\newtheorem{thm}{Theorem}[section]
\newtheorem{cor}[thm]{Corollary}
\newtheorem{lem}[thm]{Lemma}
\newtheorem{prop}[thm]{Proposition}
\newtheorem*{thm*}{Theorem}
\newtheorem*{prop*}{Proposition}
\newcounter{theoremalph}
\newtheorem{thmAlph}[theoremalph]{Theorem}
\newtheorem{corAlph}[theoremalph]{Corollary}
\theoremstyle{definition}
\newtheorem{defn}[thm]{Definition}
\newtheorem*{defn*}{Definition}
\theoremstyle{remark}
\newtheorem{rem}[thm]{Remark}
\newtheorem{exmp}[thm]{Example}
\newcounter{theoremalphex}
\newtheorem{exmpAlph}[theoremalphex]{Example}
\newtheorem{probAlph}[theoremalphex]{Problem}
\theoremstyle{definition}
\newtheoremstyle{custom}{}{}{\itshape}{}{\bfseries}{.}{.5em}{#1 \thmnote{#3}}
\theoremstyle{custom}
\DeclareMathOperator{\Ad}{Ad}
\DeclareMathOperator{\Aut}{Aut}
\DeclareMathOperator{\cd}{cd}
\DeclareMathOperator{\Comm}{Comm}
\DeclareMathOperator{\id}{Id}
\DeclareMathOperator{\im}{Im}
\DeclareMathOperator{\Isom}{Isom}
\DeclareMathOperator{\CAT}{CAT}
\DeclareMathOperator{\Sym}{Sym}
\DeclareMathOperator{\MCG}{MCG}
\DeclareMathOperator{\QI}{QI}
\DeclareMathOperator{\Out}{Out}
\DeclareMathOperator{\Inn}{Inn}
\DeclareMathOperator{\GL}{GL}
\DeclareMathOperator{\SL}{SL}
\DeclareMathOperator{\Nil}{Nil}
\DeclareMathOperator{\Sol}{Sol}
\DeclareMathOperator{\PSL}{PSL}
\DeclareMathOperator{\qa}{{\overset{\scriptscriptstyle\text{q.a.}}{\curvearrowright}}}
\newfont{\cusfont}{alnorm}
\newcommand{\alnorm}{\text{\;\cusfont Q\; }}
\newcommand{\bbE}{\mathbb{E}}
\newcommand{\bbC}{\mathbb{C}}
\newcommand{\bbH}{\mathbb{H}}
\newcommand{\bbQ}{\mathbb{Q}}
\newcommand{\bbR}{\mathbb{R}}
\newcommand{\bbZ}{\mathbb{Z}}
\newcommand{\cB}{\mathcal{B}}
\newcommand{\cG}{\mathcal{G}}
\title{Model geometries of finitely generated groups}
\author{Alex Margolis}
\address{Alex Margolis, Department of Mathematics, Vanderbilt University, 1326 Stevenson Center, Nashville, TN 37240, USA}
\email{alexander.margolis@vanderbilt.edu}
\thanks{}
\begin{document}
\begin{abstract}
We study model geometries of finitely generated groups. If  a  finitely generated group does not contain a  non-trivial    finite rank free abelian commensurated subgroup, we show any model  geometry  is dominated by either  a  symmetric space of non-compact type,  an infinite locally finite vertex-transitive graph, or a product of such spaces. We also prove that a finitely generated  group possesses a model geometry not dominated by a locally finite graph if and only if it contains either a  commensurated finite rank free abelian subgroup, or a uniformly commensurated subgroup that is a uniform lattice in a semisimple Lie group. This characterises  finitely generated groups that embed as uniform lattices in locally compact groups that are not compact-by-(totally disconnected). We show the only such groups of cohomological two  are surface groups and generalised Baumslag--Solitar groups, and we obtain an analogous characterisation for groups of cohomological dimension three.
\end{abstract}
\maketitle
\section{Introduction}
Geometric group theory  is the study of groups via their isometric actions on metric spaces. If $\Gamma$ is a finitely generated group and $X$ is a proper quasi-geodesic metric space on which $\Gamma$ acts geometrically, we say that $X$ is a \emph{model geometry} of $\Gamma$. The  Milnor--Schwarz lemma, sometimes called the fundamental lemma of geometric group theory,  says that all model geometries of $\Gamma$ are quasi-isometric to one another. 
Every finitely generated group has a model geometry that is a locally finite vertex-transitive graph, namely its Cayley graph with respect to any finite generating set. In this article we investigate  when locally finite  vertex-transitive graphs are essentially the only  model geometries of a fixed finitely generated group.   To phrase this question precisely, we introduce the notion of domination of metric spaces. 

A key  idea in geometry, going back to at least Klein's \emph{Erlangen program}, is that the isometry group $\Isom(X)$  of a metric space $X$ better captures the salient geometric features of $X$ than  the actual  metric.   This viewpoint was crucial in  Thurston's definition of his eight model geometries, in which   infinitely many non-isometric Riemannian manifolds may  correspond to the same model geometry; see the discussion in \cite[\S 3.8]{thurston1997Threedimensional}.  For example, rescaling the metric on $\bbH^3$ by any non-zero constant does not change its isometry group and more generally, does not alter the synthetic  geometry  of hyperbolic 3-space.  

 This idea is particularly natural when studying  isometric group actions on a metric space $X$, which are simply representations of a group into $\Isom(X)$.
If $X$ is a proper metric space, then its isometry group $\Isom(X)$, endowed with the compact-open topology, is a locally compact   topological group. If $X$ is a model geometry of $\Gamma$, then modulo a finite normal subgroup, $\Gamma$ is a uniform lattice in $\Isom(X)$. 

  A homomorphism $\phi:G\to H$ between locally compact groups is said to be \emph{copci} if it is \emph{\textbf{co}ntinuous and \textbf{p}roper with  \textbf{c}ocompact \textbf{i}mage}. Named by  Cornulier \cite{cornulier2015commability}, a copci homomorphism is  an isomorphism up to compact error, and is a  generalisation of  the class of homomorphisms  between discrete groups with finite kernel and finite index image.
\begin{defn*}\label{defn:dom}
Given proper quasi-geodesic metric spaces $X$ and  $Y$, we say that \emph{$X$ is dominated by $Y$} if there is a copci homomorphism $\Isom(X)\rightarrow \Isom(Y)$.
\end{defn*}
If $X$ is dominated by $Y$ and  $X$ is a model geometry of $\Gamma$, then $Y$  is also a model geometry of $\Gamma$. In particular, $X$ and $Y$ are quasi-isometric. More specifically, $\Isom(X)$ acts isometrically on $Y$ and there is a quasi-isometry $f:X\to Y$  that is coarsely $\Isom(X)$-equivariant.    Before stating our results, we  give an illustrative  concrete example of a metric space dominating another metric space.
\begin{exmpAlph}\label{exmp:dom}
		
	Let $Z$ be the \emph{torus with two antennae} pictured in Figure \ref{fig:torus_antennae}.   Formally, $Z$ is the  length space obtained as the wedge of a flat torus with two unit intervals, attached along their endpoints and equipped with the induced path metric. The universal cover  $\widetilde{Z}$ is a model geometry of $\pi_1(Z)\cong \bbZ^2$  consisting of a copy of the Euclidean plane $\bbE^2$  with a pair of unit length  antennae attached to every point of a lattice $A\subseteq \bbE^2$.  
		\begin{figure}[b]
		\centering
		\def\svgwidth{\columnwidth}
		%% Creator: Inkscape 1.2.1 (9c6d41e410, 2022-07-14), www.inkscape.org
%% PDF/EPS/PS + LaTeX output extension by Johan Engelen, 2010
%% Accompanies image file '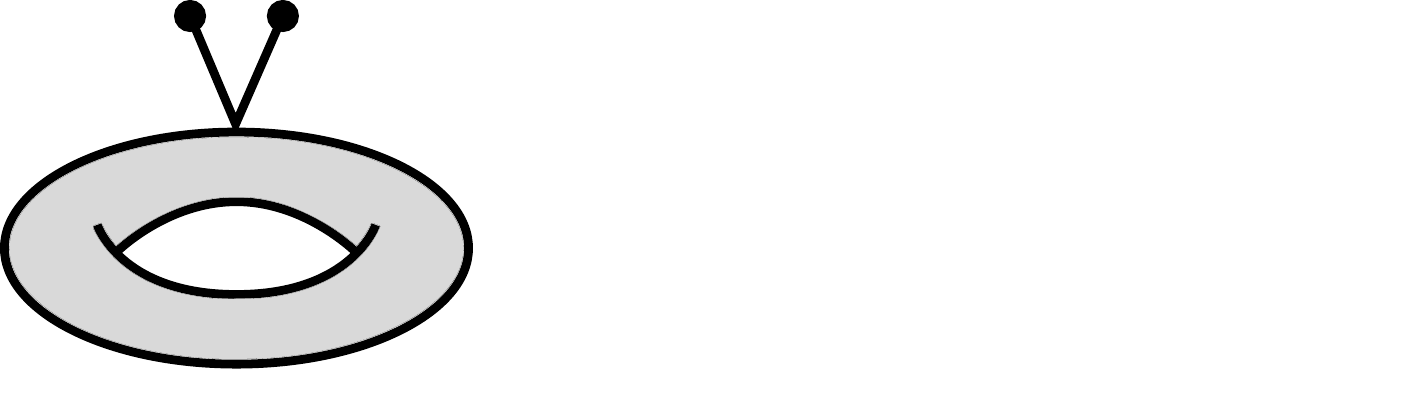' (pdf, eps, ps)
%%
%% To include the image in your LaTeX document, write
%%   \input{<filename>.pdf_tex}
%%  instead of
%%   \includegraphics{<filename>.pdf}
%% To scale the image, write
%%   \def\svgwidth{<desired width>}
%%   \input{<filename>.pdf_tex}
%%  instead of
%%   \includegraphics[width=<desired width>]{<filename>.pdf}
%%
%% Images with a different path to the parent latex file can
%% be accessed with the `import' package (which may need to be
%% installed) using
%%   \usepackage{import}
%% in the preamble, and then including the image with
%%   \import{<path to file>}{<filename>.pdf_tex}
%% Alternatively, one can specify
%%   \graphicspath{{<path to file>/}}
%% 
%% For more information, please see info/svg-inkscape on CTAN:
%%   http://tug.ctan.org/tex-archive/info/svg-inkscape
%%
\begingroup%
  \makeatletter%
  \providecommand\color[2][]{%
    \errmessage{(Inkscape) Color is used for the text in Inkscape, but the package 'color.sty' is not loaded}%
    \renewcommand\color[2][]{}%
  }%
  \providecommand\transparent[1]{%
    \errmessage{(Inkscape) Transparency is used (non-zero) for the text in Inkscape, but the package 'transparent.sty' is not loaded}%
    \renewcommand\transparent[1]{}%
  }%
  \providecommand\rotatebox[2]{#2}%
  \newcommand*\fsize{\dimexpr\f@size pt\relax}%
  \newcommand*\lineheight[1]{\fontsize{\fsize}{#1\fsize}\selectfont}%
  \ifx\svgwidth\undefined%
    \setlength{\unitlength}{407.45085949bp}%
    \ifx\svgscale\undefined%
      \relax%
    \else%
      \setlength{\unitlength}{\unitlength * \real{\svgscale}}%
    \fi%
  \else%
    \setlength{\unitlength}{\svgwidth}%
  \fi%
  \global\let\svgwidth\undefined%
  \global\let\svgscale\undefined%
  \makeatother%
  \begin{picture}(1,0.29515177)%
    \lineheight{1}%
    \setlength\tabcolsep{0pt}%
    \put(0,0){\includegraphics[width=\unitlength,page=1]{torus_with_antennae_3.pdf}}%
    \put(0.1546061,-0.02350823){\color[rgb]{0,0,0}\makebox(0,0)[lt]{\lineheight{1.25}\smash{\begin{tabular}[t]{l}$Z$\end{tabular}}}}%
    \put(0,0){\includegraphics[width=\unitlength,page=2]{torus_with_antennae_3.pdf}}%
    \put(0.58972793,-0.02315661){\color[rgb]{0,0,0}\makebox(0,0)[lt]{\lineheight{1.25}\smash{\begin{tabular}[t]{l}$\widetilde{Z}$\end{tabular}}}}%
  \end{picture}%
\endgroup%

		\caption{ The universal cover of a flat torus with two antennae is dominated by the Euclidean plane.}
		\label{fig:torus_antennae}
	\end{figure}
	
	Each isometry of $\widetilde{Z}$ stabilises the subspace $\bbE^2$, so there is an induced map $\phi:\Isom(\widetilde{Z})\to \Isom(\bbE^2)$ that is  copci; thus $\widetilde{Z}$ is  dominated by $\bbE^2$. Non-trivial isometries of $\widetilde{Z}$  fixing $\bbE^2$ pointwise  lie in the compact normal subgroup $\ker(\phi)$, and should be thought of as unwanted noise in $\Isom(\widetilde{Z})$.  Passing from $\widetilde{Z}$ to $\bbE^2$ has the effect of forgetting this noise, whilst simultaneously gaining many isometries of $\bbE^2$ not stabilising  the lattice $A$. 
\end{exmpAlph}

The study of model geometries and lattice embeddings of discrete groups has a long history going back to work of Furstenberg and Mostow. Mosher--Sageev--Whyte studied model geometries of virtually free groups \cite{mosher2003quasi,moshersageevwhyte2002maximally}. Furman and Bader--Furman--Sauer classified  lattice envelopes  for a large class of groups, including lattices in Lie groups and $S$-arithmetic lattices \cite{furman2001mostowmargulis,baderfurmansauer2020lattice}. Dymarz studied model geometries of certain solvable groups \cite{dymarz2015envelopes}. %

We say a graph is  \emph{singular} if not all vertices have valence two. For ease of notation, we use the term \emph{locally finite graph} as shorthand for a connected singular locally finite graph,  equipped with the induced  path metric in which each edge has length one.  The aim of this article is to give a complete solution to the following problem. 
\begin{probAlph}\label{prob:fggroups}
	Characterise  the class of finitely generated groups all of whose  model geometries are  dominated by locally finite vertex-transitive graphs. 
\end{probAlph}

A proper quasi-geodesic metric space with cocompact isometry group is dominated by a locally finite graph if and only if it is dominated by a vertex-transitive locally finite graph.    A model geometry $X$  is dominated by a locally finite graph if and only if the identity component of $\Isom(X)$ is compact.  Problem \ref{prob:fggroups} is thus equivalent to  characterising the class	 of  finitely generated groups that are, up to a finite normal subgroup,  uniform lattices in some locally compact group that is not compact-by-(totally disconnected).

We first give some simple yet important examples of model geometries.
\begin{exmpAlph}\label{exmp:graph}
	Any finitely generated group has a model geometry that is a locally finite vertex-transitive graph, namely its Cayley graph  with respect to a finite generating set. 
\end{exmpAlph}
Let $S$ be a closed hyperbolic surface. The hyperbolic plane  is a model geometry of $\pi_1(S)$, and is an example of a symmetric space of non-compact type. Symmetric spaces are one of the most fundamental objects in geometry, and were classified by Cartan. We refer the reader to Helgason's book for more details \cite{helgason1978differential}.  
\begin{exmpAlph}\label{exmp:symm}
	Let $X$  be a symmetric space  of non-compact type. 
	Borel showed  $\Isom(X)$  contains a uniform lattice \cite{borelm1963compact}, hence $X$ is a model geometry of some finitely generated group.  As $\Isom(X)$ is virtually connected, $X$ is not dominated by a locally finite  graph. 
\end{exmpAlph}

Combining  Examples \ref{exmp:graph} and \ref{exmp:symm}, there exist model geometries that are products of symmetric spaces and locally finite vertex-transitive graphs:
\begin{exmpAlph}\label{exmp:prod}
	Let $\Gamma_1$ be a uniform lattice in the isometry group of a  symmetric space   $X$ of non-compact type,  and let $\Gamma_2$ be an arbitrary finitely generated group. Suppose $Y$ is a Cayley graph of $\Gamma_2$ with respect to a finite generating set.  Then  $X\times Y$ is a model geometry of $\Gamma_1\times \Gamma_2$. Since the identity component of  $\Isom(X\times Y)$  is non-compact, $X\times Y$ is not dominated by a locally finite  graph.
\end{exmpAlph}

As we will shortly see in Theorem \ref{thm:modelgeom}, for a large class of finitely generated groups all model geometries are dominated by one of the  model geometries described in Examples \ref{exmp:graph}--\ref{exmp:prod}. Before stating this result, we give an example of a more exotic model geometry not dominated by the model geometries described in Examples \ref{exmp:graph}--\ref{exmp:prod}. Two subgroups $\Lambda$ and $\Lambda'$ of $\Gamma$  are  \emph{commensurable} if the intersection $\Lambda\cap \Lambda'$ has finite index in both $\Lambda$ and $\Lambda'$. 
We say that $\Lambda$ is \emph{commensurated} or \emph{almost normal} in $\Gamma$, denoted $\Lambda\alnorm \Gamma$,  if every conjugate of $\Lambda$ is commensurable to $\Lambda$.
\begin{exmpAlph}\label{exmp:bs}
	Let $\Gamma$ be the Baumslag--Solitar group $BS(1,2)$ with presentation $\langle a,t\mid tat^{-1}=a^2\rangle$. The infinite cyclic subgroup $\langle a \rangle$ is commensurated. 
		The group $\Gamma$ has a piecewise Riemannian model geometry   $X$ that is a warped product of the regular 3-valent tree $T$ with $\bbR$; see for example Farb--Mosher \cite{farbmosher1998bs1}. Since $X$ admits a 1-parameter subgroup of translations along the  $\bbR$-direction, the identity component of $\Isom(X)$ is non-compact and therefore $X$ is not dominated by a locally finite graph.
\end{exmpAlph}

Our first result says that if we exclude groups containing commensurated finite rank free abelian subgroups such as the Baumslag--Solitar group in Example \ref{exmp:bs}, then  all model geometries are dominated by the model geometries described in Examples \ref{exmp:graph}--\ref{exmp:prod}.
\begin{thmAlph}\label{thm:modelgeom}
	Let $\Gamma$ be an infinite finitely generated group that does not contain a non-trivial finite rank free abelian commensurated subgroup. Any model geometry of $\Gamma$ is dominated by a space of the  form $X$, $Y$ or $X\times Y$, where:
	\begin{enumerate}
		\item $X$ is a  symmetric space of non-compact type;
		\item $Y$ is an infinite locally finite vertex-transitive graph.
	\end{enumerate}
\end{thmAlph}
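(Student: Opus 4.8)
\textit{Proof proposal.} The plan is to translate the problem into the language of locally compact groups, apply the structure theory stemming from the solution to Hilbert's fifth problem, and use the hypothesis on commensurated subgroups to eliminate the ``solvable'' part of the identity component. Fix a model geometry $M$ of $\Gamma$ and set $G:=\Isom(M)$; then $G$ is a compactly generated locally compact group in which, modulo a finite normal subgroup, $\Gamma$ embeds as a uniform lattice. If the identity component $G^\circ$ is compact, then by the characterisation recalled in the introduction $M$ is already dominated by a locally finite vertex-transitive graph, and we are in case (2). So assume henceforth that $G^\circ$ is non-compact.

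Next I would extract a well-behaved Lie quotient. By Gleason--Yamabe there is a compact normal subgroup $W\trianglelefteq G$ with $\bar G:=G/W$ having Lie identity component; enlarging $W$ (by the preimage of the maximal compact normal subgroup of $\bar G^\circ$, which is characteristic, hence normal in $\bar G$) we may assume $L:=\bar G^\circ$ has no non-trivial compact normal subgroup. The quotient map $G\to\bar G$ is copci, $\bar G$ is again compactly generated and contains $\Gamma$ modulo finite as a uniform lattice, and $L$ is a non-compact connected Lie group. Let $R=\mathrm{Rad}(L)$ be the solvable radical; being characteristic in $L\trianglelefteq\bar G$ it is normal in $\bar G$, so its preimage $\tilde R\trianglelefteq G$ is a closed, compact-by-solvable (hence amenable) normal subgroup.

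The crux --- and the main obstacle --- is to show that $R$ is trivial. Suppose it is not. A non-trivial connected solvable Lie group with no compact normal subgroup is not a torus, so is non-compact; thus $\tilde R$ is a \emph{non-compact} closed amenable normal subgroup of $G=\Isom(M)$. The aim is to show that such a subgroup forces $\Gamma$ to contain a non-trivial finite rank free abelian commensurated subgroup, contradicting the hypothesis. The idea is that $\tilde R$, modulo the compact kernel of its action on $M$, acts on $M$ ``coarsely like'' a free abelian group $\bbZ^k$ with $k\ge 1$ --- this is where one must control the coarse geometry of the orbits of the radical --- and that normality of $\tilde R$ in $G$ makes the resulting subgroup of $\Gamma$ commensurated in $\Gamma$. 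Granting this, $R=1$, so $L$ is semisimple; having no compact normal subgroup it then has no compact simple factors, and its centre $Z(L)$ --- which is discrete, finitely generated abelian, and $\Gamma$-commensurated --- is finite by the same hypothesis. Enlarging $W$ once more, we may therefore assume $L=S$ is a connected, centre-free, semisimple Lie group with no compact factors, and $S$ is non-trivial because $G^\circ$ is non-compact.

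It remains to realise $\bar G$ geometrically. Put $X:=S/K$ for $K\le S$ a maximal compact subgroup: this is a symmetric space of non-compact type, and $S\hookrightarrow\Isom(X)$ is copci. Conjugation gives a continuous homomorphism $\psi:\bar G\to\Aut(S)$ whose kernel $C:=C_{\bar G}(S)$ is closed, normal, compactly generated and totally disconnected, the last because $C^\circ\le\bar G^\circ\cap C=Z(S)=1$. Let $Y$ be a locally finite vertex-transitive graph (a Cayley--Abels graph of $C$, or a point if $C$ is compact) on which $\bar G$ acts in a way that restricts to a geometric action of $C$; producing this honest $\bar G$-action --- for which one uses that $C$ is normal in $\bar G$ with $\bar G/C$ a virtually connected semisimple Lie group, and that $\bar G^\circ=S$ necessarily acts trivially on any graph --- is the remaining technical point. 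Combining $\psi$ with this action yields a continuous homomorphism $\rho:\bar G\to\Aut(S)\times\Isom(Y)\subseteq\Isom(X)\times\Isom(Y)=\Isom(X\times Y)$, injective because $C\cap S=Z(S)=1$. A direct argument --- using that $\psi(\bar G)\supseteq S$ acts properly cocompactly on $X$ while $C=\ker\psi$ acts properly cocompactly on $Y$, so that any element of $\bar G$ may be multiplied by a suitable element of $C$ without changing its action on $X$ while moving a fixed basepoint of $Y$ into a fixed compact set --- shows the $\bar G$-action on $X\times Y$ is proper and cocompact, hence $\rho$ is copci. Composing with $G\to\bar G$ exhibits $M$ as dominated by $X\times Y$, or by $X$ alone when $C$ is compact. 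The two steps where I expect real difficulty are the elimination of the radical, which is the one place the hypothesis is genuinely used, and the construction of the honest $\bar G$-action on the graph $Y$.
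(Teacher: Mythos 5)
Your overall architecture matches the paper's: reduce to a Lie-theoretic statement via Gleason--Yamabe, use the hypothesis to kill the solvable part of $G^\circ$, and realise the semisimple and totally disconnected parts on a symmetric space and a locally finite graph respectively. The final assembly of the copci map into $\Isom(X\times Y)$ is essentially the paper's Proposition \ref{prop:map_to_product}; the paper maps to $\Aut(G^\circ)\times G/G^\circ$ where you use the centraliser of $S$, but these agree up to compact kernel and finite index, and the graph action comes for free from the Cayley--Abels construction applied to the totally disconnected factor, so the ``remaining technical point'' you flag there is not a real obstacle. The genuine gap is exactly where you say the main obstacle lies: the elimination of the radical. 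The mechanism you propose --- that $\tilde R$, modulo a compact kernel, ``acts on $M$ coarsely like $\bbZ^k$'' --- is false in general. A non-trivial connected solvable Lie group with no compact normal subgroup can be the Heisenberg group, $\Sol$, or the affine group of the line; none of these is quasi-isometric to any $\bbZ^k$, and even granting that $\Gamma$ meets $\tilde R$ in a lattice (itself a delicate lattice-heredity assertion you do not justify), that lattice would only be polycyclic, not virtually abelian, so you cannot extract a \emph{free abelian} commensurated subgroup from the whole radical.

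The paper's fix is to descend further: instead of the radical, take the nilradical $N$ of $G^\circ$ (which is simply connected once compact normal subgroups have been removed, by Proposition \ref{prop:nilrad}) and then the last non-vanishing term $C_k(N)$ of its lower central series. This is a characteristic vector subgroup isomorphic to $\bbR^n$, and the required lattice-heredity is supplied by a lemma of Mostow ($N\cap\Gamma$ is a uniform lattice in $N$) combined with Malcev's theorem ($C_k(N)\cap\Gamma$ is then a uniform lattice in $C_k(N)$, i.e.\ a $\bbZ^n$); see Lemma \ref{lem:mostow_lattice_series}. A second point you elide: since $G^\circ$ need not be open in $G$, one cannot intersect $\Gamma$ with $C_k(N)$ directly. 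The paper first produces an open subgroup $L\cong G^\circ\times K$ with $K$ compact and totally disconnected (Proposition \ref{prop:opensbgrp_prod}) and shows $C_k(N)\times K$ is commensurated in all of $G$, so that its preimage in $\Gamma$ is a commensurated, virtually $\bbZ^n$ subgroup --- the desired contradiction with the hypothesis. Finally, your disposal of the centre (``$Z(L)$ is $\Gamma$-commensurated, hence finite by hypothesis'') does not parse as written: $Z(L)$ is a subgroup of $G$, not of $\Gamma$, and the hypothesis only constrains commensurated subgroups of $\Gamma$; the paper instead folds the centre into the dichotomy of Proposition \ref{prop:nilrad} before the case split.
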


We also have an analogue of Theorem \ref{thm:modelgeom} for lattices. The following theorem is strictly stronger than Theorem \ref{thm:modelgeom} as it applies  to all lattices, including non-uniform ones. 
\begin{thmAlph}\label{thm:lattice_prod}
	Let $\Gamma$ be an infinite finitely generated group that does not contain a non-trivial finite rank free abelian commensurated subgroup. Let $G$ be a locally compact group containing $\Gamma$ as a lattice. Then there is a continuous proper map $\phi:G\to S\times D$, with  compact kernel and finite index open image, where $S$ is a centre-free semisimple Lie group with finitely many components and no compact factors, and $D$ is a compactly generated totally disconnected locally compact group. 
\end{thmAlph}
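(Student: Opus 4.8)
The plan is to combine the structure theory of locally compact groups with the principle that, under our hypothesis on $\Gamma$, the envelope $G$ cannot carry any ``solvable noise''. Throughout I use two facts. (i) \emph{(Mostow, and its locally compact extensions)} if $N\trianglelefteq G$ is a closed amenable normal subgroup then $\Gamma\cap N$ is a lattice in $N$ --- equivalently $\Gamma N$ is closed. (ii) An infinite virtually polycyclic group contains a characteristic subgroup isomorphic to $\bbZ^{n}$ for some $n\geq 1$ (pass to a characteristic finite-index polycyclic subgroup, take its Fitting subgroup, and extract an explicit characteristic free abelian subgroup of finite positive rank inside the centre of the latter). First, $G$ is compactly generated: if $U$ is a compact symmetric identity neighbourhood and $F$ a finite generating set of $\Gamma$, then $G_{0}:=\langle F\cup U\rangle$ is an open compactly generated subgroup containing $\Gamma$; pushing the invariant probability measure on $G/\Gamma$ forward along $G/\Gamma\to G/G_{0}$ produces an invariant probability measure on the countable discrete set $G/G_{0}$, so $[G:G_{0}]<\infty$ and $G$ is generated by $G_{0}$ together with finitely many coset representatives. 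Hence $G$ is compactly generated, $\sigma$-compact, and unimodular.

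\textbf{Killing the solvable and central parts (the crux).} I claim $G$ has no closed normal subgroup $N$ with $\Gamma\cap N$ infinite and virtually polycyclic: indeed $\Gamma\cap N\trianglelefteq\Gamma$, so by (ii) it contains a subgroup $\cong\bbZ^{n}$ that is characteristic in $\Gamma\cap N$ and hence normal --- in particular commensurated --- in $\Gamma$, contradicting the hypothesis. Now let $R$ be the amenable radical of $G$ and $R^{\circ}$ its identity component. If $R^{\circ}$ were non-compact, then by (i) $\Gamma\cap R^{\circ}$ would be a lattice in the connected amenable locally compact group $R^{\circ}$, hence virtually polycyclic (project to the solvable Lie quotient, where the image is polycyclic, with finite kernel) and infinite and normal in $\Gamma$ --- impossible. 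So $R^{\circ}$ is compact. Any connected amenable normal subgroup of $G/R^{\circ}$ pulls back to one of $G$, hence into $R^{\circ}$; so $G^{\circ}/R^{\circ}$ has trivial solvable radical, and by the structure theory of connected locally compact groups (Gleason--Yamabe, Iwasawa) its maximal compact normal subgroup $K$ --- characteristic, hence normal in $G/R^{\circ}$ --- has a semisimple Lie quotient with no compact factors. If the (finitely generated) centre of that quotient were infinite, its preimage would again be a closed amenable normal subgroup meeting the image of $\Gamma$ in an infinite virtually abelian subgroup --- impossible. Quotienting by these successive compact characteristic subgroups (each step is legitimate: quotienting by a compact normal subgroup turns lattices into lattices and preserves the hypothesis on $\Gamma$), we may assume henceforth that $S:=G^{\circ}$ is a centre-free semisimple Lie group with no compact factors.

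\textbf{Peeling off the totally disconnected part.} Conjugation gives a continuous homomorphism $G\to\Aut(S)$ with kernel $C:=C_{G}(S)$. Since $S$ is centre-free, $\Inn(S)\cong S$ is the identity component of the Lie group $\Aut(S)$, which has finitely many components ($\Out(S)$ is finite as $S$ is semisimple); as $G^{\circ}=S$, the image $Q$ of $G$ contains $\Inn(S)$, so it is a finite union of cosets of $\Inn(S)$ and hence a closed --- thus open --- subgroup of $\Aut(S)$ with $Q^{\circ}=S$, finitely many components, and trivial centre. Moreover $C^{\circ}\subseteq Z(S)=1$, so $C$ is totally disconnected, $C\cap S=1$, and one checks that $SC$ is an open finite-index subgroup of $G$. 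Consider $\psi\colon G\to Q\times(G/S)$, $g\mapsto(\overline{g},gS)$, where $\overline{g}$ denotes the image of $g$ in $Q\cong G/C$. Then $\ker\psi=C\cap S=1$, and the image of $\psi$ is exactly $\{(q,d): q\text{ and }d\text{ have the same image in the finite group }G/SC\}$, an open finite-index subgroup; since $G$ is $\sigma$-compact, the open mapping theorem makes $\psi$ a topological isomorphism onto this image, so $\psi$ is continuous, proper, and has open finite-index image. Here $Q$ is a centre-free semisimple Lie group with finitely many components and no compact factors, while $D:=G/S=G/G^{\circ}$ is a compactly generated totally disconnected locally compact group. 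Composing $\psi$ with the quotient maps from the previous paragraph (each proper with compact kernel) yields the desired $\phi\colon G\to Q\times D$.

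\textbf{Main obstacle.} The delicate ingredient is fact (i): the locally compact version of Mostow's theorem that a lattice meets a closed normal amenable subgroup in a lattice (equivalently, that the relevant product is closed). Granting this together with the standard structure theory of connected locally compact groups, the rest is essentially careful bookkeeping; without it one would at least need to establish the two cases actually used --- connected amenable normal subgroups, and preimages of centres of semisimple Lie quotients --- and this is where the genuine work lies.
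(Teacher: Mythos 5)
Your reduction to the case where $G^\circ$ is a centre-free semisimple Lie group would indeed finish the proof (your ``peeling off'' step is essentially the paper's Proposition \ref{prop:map_to_product}), but the argument rests entirely on your ``fact (i)'', which you assume rather than prove, and which is not a theorem in the generality you state it. As literally phrased --- every closed amenable normal subgroup of $G$ is lattice-hereditary --- it is false: take $G=\bbR^2$, $\Gamma=\bbZ^2$ and $N$ an irrational line; then $N$ is closed, normal and abelian, yet $\Gamma\cap N=\{1\}$ is not a lattice in $N$. The two instances you actually invoke (the identity component of the amenable radical, and the preimage of the centre of the semisimple Lie quotient) are characteristic, so this particular counterexample does not apply, but those are exactly the cases belonging to the Auslander--Wang circle of results whose status the paper explicitly flags as ``a long and rather dramatic history'' with disputed claims in the literature (Remark after Lemma \ref{lem:mostow_lattice_series}). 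In particular, lattice-heredity of the \emph{radical} (as opposed to the nilradical) and of discrete central subgroups of semisimple quotients is precisely the hard part, and your closing paragraph concedes that this is ``where the genuine work lies''. So the proof has a genuine gap at its crux.

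It is worth seeing how the paper sidesteps this. After applying Gleason--Yamabe to reduce to the case where $G^\circ$ is a connected Lie group with no non-trivial compact normal subgroup (Lemma \ref{lem:conn-comp-lie}) and splitting off a compact totally disconnected complement inside an open subgroup (Proposition \ref{prop:opensbgrp_prod}), the dichotomy used is not ``amenable radical compact or not'' but Proposition \ref{prop:nilrad}: either $G^\circ$ is centre-free semisimple, or its \emph{nilradical} $N$ is non-trivial and simply connected. In the latter case one takes the last non-zero term $C_k(N)\cong\bbR^n$ of the lower central series of $N$, which is characteristic; Mostow's lemma (lattice-heredity of the nilradical, Lemma \ref{lem:mostow_lattice_series}) together with Malcev's theorem shows $\Gamma$ meets it in a uniform lattice, producing a commensurated subgroup of $\Gamma$ that is virtually $\bbZ^n$ and hence a commensurated free abelian subgroup, contradicting the hypothesis. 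The only lattice-heredity inputs are thus open subgroups, compact quotients, and the nilradical --- all uncontroversial --- and no appeal to the amenable or solvable radical is needed. If you want to salvage your route, you would need to either restrict fact (i) to these safe cases or supply a proof of the Auslander--Wang-type statements you use; as written, the argument is not complete.
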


 A subset $\Omega\subseteq \QI(X)$ of the quasi-isometry group of a space $X$ is said to be \emph{uniform} if there exist constants $K$ and $A$ such that every element of $\Omega$ can be represented by a $(K,A)$-quasi-isometry. 
\begin{defn*}
	A finitely generated commensurated subgroup $\Lambda\alnorm \Gamma$ is \emph{uniformly commensurated} if  the image of  the natural map \[\Gamma\to \Comm(\Lambda)\to \QI(\Lambda)\] induced by conjugation  is a uniform subgroup of $\QI(\Lambda)$.
\end{defn*}
Elaborating on this definition, if $\Lambda\alnorm \Gamma$ is a finitely generated commensurated subgroup, then for each $g\in \Gamma$ there is always \emph{some} quasi-isometry $f_g:\Lambda\to \Lambda$ that agrees with conjugation by $g$ on some finite index subgroup of $\Lambda$. The preceding definition says  $\Lambda$ is uniformly commensurated precisely when all the  $f_g$ can be chosen with  quasi-isometry constants independent of $g$. 
We now give an example of a  normal  hence commensurated subgroup  that is not  uniformly commensurated.
\begin{exmpAlph}
		Let $S$ be a closed hyperbolic surface and let $H\leq \MCG(S)$ be infinite. Let $\Gamma_H$ be the surface group extension fitting into the short exact sequence \[1\to \pi_1(S)\to \Gamma_H\to H\to 1,\] where the action of $H$ on $\pi_1(S)$ is given by $H\leq \MCG(S)\cong \Out(\pi_1(S))$.  Elements of $H$  induce automorphisms of  $\pi_1(S)$ with  arbitrarily large quasi-isometry constants. This follows from Thurston's \emph{quasi-isometry metric} on Teichm\"uller space \cite[\S 4.6]{thurston1997Threedimensional} or from Corollary \ref{cor:unif_comm}. Therefore,  the normal  subgroup $\pi_1(S)\vartriangleleft \Gamma_H$ is \emph{not} uniformly commensurated. 
\end{exmpAlph}

 Two groups $\Gamma_1$ and $\Gamma_2$ are \emph{virtually isomorphic} if for $i=1,2$  there exist finite index subgroup $\Gamma'_i\leq \Gamma_i$ and  finite normal subgroups $F_i\vartriangleleft \Gamma'_i$ such that  $\Gamma'_1/F_1$ and $\Gamma'_2/F_2$  are isomorphic. The following is the main result of this article and provides a complete solution to Problem \ref{prob:fggroups}. 
\begin{thmAlph}\label{thm:mainintro}
	Let $\Gamma$ be a finitely generated group. The following are equivalent:
	\begin{enumerate}
		\item $\Gamma$ has a model geometry that is not dominated by a locally finite  graph.\label{item:mainintro1}
		\item  $\Gamma$ contains a finite normal subgroup $F$ such that $\Gamma/F$ is a uniform lattice in a locally compact group whose identity component is non-compact.	\label{item:mainintro2}
		\item $\Gamma$  contains an infinite commensurated subgroup $\Lambda$ such that one of the following hold:\label{item:mainintro3}
		\begin{enumerate}
			\item $\Lambda$ is  a finite rank free abelian group; \label{mainthm:abelian}
			\item $\Lambda$ is uniformly commensurated and is virtually isomorphic to a uniform lattice in a connected centre-free semisimple Lie group  without compact factors.\label{mainthm:semisimple}
		\end{enumerate}
	\end{enumerate}  
\end{thmAlph}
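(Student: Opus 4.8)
The plan is to establish $(1)\Leftrightarrow(2)$ by soft arguments about isometry groups and metrization, and then to close the loop with $(2)\Rightarrow(3)$, which invokes Theorem~\ref{thm:lattice_prod}, and $(3)\Rightarrow(2)$, which requires constructing two families of model geometries. For $(1)\Leftrightarrow(2)$: if $X$ is a model geometry of $\Gamma$ then $\Isom(X)$ is locally compact, the kernel $F$ of $\Gamma\to\Isom(X)$ is finite (it fixes a point under a metrically proper action), and $\Gamma/F$ is a uniform lattice in $\Isom(X)$; by the characterisation recalled in the introduction, $X$ is \emph{not} dominated by a locally finite graph precisely when $\Isom(X)^{\circ}$ is non-compact, which is $(2)$ with $G=\Isom(X)$. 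Conversely, a group $G$ as in $(2)$ is compactly generated, having a finitely generated cocompact lattice, so it acts continuously, properly and cocompactly on a proper geodesic space $X$ by the metrization theorem for locally compact groups; restricting along $\Gamma\to\Gamma/F\hookrightarrow G$ makes $X$ a model geometry of $\Gamma$. The orbit map makes $G\to\Isom(X)$ a continuous proper, hence closed, homomorphism, so $\Isom(X)^{\circ}$ contains the image of the non-compact closed subgroup $G^{\circ}$ as a non-compact closed subgroup; hence $X$ is not dominated by a locally finite graph, giving $(1)$.

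For $(2)\Rightarrow(3)$: if $\Gamma$ contains an infinite finite-rank free abelian commensurated subgroup we are in case~(3a), so assume it does not. Then Theorem~\ref{thm:lattice_prod} applies to the group $G$ from $(2)$, yielding a continuous proper homomorphism $\phi:G\to S\times D$ with compact kernel and open finite-index image, $S$ centre-free semisimple Lie with finitely many components and no compact factors and $D$ compactly generated totally disconnected. Since $\phi$ is closed and $G^{\circ}$ is non-compact, $\phi(G^{\circ})$ is a non-compact closed connected subgroup of $(S\times D)^{\circ}=S^{\circ}\times\{1\}$, so $S$ is non-trivial and its symmetric space $X_{S}$ is of non-compact type. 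Let $\Lambda_{0}:=\phi(\Gamma/F)$, a uniform lattice in $S\times D$; fix a compact open $U\le D$ and set $\Lambda:=\Lambda_{0}\cap(S\times U)$. Since $S\times U$ is open and its $\Lambda_{0}$-conjugates are commensurable to it, $\Lambda$ is commensurated in $\Lambda_{0}$. The orbits of $S\times U$ on the compact space $(S\times D)/\Lambda_{0}$ are open, hence clopen, hence finite in number and each compact; in particular $(S\times U)/\Lambda$ is compact, so the projection $p_{S}:S\times U\to S$ --- which is proper, since $U$ is compact, with compact kernel $\{1\}\times U$ --- exhibits $\Lambda_{S}:=p_{S}(\Lambda)$ as a uniform lattice in $S$ with $\Lambda$ virtually isomorphic to $\Lambda_{S}$. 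Passing to $S^{\circ}$ and dividing by its centre, which is finite (otherwise $\Gamma$ would have a commensurated $\bbZ$, contrary to assumption), shows $\Lambda_{S}$, and hence $\Lambda$, is virtually isomorphic to a uniform lattice in a connected centre-free semisimple Lie group without compact factors; transporting $\Lambda$ back along the finite-kernel surjection $\Gamma\twoheadrightarrow\Lambda_{0}$ yields a commensurated subgroup of $\Gamma$ of the same virtual isomorphism type. Finally, under the identifications $\QI(\Lambda)\cong\QI(\Lambda_{S})\cong\QI(X_{S})$, conjugation by $\gamma\in\Lambda_{0}$ acts on $\Lambda_{S}$ as the $X_{S}$-isometry induced by the semisimple component of $\gamma$; conjugating such an isometry by a fixed Milnor--Schwarz quasi-isometry $\Lambda_{S}\simeq X_{S}$ and a coarse inverse gives a quasi-isometry of $\Lambda_{S}$ with constants independent of $\gamma$, so $\Lambda$ is uniformly commensurated. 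This establishes~(3b).

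For $(3)\Rightarrow(2)$: in case~(3a), write $\Lambda\cong\bbZ^{n}$; conjugation gives a homomorphism $\Gamma\to\Comm(\Lambda)\hookrightarrow\GL_{n}(\bbQ)$ together with the tautological action on $\Lambda\otimes\bbQ\cong\bbQ^{n}$, and combining a Schlichting (relative profinite) completion of this data with the Archimedean factor $\Lambda\otimes\bbR\cong\bbR^{n}$ realises $\Gamma$, modulo a finite normal subgroup, as a uniform lattice in a locally compact group whose identity component contains a copy of $\bbR^{n}$ and is therefore non-compact; this is the higher-rank analogue of the treebolic model geometry of $BS(1,2)$ from Example~\ref{exmp:bs}. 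In case~(3b), let $\Lambda\alnorm\Gamma$ be uniformly commensurated and virtually isomorphic to a uniform lattice in $S$ with symmetric space $X_{S}$. The uniformity hypothesis is exactly what is needed to discretise the coarse $\Gamma$-action combining translation on $\Gamma/\Lambda$ with the honest isometric $\Isom(X_{S})$-action on $X_{S}$: one obtains a locally compact group $G$ containing $\Gamma$ --- modulo a finite normal subgroup --- as a uniform lattice, with a closed subgroup $H\le G$ admitting a copci homomorphism $H\to\Isom(X_{S})$; since $\Isom(X_{S})^{\circ}$ is non-compact, so is $G^{\circ}$. Either way $(2)$ holds.

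The implications $(1)\Leftrightarrow(2)$ and $(2)\Rightarrow(3)$ use only the metrization theorem and Theorem~\ref{thm:lattice_prod}; the substance is in $(3)\Rightarrow(2)$. In the abelian case the main obstacle is the adelic/Schlichting bookkeeping needed to realise $\Gamma$ as a \emph{cocompact}, not merely dense, lattice while retaining a closed copy of $\bbR^{n}$ in the identity component. In the semisimple case the main obstacle is the discretisation step: promoting a \emph{uniform} group of quasi-isometries of $X_{S}$ to an isometric action on a proper space quasi-isometric to $X_{S}$, which draws on quasi-isometric rigidity of symmetric spaces and on the general theory of discretising uniform quasi-actions. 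In both constructions one must finally verify, via properness of the orbit map as in the first paragraph, that the \emph{resulting} space $X$ --- not merely some auxiliary group --- has $\Isom(X)^{\circ}$ non-compact.
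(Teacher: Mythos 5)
Your treatment of $(1)\Leftrightarrow(2)$ matches the paper's Proposition \ref{prop:modgeom_vs_lattice} in substance. Your $(2)\Rightarrow(3)$ takes a mildly different route: the paper applies Proposition \ref{prop:lattice_structure} directly, which hands you the commensurated subgroup together with its uniform commensuration (via Lemma \ref{lem:engulfing}), whereas you first dispose of case (3a), then invoke Theorem \ref{thm:lattice_prod} and re-extract the commensurated subgroup from the product $S\times D$ by intersecting the lattice with $S\times U$ and projecting. That re-derivation is sound (openness of $S\times U$ gives commensuration via Lemma \ref{lem:preimage_comm}, clopen orbits give cocompactness, and the uniform bound on the conjugation quasi-isometries is exactly the computation in Lemma \ref{lem:engulfing}); the only loose point is your parenthetical claim that $Z(S^\circ)$ is finite "otherwise $\Gamma$ would have a commensurated $\bbZ$" --- in the paper this is not needed because $S$ is constructed as $\Aut(G^\circ)$ with $S^\circ=\Inn(G^\circ)\cong G^\circ$ already centre-free (Lemma \ref{lem:fin_cen}).

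The genuine gap is in $(3)\Rightarrow(2)$, which you yourself flag as "the substance." In case (3a) you write that "combining a Schlichting completion of this data with the Archimedean factor $\Lambda\otimes\bbR\cong\bbR^n$ realises $\Gamma$ \ldots as a uniform lattice," but this is an assertion of the conclusion, not a construction. The difficulty is that $\Lambda\cong\bbZ^n$ is abelian, so the modular map $\Delta:\Gamma\to\GL_n(\bbQ)=\Aut(\bbR^n)$ kills $\Lambda$; there is no homomorphism $\Gamma\to\Aut(\bbR^n)\times Q$ (or into any naive semidirect product) that is proper, because $\Lambda$ is not normal and $\Delta$ only agrees with conjugation on finite-index subgroups. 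One must build a genuine extension $1\to\bbR^n\to G\to Q\to 1$ over the Schlichting completion $Q$, which is what the paper's Theorem \ref{thm:build_lattice_embedding} does: choose a section of $\Gamma/\Lambda$, write down a generalised cocycle $f:\Gamma\times\Gamma\to\bbR^n$ and a map $\omega:\Gamma\to\Aut(\bbR^n)$, prove both are locally constant on Cauchy sequences so that they extend continuously to $Q$, and then verify by hand that the resulting map $\gamma:\Gamma\to G_{\hat\omega,\hat f}$ is proper with cocompact image. None of these steps is present or routine, and the normalisation of $\Lambda$ by a finite-index subgroup (Lemma \ref{lem:intersection_engulf}) is needed even to make the continuity argument start. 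In case (3b) the situation is better than you suggest: once Kleiner--Leeb upgrades the uniform quasi-action to an honest homomorphism $\phi:\Gamma\to\Isom(X_S)$ restricting to a lattice embedding on $\Lambda$ (Proposition \ref{prop:engulf_commen_lattice}), the diagonal map $\Gamma\to\Isom(X_S)\times Q$ can be checked directly to be a virtual uniform lattice embedding, avoiding the cocycle machinery --- but you neither perform that check nor the cocycle construction, so this half too remains a sketch. As written, $(3)\Rightarrow(2)$ is not proved.
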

The equivalence of the first two conditions of Theorem \ref{thm:mainintro} is not difficult; see Section \ref{sec:locally_compact_prelims} for details. The content of the theorem is therefore that the first two conditions  are equivalent to  the third.
We also remark that the uniformly  commensurated hypothesis in (\ref{mainthm:semisimple}) holds automatically if   $\Lambda$ is virtually isomorphic to a lattice  satisfying  Mostow rigidity; see Proposition \ref{prop:unifcomm_mostow}.

 To understand the geometric and topological structure of 3-manifolds,  Thurston defined his eight \emph{model geometries}, which are $\bbH^3$, $\bbE^3$, $\Sol$, $\bbH^2\times \bbR$,  $\widetilde{\SL(2,\bbR)}$, $\Nil$, $S^3$ and $S^2\times \bbE^1$. 
   A closed \emph{geometric 3-manifold} is a 3-manifold obtained by taking the quotient of one of the eight model geometries  by a group of isometries acting freely and cocompactly. 
The  \emph{Geometrisation Theorem}, conjectured by Thurston in 1982 and proved by Perelman in 2002, states that every closed 3-manifold  is either geometric or can be decomposed into geometric pieces \cite{thurston1982threedimensional,perelman2002entropy}.
If $M$ is a closed geometric 3-manifold with infinite fundamental group, then its fundamental group  has a model geometry not dominated by a locally finite graph.

In Theorems \ref{thm:cd1}--\ref{thm:cd3} we apply Theorem \ref{thm:mainintro} to give structural  descriptions of low dimensional groups that have model geometries not dominated by locally finite graphs. These results are an analogue of Thurston's classification of his eight model geometries from the setting of 3-manifold groups to finitely generated groups of low cohomological dimension.
The following result  is a straightforward consequence of theorems of Stallings and  Mosher--Sageev--Whyte \cite{stallings1968torsionfree,mosher2003quasi}, which we  state  for completeness and comparison with Theorems \ref{thm:cd2} and \ref{thm:cd3}.
\begin{thmAlph}\label{thm:cd1}
	Let $\Gamma$ be a finitely generated group of cohomological dimension one. Then $\Gamma$ has a model geometry that is not dominated by a locally finite vertex-transitive graph if and only if  $\Gamma$ is infinite cyclic.
\end{thmAlph}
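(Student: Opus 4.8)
The plan is to first pin down the group-theoretic possibilities and then feed them into the rigidity of quasi-actions on trees. By the theorem of Stallings \cite{stallings1968torsionfree} (extended to all groups by Swan), a finitely generated group of cohomological dimension one is free of some finite rank $n$, and since the trivial group has cohomological dimension zero we have $n\geq 1$. If $n=1$, then $\Gamma\cong\bbZ$ and the real line $\bbR$ is a model geometry of $\Gamma$ on which $\Gamma$ acts by translations; since $\Isom(\bbR)\cong\bbR\rtimes\bbZ/2\bbZ$ has non-compact identity component, $\bbR$ is not dominated by a locally finite graph. This settles the ``if'' direction, so it remains to prove that when $\Gamma\cong F_n$ with $n\geq 2$, every model geometry of $\Gamma$ is dominated by a locally finite vertex-transitive graph.

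So fix $n\geq 2$, let $X$ be a model geometry of $F_n$, and put $G=\Isom(X)$. As $X$ is proper, $G$ is a locally compact group acting properly on $X$, and it acts cocompactly because the subgroup $F_n\leq G$ already does; hence $G$ is compactly generated and quasi-isometric to $X$, and therefore to the Cayley graph of $F_n$, which is the regular tree $T$ of valence $2n$. Because $n\geq 2$, this tree has bounded valence and is bushy, so $G$ quasi-acts properly and coboundedly on $T$. By the quasi-action rigidity theorem of Mosher--Sageev--Whyte \cite{mosher2003quasi}, this quasi-action is quasi-conjugate to a proper cocompact isometric action of $G$ on a bounded-valence bushy tree $T'$. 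Since $T'$ is locally finite and the action is proper, the stabiliser $K$ of a vertex of $T'$ is a compact open subgroup of $G$; as $G^\circ$ is connected and $G^\circ\cap K$ is a non-empty open and closed subset of it, we get $G^\circ\subseteq K$, so $G^\circ$ is compact. By the equivalences recorded in the introduction --- a model geometry is dominated by a locally finite graph precisely when the identity component of its isometry group is compact, and for spaces with cocompact isometry group domination by a locally finite graph is equivalent to domination by a locally finite vertex-transitive graph --- it follows that $X$ is dominated by a locally finite vertex-transitive graph, completing the proof.

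The one non-routine ingredient is the Mosher--Sageev--Whyte theorem, and the step deserving care is that it is applied to the quasi-action of the locally compact group $G$ rather than of a finitely generated group: one should check that properness and coboundedness of the quasi-action behave well under quasi-conjugacy once $G$ is given the word metric of a compact generating set, which is straightforward but should be made explicit. A self-contained alternative to this step, using only results of the present article, is to apply Theorem \ref{thm:mainintro} to $\Gamma\cong F_n$ with $n\geq 2$: if some model geometry were not dominated by a locally finite graph, then $F_n$ would contain an infinite commensurated subgroup $\Lambda$ which is either finite rank free abelian, or (uniformly commensurated and) virtually isomorphic to a uniform lattice in a non-trivial connected centre-free semisimple Lie group without compact factors. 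The first case is impossible: a finite rank free abelian subgroup of $F_n$ is cyclic, and the commensurator in a free group of a non-trivial cyclic subgroup is again cyclic, whereas commensuration forces $F_n=\Comm_{F_n}(\Lambda)$. The second is impossible because every subgroup of $F_n$, and hence every finite index subgroup of $\Lambda$, has cohomological dimension at most one, while such a lattice has a finite index subgroup that is the fundamental group of a closed aspherical manifold of dimension at least two and so has cohomological dimension at least two.
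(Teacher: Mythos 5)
Your argument is correct and follows exactly the route the paper indicates for Theorem \ref{thm:cd1}, which it states without a written proof as a straightforward consequence of Stallings and Mosher--Sageev--Whyte: Stallings--Swan reduces to free groups, the rank-one case is handled by $\bbR$ having non-compact isometry group, and for rank at least two the Mosher--Sageev--Whyte tree rigidity forces $\Isom(X)^\circ$ to be compact, whence Corollary \ref{cor:domin_by_loc_fin} applies. Your self-contained alternative via Theorem \ref{thm:mainintro} --- ruling out both an infinite commensurated cyclic subgroup (whose commensurator in $F_n$ is cyclic) and a commensurated subgroup of lattice type (which would have cohomological dimension at least two) --- is also valid and stays entirely within the paper's own machinery.
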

A  \emph{surface group} is the fundamental group of a closed  surface of non-positive Euler characteristic.
  A \emph{generalised Baumslag--Solitar group of rank $n$}  is the fundamental group of a finite graph of groups in which every vertex and edge group is virtually  $\bbZ^n$, and the associated Bass--Serre tree is infinite-ended. 
  
   It was already observed in Examples \ref{exmp:symm} and  \ref{exmp:bs} that surface groups and  generalised Baumslag--Solitar groups of rank one  admit  model geometries that are not dominated by a locally finite  graph. The next theorem shows these are the only such examples amongst groups of cohomological dimension two.
\begin{thmAlph}\label{thm:cd2}
	Let $\Gamma$ be a finitely generated group of cohomological dimension two. Then $\Gamma$ has a model geometry that is not dominated by a locally finite graph if and only if either:
	\begin{enumerate}
		\item  $\Gamma$ is a surface group, hence acts geometrically on $\bbE^2$ or $\bbH^2$. \label{item:geom2}
		\item   $\Gamma$ is a generalised Baumslag--Solitar group of rank one.  \label{item:baum2}
	\end{enumerate}
\end{thmAlph}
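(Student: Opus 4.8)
The proof runs through Theorem \ref{thm:mainintro}. The \emph{if} direction is a verification of condition (3) of that theorem. If $\Gamma$ is a Euclidean surface group it is virtually $\bbZ^2$, and a finite-index subgroup $A\cong\bbZ^2$ is commensurated, giving (3a); if $\Gamma$ is a hyperbolic surface group it is a uniform lattice in $\PSL(2,\bbR)$, which is connected, centre-free, semisimple and without compact factors, and $\Gamma\alnorm\Gamma$ is uniformly commensurated because the conjugation action of $\Gamma$ on a Cayley graph of $\Gamma$ is by isometries, giving (3b); finally, it follows from the definition that a generalised Baumslag--Solitar group of rank one contains a commensurated subgroup isomorphic to $\bbZ$ (commensurable with every edge group), giving (3a). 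In each case Theorem \ref{thm:mainintro} produces a model geometry not dominated by a locally finite graph.

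For the \emph{only if} direction, assume $\Gamma$ has a model geometry not dominated by a locally finite graph. Since $\cd\Gamma=2$, the group $\Gamma$ is torsion-free, every subgroup has $\cd\le2$, and $\cd_{\bbQ}\Gamma=2$. By Theorem \ref{thm:mainintro}, $\Gamma$ contains an infinite commensurated $\Lambda\alnorm\Gamma$ which is either (a) free abelian of finite rank, or (b) uniformly commensurated and virtually a uniform lattice in a connected centre-free semisimple Lie group $S$ without compact factors. In case (b), a torsion-free uniform lattice in $S$ has cohomological dimension $\dim(S/K_S)\ge2$; since $\cd\Lambda\le2$ this forces $\dim(S/K_S)=2$, hence $S\cong\PSL(2,\bbR)$, and torsion-freeness of $\Lambda$ then makes $\Lambda$ a closed hyperbolic surface group. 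In case (a) the rank of $\Lambda$ equals $\cd\Lambda\le2$, so $\Lambda\cong\bbZ$ or $\bbZ^2$. Thus $\Lambda$ is one of $\bbZ$, $\bbZ^2$, or a closed hyperbolic surface group.

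Suppose first $\Lambda$ is $\bbZ^2$ or a closed hyperbolic surface group, so $\Lambda$ is a Poincaré duality group of dimension $2$. The crucial step, which I expect to be the main obstacle, is to show $[\Gamma:\Lambda]<\infty$. For $\Lambda\cong\bbZ^2$ one argues directly: $\Comm(\bbZ^2)\cong\GL(2,\bbQ)$, and an element of $\GL(2,\bbQ)$ fixing a finite-index subgroup of $\bbZ^2$ elementwise is trivial, so the kernel $K$ of $\Gamma\to\Comm(\Lambda)$ acts trivially on $\Lambda$; thus $\Lambda$ is central in $K$, Bieri's theorem on cohomological dimensions of extensions by duality groups shows $K$ is virtually $\bbZ^2$, and applying that theorem once more to a characteristic finite-index $\bbZ^2$ normal in $\Gamma$ (extracted from $K$) gives $[\Gamma:\Lambda]<\infty$. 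For $\Lambda$ a closed hyperbolic surface group, the map $\Gamma\to\Comm(\Lambda)$ is injective (finite-index subgroups of surface groups are centreless and have trivial centraliser in $\Gamma$, as otherwise $\Gamma$ would contain a copy of $(\text{surface group})\times\bbZ$, of cohomological dimension $3$); one then invokes the uniform commensuratedness to straighten the induced uniform quasi-action of $\Gamma$ on $\bbH^2$ --- which is cobounded since $\Lambda\le\Gamma$ already acts cocompactly --- to a geometric action, whose properness forces $[\Gamma:\Lambda]<\infty$. In either case $\Gamma$ is torsion-free and commensurable with a group acting geometrically on $\bbE^2$ or $\bbH^2$, hence acts geometrically on $\bbE^2$ or $\bbH^2$ and, being torsion-free, is a surface group --- conclusion (1).

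Suppose finally $\Lambda\cong\bbZ$; then $[\Gamma:\Lambda]=\infty$, for otherwise $\cd\Gamma=1$. Consider the commensurator map $\psi\colon\Gamma\to\Comm(\bbZ)\cong\bbQ^\times$. Its kernel $K$ contains $\Lambda$ centrally (an automorphism of $\bbZ$ fixing a nonzero element is trivial), so Bieri's theorem gives $\cd_{\bbQ}(K/\Lambda)=\cd_{\bbQ}K-1$, while $\Gamma/K\hookrightarrow\bbQ^\times$ is virtually $\bbZ^k$; iterating $\cd_{\bbQ}(N\rtimes\bbZ)=\cd_{\bbQ}N+1$ yields $\cd_{\bbQ}K+k=2$, so $k\le1$. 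Torsion-freeness of $\Gamma$ forces every finite subgroup of $K/\Lambda$ to be cyclic (a torsion-free virtually cyclic group is $\bbZ$), so the locally finite groups that arise are locally cyclic; feeding this into the structure theory of groups of cohomological dimension two carrying a normal infinite cyclic subgroup (of Bieri--Strebel type), one concludes that $\Gamma$ is either virtually $\bbZ^2$ --- hence a Euclidean surface group, conclusion (1) --- or is a graph of virtually cyclic groups with infinite-ended Bass--Serre tree, i.e. a generalised Baumslag--Solitar group of rank one, conclusion (2). I expect the finite-index step in the Poincaré duality case, and this final structural identification in the cyclic case, to be the two points requiring genuine work.
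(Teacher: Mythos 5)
Your overall skeleton agrees with the paper's: apply Theorem \ref{thm:mainintro} to produce an infinite commensurated subgroup $\Lambda$, use $\cd(\Gamma)=2$ to pin $\Lambda$ down to $\bbZ$, $\bbZ^2$ or a hyperbolic surface group, and then split into the cases $\cd(\Lambda)=2$ (finite index, surface group) and $\cd(\Lambda)=1$ (graph of virtually cyclic groups). The ``if'' direction is fine. But in the ``only if'' direction the two steps you yourself flag as ``requiring genuine work'' are exactly where the paper invokes Theorem \ref{thm:codim1} --- the author's generalisation of Bieri's and Kropholler's theorems from \emph{normal} to \emph{commensurated} subgroups, with the type $FP$ hypothesis supplied by Proposition \ref{prop:typefp} --- and your proposed substitutes do not close the gap.

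Concretely: your strategy is to reduce to the normal case by passing to the kernel $K$ of the modular homomorphism $\Gamma\to\Comm(\Lambda)$, asserting that $\Lambda$ is central (or normal) in $K$. This is not justified: an element of $K$ only centralises \emph{some} finite index subgroup of $\Lambda$, depending on the element, and if $K$ is not finitely generated there need be no single finite index subgroup of $\Lambda$ that is central in $K$, nor is $\Lambda$ itself normal in $K$ (centralising $a^n$ does not force normalising $\langle a\rangle$). More seriously, in the case $\Lambda\cong\bbZ$ the final ``structural identification'' you appeal to --- structure theory for groups of cohomological dimension two ``carrying a normal infinite cyclic subgroup'' --- is not applicable, because $\Gamma$ need not contain any normal infinite cyclic subgroup: in $BS(1,2)$ the subgroup $\langle a\rangle$ is commensurated but its normal closure is $\bbZ[1/2]$, and there is no infinite cyclic normal subgroup at all. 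The statement you actually need is Kropholler's theorem for commensurated infinite cyclic subgroups of $FP$ groups of cohomological dimension two (Theorem \ref{thm:codim1}(2)), which is a substantial result and cannot be recovered from the Bieri--Strebel normal-subgroup theory you cite. Similarly, in the hyperbolic surface case your straightened action $\Gamma\to\Isom(\bbH^2)$ is not obviously proper or discrete --- $\phi(\Gamma)$ could a priori be an indiscrete subgroup containing the uniform lattice $\phi(\Lambda)$ --- whereas the paper gets $[\Gamma:\Lambda]<\infty$ directly from $\cd(\Lambda)=\cd(\Gamma)$ via Theorem \ref{thm:codim1}(1). (Your last step, passing from ``commensurable with a group acting geometrically on $\bbE^2$ or $\bbH^2$'' to a geometric action of $\Gamma$ itself, is Proposition \ref{prop:extension_action} in the paper and is fine to cite.)
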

Baumslag--Solitar groups are one of jewels in the crown of combinatorial and geometric group theory, and have received  much attention over the years \cite{baumslagsolitar1962,collinslevin1983automorphisms,kropholler1990CD2,farbmosher1998bs1,farbmosher1999bs2,whyte2001baumslag,mosher2003quasi,levitt2007automorphism}.
Theorem \ref{thm:cd2}  further  highlights their centrality and importance.

We obtain an analogous  classification for groups of cohomological dimension three.
\begin{thmAlph}\label{thm:cd3}
	Let $\Gamma$ be a finitely generated group of cohomological dimension three. Then $\Gamma$ has a model geometry that is not dominated by a locally finite graph if and only if either:
	\begin{enumerate}
			\item $\Gamma$ acts geometrically on  $\bbH^3$, $\bbE^3$ or $\Sol$. \label{item:3mfld}
			\item   $\Gamma$ acts geometrically on $\bbH^2\times T$ for some locally finite infinite-ended tree $T$.
			\item  $\Gamma$ is a  generalised Baumslag--Solitar group of rank two. 
			\item  $ \Gamma$ contains an infinite cyclic commensurated subgroup. \label{item:zalnorm}
		\end{enumerate} 
\end{thmAlph}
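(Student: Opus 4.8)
The plan is to deduce Theorem~\ref{thm:cd3} from Theorem~\ref{thm:mainintro}, using the bound $\cd\Gamma=3$ to cut down the list of possibilities it provides, and to fall back on Theorem~\ref{thm:modelgeom} when $\Gamma$ has no commensurated finite-rank free abelian subgroup.

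For the backward implication I argue directly. In cases (1) and (2) the listed space is itself a model geometry of $\Gamma$, and its isometry group has non-compact identity component: $\Isom(\bbH^3)$, $\Isom(\bbE^3)$ and $\Isom(\Sol)$ have finitely many components and are non-compact, while $\Isom(\bbH^2\times T)\cong\Isom(\bbH^2)\times\Isom(T)$ has identity component containing $\PSL(2,\bbR)$; by the criterion recalled after Problem~\ref{prob:fggroups}, such a model geometry is not dominated by a locally finite graph. In case (4), $\Gamma$ contains an infinite cyclic commensurated subgroup, so by Theorem~\ref{thm:mainintro} it has a model geometry not dominated by a locally finite graph. In case (3), if $\Gamma$ is a rank-two generalised Baumslag--Solitar group then connectedness of its Bass--Serre tree makes all vertex and edge stabilisers pairwise commensurable; a vertex stabiliser is virtually $\bbZ^2$ and, since $\cd\Gamma<\infty$, torsion-free, so it contains a commensurated copy of $\bbZ^2$, and again Theorem~\ref{thm:mainintro} shows $\Gamma$ has a model geometry not dominated by a locally finite graph.

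For the forward implication, assume $\Gamma$ has cohomological dimension three and a model geometry not dominated by a locally finite graph. \emph{Case~1: $\Gamma$ contains a non-trivial finite-rank free abelian commensurated subgroup $\Lambda$.} Since $\cd\Lambda\le 3$, $\Lambda\cong\bbZ^n$ with $n\in\{1,2,3\}$. If $n=1$, we are in case (4). If $n=3$, the kernel of the conjugation map $\Gamma\to\Comm(\Lambda)\cong\GL(3,\bbQ)$ consists of elements $g$ centralising a finite-index subgroup $\Lambda_0\le\Lambda$; such a $g$ generates with $\Lambda_0$ a direct product of cohomological dimension $3+\cd\langle g\rangle$ unless $\langle g\rangle$ is commensurable with a cyclic subgroup of $\Lambda$, so this kernel is virtually $\bbZ^3$, and an infinite image would contain an element of infinite order, forcing $\bbZ^3\rtimes\bbZ\le\Gamma$ and hence $\cd\Gamma\ge 4$; thus $\Lambda$ has finite index in $\Gamma$, which acts geometrically on $\bbE^3$: case (1). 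If $n=2$, I would analyse $\Gamma\to\Comm(\Lambda)\cong\GL(2,\bbQ)$: either $\Gamma$ is virtually polycyclic, necessarily of Hirsch length three, and hence acts geometrically on $\bbE^3$, $\Nil$ or $\Sol$ --- the first and third giving case (1), while a $\Nil$-lattice has a commensurated infinite cyclic centre, giving case (4) --- or a Bieri--Strebel/Bass--Serre argument produces a splitting of a finite-index subgroup of $\Gamma$ over a subgroup commensurable with $\bbZ^2$, with virtually-$\bbZ^2$ vertex groups and infinitely-ended Bass--Serre tree, so that $\Gamma$ is virtually, hence (being torsion-free) actually, a rank-two generalised Baumslag--Solitar group: case (3).

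\emph{Case~2: $\Gamma$ has no such subgroup.} Then Theorem~\ref{thm:modelgeom} applies: every model geometry of $\Gamma$ is dominated by $X$, $Y$ or $X\times Y$ with $X$ a symmetric space of non-compact type and $Y$ an infinite locally finite vertex-transitive graph. Domination by $Y$ alone would make the identity component of the isometry group compact, contrary to hypothesis. If the model geometry is dominated by $X$ alone, then $\Gamma$ acts geometrically on $X$, so $\cd\Gamma=\dim X=3$, whence $X=\bbH^3$: case (1). If it is dominated by $X\times Y$, then $\Gamma$ acts geometrically on $X\times Y$, and a dimension count forces $\dim X=2$ (so $X=\bbH^2$) with the $Y$-factor quasi-isometric to a tree; since $\Gamma$ has no commensurated infinite cyclic subgroup, $Y$ is not two-ended, hence infinitely-ended, and a locally compact version of the Stallings--Dunwoody accessibility theorem then replaces $Y$ by a locally finite infinite-ended tree $T$ on which $\Isom(Y)$, and hence $\Gamma$, acts geometrically; so $\Gamma$ acts geometrically on $\bbH^2\times T$: case (2).

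I expect the main work to lie in: verifying that the dichotomy for a commensurated $\bbZ^2$ in a group of cohomological dimension three (Case~1, $n=2$) is exhaustive, which should follow from the Tits alternative together with accessibility over virtually polycyclic subgroups; and, in Case~2, making the dimension count precise (forcing $\dim X=2$ and $Y$ to be a quasi-tree) and upgrading domination by $\bbH^2\times Y$ to a genuine geometric action on $\bbH^2\times T$ --- the latter requiring that the relevant totally disconnected locally compact group act cocompactly on a locally finite tree with compact stabilisers.
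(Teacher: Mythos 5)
Your backward implication is fine, and your top-level reduction (split on whether $\Gamma$ contains a commensurated finite-rank free abelian subgroup, then invoke Theorem~\ref{thm:mainintro} resp.\ Theorem~\ref{thm:modelgeom}) is close in spirit to the paper's. But the forward direction has a genuine gap at its core: every hard step in your sketch is exactly the content of Theorem~\ref{thm:codim1} (together with Proposition~\ref{prop:typefp} to verify the $FP$ hypothesis), and you replace it with unproven appeals. The paper's proof runs entirely through that result: a commensurated Poincar\'e duality subgroup $\Lambda$ with $\cd(\Lambda)=\cd(\Gamma)$ has finite index, and one with $\cd(\Lambda)=\cd(\Gamma)-1$ yields a splitting of $\Gamma$ \emph{itself} as a finite graph of groups with all vertex and edge groups commensurable to $\Lambda$. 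Your substitutes --- ``Bieri--Strebel/Bass--Serre'', ``Tits alternative together with accessibility over virtually polycyclic subgroups'', ``a locally compact version of Stallings--Dunwoody'' --- are not arguments, and the one place you do argue in detail is wrong: in the $n=3$ subcase, an infinite-order element $g$ in the image of $\Gamma\to\Comm(\Lambda)$ does \emph{not} force $\bbZ^3\rtimes\bbZ\leq\Gamma$, because $g$ need not normalise $\Lambda$ (compare $BS(1,2)$, where $\langle a,t\rangle$ is not $\bbZ\rtimes\bbZ$); one only gets an ascending-HNN-type subgroup, and showing that has $\cd\geq 4$ is itself nontrivial. Relatedly, obtaining a splitting only of a finite-index subgroup and then asserting that a torsion-free group virtually equal to a rank-two GBS group is a rank-two GBS group is an unjustified upgrade; Theorem~\ref{thm:codim1} sidesteps this by splitting $\Gamma$ directly.

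Your Case~2 also conceals the other main construction of the paper. Knowing from Theorem~\ref{thm:modelgeom} that a model geometry is dominated by $X\times Y$ does not by itself tell you $\dim X=2$ or that $Y$ is a quasi-tree: the ``dimension count'' needs the cohomological input again (the paper gets a commensurated surface subgroup $\Lambda$ of codimension one, splits $\Gamma$ over groups commensurable to $\Lambda$ via Theorem~\ref{thm:codim1}, and only then knows the totally disconnected part is tree-like). The passage from there to an actual geometric action on $\bbH^2\times T$ is done via the engulfing-group machinery: Proposition~\ref{prop:engulf_commen_lattice} gives $\Isom(\bbH^2)$ as an engulfing group, Theorem~\ref{thm:build_lattice_embedding} embeds $\Gamma$ as a uniform lattice in an extension of $\Isom(\bbH^2)$ by the Schlichting completion, and Lemma~\ref{lem:schlicht_tree} supplies the tree. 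You flag this ``upgrade'' as remaining work, but it is not a routine verification --- it is Sections~\ref{sec:engulf}, \ref{sec:building_group} and Lemma~\ref{lem:schlicht_tree} of the paper. Finally, note the paper must also dispose of the two-ended-tree subcases (via Proposition~\ref{prop:extension_action}, the polycyclic analysis, and Corollary~\ref{cor:unif_comm}); your Case~2 dismisses two-endedness using the no-commensurated-$\bbZ$ hypothesis, which is logically fine, but only after one has established that the relevant space is a quasi-tree at all.
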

We comment on the four alternatives in Theorem \ref{thm:cd3}.
	The spaces $\bbH^3$, $\bbE^3$ and $\Sol$ are three of Thurston's eight model geometries. If $\Gamma$ acts geometrically on any of  the  three remaining  aspherical Thurston model geometries, namely    $\bbH^2\times \bbR$,  $\widetilde{\SL(2,\bbR)}$ or $\Nil$, then $\Gamma$ contains an infinite cyclic normal subgroup, so is encompassed by alternative (\ref{item:zalnorm}) of Theorem \ref{thm:cd3}. Fundamental groups of closed 3-manifolds modelled on the spherical-type model geometries $S^3$ and $S^2\times \bbE^1$ do not have cohomological dimension three.
	
	Groups acting geometrically on $\bbH^2\times T$ include arithmetic lattices in $\Isom(\bbH^2)\times \PSL(\bbQ_p)$, as well as more exotic non-residually finite examples constructed  by Hughes--Valiunas \cite{hughes2022commensurating}. Generalised Baumslag--Solitar groups of rank two were classified up to quasi-isometry by Mosher--Sageev--Whyte, Farb--Mosher and Whyte \cite{farbmosher2000abelianbycyclic,whyte2001baumslag,mosher2003quasi,whyte2010coarse}.  Leary--Minasyan recently provided the first examples of  $\CAT(0)$ groups that are not biautomatic; their groups are generalised Baumslag--Solitar group of rank two, and act geometrically on $\bbE^2\times T$.  Groups in (\ref{item:zalnorm}) comprise a large class of groups  including  groups of the form $\bbZ\times \Lambda$ where $\Lambda$ is any finitely presented group of cohomological dimension two.

	\subsection*{Two types of rigidity}
In order to explain the phenomena underlying the results of this article, we discuss two different types of  rigidity that lattices in connected Lie groups may possess. We motivate the discussion with  three examples of lattices.
\begin{enumerate}
	\item $\bbZ^n$ for $n\geq 1$, which is a uniform lattice in $\bbR^n$.
	\item $\pi_1(S)$ for $S$ a closed hyperbolic surface, which is a uniform lattice in $\PSL(2,\bbR)$.
	\item $\pi_1(M)$ for $M$ a closed hyperbolic 3-manifold, which is a uniform lattice in $\PSL(2,\bbC)$.
\end{enumerate}

The hyperbolic 3-manifold group $\pi_1(M)$ is undoubtedly the most rigid, satisfying both types of rigidity.
Firstly, every automorphism of $\pi_1(M)$ induces an automorphism of $\PSL(2,\bbC)$. We call this  \emph{lattice rigidity}.
Secondly, the locally compact group $\PSL(2,\bbC)$ containing  $\pi_1(M)$ as a lattice is itself rigid, since   $\Out(\PSL(2,\bbC))$ is finite.  We call this \emph{envelope rigidity}. 

The free abelian group $\bbZ^n$ satisfies lattice rigidity, since every automorphism of $\bbZ^n$ induces an automorphism of $\bbR^n$. However, $\Out(\bbR^n)=\Aut(\bbR^n)=\GL_n(\bbR)$ is infinite, so $\bbZ^n$ does not satisfy envelope rigidity. In contrast, $\pi_1(S)$ does not satisfy lattice rigidity, since infinite order mapping classes in $\MCG(S)\cong \Out(\pi_1(S))$ do not induce automorphisms of $\PSL(2,\bbR)$. However, $\PSL(2,\bbR)$ has finite outer automorphism group, so $\pi_1(S)$ does satisfy envelope rigidity.

Envelope rigidity is the phenomenon responsible for the product structure of model geometries in Theorem \ref{thm:modelgeom}. The fact that $\bbZ^n$ is not  envelope rigid ensures commensurated abelian subgroups may give rise to model geometries with a warped product structure such as the Baumslag--Solitar group $BS(1,2)$ in Example \ref{exmp:bs}. Lattice rigidity of free abelian groups ensures that every commensurated $\bbZ^n$ subgroup gives rise to some model geometry not dominated by a locally finite graph as in Theorem \ref{thm:mainintro}. 
In contrast, the fact that  $\pi_1(S)$ is envelope rigid but not lattice rigid means that whilst groups acting geometrically  on  $\bbH^2\times T$ appear  in the statement of Theorem \ref{thm:cd3},  surface-by-free groups of the form considered in Example \ref{exmp:symm} do not.

	\subsection*{Connection to work of Bader--Furman--Sauer} In  \cite{baderfurmansauer2020lattice}, Bader--Furman--Sauer introduce the  conditions (Irr), (CAF) and (NbC)  and  study lattice envelopes of   countable groups that satisfy these conditions.  When these conditions are satisfied, they obtain results that are stronger and more complete than the results of this article.  The results of this article complement \cite{baderfurmansauer2020lattice} by providing information on finitely generated groups in which at least one of the  conditions (Irr), (CAF) and (NbC) does not hold.  Both the statement and  proof of Proposition \ref{prop:lattice_structure} of this article  are similar to and inspired by  Proposition 5.3 of \cite{baderfurmansauer2020lattice}. However,  we  replace the (CAF) condition, which says the group does not contain an infinite amenable  commensurated subgroup, with the condition that the group does not contain an infinite finite rank abelian commensurated subgroup.

	\subsection*{Relation to discretisability} In \cite{margolis2022discretisable}, the author introduced  discretisable spaces. These are spaces for which every cobounded quasi-action  can be quasi-conjugated to an isometric action on a locally finite graph. If the space in question is  a finitely generated group $\Gamma$, then discretisability  is a coarse analogue of the property that every model geometry of $\Gamma$ is dominated by a locally finite graph. Indeed, it is shown in 
	\cite[Corollary 4.15]{margolis2022discretisable} that if a finitely generated group $\Gamma$ is a lattice in a locally compact group that is not compact-by-(totally disconnected), then $\Gamma$ is not discretisable. Combined with Theorem \ref{thm:mainintro}, this proves:
	\begin{corAlph}\label{cor:discretisable}
		Let $\Gamma$ be a finitely generated group that contains an infinite commensurated subgroup that is either:
		\begin{enumerate}
			\item a finite rank free abelian group
			\item a uniformly commensurated group virtually isomorphic to a uniform lattice in a centre-free semisimple Lie group with finitely many components.
		\end{enumerate}Then $\Gamma$ is not discretisable. 
	\end{corAlph}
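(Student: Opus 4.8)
The plan is to combine Theorem \ref{thm:mainintro} with \cite[Corollary 4.15]{margolis2022discretisable}.

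First I would check that the two hypotheses on $\Gamma$ are instances of condition (3) of Theorem \ref{thm:mainintro}. Case (1) is literally condition (3)(a). For case (2), the only point to verify is that a group is virtually isomorphic to a uniform lattice in a centre-free semisimple Lie group with finitely many components if and only if it is virtually isomorphic to a uniform lattice in a \emph{connected} centre-free semisimple Lie group with \emph{no compact factors}; here the assumption that the commensurated subgroup is infinite guarantees the ambient group is non-compact and so has a non-compact factor. One implication is trivial. For the converse, given a uniform lattice $\Lambda$ in such a group $S$, I would first pass to the finite-index subgroup $\Lambda\cap S^{0}$, a uniform lattice in the connected semisimple group $S^{0}$, and then quotient $S^{0}$ by its centre and by the product of its compact factors; the relevant intersections with $\Lambda$ are uniform lattices in compact groups, hence finite, and the images of $\Lambda$ remain uniform lattices because a product of $\Lambda$ with a closed subgroup is closed. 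These moves replace $\Lambda$ by a virtually isomorphic group, and since $\QI(-)$ and the commensurator $\Comm(-)$ are unchanged under virtual isomorphism, a routine check from the definitions shows that the commensuration and uniform commensuration hypotheses are preserved. Hence the hypotheses of the corollary place $\Gamma$ under condition (3) of Theorem \ref{thm:mainintro}.

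Theorem \ref{thm:mainintro} then gives a finite normal subgroup $F\vartriangleleft\Gamma$ with $\Gamma/F$ a uniform lattice in a locally compact group $G$ whose identity component $G^{0}$ is non-compact. Such a $G$ is not compact-by-(totally disconnected): otherwise there would be a compact normal $K\vartriangleleft G$ with $G/K$ totally disconnected, and then the closed subgroup $G^{0}$ would lie in $K$ and hence be compact. By \cite[Corollary 4.15]{margolis2022discretisable}, $\Gamma/F$ is not discretisable; since the projection $\Gamma\to\Gamma/F$ has finite kernel it is a quasi-isometry, and discretisability depends only on the quasi-isometry type of a metric space, so $\Gamma$ is not discretisable either. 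The argument is short given the two cited results, and I do not expect a genuine obstacle; the only step calling for care is the reduction in case (2), where one must confirm that passing to a virtually isomorphic subgroup does not destroy the uniform commensuration hypothesis, which cannot simply be dropped, as surface groups (uniform lattices in $\PSL(2,\bbR)$) illustrate.
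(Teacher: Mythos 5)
Your proposal is correct and is essentially the paper's own argument: the paper derives this corollary immediately by combining Theorem \ref{thm:mainintro} with \cite[Corollary 4.15]{margolis2022discretisable}, with no further detail given. Your additional care in reconciling ``finitely many components'' with the ``connected, no compact factors'' wording of Theorem \ref{thm:mainintro}(3)(b), and in passing from $\Gamma/F$ back to $\Gamma$ via quasi-isometry invariance of discretisability, fills in steps the paper leaves implicit.
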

	All examples of non-discretisable groups that the author is aware of arise from Corollary \ref{cor:discretisable}. This corollary  was one of the motivations of the author for writing this article, giving sufficient algebraic conditions for a group to not be discretisable. It will be used in upcoming work of the author to further investigate which groups are and are not  discretisable.
	
	\subsection*{Outline of article and discussion of proofs}
In Section \ref{sec:locally_compact_prelims} we gather  preliminary results concerning locally compact groups,  uniform lattices, model geometries and copci homomorphisms. In Section \ref{sec:engulf} we discuss engulfing groups of  commensurated subgroups.  To illustrate the idea, if $\bbZ$ is a commensurated subgroup of $\Gamma$, there is a canonical map $\Gamma\to\Aut(\bbR)$ compatible with the action of $\Gamma$ by conjugation on subgroups of $\bbZ$ of sufficiently large finite index. In such a  situation, we say $\bbR$ is an engulfing group.

Section \ref{sec:structure} is devoted to the proof of Proposition \ref{prop:lattice_structure}, from which we deduce Theorems \ref{thm:modelgeom}, \ref{thm:lattice_prod} and one direction of Theorem \ref{thm:mainintro}. The proof of Proposition \ref{prop:lattice_structure} is a combination of several  results, the most important of which is a theorem of Gleason--Yamabe solving  Hilbert's fifth problem. Coupled with van Dantzig's theorem, this essentially reduces Proposition \ref{prop:lattice_structure} to a problem about lattices in Lie groups. Applying tools from the structure theory of Lie groups  and a theorem of Mostow regarding  lattice-hereditary subgroups of Lie groups allows us to deduce Proposition \ref{prop:lattice_structure}. 

In Section \ref{sec:building_group} we further develop the notion of an engulfing group. The main technical result is Theorem \ref{thm:build_lattice_embedding}. Informally, this theorem can be thought of as a way of approximating a commensurated subgroup with a normal subgroup by embedding the ambient group into a locally compact group. More precisely, Theorem \ref{thm:build_lattice_embedding} takes as input a  finitely generated group $\Gamma$ containing a commensurated subgroup $\Lambda$ with engulfing group $E$, and embeds $\Gamma$ as a uniform lattice in a locally compact group $G$ containing $E$ is a closed normal subgroup. The quotient $G/E$ is the Schlichting completion  of $\Gamma$ with respect to $\Lambda$. Theorem \ref{thm:build_lattice_embedding}  builds a group extension  using a continuous generalised cocycle that is determined by  $\Gamma$ and $\Lambda$. Theorem \ref{thm:mainintro} follows readily from Theorem \ref{thm:build_lattice_embedding}. We anticipate Theorem \ref{thm:build_lattice_embedding} to be a useful stand-alone  result for investigating algebraic and geometric properties of  commensurated subgroups.

We conclude with proofs of  Theorems \ref{thm:cd2}  and \ref{thm:cd3} in Section \ref{sec:lowdim}. The main ingredient is  work of the author concerning commensurated subgroups of cohomological codimension one \cite{margolis2019codim1}, generalising results of Kropholler \cite{kropholler1990CD2,kropholler2006spectral}. We also highlight Proposition \ref{prop:extension_action}, a result of independent interest that uses work  of Grunewald--Platonov \cite{grunewaldplatonov2004new} to extend a geometric action of a finite index torsion-free subgroup on a model geometry in the sense of Thurston to an  action of the  ambient group. This obviates the need to pass to finite index subgroups in the conclusions of Theorems \ref{thm:cd2}  and \ref{thm:cd3}.
	
\subsection*{Acknowledgements}
The author would like to thank Sam Shepherd for helpful feedback and comments on a draft of this article.

\section{Locally compact groups and  metric spaces}\label{sec:locally_compact_prelims}
Topological groups are always  assumed to be Hausdorff. A topological group is \emph{locally compact} if every point admits a compact neighbourhood, or equivalently, every point admits a fundamental system of compact neighbourhoods. A topological group is \emph{compactly generated} if it contains a compact generating set. We refer the reader to \cite{hewittross1979abstract} for general background on topological groups, and to \cite{cornulierdlH2016metric} for background on the coarse geometry of locally compact groups.

All Lie groups are assumed to be real Lie groups. As a continuous map between Lie groups is automatically smooth, a topological group that is   topologically isomorphic to a Lie group can be endowed with a unique smooth structure making it a Lie group. We thus say a topological group $G$, not a priori  equipped with any  smooth manifold structure, \emph{is} a Lie group if it is topologically isomorphic to a Lie group.
The following result describes the  structure of Lie groups with finitely many components:
\begin{thm}[see e.g.\ {\cite[\S VII]{borel98semisimple}}]\label{thm:maxcompact}
	Let $G$ be a Lie group with finitely many components. Then  $G$ contains  a maximal compact subgroup $K\leq G$, unique up to conjugation. Moreover, $G$ is diffeomorphic to $K\times \bbR^n$ and so  the homogeneous space $G/K$ is diffeomorphic to $\bbR^n$.
\end{thm}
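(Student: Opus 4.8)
The plan is to follow the classical Cartan--Iwasawa--Malcev argument, with the semisimple case as the engine. I would first reduce to the case that $G$ is connected: a standard argument of Malcev type shows $N_G(K_0)$ meets every connected component of $G$, where $K_0$ is a maximal compact subgroup of the identity component $G_0$, and this yields a maximal compact $K\leq G$ with $K\cap G_0=K_0$ and $KG_0=G$; the asserted diffeomorphism then reduces to the connected statement since $G$ is a finite disjoint union of translates of $G_0$. So assume $G$ connected and let $R\trianglelefteq G$ be its radical, so that $G/R$ is a connected semisimple Lie group by the Levi--Mostow decomposition. The bulk of the work is the semisimple case.

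Assume then $G$ is connected semisimple, first with finite centre. I would fix a Cartan involution $\theta$ of the Lie algebra $\mathfrak g$ --- which exists for any real semisimple Lie algebra, being read off from a compact real form of the complexification --- and write $\mathfrak g=\mathfrak k\oplus\mathfrak p$ for its $+1$ and $-1$ eigenspaces, so that the Killing form is negative definite on $\mathfrak k$ and positive definite on $\mathfrak p$. Let $K$ be the analytic subgroup with Lie algebra $\mathfrak k$. The two technical points are that (i) $K$ is closed and compact --- for finite centre, $K$ is the fixed-point set of the involution of $G$ integrating $\theta$, and negative definiteness of the Killing form on $\mathfrak k$ forces compactness --- and that (ii) the polar map $K\times\mathfrak p\to G$, $(k,X)\mapsto k\exp(X)$, is a diffeomorphism, the global Cartan decomposition, which follows from injectivity of $\exp$ on $\mathfrak p$ together with properness of its image. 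Granting these, $G$ is diffeomorphic to $K\times\bbR^{\dim\mathfrak p}$ and $G/K$ is diffeomorphic to $\mathfrak p\cong\bbR^n$. The infinite-centre case (for instance $\widetilde{\SL(2,\bbR)}$) is handled by passing to $G/Z(G)$, which has finite centre, and tracking how the maximal compact changes under the covering.

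For uniqueness and conjugacy I would equip $G/K$ with the $G$-invariant Riemannian metric induced by the Killing form on $\mathfrak p\cong T_{eK}(G/K)$; this exhibits $G/K$ as a Riemannian symmetric space of non-compact type, in particular a Hadamard manifold (complete, simply connected, non-positively curved). Any compact subgroup $C\leq G$ acts on $G/K$ by isometries with a compact orbit, so by the Cartan fixed-point theorem --- a bounded subset of a Hadamard manifold has a unique circumcentre, fixed by every isometry stabilising the subset --- $C$ fixes some $gK$, i.e.\ $C\subseteq gKg^{-1}$; hence $K$ is maximal and any two maximal compact subgroups are conjugate. For general connected $G$ one uses additionally that a connected solvable Lie group is diffeomorphic to $(S^1)^k\times\bbR^m$ with $(S^1)^k$ maximal compact, and assembles the global statement by combining the semisimple case with the fibre bundles arising from the radical (relating a maximal compact $K\leq G$ to the preimage $KR$ of a maximal compact $\overline{K}\leq G/R$), each of which is trivial because the relevant base --- a contractible symmetric space, or a Euclidean quotient of a solvable group --- is contractible; uniqueness in general follows by pushing a compact subgroup into $G/R$, conjugating it into $\overline{K}$, and finishing inside $KR$. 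I expect the main obstacle to be making the semisimple Cartan decomposition genuinely global --- the closedness of $K$, the properness of $\exp|_{\mathfrak p}$, and the bookkeeping when $Z(G)$ is infinite --- together with the non-positive-curvature input behind conjugacy; the reduction arguments in the non-semisimple case are routine but must be carried out with care.
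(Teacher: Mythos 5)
The paper offers no proof of this statement: it is quoted as a classical result (Cartan--Iwasawa--Malcev) with a pointer to Borel's notes, \S VII. Your outline --- Cartan involution and global Cartan decomposition plus the Cartan fixed-point theorem on the Hadamard manifold $G/K$ for the semisimple case, reduction through the radical for general connected $G$, and the Malcev normaliser argument for finitely many components --- is exactly the argument of the cited source, and it is correct as a sketch.
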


 If $G$ is a topological group, let  $G^\circ$ denote the connected component of the identity. A topological space is \emph{totally disconnected} if every connected component is a singleton. The following elementary proposition says that  to some extent,  the structure of locally compact groups reduces to the structure of connected and of totally disconnected locally compact groups. 

\begin{prop}\label{prop:idcomp}
	If $G$ is a locally compact group, then
	\begin{itemize}
		\item $G^\circ$ is a closed normal subgroup of $G$;
		\item The quotient group $G/G^\circ$ is a totally disconnected locally compact group.
	\end{itemize}
In particular,  $G$ is compact-by-(totally disconnected) if and only if $G^\circ$ is compact.
\end{prop}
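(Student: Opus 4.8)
The plan is to assemble standard facts about topological groups. First I would check that $G^\circ$ is a closed normal subgroup. Closedness is immediate, since connected components of any topological space are closed. For the subgroup property, note that $G^\circ\cdot G^\circ$ is the image of the connected set $G^\circ\times G^\circ$ under the continuous multiplication map, hence connected, and it contains the identity $e$; therefore $G^\circ\cdot G^\circ\subseteq G^\circ$. Likewise $(G^\circ)^{-1}$ is connected and contains $e$, so $(G^\circ)^{-1}\subseteq G^\circ$, and $G^\circ$ is a subgroup. Normality is the same argument applied to conjugation: for each $g\in G$ the set $gG^\circ g^{-1}$ is connected and contains $e$, so $gG^\circ g^{-1}\subseteq G^\circ$.

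Next I would treat the quotient $G/G^\circ$, with quotient map $\pi\colon G\to G/G^\circ$, which is continuous, open and surjective. Local compactness of $G/G^\circ$ is the standard fact that a quotient of a locally compact group by a closed subgroup is locally compact (a compact neighbourhood of $e$ maps onto a compact neighbourhood of the identity coset), so I would simply cite it from \cite{hewittross1979abstract}. For total disconnectedness it suffices to show that the identity component $C:=(G/G^\circ)^\circ$ is a singleton. The key point is the elementary lemma that a continuous open surjection onto a connected space all of whose fibres are connected has connected domain: were the domain a disjoint union of two non-empty open sets, each fibre would lie entirely in one of them, so the image would be disconnected. Applying this to the restriction of $\pi$ to the $\pi$-saturated closed set $\pi^{-1}(C)$ --- which then maps by an open surjection onto $C$ with fibres the cosets of $G^\circ$, each homeomorphic to the connected group $G^\circ$ --- shows $\pi^{-1}(C)$ is connected. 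As it contains $e$, we get $\pi^{-1}(C)\subseteq G^\circ$, whence $\pi^{-1}(C)=G^\circ$ and $C=\{\bar e\}$.

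For the final assertion: if $G^\circ$ is compact then $G^\circ$ is a compact normal subgroup with totally disconnected quotient $G/G^\circ$, so $G$ is compact-by-(totally disconnected) by definition. Conversely, if $K\vartriangleleft G$ is a compact normal subgroup with $G/K$ totally disconnected, then the image of the connected group $G^\circ$ in $G/K$ is a connected subgroup, hence trivial, so $G^\circ\subseteq K$; being closed in $G$ and contained in the compact set $K$, the subgroup $G^\circ$ is compact.

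There is no real obstacle here: the proof is bookkeeping of standard facts. The only step requiring a genuine (if short) argument beyond a citation is the total disconnectedness of $G/G^\circ$, via the lemma on open surjections with connected fibres; the rest --- closedness of components, the connectedness manipulations, and local compactness of quotients --- can be taken from \cite{hewittross1979abstract}.
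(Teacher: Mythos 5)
Your proof is correct. The paper states this proposition without proof, treating it as a standard fact about topological groups, and your argument is the standard one: the connectedness manipulations for the subgroup and normality claims, the open-surjection-with-connected-fibres lemma for total disconnectedness of $G/G^\circ$ (together with the implicit use of homogeneity to reduce to the identity component), and the observation that any compact normal $K$ with totally disconnected quotient must contain $G^\circ$ all check out.
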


Let  $(X,d_X)$ and $(Y,d_Y)$ be metric spaces. For constants $K\geq 1$ and $A\geq 0$,   a map $f:X\to Y$ is a \emph{$(K,A)$-quasi-isometry} if:
\begin{enumerate}
	\item for all $x,x'\in X$\[\frac{1}{K}d_X(x,x')-A\leq d_Y(f(x),f(x'))\leq Kd_X(x,x')+A;\] 
	\item for all $y\in Y$ there exists an $x\in X$ such that $d_Y(f(x),y)\leq A$. 
\end{enumerate}
We say $f$ is a \emph{quasi-isometry} if it is a $(K,A)$-quasi-isometry for some $K\geq 1$ and $A\geq 0$. Two quasi-isometries $f,g:X\to Y$ are \emph{$A$-close} if $d(f(x),g(x))\leq A$ for all $x\in X$, and are \emph{close} if they are $A$-close for some $A$.

A metric space is \emph{proper} if closed balls are compact, and \emph{quasi-geodesic} if it is quasi-isometric to a geodesic metric space. A metric space $X$ is said to be \emph{cocompact} if the natural action of $\Isom(X)$ on $X$ is cocompact. If $X$ is a metric space, then its isometry group $\Isom(X)$ inherits the structure of a topological group endowed with the compact-open topology. 
Moreover, if $X$ is cocompact, proper  and quasi-geodesic, then $\Isom(X)$ is second countable, locally compact and compactly generated \cite[Lemma 5.B.4 and Proposition 5.B.10]{cornulierdlH2016metric}.

We now define what it means for a locally compact group to act geometrically on a proper metric space. The following definition   is stronger than that considered in \cite{cornulierdlH2016metric}, where  $X$ need not be proper and $\rho$ need not be continuous.
\begin{defn}
	Let $G$ be a locally compact topological group and $X$ be a proper	 metric space. An isometric action  $\rho:G\rightarrow \Isom(X)$ is said to be:
	\begin{itemize}
		\item \emph{continuous} if $\rho$ is continuous;
		\item \emph{proper} if for every compact set $K\subseteq X$, the set \[\{g\in G\mid \rho(g)K\cap K\neq \emptyset\}\] has compact closure;
		\item \emph{cocompact} if there is a compact set $K\subseteq X$ such that $X=GK$.
	\end{itemize}
	An  action of $G$ on $X$ is said to be \emph{geometric} if it is isometric, continuous, proper and cocompact.
\end{defn}

\begin{rem}\label{rem:isom_ct}
	 Continuity of $\rho:G\to \Isom(X)$ is equivalent to continuity of the map $\alpha:G\times X\to X$ given by $\alpha(g,x)=\rho(g)(x)$.
\end{rem}

 A continuous  map between topological spaces is said to be \emph{proper} if the preimage of a compact set is compact.  Recall a homomorphism $G\to H$ is said to be \emph{copci} if it is continuous and proper with cocompact image.  A copci homomorphism $\phi:G\to H$ between locally compact groups factors as \[G\to G/\ker(\phi)\xrightarrow{\widehat \phi} \phi(G)\to H,\]  where $\phi(G)$ is a closed subgroup of $H$ equipped with the subspace topology and $\widehat{\phi}$ is a topological isomorphism \cite[\S 1.10]{bourbaki98gentop1-4}.
  The following lemma shows an action $\rho:G\to \Isom(X)$ is geometric if and only if $\rho$ is copci. 
\begin{lem}\label{lem:copci_action}
	Let $X$ be a proper quasi-geodesic cocompact metric space. 
	\begin{enumerate}
		\item The action of $\Isom(X)$ on $X$ is geometric. \label{item:isom_action}
		\item Let $G$ be a locally compact group. An action $\rho:G\to\Isom(X)$ is geometric if and only if it is copci.  \label{item:copci_action}
	\end{enumerate}
\end{lem}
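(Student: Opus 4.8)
The plan is to prove (1) directly by an Arzel\`{a}--Ascoli argument and then to deduce (2) from it by a purely formal comparison of the two notions. Throughout I write $H=\Isom(X)$, which is locally compact by \cite[Lemma 5.B.4]{cornulierdlH2016metric}, I fix a basepoint $x_0\in X$, and for each compact $K\subseteq X$ I put $W_K=\{f\in H:fK\cap K\neq\emptyset\}$.

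For (1), the tautological action is isometric by construction and cocompact by the standing hypothesis that $X$ is a cocompact metric space. Continuity of $\id\colon H\to H$ unwinds, via Remark~\ref{rem:isom_ct}, to joint continuity of the evaluation map $H\times X\to X$, which I would get from the triangle inequality together with the fact that isometries are $1$-Lipschitz, so that on $H$ the compact-open topology coincides with pointwise convergence. The substantive point is properness of the action. First I would note that $fK\cap K\neq\emptyset$ forces $d(x_0,f(x_0))$ to be bounded by a constant depending only on $K$ and $x_0$, which reduces matters to showing that each set $B_R=\{f\in H:d(x_0,f(x_0))\leq R\}$ has compact closure. Since $X$ is proper, for every $x\in X$ the triangle inequality confines $\{f(x):f\in B_R\}$ to a compact ball, and $B_R$ is equicontinuous, so Arzel\`{a}--Ascoli shows $B_R$ is relatively compact in $C(X,X)$ with the compact-open topology. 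To finish I would check that any limit point of $B_R$ is distance-preserving and, running the same argument on the inverses (which also lie in $B_R$), has a distance-preserving inverse, hence is a genuine isometry; thus $\overline{B_R}$ is a compact subset of $H$. Alternatively, all of (1) is contained in \cite[\S 5.B]{cornulierdlH2016metric}.

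For (2) I would use part (1) freely: $W_K$ has compact closure for every compact $K$, and the action map $H\times X\to X$ is continuous. Suppose first that $\rho$ is copci. For properness of the action, given compact $K\subseteq X$ the set $\{g:\rho(g)K\cap K\neq\emptyset\}$ equals $\rho^{-1}(W_K)\subseteq\rho^{-1}(\overline{W_K})$, which is compact because $\rho$ is a proper map; for cocompactness, choosing compact $K_0$ with $HK_0=X$ and compact $C$ with $\rho(G)C=H$ gives $X=\rho(G)(CK_0)$ with $CK_0$ compact, being a continuous image of $C\times K_0$. Hence $\rho$ is geometric. Conversely, suppose $\rho$ is geometric; continuity is built into the definition. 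For properness of $\rho$, take compact $C\subseteq H$, choose $R$ with $d(x_0,f(x_0))\leq R$ for all $f\in C$, and set $K=\overline{B}(x_0,R)$, which is compact; then $C\subseteq W_K$, so $\rho^{-1}(C)$ is a closed, hence compact, subset of the relatively compact set $\rho^{-1}(W_K)$. For cocompact image, choose a compact set $K_1\ni x_0$ with $\rho(G)K_1=X$; given $f\in H$, write $f(x_0)=\rho(g)(y)$ with $y\in K_1$, so $\rho(g)^{-1}f\in W_{K_1}$, whence $H=\rho(G)\,\overline{W_{K_1}}$ with $\overline{W_{K_1}}$ compact. Thus $\rho$ is copci.

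The only step requiring genuine work is the properness assertion in part (1), which is a normal-families argument; the subtlety I would be careful about is that a pointwise limit of surjective isometries is not obviously surjective, which is exactly why one needs properness of $X$ together with controlling the inverses. Everything in part (2) is then a routine manipulation of the definitions on top of the conclusions of part (1).
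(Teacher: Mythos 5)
Your proof is correct, and it follows the same route the paper takes: the paper simply cites \cite[Lemma 5.B.4, Proposition 5.B.10]{cornulierdlH2016metric} for part (1) (where the content is exactly the Arzel\`a--Ascoli/normal-families argument you supply, including the point about controlling inverses to get surjectivity of limits) and notes that part (2) ``readily follows from the definitions,'' which is precisely the routine comparison you carry out. No gaps; you have just written out in full what the paper outsources.
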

\noindent The first part of Lemma \ref{lem:copci_action}  was shown in \cite[Lemma 5.B.4 and Proposition 5.B.10]{cornulierdlH2016metric}. The second part   was noted by Cornulier \cite[p. 117]{cornulier2015commability} and readily follows from the definitions.

We recall from the introduction that if $X$ and $Y$ are metric spaces, then  $X$ is \emph{dominated by} $Y$ if there is a   copci homomorphism $\phi:\Isom(X)\rightarrow\Isom(Y)$. Since the composition of two copci homomorphisms is copci, Lemma \ref{lem:copci_action} can be used to deduce the following  properties of domination.
\begin{lem}\label{lem:dom}
	Let $X$ be a cocompact proper quasi-geodesic metric space. 
	\begin{enumerate}
		\item If $X$ is dominated by $Y$ and  $G$ acts geometrically on $X$, then $G$ acts geometrically on $Y$.\label{dom:geomact}
		\item Suppose a locally compact group $G$ acts geometrically on a proper quasi-geodesic metric space $Y$. If  there is a  copci homomorphism  $\phi:\Isom(X)\to G$, then  $X$ is dominated by $Y$.\label{dom:dominate}
	\end{enumerate}
\end{lem}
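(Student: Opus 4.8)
The plan is to reduce everything to two facts already on the table: the composition of copci homomorphisms is copci, and Lemma~\ref{lem:copci_action}(\ref{item:copci_action}), which identifies geometric actions on a cocompact proper quasi-geodesic space with copci homomorphisms into its isometry group. Throughout, the only mildly non-formal ingredient is the coarsely $\Isom(X)$-equivariant quasi-isometry $f\colon X\to Y$ that a copci homomorphism $\Isom(X)\to\Isom(Y)$ produces, as recorded in the introduction.

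For part (\ref{dom:geomact}), I would begin from a copci homomorphism $\phi\colon\Isom(X)\to\Isom(Y)$ witnessing that $X$ is dominated by $Y$, together with a geometric action $\rho\colon G\to\Isom(X)$. By Lemma~\ref{lem:copci_action}(\ref{item:copci_action}), $\rho$ is copci, so the composite $\psi:=\phi\circ\rho\colon G\to\Isom(Y)$ is copci as well. To promote $\psi$ to a geometric action I first need $Y$ to be cocompact, proper and quasi-geodesic; properness and quasi-geodesicity are hypotheses, and for cocompactness I would use the coarsely $\Isom(X)$-equivariant quasi-isometry $f\colon X\to Y$: since $\rho(G)$ is cocompact in $\Isom(X)$ and $X$ is cocompact, $\rho(G)$ acts cocompactly on $X$, and transporting this across $f$ (using that $f$ is a coarsely surjective quasi-isometry and $f\circ\rho(g)$ is uniformly close to $\psi(g)\circ f$) shows $\psi(G)=\phi(\rho(G))$ acts coboundedly, hence — $Y$ being proper — cocompactly on $Y$; in particular $\Isom(Y)$ acts cocompactly on $Y$. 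With $Y$ now a cocompact proper quasi-geodesic space, Lemma~\ref{lem:copci_action}(\ref{item:copci_action}) applied to $Y$ turns the copci homomorphism $\psi$ into a geometric action of $G$ on $Y$.

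For part (\ref{dom:dominate}), suppose $G$ acts geometrically on the proper quasi-geodesic space $Y$ via $\psi\colon G\to\Isom(Y)$, and let $\phi\colon\Isom(X)\to G$ be copci. Since $G$ already acts cocompactly on $Y$, so does $\Isom(Y)$, whence $Y$ is cocompact; Lemma~\ref{lem:copci_action}(\ref{item:copci_action}) applied to $Y$ then shows $\psi$ is copci. Consequently $\psi\circ\phi\colon\Isom(X)\to\Isom(Y)$ is a composition of copci homomorphisms, hence copci, which is by definition the statement that $X$ is dominated by $Y$.

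I do not expect a genuine obstacle here: the lemma amounts to chasing the copci property through compositions and invoking Lemma~\ref{lem:copci_action}. The one point that requires attention is verifying, in part (\ref{dom:geomact}), that $Y$ itself is cocompact so that Lemma~\ref{lem:copci_action} may legitimately be applied with $Y$ in place of $X$ — this is exactly where the coarsely equivariant quasi-isometry attached to a copci map between isometry groups gets used. (If cocompactness of $Y$ is instead built into the hypotheses whenever one speaks of domination, part (\ref{dom:geomact}) collapses to the one-line observation that $\phi\circ\rho$ is copci.)
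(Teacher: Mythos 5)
Your proof matches the paper's: Lemma~\ref{lem:dom} is given there with no more justification than the observation that copci homomorphisms compose and that Lemma~\ref{lem:copci_action} converts copci homomorphisms into geometric actions, which is exactly your skeleton. The only place you go beyond the paper is the verification in part (\ref{dom:geomact}) that $Y$ is cocompact, and there your argument is mildly circular --- the coarse surjectivity of the coarsely $\Isom(X)$-equivariant quasi-isometry $f\colon X\to Y$ quoted from the introduction is essentially the statement being established, since the orbit map $\Isom(Y)\to Y$ is only known to be a quasi-isometry (Lemma~\ref{lem:quasi-isom_copci}) once $Y$ is cocompact; but, as you note yourself, this is really an implicit hypothesis on $Y$ (all dominating spaces in the paper are homogeneous), and the paper treats it as such.
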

A locally compact compactly generated group can be considered as a metric space by equipping it with the word metric with respect to a compact generating set. This metric is well-defined up to equivariant quasi-isometry. More generally, such a group can be equipped with any \emph{geodesically adapted} metric in the sense of \cite{cornulierdlH2016metric}.
\begin{lem}[{\cite[Theorem 4.C.5 and Remark 4.C.13]{cornulierdlH2016metric}}]\label{lem:quasi-isom_copci}
	Let $G$ and $H$ be locally compact compactly generated groups and let $X$ be a cocompact proper quasi-geodesic metric space.
	\begin{enumerate}
		\item A copci homomorphism $\phi:G\to H$ is a quasi-isometry.
		\item For any $x_0\in X$, the orbit map $\Isom(X)\to X$ given by $\phi\mapsto \phi(x_0)$ is a quasi-isometry.
	\end{enumerate}
\end{lem}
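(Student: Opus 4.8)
Both parts are instances of the Milnor--Schwarz lemma for locally compact groups, so one option is to invoke \cite[Chapter~4]{cornulierdlH2016metric} directly; I sketch the arguments for completeness, the underlying point being that word metrics are insensitive to compact error.

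For part (1), the plan is to factor the copci homomorphism $\phi:G\to H$ as $G\xrightarrow{q}G/\ker(\phi)\xrightarrow{\widehat\phi}\phi(G)\hookrightarrow H$, as recalled from \cite[\S 1.10]{bourbaki98gentop1-4}, and to show each factor is a quasi-isometry. First, since $\phi$ is proper, $\ker(\phi)=\phi^{-1}(\{e_H\})$ is compact, and the quotient map by a compact normal subgroup is a quasi-isometry: a compact generating set $S$ of $G$ maps onto a compact generating set $q(S)$ of $G/\ker(\phi)$, the map $q$ is surjective and $1$-Lipschitz for the corresponding word metrics, and a coarse inverse is built from coset representatives of bounded word length, using that the compact set $\ker(\phi)$ has bounded word-diameter in $G$. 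Second, $\widehat\phi$ is a topological isomorphism of compactly generated locally compact groups, hence carries a compact generating set to a compact generating set and preserves the associated word length exactly, so is a quasi-isometry. Third, $\phi(G)$ is a closed cocompact subgroup of $H$; a closed cocompact subgroup of a compactly generated locally compact group is itself compactly generated with quasi-isometrically embedded inclusion, which is the Milnor--Schwarz lemma applied to the left-translation action of $\phi(G)$ on $H$ (isometric, continuous, proper and, since $\phi(G)$ is cocompact in $H$, cocompact). Composing the three quasi-isometries gives~(1).

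For part (2), by Lemma~\ref{lem:copci_action}(\ref{item:isom_action}) the group $\Isom(X)$ acts \emph{geometrically} --- continuously, properly and cocompactly --- on the proper quasi-geodesic space $X$, and the claim is exactly the Milnor--Schwarz lemma. Concretely: fixing $x_0$, pick $R$ large relative to the quasi-geodesic constants of $X$ so that $\Isom(X)\cdot B(x_0,R)=X$, and set $S=\{g\in\Isom(X)\mid d(g x_0,x_0)\le 2R\}$, which is compact by properness. A standard chaining argument along quasi-geodesics shows $S$ generates $\Isom(X)$ and that $\tfrac1C d(g x_0,h x_0)-C\le |g^{-1}h|_S\le C\,d(g x_0,h x_0)+C$ for a uniform constant $C$, while coarse surjectivity of the orbit map is just cocompactness. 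Hence $\phi\mapsto\phi(x_0)$ is a quasi-isometry.

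The only genuine content is the Milnor--Schwarz lemma in the locally compact setting --- both the statement that a geometric action forces compact generation and yields a quasi-isometric orbit map, and the companion fact that a closed cocompact subgroup is coarsely dense and compactly generated. These use properness and the (quasi-)geodesic hypothesis essentially; everything else reduces to the routine observation that passing to a quotient by, or an extension by, a compact group alters word length only by a bounded additive amount.
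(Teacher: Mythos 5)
The paper offers no proof of this lemma, citing it directly to Cornulier--de la Harpe \cite[Theorem 4.C.5 and Remark 4.C.13]{cornulierdlH2016metric}, and your argument is a correct reconstruction of the standard one: the three-step factorisation of a copci map (compact kernel, topological isomorphism, closed cocompact inclusion) together with the Milnor--Schwarz lemma for geometric actions of locally compact groups on proper quasi-geodesic spaces, which is exactly the content of the cited results. The only cosmetic point is that properness of the action gives that your set $S$ has compact \emph{closure}; since $S$ is already closed (the orbit map is continuous), this is immediate, and nothing else needs adjustment.
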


A (left-invariant) \emph{Haar measure} $\mu$ on a locally compact group $G$ is a Radon measure on the Borel $\sigma$-algebra $\cB$ of $G$ such that  $\mu(A)=\mu(gA)$ for all $g\in G$ and $A\in \cB$. On each locally compact group the Haar measure exists and is unique up to rescaling.

\begin{defn}
	Let $G$ be a locally compact group and let $\mu$ be a Haar measure on $G$. A subgroup $\Gamma\leq G$ is a \emph{lattice} if it is discrete and  there exists a set $K\subseteq G$ such that $\Gamma K=G$ and $\mu(K)<\infty$. If $K$ can be chosen to be compact, we say $\Gamma$ is a \emph{uniform} or cocompact lattice.
	If $\Lambda$ is discrete, a homomorphism $\rho:\Lambda\to G$ is a \emph{virtual (uniform) lattice embedding} if $\ker(\rho)$ is finite and $\im(\rho)$ is a (uniform) lattice.
\end{defn}
\begin{rem}
	The terminology virtual lattice embedding was chosen to be consistent with the notion of a virtual isomorphism.
\end{rem}

\begin{prop}\label{prop:discrete_copci}
	Let $\Gamma$ be a discrete group, $G$ be a locally compact group and  $\rho:\Gamma\to G$  be a homomorphism. The following are equivalent:
	\begin{enumerate}
		\item $\rho$ is a virtual uniform lattice embedding;\label{item:discrete_copci_lattice}
		\item $\rho$ is copci.\label{item:discrete_copci_copci}
	\end{enumerate}
\end{prop}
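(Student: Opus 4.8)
The plan is to verify both implications directly from the definitions; the key simplifications are that continuity of $\rho$ is automatic because $\Gamma$ is discrete, and that the compact subsets of a discrete group are precisely its finite subsets.

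First I would prove (\ref{item:discrete_copci_copci}) $\Rightarrow$ (\ref{item:discrete_copci_lattice}). Assuming $\rho$ is copci, properness applied to the compact set $\{1\}$ shows $\ker(\rho)$ is finite. To obtain discreteness of $\rho(\Gamma)$, I would fix a compact neighbourhood $U$ of $1$ in $G$; properness makes $\rho^{-1}(U)$ finite, hence $\rho(\Gamma)\cap U$ is a finite subset of the Hausdorff group $G$, and shrinking $U$ to exclude the finitely many non-identity points of this set exhibits $1$ as an isolated point of $\rho(\Gamma)$, so $\rho(\Gamma)$ is discrete. Finally, cocompactness of the image means $G/\rho(\Gamma)$ is compact; since $G$ is locally compact the quotient map is open, so a finite subcover of $G/\rho(\Gamma)$ by images of compact neighbourhoods yields a compact $K\subseteq G$ with $\rho(\Gamma)K=G$. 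As any compact set has finite Haar measure, $\rho(\Gamma)$ is a uniform lattice and $\rho$ is a virtual uniform lattice embedding.

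Conversely, for (\ref{item:discrete_copci_lattice}) $\Rightarrow$ (\ref{item:discrete_copci_copci}), write $\Lambda=\rho(\Gamma)$, which by hypothesis is discrete with $\Lambda K=G$ for some compact $K$; this immediately gives cocompactness of the image, and continuity is free. For properness, let $C\subseteq G$ be compact: since the discrete subgroup $\Lambda$ is closed in $G$, the set $\Lambda\cap C$ is compact and discrete, hence finite, and therefore $\rho^{-1}(C)$ is a finite union of cosets of the finite group $\ker(\rho)$, so it is finite and in particular compact. Hence $\rho$ is proper, and $\rho$ is copci.

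The content here is essentially bookkeeping with definitions, so there is no serious obstacle; the one point requiring a little care is the deduction, in the first implication, that compactness of $G/\rho(\Gamma)$ yields an actual compact $K$ with $\rho(\Gamma)K=G$, which relies on local compactness of $G$ and openness of the quotient map. One could alternatively invoke the factorisation of copci homomorphisms recorded before Lemma \ref{lem:copci_action} to see directly that $\rho(\Gamma)$ is a closed subgroup.
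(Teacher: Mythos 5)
Your argument is correct and follows essentially the same route as the paper: properness plus discreteness of $\Gamma$ gives finite kernel and finite (hence discrete) intersections of the image with compact sets in one direction, and discreteness plus cocompactness of the image gives properness and a compact $K$ with $\rho(\Gamma)K=G$ in the other. The extra care you take with the equivalence between compactness of $G/\rho(\Gamma)$ and the existence of such a $K$, and with closedness of the discrete image, is correct but goes beyond what the paper spells out.
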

\begin{proof}
	Note that continuity of $\rho$ is automatic since $\Gamma$ is discrete.
	
	(\ref{item:discrete_copci_lattice})$\implies$(\ref{item:discrete_copci_copci}):  As $\rho(\Gamma)$ is a uniform lattice, $\rho$ has cocompact image.  Moreover, since $\rho$ has finite kernel and $\rho(\Gamma)$ is discrete, $\rho^{-1}(K)$ is finite for every compact $K\subseteq G$, hence $\rho$ is proper.
	
	(\ref{item:discrete_copci_copci})$\implies$(\ref{item:discrete_copci_lattice}):  Since $\rho$ is proper and $\Gamma$ is discrete, $\ker(\rho)$ is finite and $\rho(\Gamma)\cap K$ is finite for every compact $K\subseteq G$. Thus   $\rho(\Gamma)$ is discrete. Since $\rho(\Gamma)$ is cocompact, $\rho$ is a virtual uniform lattice embedding.
\end{proof}

We now restrict our attention to  groups that act geometrically on two important classes of metric spaces: symmetric spaces and locally finite graphs.
A \emph{symmetric space} is a connected Riemannian manifold $X$ such that for each $p\in X$, there is an isometry $\phi_p:X\to X$ such that $\phi_p(p)=p$ and  $(d\phi_p)_p=-\id$. Such spaces are sometimes called global symmetric spaces.  Symmetric spaces were classified by Cartan.  

A symmetric space is said to be \emph{of non-compact type} if it has non-positive sectional curvature and does not split isometrically  as a Cartesian product of the form $X=\bbE^n\times X'$ for some $n>0$.  Every symmetric space of non-compact type admits a canonical \emph{de Rham decomposition} $X=X_1\times \dots\times X_n$, where  $X_1,\dots,X_n$ are  symmetric spaces of non-compact type that are \emph{irreducible}, i.e.\ do not decompose as a non-trivial direct product.   

A  Lie group $G$ is  \emph{semisimple with no compact factors} if its Lie algebra is semisimple and $G$ contains no connected  non-trivial compact normal subgroup. Note  $G$ is semisimple with no compact factors if and only if its identity component $G^\circ$ is. 
There is a correspondence between symmetric spaces of non-compact type and semisimple Lie groups. 
\begin{prop}[{{\cite[Chapters II, IV and VI]{helgason1978differential}}}]\label{prop:manifold_gp_action}
	 Let $X$ be a symmetric space of non-compact type. Then $\Isom(X)$ has a Lie group structure acting smoothly, compatible with the compact-open topology. Moreover, $\Isom(X)$  is a centre-free semisimple Lie group with no compact factors and finitely many  components and $\Isom(X)^\circ$ acts transitively on $X$.
	 
	 Conversely, let $G$ be a semisimple Lie group with finite centre and finitely many  components. Let $K\leq G$ be a maximal compact subgroup. Then $G/K$, equipped with any $G$-invariant  Riemannian metric, is a symmetric space of non-compact type. In particular, $G$ acts geometrically on a symmetric space of non-compact type. 
\end{prop}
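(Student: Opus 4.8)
The plan is to prove the two halves of the proposition separately, assembling standard results from Riemannian and Lie theory; in particular I will assume the Myers--Steenrod theorem and the classification of orthogonal symmetric Lie algebras, and use the cited Theorem~\ref{thm:maxcompact} for the converse.

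\emph{Forward direction.} First I would invoke Myers--Steenrod: for any Riemannian manifold $X$, the group $\Isom(X)$ with the compact--open topology is a Lie group acting smoothly and properly on $X$. Write $G=\Isom(X)^\circ$ and fix a basepoint $p\in X$. I would then show $G$ acts transitively: the transvections $\phi_q\phi_p$ (products of two geodesic symmetries) lie in $G$ and translate points along geodesics, so by completeness and connectedness of $X$ the orbit $Gp$ is open and closed, hence all of $X$. The point stabiliser $K=\stab_G(p)$ injects into $O(T_pX)$ via the isotropy representation, since an isometry of a connected Riemannian manifold is determined by its $1$-jet at a point; thus $K$ is compact and $X\cong G/K$.

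Next, conjugation by the geodesic symmetry $\phi_p$ is an involutive automorphism $\sigma$ of $G$ whose differential $\theta$ makes $(\mathfrak{g},\theta)$ an orthogonal symmetric Lie algebra, $\mathfrak{g}=\mathfrak{k}\oplus\mathfrak{p}$ with $\mathfrak{p}\cong T_pX$ and curvature tensor $R(u,v)w=-[[u,v],w]$ on $\mathfrak{p}$. The key step is to feed the two hypotheses into the canonical decomposition of $(\mathfrak{g},\theta)$ into Euclidean, compact-type and noncompact-type parts: non-positive sectional curvature rules out the compact-type part (which carries positively curved tangent planes), and the absence of a Euclidean de~Rham factor rules out the Euclidean part, so $\mathfrak{g}$ is of noncompact type, in particular semisimple, with its decomposition into simple ideals matching the de~Rham decomposition $X=X_1\times\cdots\times X_n$. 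A compact factor of $G$ would produce a compact-type de~Rham factor, again impossible, so $G$ has no compact factors. Since $\Isom(X)$ acts effectively, $K$ contains no nontrivial normal subgroup of $G$; because $\mathfrak{g}$ is semisimple of noncompact type the normaliser of $K$ in $G$ equals $K$, so the finite centre $Z(G)$ lies in $K$, is normal in $G$, and is therefore trivial --- $G$ is centre-free. Finally, conjugation yields an injection $\Isom(X)\hookrightarrow\Aut(\mathfrak{g})$ (its kernel consists of isometries inducing the identity on $\mathfrak{g}$, which fix $p$ with trivial $1$-jet, hence are trivial), and $\Aut(\mathfrak{g})$ has finitely many components for $\mathfrak{g}$ semisimple; so $\Isom(X)$ has finitely many components, and $G=\Isom(X)^\circ$ acts transitively by the above.

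\emph{Converse.} Let $G$ be semisimple with finite centre and finitely many components and $K$ a maximal compact subgroup. By Theorem~\ref{thm:maxcompact}, $G/K$ is diffeomorphic to $\bbR^n$; averaging a $K$-invariant inner product on $\mathfrak{g}/\mathfrak{k}$ over the compact group $K$ and transporting it by $G$ produces a $G$-invariant Riemannian metric on $G/K$. The Cartan decomposition $\mathfrak{g}=\mathfrak{k}\oplus\mathfrak{p}$, with $[\mathfrak{p},\mathfrak{p}]\subseteq\mathfrak{k}$ and the Killing form positive definite on $\mathfrak{p}$, exhibits $(\mathfrak{g},\theta)$ as an orthogonal symmetric Lie algebra of noncompact type, and by the standard correspondence $G/K$ with this metric is a symmetric space; it is of noncompact type because positivity of the Killing form on $\mathfrak{p}$ forces non-positive curvature and, $\mathfrak{g}$ being semisimple, there is no abelian ideal and hence no Euclidean de~Rham factor. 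The action $G\curvearrowright G/K$ is isometric by construction, smooth hence continuous, cocompact because transitive, and proper because point stabilisers are conjugates of the compact group $K$; it is therefore geometric.

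The main obstacle is the structural input in the third paragraph: translating ``non-positive curvature'' and ``no Euclidean de~Rham factor'' into ``$\mathfrak{g}$ is semisimple of noncompact type'' and then deducing centre-freeness from $N_G(K)=K$. This is exactly the content of the classification of orthogonal symmetric Lie algebras together with the curvature formula for symmetric spaces, so rather than reprove it I would cite Helgason; everything else is unwinding definitions (effectiveness, properness, the isotropy representation, and the topological structure of $\Aut(\mathfrak{g})$).
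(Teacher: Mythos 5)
The paper offers no proof of this proposition at all — it is quoted directly from Helgason (Chapters II, IV and VI) — so there is no in-paper argument to diverge from. Your outline is the standard one found there (Myers--Steenrod for the Lie group structure, transvections for transitivity of $\Isom(X)^\circ$, the Cartan involution and the classification of orthogonal symmetric Lie algebras to pin down the noncompact type, the embedding $\Isom(X)\hookrightarrow\Aut(\mathfrak{g})$ for centre-freeness and finiteness of components, and the Cartan decomposition plus averaging for the converse); it is correct, and in deferring the classification step to Helgason it lands exactly where the paper does.
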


For the purposes of this article, a \emph{graph} is a combinatorial 1-dimensional cell complex, where \emph{vertices} are 0-cells and \emph{edges} are 1-cells.   A connected graph $X$ will be considered as a metric space by endowing it with the induced path metric in which edges  have length one. The group $\Aut(X)$ of graphical automorphisms of $X$ is thus a subgroup of  $\Isom(X)$, and is equipped with the subspace topology. Note that $\Aut(X)$ is a totally disconnected group, and is locally compact if $X$ is locally finite.

 The following fundamental theorem  is due to  van Dantzig:
\begin{thm}[\cite{vandantzig36}]\label{thm:vd}
	If $G$ is locally compact and totally disconnected, then every compact identity  neighbourhood  contains a compact open subgroup.
\end{thm}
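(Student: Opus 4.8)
The plan is to first produce a compact \emph{open} identity neighbourhood sitting inside the given compact identity neighbourhood $V$, and then to shrink this to a compact open \emph{subgroup} by a standard compactness-and-continuity argument.

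For the first step, I would use the fact that a locally compact Hausdorff totally disconnected space admits a basis of compact open sets. The essential topological ingredient is that in a compact Hausdorff space the connected component of a point coincides with its quasi-component, i.e.\ the intersection of all clopen sets containing it; since $G$ is totally disconnected, this quasi-component is $\{e\}$. Applying this inside a compact neighbourhood $C\subseteq V$ of the identity (which exists by local compactness), and using a finite-intersection argument against the compact set $C\setminus\operatorname{int}(C)$, one obtains a set $U$ that is clopen in $C$ and contained in $\operatorname{int}(C)$. Such a $U$ is then compact (being closed in the compact set $C$) and open in $G$ (being open in $C$ and contained in an open subset of $G$), with $e\in U\subseteq V$.

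For the second step, I would exploit continuity of multiplication together with compactness of $U$: for each $u\in U$ choose open neighbourhoods $A_u\ni u$ and $B_u\ni e$ with $A_uB_u\subseteq U$; extracting a finite subcover $A_{u_1},\dots,A_{u_n}$ of $U$ and setting $W_0=\bigcap_i B_{u_i}$ produces an open identity neighbourhood with $UW_0\subseteq U$. Replacing $W_0$ by a symmetric open identity neighbourhood $W\subseteq U\cap W_0$, an easy induction gives $W^n\subseteq U$ for all $n\geq 1$, so the subgroup $H=\bigcup_{n\geq 1}W^n$ generated by $W$ is contained in $U$. Now $H$ is open, since it contains the open set $W$; hence it is also closed, as its complement is a union of (open) cosets; hence it is compact, being a closed subset of the compact set $U$. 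Thus $H$ is a compact open subgroup with $H\subseteq U\subseteq V$, as required.

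I expect the main obstacle to be the topological input of the first step — the coincidence of components and quasi-components in compact Hausdorff spaces, and the resulting existence of a clopen neighbourhood basis in $G$. Once a compact open identity neighbourhood is available, the passage to a compact open subgroup is a routine application of local compactness and continuity of the group operations.
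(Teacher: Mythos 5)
Your proof is correct and complete. The paper does not prove this statement at all --- it is van Dantzig's theorem, quoted with a citation to the original 1936 paper and used as a black box --- so there is no in-paper argument to compare against; what you have written is the standard textbook proof. Both halves check out: the first step correctly reduces to the Šura--Bura fact that components and quasi-components agree in compact Hausdorff spaces (applied to a compact neighbourhood $C\subseteq V$ of the identity, with the finite-intersection argument against the compact set $C\setminus\operatorname{int}(C)$ yielding a compact open $U$ with $e\in U\subseteq V$), and the second step is the routine passage from a compact open identity neighbourhood to a compact open subgroup via a symmetric $W$ with $W^n\subseteq U$ for all $n$, so that $H=\bigcup_{n\geq 1}W^n$ is an open, hence closed, hence compact subgroup inside $U$. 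The only ingredient you should make sure to actually prove or cite if writing this out in full is the quasi-component lemma itself, which you have correctly identified as the real topological content.
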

	As noted in the introduction, \textbf{locally finite graphs are assumed to be both connected and singular}.  If a  graph $X$ is singular, then $\Aut(X)=\Isom(X)$. 
	Proposition \ref{prop:td_graph} and hence Theorem \ref{thm:mainintro} is false without this singularity hypothesis, since $\bbR$ is dominated by a non-singular graph, namely the Cayley graph of $\bbZ$ with generator $1$. \footnote{This is essentially the only counterexample, since every connected non-singular connected graph is homeomorphic to either $\bbR$ or $S^1$.}
\begin{prop}\label{prop:td_graph}
	If $G$ is locally compact and compactly generated, the following are equivalent:
	\begin{enumerate}
		\item $G$ is compact-by-(totally disconnected); \label{item:c-by-td}
		\item $G$ contains a compact open subgroup;\label{item:cpctopen}
		\item $G$ acts geometrically on a  locally finite vertex-transitive graph;\label{item:vtrangraph}
		\item $G$ acts geometrically on a locally finite  graph.\label{item:graph}
	\end{enumerate}
\end{prop}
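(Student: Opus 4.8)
The plan is to prove the cycle of implications $(1)\Leftrightarrow(2)\Rightarrow(3)\Rightarrow(4)\Rightarrow(1)$; here $(3)\Rightarrow(4)$ is immediate, since a connected singular locally finite vertex-transitive graph is in particular a locally finite graph in the sense of this article. For $(1)\Leftrightarrow(2)$, I would use Proposition \ref{prop:idcomp} to reduce $(1)$ to the assertion that $G^\circ$ is compact. If $G^\circ$ is compact, then $G/G^\circ$ is a totally disconnected locally compact group, so by van Dantzig's theorem (Theorem \ref{thm:vd}) it contains a compact open subgroup $\overline U$; the preimage of $\overline U$ under the quotient map $G\to G/G^\circ$ is then an open subgroup of $G$ that is an extension of the compact group $\overline U$ by the compact group $G^\circ$, hence is compact. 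Conversely, if $U\leq G$ is a compact open subgroup, then $U$ is also closed, so $U\cap G^\circ$ is a non-empty clopen subset of the connected group $G^\circ$, forcing $G^\circ\subseteq U$; since $G^\circ$ is closed in $G$ by Proposition \ref{prop:idcomp}, it is a closed subgroup of the compact group $U$ and therefore compact.

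For $(2)\Rightarrow(3)$, I would build a Cayley--Abels graph. If $G$ is compact, take $U=G$, so that the associated graph is a single vertex (or, to avoid a degenerate example, the wedge of two loops), on which $G$ acts trivially; since $G$ is compact this action is proper and cocompact, hence geometric. If $G$ is non-compact, fix a compact open subgroup $U$ and a compact symmetric generating set $S_0$ with $U\subseteq S_0=US_0U$; as $G/U$ is infinite while $S_0/U$ is finite, we may adjoin finitely many double cosets to obtain a compact symmetric generating set $S$ with $U\subseteq S=USU$ and $|S/U|\geq 4$. Let $X$ be the graph with vertex set $G/U$ in which $gU$ and $hU$ are adjacent precisely when $gU\neq hU$ and $g^{-1}h\in S$. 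Then $X$ is connected because $S$ generates $G$, and every vertex has valence $|S/U|-1$, which is finite (since $S$ is covered by finitely many cosets of the open subgroup $U$) and at least $3$; thus $X$ is a connected singular locally finite graph on which both $G$ and $\Aut(X)$ act transitively on vertices. The left-translation action $\rho\colon G\to\Aut(X)$ is continuous because $G/U$ is discrete, proper because each vertex stabilizer is a conjugate of the compact group $U$ and every compact subset of $X$ lies in a finite subgraph, and cocompact because $X$ has only finitely many $G$-orbits of vertices and of edges; hence $\rho$ is geometric.

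For $(4)\Rightarrow(1)$, suppose $G$ acts geometrically on a locally finite graph $X$. By Lemma \ref{lem:copci_action} the action $\rho\colon G\to\Isom(X)$ is copci, and since $X$ is singular, $\Isom(X)=\Aut(X)$, which is totally disconnected. Therefore $\rho(G^\circ)$ is a connected subgroup of $\Aut(X)$ containing the identity, so it is trivial, i.e.\ $G^\circ\subseteq\ker\rho$; and since $\rho$ is proper, $\ker\rho=\rho^{-1}(\{e\})$ is compact. As $G^\circ$ is closed in $G$ by Proposition \ref{prop:idcomp}, it is a closed subgroup of the compact group $\ker\rho$, hence compact, and Proposition \ref{prop:idcomp} then yields that $G$ is compact-by-(totally disconnected).

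The one genuinely delicate point — and the reason the singularity convention is needed — is the construction in $(2)\Rightarrow(3)$: one must be able to arrange that every vertex of the Cayley--Abels graph has valence at least three, for otherwise that graph could be a bi-infinite line, which is not singular. This is exactly the phenomenon behind the footnote that $\bbR$ is dominated by the standard Cayley graph of $\bbZ$. I therefore expect the main step to verify carefully is that the required enlargement of the generating set can always be carried out, which rests only on the fact that $[G:U]$ is infinite whenever $G$ is non-compact.
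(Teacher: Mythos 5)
Your proof is correct and follows essentially the same route as the paper: van Dantzig's theorem for $(1)\Rightarrow(2)$, the Cayley--Abels graph for $(2)\Rightarrow(3)$, and the copci map into the totally disconnected group $\Aut(X)$ for $(4)\Rightarrow(1)$. The only difference is that the paper delegates the Cayley--Abels construction to a citation, whereas you carry it out in full, including the (genuinely necessary) enlargement of the generating set that guarantees the resulting graph is singular.
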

\begin{proof}
	(\ref{item:c-by-td})$\implies$(\ref{item:cpctopen}):
	Suppose $G$  has a compact normal subgroup $K\vartriangleleft G$ such that $G/K$ is totally disconnected. Then Theorem \ref{thm:vd} ensures there exists a compact open subgroup $U/K\leq G/K$, whence $U$ is a compact open subgroup of $G$. 
	
	(\ref{item:cpctopen})$\implies$(\ref{item:vtrangraph}): This follows from the Cayley--Abels graph construction; see for instance \cite{kronmoller08roughcayley}.
	
	(\ref{item:vtrangraph})$\implies$(\ref{item:graph}) is obvious.
	
	(\ref{item:graph})$\implies$(\ref{item:c-by-td}): As $G$ acts geometrically on a singular locally finite  graph $X$, there is a copci map $\rho:G\rightarrow \Isom(X)$. As $X$ is singular, $\Isom(X)=\Aut(X)$ is totally disconnected and so $G$ is  compact-by-(totally disconnected).
\end{proof}
Combining Lemma \ref{lem:copci_action} and Proposition \ref{prop:td_graph}, we deduce:
\begin{cor}\label{cor:domin_by_loc_fin}
	If $X$ is a cocompact proper quasi-geodesic metric space, then $X$ is dominated by a locally finite  graph if and only if $\Isom(X)$ is compact-by-(totally disconnected).
\end{cor}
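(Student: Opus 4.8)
The plan is to deduce Corollary \ref{cor:domin_by_loc_fin} directly from Lemma \ref{lem:copci_action} and Proposition \ref{prop:td_graph} by treating the two implications separately. Throughout one uses that, since $X$ is cocompact, proper and quasi-geodesic, the group $G:=\Isom(X)$ is locally compact and compactly generated (by \cite[Lemma 5.B.4 and Proposition 5.B.10]{cornulierdlH2016metric}), so Proposition \ref{prop:td_graph} applies to $G$; and one uses the standing convention that a \emph{locally finite graph} $Y$ is connected and singular, whence $\Isom(Y)=\Aut(Y)$ is a totally disconnected locally compact group.

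For the forward direction I would argue as follows. Assume $G$ is compact-by-(totally disconnected). By the equivalence (\ref{item:c-by-td})$\iff$(\ref{item:graph}) of Proposition \ref{prop:td_graph}, there is a locally finite graph $Y$ together with a geometric action $\rho\colon G\to\Isom(Y)$; being geometric, $\rho$ is in particular cocompact, so $Y$ is a cocompact proper quasi-geodesic metric space and Lemma \ref{lem:copci_action}(\ref{item:copci_action}) tells us $\rho$ is copci. By the definition of domination, this exhibits $X$ as dominated by the locally finite graph $Y$.

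For the converse, suppose $X$ is dominated by a locally finite graph $Y$, witnessed by a copci homomorphism $\phi\colon\Isom(X)\to\Isom(Y)$. Since $Y$ is singular, $\Isom(Y)=\Aut(Y)$ is totally disconnected, so the continuous image $\phi(\Isom(X)^\circ)$ is a connected subset of a totally disconnected group and hence trivial; that is, $\Isom(X)^\circ\leq\ker(\phi)$. Because $\phi$ is proper, $\ker(\phi)=\phi^{-1}(\{e\})$ is compact, and since $\Isom(X)^\circ$ is a closed subgroup of $\Isom(X)$ by Proposition \ref{prop:idcomp}, it is a closed subset of the compact set $\ker(\phi)$, hence itself compact. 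Applying Proposition \ref{prop:idcomp} once more, $\Isom(X)$ is compact-by-(totally disconnected), as required.

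I do not expect a genuine obstacle: the argument is bookkeeping with copci homomorphisms, combined with the two quoted results. The one point that must be handled with care is the singularity hypothesis built into the term ``locally finite graph'' — it is precisely what forces $\Isom(Y)$ to be totally disconnected in the converse direction, and it cannot be dropped, since the Euclidean line $\bbR$ is dominated by the non-singular Cayley graph of $\bbZ$ while $\Isom(\bbR)$ is not compact-by-(totally disconnected).
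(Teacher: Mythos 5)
Your proposal is correct and follows essentially the route the paper intends (the paper gives no written proof beyond ``combining Lemma \ref{lem:copci_action} and Proposition \ref{prop:td_graph}''): the forward direction is exactly Proposition \ref{prop:td_graph} plus Lemma \ref{lem:copci_action}(\ref{item:copci_action}), and your converse merely inlines the argument of the implication (\ref{item:graph})$\implies$(\ref{item:c-by-td}) of Proposition \ref{prop:td_graph}, using properness of the copci map and total disconnectedness of $\Aut(Y)$. All the details check out, including the correct emphasis on the singularity convention.
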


The following proposition  translates the problem of finding a model geometry of $\Gamma$ not dominated by a locally finite graph, to that of embedding $\Gamma$ as a uniform lattice in a suitable locally compact group:
\begin{prop}\label{prop:modgeom_vs_lattice}
	Let $\Gamma$ be a finitely generated group. The following are equivalent:
	\begin{enumerate}
		\item $\Gamma$ has a model geometry  not dominated by a locally finite  graph;\label{item:modgeom_vs_lattice_graph}
		\item There is a locally compact group $G$ and virtual uniform lattice embedding $\Gamma\to  G$ such that  $G^\circ$ is non-compact.\label{item:modgeom_vs_lattice_lattice}
	\end{enumerate}
\end{prop}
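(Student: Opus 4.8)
The plan is to prove the two implications separately, in each case translating between the language of model geometries and that of lattice embeddings via the tools assembled earlier in this section --- principally Lemma \ref{lem:copci_action}, Proposition \ref{prop:discrete_copci} and Corollary \ref{cor:domin_by_loc_fin}.

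First I would prove (\ref{item:modgeom_vs_lattice_graph})$\implies$(\ref{item:modgeom_vs_lattice_lattice}). Let $X$ be a model geometry of $\Gamma$ not dominated by a locally finite graph. By definition $X$ is a proper quasi-geodesic metric space on which $\Gamma$ acts geometrically; since $\Gamma$ is finitely generated, $X$ is also cocompact (the $\Gamma$-action is already cocompact, so a fortiori the larger $\Isom(X)$-action is). Set $G = \Isom(X)$, which by \cite[Lemma 5.B.4 and Proposition 5.B.10]{cornulierdlH2016metric} is locally compact and compactly generated. The inclusion $\rho\colon\Gamma\to G$ is the homomorphism recording the $\Gamma$-action; it is geometric, hence copci by Lemma \ref{lem:copci_action}(\ref{item:copci_action}), hence a virtual uniform lattice embedding by Proposition \ref{prop:discrete_copci}. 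Finally, $G = \Isom(X)$ is \emph{not} compact-by-(totally disconnected): if it were, then by Corollary \ref{cor:domin_by_loc_fin} the space $X$ would be dominated by a locally finite graph, contrary to hypothesis. By Proposition \ref{prop:idcomp} this means $G^\circ$ is non-compact, so $\rho\colon\Gamma\to G$ is the required embedding.

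Next I would prove (\ref{item:modgeom_vs_lattice_lattice})$\implies$(\ref{item:modgeom_vs_lattice_graph}). Given a locally compact $G$ with $G^\circ$ non-compact and a virtual uniform lattice embedding $\rho\colon\Gamma\to G$, first note $G$ is compactly generated: $\rho(\Gamma)$ is a cocompact lattice and a compact generating set is obtained by combining a compact transversal with a finite generating set of the image. Equip $G$ with a word metric with respect to a compact generating set; this makes $G$ a proper --- after passing to the Cayley--Abels-type geodesically adapted model, or directly via \cite{cornulierdlH2016metric} --- quasi-geodesic cocompact metric space $X$ on which $G$, and hence $\Gamma$ through $\rho$, acts geometrically, so $X$ is a model geometry of $\Gamma$ (using that $\ker\rho$ is finite, the geometric action of $\Gamma$ factors through a finite-kernel quotient and is still geometric). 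The left-translation action gives a copci homomorphism $G\to\Isom(X)$, and by Lemma \ref{lem:quasi-isom_copci} this is a quasi-isometry; since $G^\circ$ is non-compact, $G$ is not compact-by-(totally disconnected), and therefore --- the mild point here being that $\Isom(X)$ could a priori be strictly larger than $G$ --- neither is $\Isom(X)$, because a copci map into a compact-by-(totally disconnected) group would force $G$ to be such (the preimage of a compact open subgroup under a copci map is a compact-by-open subgroup, whence $G$ itself has a compact open subgroup). By Corollary \ref{cor:domin_by_loc_fin}, $X$ is not dominated by a locally finite graph.

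The one step requiring genuine care is the final sentence of the second implication: producing a \emph{proper} metric space model geometry out of the abstract lattice embedding, and ensuring that enlarging the acting group from $G$ to $\Isom(X)$ does not accidentally create a compact open subgroup. Both points are handled by the machinery of \cite{cornulierdlH2016metric} (geodesically adapted metrics, together with Lemma \ref{lem:copci_action} and the fact that compact-by-(totally disconnected) is detected by the existence of a compact open subgroup, which is inherited along copci maps), so no new ideas are needed --- only a careful invocation of the already-cited results.
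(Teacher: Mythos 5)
Your argument is correct and follows essentially the same route as the paper: both directions translate between geometric actions, copci homomorphisms and virtual uniform lattice embeddings via Lemma \ref{lem:copci_action}, Proposition \ref{prop:discrete_copci}, Lemma \ref{lem:quasi_geod} and Corollary \ref{cor:domin_by_loc_fin}. The only difference is that you spell out two points the paper leaves implicit (compact generation of $G$, and the transfer of non-compactness of the identity component along the copci map $G\to\Isom(X)$ via preimages of compact open subgroups), which is a sound elaboration rather than a deviation.
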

To prove this, we use the following lemma.
\begin{lem}[{\cite[Lemma 6, Corollary 7]{moshersageevwhyte2002maximally}}]\label{lem:quasi_geod}
	If $G$ is a locally compact compactly generated group, it has can be endowed with a left-invariant metric $(G,d)$ that is proper and quasi-geodesic. In particular, there exists a proper quasi-geodesic metric space $X$ such that $G$ acts  geometrically on $X$.
\end{lem}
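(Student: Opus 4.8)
The plan is to produce a left-invariant metric $d$ on $G$ that induces the group topology, is proper, and is large-scale (equivalently quasi-) geodesic; the final assertion then comes for free. Indeed, once such a $d$ is in hand, left translation $\rho\colon G\to\Isom(G,d)$ is isometric by left-invariance of $d$, continuous because $d$ induces the topology (so the action map $G\times G\to G$ is continuous, cf. Remark \ref{rem:isom_ct}), proper because $\{g : gK\cap K\neq\emptyset\}\subseteq KK^{-1}$ is relatively compact for every compact $K\subseteq G$, and cocompact because $G\cdot\{e\}=G$. Thus $X=(G,d)$ witnesses the ``in particular'' clause, and the whole problem reduces to constructing $d$.

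First I would fix a compact symmetric subset $S=S^{-1}\ni e$ that generates $G$ and is a neighbourhood of the identity: such an $S$ exists because compact generation supplies a compact generating set while local compactness supplies a compact identity neighbourhood, and the symmetrised union of the two has all the required features. The associated word length $\ell_S(g)=\min\{n : g\in S^n\}$ is subadditive and symmetric, so $d_S(g,h)=\ell_S(g^{-1}h)$ is a left-invariant metric. Its sublevel sets $\{\ell_S\le n\}=S^n$ are compact, and because $S$ generates $G$ and contains a neighbourhood of $e$, the metric $d_S$ is large-scale geodesic: any two points are joined by a chain of $S$-steps, each of $d_S$-length one. This already pins down the coarse geometry of $G$.

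The hard part will be the topological upgrade. The word metric $d_S$ is integer-valued, so its metric topology is discrete; when $G$ is non-discrete its closed balls $S^n$ are infinite discrete sets, not compact in the metric topology, and so $(G,d_S)$ fails to be proper in the sense used here. To remedy this I would replace $\ell_S$ by a continuous proper length function comparable to it at large scale --- a \emph{geodesically adapted} metric in the terminology of \cite{cornulierdlH2016metric}. Concretely, choose a continuous $\ell_0\colon G\to[0,\infty)$ supported on $S$ with $\ell_0(e)=0$, with $\ell_0>0$ on $S\setminus\{e\}$, and with $\ell_0(g^{-1})=\ell_0(g)$, and set $\ell(g)=\inf\{\sum_{i=1}^n \ell_0(s_i) : g=s_1\cdots s_n,\ s_i\in S\}$. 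By construction $\ell$ is subadditive, symmetric, and gives a left-invariant metric $d=d_\ell$. The work is to check that $\ell$ is proper, that small $\ell$-balls are neighbourhoods of $e$ so that $d$ induces the group topology, and that $\ell$ stays within bounded additive and multiplicative error of $\ell_S$ so that large-scale geodesicity is inherited. Bounding $\ell$ from below is the delicate point: one must rule out writing an element that is deep in the word metric as a product of many $\ell_0$-cheap factors, which is exactly where the compactness of $S$ and the positivity of $\ell_0$ away from $e$ are used.

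Finally I would assemble the pieces: $d=d_\ell$ is left-invariant, proper (its closed balls are the compact sublevel sets of $\ell$), and quasi-geodesic (being within bounded error of the large-scale geodesic $d_S$). Left translation then acts geometrically on $(G,d)$ as in the first paragraph, giving both assertions. I expect the cleanest write-up to isolate the inner-metric construction of $\ell$ as the technical core and to invoke \cite[\S 4.B]{cornulierdlH2016metric} for the existence of continuous proper large-scale-geodesic length functions, exactly as \cite{moshersageevwhyte2002maximally} does.
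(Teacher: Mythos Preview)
The paper does not prove this lemma; it merely records it with a citation to \cite{moshersageevwhyte2002maximally}. Your closing suggestion---to invoke \cite[\S4.B]{cornulierdlH2016metric} or \cite{moshersageevwhyte2002maximally} directly for the existence of a geodesically adapted metric---is therefore exactly what the paper does, and your deduction of the ``in particular'' clause from such a metric is correct.

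That said, the concrete inner-metric construction you sketch has a real gap. From an arbitrary continuous symmetric $\ell_0$ with $\ell_0(e)=0$ and $\ell_0>0$ on $S\setminus\{e\}$, the induced $\ell(g)=\inf\sum\ell_0(s_i)$ can vanish identically. Take $G=\bbR$, $S=[-1,1]$, and $\ell_0(x)=x^2$: writing $g$ as $n$ copies of $g/n$ gives $\sum\ell_0(g/n)=g^2/n\to0$, so $\ell\equiv0$. Compactness of $S$ and positivity of $\ell_0$ away from $e$ are not what save you; what is needed is a \emph{triangle inequality} at the level of $\ell_0$, i.e.\ $\ell_0$ should itself come from a left-invariant metric $d_0$ (Birkhoff--Kakutani or Struble) restricted to $S$. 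Then $\sum\ell_0(s_i)=\sum d_0(e,s_i)\ge d_0(e,g)$ gives the lower bound you need, and properness and compatibility with the topology follow. Your sketch identifies the right shape of argument but misattributes which hypothesis is doing the work.
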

\begin{proof}[Proof of Proposition \ref{prop:modgeom_vs_lattice}]
	(\ref{item:modgeom_vs_lattice_graph})$\implies$(\ref{item:modgeom_vs_lattice_lattice}): Suppose $X$ is a model geometry of $\Gamma$ not dominated by a locally finite graph. By Corollary \ref{cor:domin_by_loc_fin}, $\Isom(X)$ is not compact-by-(totally disconnected) or equivalently, $\Isom(X)^\circ$ is non-compact. Since $X$ is a model geometry, there is a geometric action  $\rho:\Gamma\to \Isom(X)$. Lemma \ref{lem:copci_action} and Proposition \ref{prop:discrete_copci} ensure $\rho$ is a virtual uniform lattice embedding.
	
	(\ref{item:modgeom_vs_lattice_lattice})$\implies$(\ref{item:modgeom_vs_lattice_graph}): Suppose $\rho:\Gamma\to G$ is a virtual uniform lattice embedding in a locally compact group $G$ with $G^\circ$ non-compact. By Proposition \ref{prop:discrete_copci}, $\rho$ is copci. Lemmas \ref{lem:copci_action} and \ref{lem:quasi_geod} ensure there is a proper quasi-geodesic metric space $X$ and a copci map $\phi:G\to \Isom(X)$. In particular, Lemma \ref{lem:dom} ensures $X$ is a model geometry of $\Gamma$. Since $G^\circ$ is non-compact, neither is the identity component of $\Isom(X)$. Therefore, Corollary \ref{cor:domin_by_loc_fin} implies $X$ is not dominated by a locally finite graph.
\end{proof}

We briefly mention a useful consequence of van Dantzig's theorem.
\begin{prop}\label{prop:vand+nss}
	Let $G$ be a Lie group. Any totally disconnected  subgroup $H\leq G$ is discrete. In particular, if $H$ is both totally disconnected and compact, then it is finite.
\end{prop}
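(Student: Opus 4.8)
The plan is to prove that the trivial subgroup is open in $H$; discreteness of $H$ is then immediate, and the ``in particular'' clause follows at once, since a discrete compact group is finite. To carry out the argument we will invoke van Dantzig's theorem, which requires $H$ to be locally compact; this is automatic once $H$ is closed in $G$ (a closed subgroup of the locally compact group $G$ is locally compact), and in particular it holds whenever $H$ is compact, which is the case relevant to the last sentence.

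The proof will rest on two facts. The first is van Dantzig's theorem (Theorem~\ref{thm:vd}): since $H$ is locally compact and totally disconnected, every identity neighbourhood of $H$ contains a compact open subgroup of $H$. The second is the elementary ``no small subgroups'' property of Lie groups, namely that there is an open neighbourhood $U$ of the identity in $G$ containing no subgroup of $G$ other than the trivial one. I would justify this via the exponential map: fixing a norm on the Lie algebra $\mathfrak g$ and some $r>0$ for which $\exp$ restricts to a diffeomorphism of $\{X:\|X\|<r\}$ onto its image, the set $U=\exp(\{X:\|X\|<r/2\})$ works, because for any nonzero $X$ with $\|X\|<r/2$ one can pick $k\geq 1$ with $r/2\leq\|kX\|<r$, and then $\exp(X)^k=\exp(kX)\notin U$ by injectivity of $\exp$ on $\{X:\|X\|<r\}$.

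It then remains to combine the two. The intersection $U\cap H$ is an identity neighbourhood of $H$, so by van Dantzig it contains a compact open subgroup $K\leq H$; since $K\subseteq U$, the no-small-subgroups property forces $K=\{1\}$. Hence $\{1\}$ is open in $H$, so $H$ is discrete, and if $H$ is in addition compact it is finite. I do not anticipate a genuine obstacle here: the statement is routine, and the only point requiring care is ensuring the local compactness of $H$ needed to apply van Dantzig. As an alternative route that sidesteps van Dantzig, one may instead invoke Cartan's closed subgroup theorem to realise the closed subgroup $H$ as a Lie subgroup of $G$; its identity component $H^\circ$ is then a connected subgroup, hence trivial by total disconnectedness, so $H$ is $0$-dimensional and therefore discrete.
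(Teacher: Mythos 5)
Your proof is correct and follows essentially the same route as the paper: combine the no-small-subgroups property of Lie groups with van Dantzig's theorem to conclude that $\{1\}$ is open in $H$. Your extra care about the local compactness of $H$ needed for van Dantzig (which the paper glosses over, and which does matter, since a non-closed totally disconnected subgroup such as $\bbQ\leq\bbR$ need not be discrete) is a worthwhile refinement, and correctly identifies that the compact case is the one actually used in the paper.
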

\begin{proof}
Lie groups satisfy the \emph{no small subgroups} property: there is a sufficiently small identity neighbourhood $U\subseteq G$ containing no non-trivial subgroups. In particular, the only subgroup contained in $H\cap U$ is trivial.  By Theorem \ref{thm:vd}, $H\cap U$ contains a subgroup that is  open in $H$, hence $\{1\}$ is open in $H$ and so $H$ is discrete.
\end{proof}

\section{Engulfing groups  and uniformly commensurated subgroups}\label{sec:engulf}

Let $\Lambda$ be a group. A \emph{commensuration} of $\Lambda$ is an isomorphism $\Lambda_1\to\Lambda_2$ between finite index subgroups of $\Lambda$. The \emph{abstract commensurator} $\Comm(\Lambda)$ of  $\Lambda$ is the group of all equivalence classes of commensurations of $\Lambda$, where two commensurations   are equivalent if they agree on a common finite index subgroup of $\Lambda$. The group operation on $\Comm(\Lambda)$ is composition after restricting the domain to a finite  index subgroup so that composition is defined.

\begin{exmp}\label{exmp:abs_comm}
	The abstract commensurator of $\bbZ^n$ is $\GL_n(\bbQ)$. More specifically, identifying  $\bbZ^n$ with the integer lattice in $\bbR^n$, the natural action of  $\GL_n(\bbQ)$ on $\bbR^n$ acts as commensurations of $\bbZ^n$, and every commensuration of $\bbZ^n$ arises in this way.
\end{exmp}

If $\Gamma$ is a group and $\Lambda\alnorm \Gamma$ is a commensurated subgroup, then there is a  homomorphism $\Phi_{\Gamma,\Lambda}:\Gamma\to \Comm(\Lambda)$ induced by conjugation. Indeed, for every $g\in G$ the  isomorphism $\phi_g:\Lambda\cap g^{-1}\Lambda g\to g\Lambda g^{-1}\cap \Lambda$  given by $h\mapsto ghg^{-1}$ is  a commensuration of $\Lambda$. The map $\Phi_{\Gamma,\Lambda}:\Gamma\to \Comm(\Lambda)$  given by $g\mapsto [\phi_g]$ is called the \emph{modular homomorphism}.

If $X$ is a metric space, the \emph{quasi-isometry group} $\QI(X)$  is the set of all equivalence classes of self quasi-isometries of $X$, where two quasi-isometries  $f,g:X\to X$ are equivalent if $\sup_{x\in X}d(f(x),g(x))<\infty$. The group operation on $\QI(X)$ is induced by composition. If $X$ and $Y$ are quasi-isometric, then a quasi-isometry $X\to Y$ naturally induces an isomorphism  $\QI(X)\to\QI(Y)$. A subset $\Omega\subseteq \QI(X)$ is \emph{uniform} if there exist constants $K$ and $A$ such that each equivalence class in $\Omega$ contains a $(K,A)$-quasi-isometry.

Let $\Lambda$ be a finitely generated group equipped with the word metric. For every commensuration  $\phi:\Lambda_1\to\Lambda_2$  of $\Lambda$,  the map $\phi^*=i_{\Lambda_2}\circ\phi\circ q_{\Lambda_1}:\Lambda\to \Lambda$ is a quasi-isometry, where $i_{\Lambda_2}:\Lambda_2\to \Lambda$ and  $q_{\Lambda_1}:\Lambda\to \Lambda_1$  are the inclusion map  and a closest point projection respectively.  This induces  a homomorphism $\Psi_\Lambda:\Comm(\Lambda)\to\QI(\Lambda)$ given by $[\phi]\to[\phi^*]$, which is independent of the choice of commensuration and closest point projection.  Recall from the introduction that a finitely generated commensurated subgroup $\Lambda\alnorm \Gamma$ is \emph{uniformly commensurated} if the image of the map \[\Gamma\xrightarrow{\Phi_{\Gamma,\Lambda}} \Comm(\Lambda)\xrightarrow{\Psi_\Lambda} \QI(\Lambda)\] is uniform.

\begin{rem}\label{rem:induced_qi}
	If $\Lambda$ is a finitely generated group, the natural map  $\Lambda\to \QI(\Lambda)$ induced by the action of $\Lambda$ on itself by left-multiplication, coincides with the composition $\Lambda\to\Comm(\Lambda)\to\QI(\Lambda)$.
\end{rem}

A \emph{quasi-action} of a group $G$ on a space $X$ is a map $\phi:G\to X^X$ such that there exists constants $K\geq 1$ and $A\geq 0$ such that the following hold:
\begin{itemize}
	\item for all $g\in G$, $\phi(g)$ is  a $(K,A)$-quasi-isometry;
	\item for all $g,h\in G$ and $x\in X$, $d(\phi(gh)(x),\phi(g)(\phi(h)(x)))\leq A$;
	\item for all $x\in X$, $d(x,\phi(1)(x))\leq A$.
\end{itemize}We typically suppress the map $\phi$ and write a quasi-action $\phi:G\to X^X$ as $G\qa X$. Two quasi-actions $G\qa X$ and $G\qa Y$ are \emph{quasi-conjugate} if there is a quasi-isometry $f:X\to Y$ that is coarsely uniformly $G$-equivariant.

A metric space $X$ is \emph{tame} if for every $K$ and $A$, there exists a constant $B$ such if two $(K,A)$-quasi-isometries from $X$ to $X$ are close, they are $B$-close. We will use the following theorem of Kapovich--Kleiner--Leeb, which is a combination of  Propositions 4.8 and 5.9 in \cite{kapovich1998derham}:
\begin{prop}[{\cite[Propositions 4.8 and 5.9]{kapovich1998derham}}]
	Symmetric spaces of non-compact type are tame.
\end{prop}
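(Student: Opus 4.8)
The plan is to reduce to the irreducible factors of the de~Rham decomposition $X=X_1\times\dots\times X_n$, prove tameness for each irreducible factor according to its rank, and then reassemble. Unwinding the definition, it suffices to show: for all $K,A$ there is $B=B(K,A)$ so that whenever $f,g\colon X\to X$ are $(K,A)$-quasi-isometries with $\sup_{x}d(f(x),g(x))<\infty$, in fact $\sup_x d(f(x),g(x))\le B$. Throughout, ``uniform'' means ``with constants depending only on $K$ and $A$''; the content is upgrading a merely finite bound to a uniform one. Since $\Isom(X)$ acts transitively on $X$ (Proposition~\ref{prop:manifold_gp_action}), each $X_i$ is homogeneous, which lets me transport local geometric data around a basepoint to an arbitrary point with uniform constants.

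\emph{Higher-rank irreducible factors.} Suppose $X_i$ has rank at least two. By the quasi-isometric rigidity of Kleiner--Leeb there is a uniform constant $B_2$ such that every $(K,A)$-quasi-isometry of $X_i$ is $B_2$-close to an isometry. So if $f_i,g_i$ are $(K,A)$-quasi-isometries of $X_i$ at finite distance, write $f_i\sim_{B_2}\phi$ and $g_i\sim_{B_2}\psi$ with $\phi,\psi\in\Isom(X_i)$; then $\psi^{-1}\phi$ has bounded displacement. The function $y\mapsto d(\psi^{-1}\phi(y),y)$ is convex on the Hadamard manifold $X_i$ (for a geodesic $\gamma$ it records the distance between $\gamma$ and the geodesic $\psi^{-1}\phi\circ\gamma$), and bounded, hence constant along every complete geodesic, hence constant on $X_i$; thus $\psi^{-1}\phi$ is a Clifford translation. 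A symmetric space of non-compact type carries no nontrivial Clifford translation, since one would force a Euclidean de~Rham factor. Hence $\phi=\psi$ and $d(f_i,g_i)\le 2B_2$.

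\emph{Rank-one irreducible factors.} If $X_i$ has rank one it is a real, complex or quaternionic hyperbolic space or the octonionic plane; in particular it is a proper, cocompact, Gromov-hyperbolic space whose visual boundary is a sphere of positive dimension. If $f_i,g_i$ are $(K,A)$-quasi-isometries at finite distance, they induce the same boundary map $\partial f_i=\partial g_i$. Fix $x\in X_i$. By homogeneity one can pick, with uniform constants, three boundary points $\xi_1,\xi_2,\xi_3$ so that $x$ lies uniformly close to each geodesic $[\xi_a,\xi_b]$ while any point lying uniformly close to all three of them is uniformly close to $x$. The image $f_i([\xi_a,\xi_b])$ is a $(K,A)$-quasigeodesic joining $\partial f_i(\xi_a)$ to $\partial f_i(\xi_b)$, so by the Morse lemma $f_i(x)$ --- and likewise $g_i(x)$, since $\partial g_i=\partial f_i$ --- lies uniformly close to the geodesic $[\partial f_i(\xi_a),\partial f_i(\xi_b)]$ for each pair $a,b$. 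As $\partial f_i$ is quasi-M\"obius with uniform constants, these three geodesics fellow-travel for only uniformly bounded time, so both $f_i(x)$ and $g_i(x)$ are confined to a common region of uniformly bounded diameter. Hence $d(f_i,g_i)\le B_1$.

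\emph{Reassembly, and the main obstacle.} For the general $X=X_1\times\dots\times X_n$ one invokes the product structure of quasi-isometries of Kapovich--Kleiner--Leeb: every $(K,A)$-quasi-isometry of $X$ is uniformly close to a map $\sigma\circ(h_1\times\dots\times h_n)$, where $\sigma$ permutes isometric factors and each $h_i$ is a $(K',A')$-quasi-isometry of $X_i$ with $(K',A')$ uniform. If $f,g$ are $(K,A)$-quasi-isometries at finite distance, the corresponding permutations must agree --- distinct permutations would carry a factor $X_i\times\{*\}$ to coarse copies of two \emph{different} factors, which lie at infinite Hausdorff distance --- and projecting to each $X_i$ exhibits the associated quasi-isometries of $X_i$ as being at finite distance, hence, by the two previous paragraphs, uniformly close; summing the contributions bounds $d(f,g)$ uniformly. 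So $X$ is tame. The genuine obstacles are precisely the two deep inputs --- the \emph{uniform} quasi-isometric rigidity of higher-rank irreducible symmetric spaces and the product decomposition of quasi-isometries of their products --- which are exactly the Kleiner--Leeb and Kapovich--Kleiner--Leeb results being quoted; everything else is routine CAT(0) convexity, thin-triangle geometry, and bookkeeping.
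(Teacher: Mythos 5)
The paper does not actually prove this proposition --- it is quoted verbatim from Kapovich--Kleiner--Leeb, and the two deep inputs you black-box (uniform closeness of quasi-isometries of higher-rank irreducible factors to isometries, and the uniform product splitting of quasi-isometries of the de~Rham decomposition) are precisely the content of the cited Propositions 4.8 and 5.9, so your argument is a faithful reconstruction of the cited proof rather than a genuinely different route. The surrounding steps are correct: in higher rank, bounded displacement of $\psi^{-1}\phi$ forces a Clifford translation, hence the identity since an irreducible factor of non-compact type has no Euclidean de~Rham factor; in rank one, the coarse-incenter argument works, though the uniform bound on the diameter of the set of points within $R$ of all three sides of an ideal triangle needs only $\delta$-hyperbolicity and the distinctness of the three ideal vertices (guaranteed by injectivity of $\partial f_i$) --- the quasi-M\"obius property plays no role there.
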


The following lemma  will be used as a source of quasi-actions:
\begin{lem}\label{lem:induced_qa}
	If $X$ is  tame and $\rho:G\to\QI(X)$ is a homomorphism with uniform image, then there is  a quasi-action $\phi:G\to X^X$ such that \[[\phi(g)]=\rho(g).\]
\end{lem}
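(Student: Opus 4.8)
The plan is to upgrade the homomorphism $\rho:G\to\QI(X)$ to an honest quasi-action by making a coherent choice of representative quasi-isometries, using uniformity to control the constants and tameness to control the closeness defects. First I would fix constants $K\geq 1$, $A\geq 0$ witnessing uniformity of $\rho(G)$, so that for every $g\in G$ the class $\rho(g)$ contains at least one $(K,A)$-quasi-isometry; I would also fix a $(K,A)$-quasi-isometry in the class $\rho(1)=[\id_X]$, which is automatically within bounded distance of $\id_X$, and (after enlarging $A$) may assume it equals $\id_X$. Using the axiom of choice, pick for each $g\in G$ a $(K,A)$-quasi-isometry $\phi(g)$ representing $\rho(g)$, with $\phi(1)=\id_X$. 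This already gives $[\phi(g)]=\rho(g)$ and the third quasi-action axiom on the nose.

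The remaining point is the coarse cocycle condition: for $g,h\in G$, both $\phi(gh)$ and $\phi(g)\circ\phi(h)$ represent $\rho(gh)=\rho(g)\rho(h)$, hence are close; I need them to be \emph{uniformly} close, i.e.\ $B$-close for a single $B$ independent of $g,h$. Here is where tameness enters. Since $\phi(g)$ and $\phi(h)$ are $(K,A)$-quasi-isometries, their composite $\phi(g)\circ\phi(h)$ is a $(K',A')$-quasi-isometry for $K'=K^2$ and $A'=KA+A$ depending only on $K,A$; and $\phi(gh)$ is a $(K,A)$-quasi-isometry, hence also a $(K',A')$-quasi-isometry. Both lie in the same class of $\QI(X)$, so they are close. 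By the definition of tameness applied with the fixed constants $(K',A')$, there is a single constant $B=B(K',A')$ such that any two close $(K',A')$-quasi-isometries are $B$-close. Therefore $d(\phi(gh)(x),\phi(g)(\phi(h)(x)))\leq B$ for all $x\in X$ and all $g,h\in G$. Taking the quasi-action constant to be $\max\{K,B,A\}$ (or $\max\{K',B,A'\}$ to be safe), the map $\phi:G\to X^X$ satisfies all three axioms, and by construction $[\phi(g)]=\rho(g)$.

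The only subtlety — and the step I'd flag as the place to be careful — is the order of quantifiers in the definition of tameness: tameness gives a bound $B$ depending on the quasi-isometry constants $(K',A')$, and the key observation is that these constants are \emph{uniform} over all pairs $(g,h)$ precisely because $\rho(G)$ is uniform. If the image of $\rho$ were not uniform, the composites would have unbounded constants and tameness would yield no uniform $B$. Everything else is routine bookkeeping with quasi-isometry constants (composition of quasi-isometries, the fact that a representative of $[\id_X]$ is boundedly close to $\id_X$), so I would state those without detailed computation. Thus the proof is: fix uniform constants, choose representatives by the axiom of choice normalising $\phi(1)=\id_X$, and invoke tameness once with the composition constants $(K^2,KA+A)$ to get the uniform coarse-associativity bound.
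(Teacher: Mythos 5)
Your proposal is correct and follows essentially the same argument as the paper: choose uniform $(K,A)$-representatives using uniformity of the image, then invoke tameness once (with constants depending only on $K,A$) to get a single bound $B$ witnessing the coarse cocycle and identity conditions. The paper handles $\phi(1)$ by noting it is $B$-close to the identity rather than normalising $\phi(1)=\id_X$, but this is an immaterial difference.
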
  
\begin{proof}
	Since $\rho$ has a uniform image, there exist $K$ and $A$ such that for each $g\in G$, there is some $(K,A)$-quasi-isometry $\phi(g):X\to X$ such that $[\phi(g)]=\rho(g)$. As $\rho$ is a homomorphism, for all $g,h\in G$ we have $[\phi(g)\phi(h)]=[\phi(gh)]$. Since $X$ is tame, there is a constant $B$, depending only on $(K,A)$ such that $\phi(g)\phi(h)$ and $\phi(gh)$ are $B$-close for all $g,h\in A$, and $\phi(1)$ is $B$-close to the identity. This shows $\phi$ is a quasi-action.
\end{proof}

A symmetric space $X$ with de Rham decomposition $X_1\times \dots \times X_n$ is said to be \emph{normalised} if whenever $X_i$ and $X_j$ are homothetic, they are isometric.  We recall the following theorem of Kleiner--Leeb:

\begin{thm}[\cite{kleinerleeb09quasiactions}]\label{thm:quasi-action}
	Let $X$ be a normalised symmetric space of non-compact type  and $G\qa X$ be a quasi-action. Then $G$ is quasi-conjugate to an isometric action  on $X$.
\end{thm}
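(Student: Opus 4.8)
The plan is to reduce the theorem, by way of the de Rham decomposition together with quasi-isometric rigidity, to a factor-by-factor problem, and then to straighten the quasi-action on each irreducible de Rham factor separately. Write $X = X_1 \times \dots \times X_n$ for the de Rham decomposition of $X$ into irreducible symmetric spaces of non-compact type. By the theorem of Kapovich--Kleiner--Leeb that quasi-isometries preserve the de Rham decomposition \cite{kapovich1998derham}, every self-quasi-isometry of $X$ lies within bounded distance of a product of quasi-isometries $X_i \to X_{\pi(i)}$ precomposed with a permutation $\pi$ of $\{1,\dots,n\}$ preserving homothety classes of factors; since $X$ is normalised, $\pi$ preserves isometry classes. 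Feeding the quasi-action $\rho : G \to \QI(X)$ through this structure produces a homomorphism from $G$ to the finite group of such permutations. Any permutation of a block of mutually isometric de Rham factors is realised by an honest isometry of the product $X$, so after fixing such realisations it remains to straighten, on the finite-index subgroup $G_0 \le G$ that is the kernel of the homomorphism to the permutation group, the induced uniform quasi-actions on the individual $X_i$, compatibly across each block of isometric factors. Compatibility means $G_0$ should act diagonally on each such block, which is what the normalisation hypothesis permits; once $G_0$ acts isometrically one recombines with the permutation isometries to obtain the isometric $G$-action. Tameness of symmetric spaces of non-compact type keeps the relevant quasi-isometry constants uniformly controlled throughout this step.

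For the higher-rank factors the reduction already finishes the job: if $X_i$ has rank at least two, the Kleiner--Leeb quasi-isometric rigidity theorem for higher-rank symmetric spaces shows the natural homomorphism $\Isom(X_i) \to \QI(X_i)$ is an isomorphism, so the induced map $G_0 \to \QI(X_i)$ already \emph{is} an isometric action. The same applies verbatim, with Pansu's rigidity theorem in place of Kleiner--Leeb, when $X_i$ is a rank-one symmetric space that is not real hyperbolic --- complex or quaternionic hyperbolic space, or the octonionic hyperbolic plane --- since again $\QI(X_i) = \Isom(X_i)$.

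The remaining, and genuinely substantive, case is a real hyperbolic factor $X_i = \bbH^m$. Here I would pass to the boundary sphere $\partial \bbH^m = S^{m-1}$ with a visual metric: a uniform quasi-action on $\bbH^m$ induces a group action on $S^{m-1}$ by uniformly quasi-M\"obius (equivalently uniformly quasiconformal) homeomorphisms, and conjugating that boundary action by a quasi-M\"obius homeomorphism onto a M\"obius action lifts to a quasi-conjugacy of the quasi-action onto an isometric action of $\bbH^m$. When $m = 2$ the straightening one needs is Hinkkanen's theorem that a group of uniformly quasisymmetric self-homeomorphisms of the circle is quasisymmetrically conjugate to a group of M\"obius transformations; for $m \ge 3$ one must instead exploit the dynamics of the boundary action together with quasiconformal rigidity for uniform group actions on spheres. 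This last point --- a Sullivan--Tukia-type rigidity statement on $S^{m-1}$ for $m \ge 3$, together with the bookkeeping needed to keep the straightening canonical enough to descend from $G_0$ back to $G$ --- is where I expect the real work to lie, and is what \cite{kleinerleeb09quasiactions} is devoted to establishing; the higher-rank and Pansu cases, by contrast, are essentially formal consequences of previously known quasi-isometric rigidity.
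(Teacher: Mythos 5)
There is an important mismatch of expectations here: the paper contains \emph{no proof} of Theorem \ref{thm:quasi-action}. It is imported as a black box from Kleiner--Leeb \cite{kleinerleeb09quasiactions} and used exactly once, in the proof of Proposition \ref{prop:engulf_commen_lattice}, where the quasi-action being straightened is cobounded because it extends the geometric action of the uniform lattice $\Lambda$. So there is no in-paper argument to compare yours against; I can only measure your sketch against the literature it invokes. On that score your overall architecture --- split along the de Rham decomposition via Kapovich--Kleiner--Leeb \cite{kapovich1998derham}, pass to the finite-index subgroup preserving the factors, straighten factor by factor, and recombine using the normalisation hypothesis --- is the standard and correct route, and the division into ``formal'' higher-rank/Pansu cases versus ``hard'' boundary-dynamical cases is the right picture.

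Two substantive corrections, though. First, you file complex hyperbolic space under Pansu's rigidity theorem. Pansu's theorem covers only the quaternionic hyperbolic spaces and the octonionic hyperbolic plane; for complex hyperbolic $n$-space the map $\Isom(X_i)\to\QI(X_i)$ is very far from surjective, since the boundary Heisenberg group admits an abundance of non-conformal quasiconformal self-maps. Complex hyperbolic factors therefore require the same Tukia-type boundary argument (uniformly quasiconformal group actions on the compactified Heisenberg group) that you reserve for the real hyperbolic case; as written, that step of your proof fails. Second, in the real hyperbolic case your appeal to ``a Sullivan--Tukia-type rigidity statement on $S^{m-1}$ for $m\ge 3$'' needs a hypothesis you have not supplied: Sullivan's theorem does handle $\partial\bbH^3=S^2$ unconditionally, but for $m\ge 4$ Tukia constructed uniformly quasiconformal groups on $S^{m-1}$ that are not quasiconformally conjugate to M\"obius groups, and these extend to uniform quasi-actions on $\bbH^m$ that are not quasi-conjugate to isometric actions. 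Some form of coboundedness (or cocompactness of the action on triples of the limit set) is therefore essential at this step --- harmless for the application in this paper, where the quasi-action is cobounded, but it means your sketch cannot be completed for arbitrary quasi-actions in the generality you (and, as literally quoted, the theorem statement) assert. A minor point in the same direction: the full straightening theorem for uniformly quasisymmetric groups on the circle is due to Markovic; Hinkkanen proved important special cases.
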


Given a group $\Gamma$ containing  an infinite cyclic commensurated subgroup $\Lambda= \langle a \rangle\cong \bbZ$,  $\Gamma$ does not  act on $\Lambda$ by conjugation unless $\Lambda$ is normal. To remedy this, we embed $\Lambda$ as a lattice in $\bbR$ and define an action of $\Gamma$ on $\bbR$ as follows. If $g\in \Gamma$ and $ga^mg^{-1}=a^n$ for $n,m\neq 0$, then $g$ acts on $\bbR$ via multiplication by  $\frac{n}{m}$. Note that as $\Lambda$ is commensurated, such $m$ and $n$ exist. This gives a well-defined action $\Delta:\Gamma\rightarrow \Aut(\bbR)$ such that $\Delta(g)$ agrees with  conjugation by $g$   on a finite index subgroup of $\Lambda$. This phenomenon motivates the following definition:
\begin{defn}\label{defn:engulf}
	A \emph{Hecke pair} $(\Gamma,\Lambda)$ consists of a group $\Gamma$ and a commensurated subgroup $\Lambda\alnorm \Gamma$. We say that a locally compact group $E$ is an \emph{engulfing group} of the Hecke pair $(\Gamma,\Lambda)$ if  there is a virtual uniform lattice embedding $\rho:\Lambda\rightarrow E$ and a  homomorphism $\Delta:\Gamma\to \Aut(E)$ such that  \begin{enumerate}
		\item $\Delta$ extends the map $\Lambda\xrightarrow{\rho} E\to  \Aut(E)$, where  $E\to \Aut(E)$ is the action of $E$ on itself by  left conjugation.
		\item for all $g\in \Gamma$, there is a finite index subgroup $\Lambda_g\leq \Lambda$ such that  \[\Delta(g)(\rho(h))=\rho(ghg^{-1})\] for all $h\in \Lambda_g$.
	\end{enumerate} More generally, we say $(E,\rho,\Delta)$ as above is an \emph{engulfing triple} of the Hecke pair $(\Gamma,\Lambda)$.
\end{defn}
Engulfing groups play a central role in Theorem \ref{thm:build_lattice_embedding}, which is used to prove one direction of Theorem \ref{thm:mainintro}. The following proposition generalises the discussion preceding Definition \ref{defn:engulf}.
\begin{prop}\label{prop:engulf_abelian}
	Let $(\Gamma,\Lambda)$ be a Hecke pair with $\Lambda$ isomorphic to $\bbZ^n$. Then $(\Gamma,\Lambda)$ has engulfing group $\bbR^n$.
\end{prop}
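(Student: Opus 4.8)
The plan is to construct the engulfing group $\bbR^n$ explicitly, generalising the $n=1$ case worked out before Definition \ref{defn:engulf}. First I would set $E = \bbR^n$ and fix an isomorphism $\Lambda \cong \bbZ^n$, identifying $\Lambda$ with the standard integer lattice $\bbZ^n \subseteq \bbR^n$; the inclusion $\rho:\Lambda\hookrightarrow \bbR^n$ is then a uniform lattice embedding (indeed an honest embedding with trivial kernel and cocompact image $\bbZ^n$), so it is in particular a virtual uniform lattice embedding. The key point is to build the homomorphism $\Delta:\Gamma\to \Aut(\bbR^n) = \GL_n(\bbR)$. For this I invoke Example \ref{exmp:abs_comm}: the abstract commensurator $\Comm(\bbZ^n)$ is naturally $\GL_n(\bbQ)$, acting on $\bbR^n$ by its standard linear action, and every commensuration of $\bbZ^n$ is the restriction of a unique such linear map. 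Composing the modular homomorphism $\Phi_{\Gamma,\Lambda}:\Gamma\to \Comm(\Lambda)\cong \GL_n(\bbQ)$ with the inclusion $\GL_n(\bbQ)\hookrightarrow \GL_n(\bbR) = \Aut(\bbR^n)$ yields the desired $\Delta$.

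Next I would verify the two axioms of Definition \ref{defn:engulf}. For axiom (1): since $\bbR^n$ is abelian, the conjugation action $\bbR^n\to\Aut(\bbR^n)$ is trivial, and likewise the composite $\Lambda\xrightarrow{\rho}\bbR^n\to\Aut(\bbR^n)$ is trivial; so I must check that $\Delta$ restricted to $\Lambda$ is trivial. This holds because conjugation of $\bbZ^n$ by an element of $\bbZ^n$ (or by any $g\in\Gamma$ whose induced commensuration is inner on a finite index subgroup) is the identity commensuration, which maps to the identity in $\GL_n(\bbQ)$ — more precisely, $\Phi_{\Gamma,\Lambda}|_\Lambda = \Psi$ composed with the canonical $\Lambda\to\Comm(\Lambda)$, and since $\Lambda$ is abelian this canonical map is trivial. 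Thus $\Delta|_\Lambda$ is trivial, matching the trivial conjugation action, so (1) holds. For axiom (2): given $g\in\Gamma$, the commensuration $\phi_g:\Lambda\cap g^{-1}\Lambda g\to \Lambda\cap g\Lambda g^{-1}$, $h\mapsto ghg^{-1}$, agrees by definition of $\Phi_{\Gamma,\Lambda}$ and of the action of $\GL_n(\bbQ)$ on $\bbR^n$ with the linear map $\Delta(g)$ on the finite index subgroup $\Lambda_g := \Lambda\cap g^{-1}\Lambda g$; that is, $\Delta(g)(\rho(h)) = \rho(ghg^{-1})$ for all $h\in\Lambda_g$, as required.

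Finally, because a homomorphism to a discrete group that we then post-compose into a Lie group poses no continuity issue (and $\Gamma$ carries no topology here beyond the discrete one in Definition \ref{defn:engulf}), there is nothing further to check: $(\bbR^n, \rho, \Delta)$ is an engulfing triple of $(\Gamma,\Lambda)$. The only genuinely substantive step is the identification $\Comm(\bbZ^n)\cong\GL_n(\bbQ)$ with its standard $\bbR^n$-action, which is Example \ref{exmp:abs_comm} and hence available; everything else is bookkeeping. I do not anticipate a serious obstacle — the main thing to be careful about is axiom (1), namely confirming that the restriction of $\Delta$ to $\Lambda$ lands in the trivial subgroup of $\Aut(\bbR^n)$, which follows from abelianness of $\Lambda$ via Remark \ref{rem:induced_qi} and the fact that inner automorphisms of an abelian group are trivial.
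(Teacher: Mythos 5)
Your proposal is correct and follows essentially the same route as the paper: identify $\Lambda$ with the integer lattice in $\bbR^n$, use Example \ref{exmp:abs_comm} to realise $\Comm(\bbZ^n)$ as $\GL_n(\bbQ)$ acting linearly on $\bbR^n$, and compose the modular homomorphism with $\GL_n(\bbQ)\hookrightarrow\GL_n(\bbR)=\Aut(\bbR^n)$ to obtain $\Delta$. Your explicit check of axiom (1) via abelianness (which the paper leaves implicit) is fine, though the appeal to Remark \ref{rem:induced_qi} there is unnecessary — triviality of inner commensurations of an abelian group suffices.
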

\begin{proof}
	We fix an embedding $\rho:\Lambda\to \bbR^n$  with image  the integer lattice and identify $\Lambda$ with its image. As noted in Example \ref{exmp:abs_comm}, all commensurations of $\Lambda$ arise from the standard action of an element $\GL_n(\bbQ)$  restricted to a finite index subgroup of $\Lambda$.  The modular homomorphism $\Gamma\to \Comm(\Lambda)$ thus induces a homomorphism $\Delta:\Gamma\to \GL_n(\bbQ)\leq \GL_n(\bbR)=\Aut(\bbR^n)$.  By construction,  for each $g\in G$ there is a finite index subgroup $\Lambda_g\leq \Lambda$ such that  $\Delta(g)(\rho(h))=\rho(ghg^{-1})$ for all $h\in \Lambda_g$. Thus $\bbR^n$ is an engulfing group of $(\Gamma,\Lambda)$.
\end{proof}
\begin{rem}
	Using a result of Malcev \cite{malcev1951class}, an analogue of Proposition \ref{prop:engulf_abelian} holds when $\Lambda$ is a finitely generated torsion-free nilpotent group.
\end{rem}

We have the following sufficient criterion for  $E$ to be an engulfing group.
\begin{lem}\label{lem:engulfing}
		Let $(\Gamma,\Lambda)$ be a Hecke pair with  $\Lambda$  finitely generated. Suppose there is a locally compact group $E$ and a homomorphism $\phi:\Gamma\to E$ such that $\phi|_\Lambda$ is a virtual uniform lattice embedding. Then $E$ is an engulfing group of $(\Gamma,\Lambda)$ and $\Lambda$ is uniformly commensurated.
\end{lem}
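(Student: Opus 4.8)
The plan is to take $\rho:=\phi|_\Lambda$, which by hypothesis is a virtual uniform lattice embedding, together with the homomorphism $\Delta\colon\Gamma\to\Aut(E)$, $g\mapsto c_{\phi(g)}$ (conjugation by $\phi(g)$), and to check directly that $(E,\rho,\Delta)$ is an engulfing triple; then to deduce that $\Lambda$ is uniformly commensurated by observing that, after passing to $\QI$, conjugation becomes left translation.

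For the first part I would work straight from the definitions. Since $\Lambda$ is finitely generated and $\phi(\Lambda)$ is a uniform lattice, $E$ is compactly generated \cite{cornulierdlH2016metric}; by Proposition \ref{prop:discrete_copci} the map $\rho$ is copci, so by Lemma \ref{lem:quasi-isom_copci} it is a quasi-isometry of $\Lambda$ onto $E$ equipped with a word metric with respect to a compact generating set. Each $\Delta(g)=c_{\phi(g)}$ is a continuous automorphism of $E$, and $\Delta$ is a homomorphism because $\phi$ is; it extends $\Lambda\xrightarrow{\rho}E\to\Aut(E)$ because $\Delta(\lambda)$ is conjugation by $\phi(\lambda)=\rho(\lambda)$. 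Setting $\Lambda_g:=\Lambda\cap g^{-1}\Lambda g$, which is finite index in $\Lambda$ since $\Lambda$ is commensurated, one has $ghg^{-1}\in\Lambda$ and $\Delta(g)(\rho(h))=\phi(g)\phi(h)\phi(g)^{-1}=\phi(ghg^{-1})=\rho(ghg^{-1})$ for all $h\in\Lambda_g$. This verifies Definition \ref{defn:engulf}, so $E$ is an engulfing group of $(\Gamma,\Lambda)$.

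For the second part, the key observation is that for any $e\in E$ the conjugation $c_e=L_e\circ R_{e^{-1}}$ lies at bounded distance from the left translation $L_e$, since a direct computation with the left-invariant metric shows $d(xe^{-1},x)$ is independent of $x\in E$; hence $[c_e]=[L_e]$ in $\QI(E)$, where $L_e$ is a genuine $(1,0)$-quasi-isometry (indeed an isometry). Fixing a quasi-inverse $\bar\rho$ of $\rho$ and a representative $\phi_g^*$ of $\Psi_\Lambda(\Phi_{\Gamma,\Lambda}(g))$ that agrees with $h\mapsto ghg^{-1}$ on $\Lambda_g$, the identity displayed above shows $\rho\circ\phi_g^*$ and $c_{\phi(g)}\circ\rho$ agree on the cobounded subgroup $\Lambda_g$, so they are close; composing with $\bar\rho$ and using $\bar\rho\rho\sim\mathrm{id}_\Lambda$ then gives $[\phi_g^*]=[\bar\rho\circ L_{\phi(g)}\circ\rho]$ in $\QI(\Lambda)$. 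Since $L_{\phi(g)}$ is a $(1,0)$-quasi-isometry and the quasi-isometry constants of $\rho$ and $\bar\rho$ are independent of $g$, the map $\bar\rho\circ L_{\phi(g)}\circ\rho$ is a $(K,A)$-quasi-isometry with $K,A$ independent of $g$, so the image of $\Gamma$ in $\QI(\Lambda)$ is uniform, i.e.\ $\Lambda$ is uniformly commensurated. I expect the only real obstacle to be spotting that conjugation equals left translation in $\QI(E)$: without this, the intermediate steps that pass between quasi-isometry classes via ``agreement on a cobounded set implies closeness'' are harmless, since their constants are allowed to depend on $g$, and all the uniformity is extracted at the end from the fact that $L_{\phi(g)}$ is an isometry.
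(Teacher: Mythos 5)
Your proposal is correct and follows essentially the same route as the paper: take $\rho=\phi|_\Lambda$, let $\Delta(g)$ be conjugation by $\phi(g)$, and represent the image of $g$ in $\QI(\Lambda)$ by $\bar\rho\circ L_{\phi(g)}\circ\rho$, whose constants are uniform because $L_{\phi(g)}$ is an isometry of $E$. The paper packages the intermediate step as a single left-invariance computation bounding $d_\Lambda(ghg^{-1},f_g(h))$ by a constant depending only on $g$, which is just a condensed version of your observation that conjugation and left translation coincide in $\QI(E)$.
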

\begin{proof}
We define  $\Delta:\Gamma\to\Aut(E)$ by $\Delta(g)(l)=\phi(g)l\phi(g)^{-1}$ and set $\rho\coloneqq \phi|_\Lambda$. Thus $\Delta(g)\phi(h)=\phi(ghg^{-1})$ for all $g,h\in \Gamma$ and so $(E,\rho,\Delta)$ is an engulfing triple of $(\Gamma,\Lambda)$.
We equip $\Lambda$ and $E$ with the word metrics $d_\Lambda$ and $d_E$ with respect to finite and  compact generating sets respectively. By Lemma  \ref{lem:quasi-isom_copci} and Proposition \ref{prop:discrete_copci},  $\rho:\Lambda\to E$ is a quasi-isometry. Let  $f:E\to \Lambda$ be a coarse inverse to $\rho$. For $e\in E$, let $L_e:E\to E$ be left multiplication by $e$, which is an isometry of $E$.  Then there exist constants $K$ and $A$ such for every $g\in \Gamma$, the map $f_{g}\coloneqq f\circ L_{\phi(g)}\circ \rho:\Lambda\to \Lambda$ is a $(K,A)$-quasi-isometry. We may also assume, by increasing $K$ and $A$ if needed, that $f$ and $\rho$ are $(K,A)$-quasi-isometries and $ \rho\circ f$ is $A$-close to the identity.
Then for all $g\in G$ and  $h\in g^{-1}\Lambda g\cap \Lambda$, we have
\begin{align*}
 d_\Lambda(ghg^{-1},f_g(h))	&\leq  Kd_E(\rho(ghg^{-1}),\rho(f(\phi(g)\rho(h))))+KA\\
 &\leq  Kd_E(\phi(ghg^{-1}),\phi(gh))+2KA\\
 &=  Kd_E(\phi(g^{-1}),1)+2KA,
\end{align*}
noting that this bound depends only on $g$ and is independent of $h$. This ensures the  map $\Gamma\to\Comm(\Lambda)\to\QI(\Lambda)$ is given by $g\mapsto [f_g]$. Since every $f_g$ is a $(K,A)$-quasi-isometry,  $\Lambda$ is uniformly commensurated.
\end{proof}

The following proposition gives a situation in which Lemma \ref{lem:engulfing} can be applied.
\begin{prop}\label{prop:engulf_commen_lattice}
	Let $\Gamma$ be a  group and let $\Lambda\alnorm \Gamma$ be a uniformly commensurated subgroup that is a virtual uniform lattice in a connected centre-free semisimple Lie group. 		Then there is a centre-free semisimple Lie group $S$ with  no compact factors and finitely many components, and a   homomorphism $\phi:\Gamma\to S$  such that $\phi|_{\Lambda}$ is a virtual uniform lattice embedding. In particular,  $S$ is an engulfing  of the pair $(\Gamma,\Lambda)$.
\end{prop}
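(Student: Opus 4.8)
The plan is to produce a homomorphism $\phi\colon\Gamma\to S$ into a suitable semisimple Lie group so that $\phi|_\Lambda$ is a virtual uniform lattice embedding; Lemma~\ref{lem:engulfing} then immediately gives that $S$ is an engulfing group of $(\Gamma,\Lambda)$. First I would reduce to the case where the ambient Lie group has no compact factors. Writing the given connected centre-free semisimple group as a direct product $L=L_{\mathrm{nc}}\times L_{\mathrm c}$ of its centre-free simple factors, with $L_{\mathrm c}$ the product of the compact ones, the quotient map $L\to L_{\mathrm{nc}}$ has compact kernel $L_{\mathrm c}$ and so carries the given uniform lattice onto a uniform lattice of $L_{\mathrm{nc}}$; composing, we obtain a virtual uniform lattice embedding $\rho_0\colon\Lambda\to L_{\mathrm{nc}}$, where $L_{\mathrm{nc}}$ is connected, centre-free, semisimple and has no compact factors (in particular $\Lambda$ is finitely generated). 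By Proposition~\ref{prop:manifold_gp_action}, $L_{\mathrm{nc}}$ acts geometrically on a symmetric space of non-compact type; after rescaling its de~Rham factors we may take this space $X$ to be \emph{normalised}, and $L_{\mathrm{nc}}$ --- being connected, hence contained in $\Isom(X)^\circ$, which preserves the de~Rham decomposition --- still acts geometrically on $X$. Consequently $\rho_0$ defines a geometric action of $\Lambda$ on $X$, so the orbit map $o\colon\Lambda\to X$, $o(\lambda)=\rho_0(\lambda)x_0$, is a quasi-isometry which is genuinely equivariant for left multiplication on $\Lambda$ and the $\rho_0$-action on $X$.

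Next I would transport the commensuration data to $X$. The modular homomorphism together with the map to quasi-isometry groups gives $\psi\colon\Gamma\xrightarrow{\Phi_{\Gamma,\Lambda}}\Comm(\Lambda)\xrightarrow{\Psi_\Lambda}\QI(\Lambda)\xrightarrow{o_*}\QI(X)$, and since $\Lambda$ is uniformly commensurated and conjugation by the fixed quasi-isometry $o$ (together with a coarse inverse) preserves quasi-isometry constants up to a fixed amount, the image of $\psi$ is uniform. As $X$ is a symmetric space of non-compact type it is tame (Kapovich--Kleiner--Leeb), so Lemma~\ref{lem:induced_qa} furnishes a quasi-action $\Theta\colon\Gamma\to X^X$ with $[\Theta(g)]=\psi(g)$; as $X$ is a normalised symmetric space of non-compact type, Theorem~\ref{thm:quasi-action} yields an isometric action $\beta\colon\Gamma\to\Isom(X)$ and a quasi-isometry $f\colon X\to X$ that is coarsely uniformly $\Gamma$-equivariant between $\Theta$ and $\beta$. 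I would then set $S\coloneqq\Isom(X)$, which by Proposition~\ref{prop:manifold_gp_action} is a centre-free semisimple Lie group with no compact factors and finitely many components, and take $\phi\coloneqq\beta$.

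The substance of the proof is to verify that $\phi|_\Lambda=\beta|_\Lambda$ is a virtual uniform lattice embedding, equivalently (Lemma~\ref{lem:copci_action}, Proposition~\ref{prop:discrete_copci}) a geometric action of $\Lambda$ on $X$. By Remark~\ref{rem:induced_qi} the composite $\Lambda\to\Comm(\Lambda)\to\QI(\Lambda)$ is the left-multiplication action, and using the $\Lambda$-equivariance of $o$ this identifies $\psi(\lambda)$ with the class of the isometry $\rho_0(\lambda)$; hence $\Theta(\lambda)$ is close to $\rho_0(\lambda)$ for each $\lambda$. Crucially, tameness of $X$ upgrades this to a \emph{uniform} bound: since all the $\Theta(\lambda)$ are $(K,A)$-quasi-isometries with the same constants, there is a single $B$ with $d(\Theta(\lambda)x,\rho_0(\lambda)x)\le B$ for all $\lambda\in\Lambda$ and $x\in X$. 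Combining with the coarse equivariance of $f$ yields a constant $C_0$ with $d\bigl(f(\rho_0(\lambda)x),\beta(\lambda)f(x)\bigr)\le C_0$ for all $\lambda$ and $x$, so $f$ coarsely intertwines the geometric $\rho_0$-action with $\beta|_\Lambda$. Standard transfer arguments across this coarsely equivariant quasi-isometry then show $\beta|_\Lambda$ is cobounded (the $\beta(\Lambda)$-orbit of $f(x_0)$ is coarsely dense because the $\rho_0(\Lambda)$-orbit of $x_0$ is, and $f$ is coarsely surjective) and metrically proper (if $d(\beta(\lambda)f(x_0),f(x_0))\le N$, the lower quasi-isometry bound forces $d(\rho_0(\lambda)x_0,x_0)$ to be bounded, and only finitely many $\lambda$ satisfy this, as $\rho_0$ has finite kernel and discrete image and $\Isom(X)$ acts properly on the proper space $X$). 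Hence $\beta|_\Lambda$ is geometric, $\phi\colon\Gamma\to S$ is as required, and Lemma~\ref{lem:engulfing} shows $S$ is an engulfing group of $(\Gamma,\Lambda)$. The principal obstacle is exactly this last verification --- extracting uniform constants from the tameness of $X$ so that the $\rho_0$-action and the $\beta$-action are linked by a single coarsely equivariant quasi-isometry, and then transferring coboundedness and properness along it.
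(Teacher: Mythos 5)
Your proposal is correct and follows essentially the same route as the paper: realise $\Lambda$ geometrically on a normalised symmetric space $X$ of non-compact type, use uniform commensuration plus tameness (Lemma \ref{lem:induced_qa}) to induce a quasi-action of $\Gamma$ on $X$, straighten it with Kleiner--Leeb (Theorem \ref{thm:quasi-action}), check via Remark \ref{rem:induced_qi} that the restriction to $\Lambda$ is quasi-conjugate to its natural action and hence geometric, and conclude with Lemma \ref{lem:engulfing}. The extra details you supply (splitting off compact factors, the uniform closeness bound from tameness, and the transfer of properness and coboundedness) are exactly the steps the paper leaves implicit.
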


\begin{proof}
First note that  $\Lambda$ acts  geometrically on a symmetric space $X$ of non-compact type; see Proposition \ref{prop:manifold_gp_action}. Without loss of generality, we may assume $X$ is normalised. Since $X$ is tame and   $\Lambda$ is uniformly commensurated,  Lemma \ref{lem:induced_qa} implies the  map $\Gamma\to \Comm(\Lambda)\to \QI(\Lambda)\cong \QI(X)$ induces a quasi-action $\Gamma\qa X$. By Theorem \ref{thm:quasi-action}, this quasi-action is quasi-conjugate to an isometric action $\phi:\Gamma\to \Isom(X)$. By Remark \ref{rem:induced_qi}, this restricts to an isometric action $\phi|_\Lambda$ that is quasi-conjugate to the natural action of $\Lambda$ on itself, hence is geometric. By Lemma \ref{lem:copci_action} and Proposition \ref{prop:discrete_copci}, $\phi|_\Lambda$ is a  virtual uniform lattice embedding.   Lemma \ref{lem:engulfing} now ensures $\Isom(X)$ is an engulfing  of the pair $(\Gamma,\Lambda)$. Moreover, Proposition \ref{prop:manifold_gp_action} ensures $\Isom(X)$ is a semisimple Lie group of the required type.
\end{proof}
We can now use Proposition \ref{prop:engulf_commen_lattice} to describe the structure of uniformly commensurated normal subgroups that are lattices in semisimple Lie groups. 
\begin{cor}\label{cor:unif_comm}
	Let $\Lambda\vartriangleleft \Gamma$ be a uniformly commensurated normal subgroup that is isomorphic to a uniform lattice in a connected  centre-free semisimple Lie group with no compact factors and finitely many components. Then there is a normal subgroup $\Delta\vartriangleleft \Gamma$ such that some  finite index subgroup of $\Gamma$  splits as a direct product $\Lambda\times \Delta$. 
\end{cor}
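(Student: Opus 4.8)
The plan is to feed the normal subgroup $\Lambda\vartriangleleft\Gamma$ into Proposition \ref{prop:engulf_commen_lattice}. Since $\Lambda$ is uniformly commensurated and (isomorphic to) a uniform lattice in a connected centre-free semisimple Lie group, that proposition produces a centre-free semisimple Lie group $S$ with no compact factors and finitely many components, together with a homomorphism $\phi\colon\Gamma\to S$ whose restriction $\phi|_\Lambda$ is a virtual uniform lattice embedding. Set $\Delta:=\ker(\phi)\vartriangleleft\Gamma$. The argument then reduces to two claims: (i) $\phi|_\Lambda$ is injective, so $\Lambda\cap\Delta=\{1\}$; and (ii) $\phi(\Gamma)$ is a discrete subgroup of $S$ containing the lattice $\phi(\Lambda)$ with finite index. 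Granting these, put $\Gamma':=\phi^{-1}(\phi(\Lambda))$: this has finite index in $\Gamma$ by (ii), since $[\Gamma:\Gamma']=[\phi(\Gamma):\phi(\Lambda)]$, and $\Gamma'=\Lambda\Delta$ because every element of $\Gamma'$ differs from an element of $\Lambda$ by an element of $\ker(\phi)$. Both $\Lambda$ and $\Delta$ are normal in $\Gamma$, hence in $\Gamma'$; they meet trivially by (i), and two normal subgroups with trivial intersection commute, so $\Gamma'=\Lambda\times\Delta$ is the desired finite-index direct product.

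For (i), I would use that, by Remark \ref{rem:induced_qi} and the proof of Proposition \ref{prop:engulf_commen_lattice}, the isometric action $\phi|_\Lambda$ is quasi-conjugate to the action of $\Lambda$ on itself by left multiplication. Hence any $\lambda\in\ker(\phi|_\Lambda)$ acts on $\Lambda$ within bounded distance of the identity, so $\sup_{x\in\Lambda}d(\lambda x,x)<\infty$, which forces the $\Lambda$-conjugacy class of $\lambda$ to be finite; thus $\ker(\phi|_\Lambda)$ lies in the FC-centre of $\Lambda$. Since $\Lambda$ is a lattice in a connected centre-free semisimple Lie group $G$ with no compact factors, the Borel density theorem implies that $\Lambda$, and any finite-index subgroup of it, is Zariski dense in $G$. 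Applying this to the centraliser in $\Lambda$ of an FC-centre element (which has finite index) shows any such element commutes with a Zariski-dense subgroup of $G$, hence lies in $Z(G)=\{1\}$. Therefore the FC-centre of $\Lambda$ is trivial and $\phi|_\Lambda$ is injective.

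For (ii), the crucial point is that the normaliser $N_S(\phi(\Lambda))$ is discrete. This is again a consequence of Borel density: passing to the cocompact lattice $\phi(\Lambda)\cap S^\circ$ in $S^\circ$ (which is again semisimple with no compact factors, as any compact factor would generate a compact connected normal subgroup of $S$), one sees its centraliser in $S$, and hence the centraliser $Z_S(\phi(\Lambda))$, is finite; the identity component of $N_S(\phi(\Lambda))$, being connected and normalising the discrete group $\phi(\Lambda)$, centralises it, so is trivial. Since $\Lambda\vartriangleleft\Gamma$ gives $\phi(\Gamma)\leq N_S(\phi(\Lambda))$, the group $\phi(\Gamma)$ is discrete; being discrete and containing the cocompact lattice $\phi(\Lambda)$, it is itself a cocompact lattice in $S$, and comparing Haar covolumes (equivalently, counting sheets of the orbifold covering between the compact quotients) gives $[\phi(\Gamma):\phi(\Lambda)]<\infty$.

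The only real friction I anticipate is the structure-theoretic bookkeeping in (ii) when $S$ is disconnected: checking that $\phi(\Lambda)$ and $\phi(\Lambda)\cap S^\circ$ are honest cocompact lattices of the kind to which the Borel density theorem and the normaliser-discreteness argument apply, and recording the exact form in which Borel density is used. The group-theoretic assembly in the first paragraph and the identification of $\ker(\phi|_\Lambda)$ with a subgroup of the FC-centre are routine.
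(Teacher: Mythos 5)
Your proposal is correct and follows essentially the same route as the paper: apply Proposition \ref{prop:engulf_commen_lattice}, set $\Delta=\ker(\phi)$, use normality and injectivity of $\phi|_\Lambda$ to see that $\Lambda$ and $\Delta$ commute with trivial intersection, and get finiteness of $[\Gamma:\phi^{-1}(\phi(\Lambda))]$ from the fact that $\phi(\Gamma)$ normalises the lattice $\phi(\Lambda)$. The only difference is that you re-derive two standard inputs by hand via Borel density --- triviality of the kernel of $\phi|_\Lambda$ (the paper just notes a virtual lattice embedding has finite kernel and $\Lambda$ has no nontrivial finite normal subgroups) and finiteness of $[N_S(\phi(\Lambda)):\phi(\Lambda)]$ (the paper cites Raghunathan, Theorem 5.26) --- which is sound but not a genuinely different argument.
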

\begin{proof}
	The hypothesis on $\Lambda$ ensures it contains no finite normal subgroups. By Proposition \ref{prop:engulf_commen_lattice}, there is a centre-free semisimple Lie group $S$ with no compact factors and finitely many components, and a   map $\phi:\Gamma\to S$ such that $\phi|_\Lambda$ is a virtual uniform lattice embedding. Since $\Lambda$ has no finite normal subgroups, $\phi|_\Lambda$ is injective. 
	
	 As $\phi|_\Lambda$ is injective, $\Lambda\cap\ker(\phi)=\{1\}$. Let  $g\in \ker(\phi)$ and $h\in \Lambda$. Since $\phi(g)=1$, $\phi([g,h])=1$. As $\Lambda$ is normal, $[g,h]\in \Lambda$ and so injectivity of $\phi|_\Lambda$  ensures $[g,h]=1$.  Since $\ker(\phi)$ and $\Lambda$ commute and have trivial intersection,  $\Gamma_1\coloneqq \phi^{-1}(\phi(\Lambda))=\ker(\phi)\Lambda$ is isomorphic to the direct product $\ker(\phi)\times \Lambda$. As $\Lambda$ is normal in $\Gamma$, $\phi(\Gamma)\subseteq N_{S}(\phi(\Lambda))$. However, since $\phi(\Lambda)$ is a uniform lattice in $S$, a consequence of Borel density ensures $\phi(\Lambda)$ is of finite index in its normaliser $N_{S}(\phi(\Lambda))$ \cite[Theorem 5.26]{raghunathan1972discrete}, implying $\Gamma_1$ is a finite index subgroup of $\Gamma$.
\end{proof}

We digress briefly to discuss Mostow rigidity, giving examples of commensurated subgroups that are automatically uniformly commensurated. 
Let $G$ be a centre-free semisimple Lie group with no compact factors and finitely many components. Then $G$ splits as a direct product $G_1\times \dots \times G_n$, where each $G_i$ is simple and non-compact.  A uniform lattice $\Gamma\leq G$ is said to be \emph{Mostow rigid} if  there is no  factor $G_i$ isomorphic to $\PSL(2,\bbR)$ such that $\Gamma G_i\leq G$ is closed. Equivalently, $\Gamma$ does not virtually split as a direct product with some direct factor a uniform lattice in  $\PSL(2,\bbR)$.
\begin{thm}[{\cite[Theorem A']{mostow1973strong}}]\label{thm:mostowrigid}
	If $G$ is a connected centre-free semisimple Lie group with no compact factors and $\Gamma,\Gamma'\leq G$ are two Mostow rigid lattices, then any isomorphism $\Gamma\to \Gamma'$ extends uniquely to a smooth automorphism $G\to G$.
\end{thm}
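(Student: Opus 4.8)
The plan is to run the quasi-isometric version of Mostow's argument. By Proposition~\ref{prop:manifold_gp_action}, $G$ acts geometrically on a symmetric space $X$ of non-compact type with $G=\Isom(X)^{\circ}$, and after rescaling the de Rham factors we may assume $X$ is normalised. Both $\Gamma$ and $\Gamma'$ act geometrically on $X$ as uniform lattices, so fixing basepoints and invoking the Milnor--Schwarz lemma, an isomorphism $\theta\colon\Gamma\to\Gamma'$ induces a quasi-isometry $f\colon X\to X$ with $\sup_{\gamma,x}d\big(f(\gamma x),\theta(\gamma)f(x)\big)=:C'<\infty$. The goal is to show $f$ lies within bounded distance of an isometry $\psi\in\Isom(X)$, and then to check that $\psi$ implements $\theta$ by conjugation.

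The first, and by far the hardest, step is to produce $\psi$. Decompose $X=X_1\times\cdots\times X_n$ into irreducible de Rham factors, matching $G=G_1\times\cdots\times G_n$ into non-compact simple factors. By quasi-isometric rigidity of products (Eskin--Farb, Kleiner--Leeb), $f$ respects this decomposition up to a permutation of homothetic factors and bounded error, so one may treat each factor separately; equivalently, one applies Theorem~\ref{thm:quasi-action} to the $\theta$-twisted quasi-action of $\Gamma$ on $X$ and then works to make the resulting quasi-conjugacy isometric. On a higher-rank irreducible factor, every self-quasi-isometry is bounded distance from an isometry (Kleiner--Leeb, via automorphisms of the Tits building at infinity). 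On a rank-one factor, pass to the visual boundary: the induced boundary self-map is quasi-conformal; it is automatically conformal for complex, quaternionic and octonionic hyperbolic factors (Pansu), and for real hyperbolic factors of dimension $\geq 3$ equivariance of the boundary map under the cocompact action forces conformality through ergodicity of the geodesic flow --- this is the technical core of Mostow's theorem --- whence $f$ restricted to that factor is close to an isometry. The lone exception is a real hyperbolic plane factor, where rigidity genuinely fails (Teichm\"uller deformations); but the Mostow rigid hypothesis says no $\PSL(2,\bbR)$ factor $G_i$ has $\Gamma G_i$ closed, so the lattice never splits off such a factor and the higher-rank argument applies there too. Assembling the factors gives an isometry $\psi$ with $d(f(x),\psi(x))\leq C$ for all $x$.

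It remains to check $\psi$ conjugates $\Gamma$ to $\Gamma'$ compatibly with $\theta$. For $\gamma\in\Gamma$ set $\sigma_\gamma=\psi^{-1}\theta(\gamma)\psi\gamma^{-1}\in\Isom(X)$; combining the two displacement bounds shows $d(\sigma_\gamma(z),z)\leq 2C+C'$ for all $z\in X$, so $\sigma_\gamma$ has bounded orbits and by the Cartan fixed point theorem fixes some $p\in X$. A non-trivial isometry fixing $p$ would rotate some geodesic ray issuing from $p$, producing unbounded displacement along it; hence $\sigma_\gamma=\id$ and $\theta(\gamma)=\psi\gamma\psi^{-1}$ for every $\gamma$. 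Since $G=\Isom(X)^{\circ}$ is characteristic in $\Isom(X)$, conjugation by $\psi$ restricts to an automorphism of $G$, which is smooth as a continuous automorphism of a Lie group, and extends $\theta$. Uniqueness follows from the Borel density theorem: a uniform lattice in $G$ is Zariski-dense, so if two continuous automorphisms of $G$ agree on $\Gamma$ their composite quotient fixes a Zariski-dense set, and its fixed subgroup, being Zariski-closed, must be all of $G$.

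I expect the genuine obstacle to be exactly the rank-one, higher-dimensional step --- upgrading quasi-conformality of the boundary map to conformality in the real hyperbolic case, and correctly dovetailing this with the Mostow rigid hypothesis to dispose of $\PSL(2,\bbR)$ factors. Everything else is structure theory of symmetric spaces or formal manipulation of (quasi-)actions, much of it already packaged in Theorem~\ref{thm:quasi-action} and the results quoted above.
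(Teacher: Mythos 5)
The paper does not prove this statement: it is imported verbatim as Theorem~A$'$ of Mostow's monograph \cite{mostow1973strong}, so there is no internal argument to compare yours against. What you have written is an outline of the proof of Mostow rigidity itself, and much of it is a fair summary of the standard route: the equivariant quasi-isometry from Milnor--Schwarz, boundary maps, Pansu's theorem for the exotic rank-one spaces, the ergodicity argument for $\bbH^n$ with $n\geq 3$, Kleiner--Leeb in higher rank, and the bounded-displacement/fixed-point argument converting ``$f$ is close to an isometry $\psi$'' into ``$\psi$ conjugates $\Gamma$ to $\Gamma'$ compatibly with $\theta$''. The uniqueness argument via Borel density is also correct.

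There is, however, a genuine gap exactly where you flag the difficulty, in the disposal of $\PSL(2,\bbR)$ factors. The paper's ``Mostow rigid'' hypothesis does \emph{not} say that $G$ has no $\PSL(2,\bbR)$ factors; it says that for no such factor $G_i$ is $\Gamma G_i$ closed, i.e.\ $\Gamma$ does not virtually split off a surface-group direct factor. An irreducible cocompact lattice in $\PSL(2,\bbR)\times\PSL(2,\bbR)$ is Mostow rigid in this sense, yet your factor-by-factor reduction (Eskin--Farb/Kleiner--Leeb product rigidity) leaves you staring at a self-quasi-isometry of a single $\bbH^2$ factor, and there is no ``higher-rank argument'' that applies to $\bbH^2$ on its own: $\QI(\bbH^2)$ is enormous, and a lone $\bbH^2$ is precisely where quasi-isometric rigidity fails. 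Splitting into factors discards the information that rescues this case, namely that the boundary map on each circle factor is equivariant under the projection of $\Gamma$ to $G_i$, which is \emph{dense} rather than discrete in $\PSL(2,\bbR)$ precisely because $\Gamma G_i$ is not closed; Mostow's argument combines this density with ergodicity of $\Gamma$ on the product of the boundaries to force the boundary map to be M\"obius on those factors. Without that step your proof does not close up. A smaller caveat: the statement as quoted concerns arbitrary lattices, whereas Milnor--Schwarz only produces the equivariant quasi-isometry in the cocompact case; the non-uniform case requires Mostow's pseudo-isometries (or Prasad's extension in rank one), though the paper only ever invokes the theorem for uniform lattices.
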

We can use this rigidity to show the following.
\begin{prop}\label{prop:unifcomm_mostow}
	Let $(\Gamma,\Lambda)$ be a Hecke pair and let $S$ be a connected centre-free semisimple Lie group $S$ with  no compact factors. Suppose $\rho:\Lambda\to S$ is a virtual uniform lattice embedding with  Mostow rigid image.  Then $\Lambda$ is a uniformly commensurated subgroup of $\Gamma$.
\end{prop}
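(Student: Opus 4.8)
The strategy is to reduce to Lemma \ref{lem:engulfing}: it suffices to construct a homomorphism $\Delta\colon\Gamma\to E$ into some locally compact group $E$ whose restriction to $\Lambda$ is a virtual uniform lattice embedding, since Lemma \ref{lem:engulfing} then gives at once that $\Lambda$ is uniformly commensurated. I will take $E\coloneqq\Aut(S)$. As $S$ is connected semisimple and centre-free, $\Aut(S)$ is a Lie group with finitely many components whose identity component is $\Inn(S)\cong S$ (it has the same Lie algebra $\mathrm{Der}(\mathfrak s)\cong\mathfrak s$), so $\Inn(S)$ is open of finite index and $E$ is locally compact. The homomorphism $\Delta$ will be obtained by lifting the modular homomorphism $\Phi_{\Gamma,\Lambda}\colon\Gamma\to\Comm(\Lambda)$ through Mostow rigidity, which provides a canonical homomorphism $\Comm(\Gamma_0)\to\Aut(S)$ for $\Gamma_0\coloneqq\rho(\Lambda)$ the given Mostow rigid uniform lattice.

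Write $N\coloneqq\ker\rho$, a finite normal subgroup of $\Lambda$, let $\pi\colon\Lambda\to\bar\Lambda\coloneqq\Lambda/N$ be the quotient map, and let $\bar\rho\colon\bar\Lambda\hookrightarrow S$ be the induced injection, so $\bar\rho(\bar\Lambda)=\Gamma_0$. There are three steps. First, upgrade Theorem \ref{thm:mostowrigid} to commensurators: every finite-index subgroup of $\Gamma_0$ is again a uniform lattice in $S$ and is Mostow rigid, since being Mostow rigid is a commensurability invariant (by its characterisation, stated just before Theorem \ref{thm:mostowrigid}, in terms of not virtually splitting off a uniform lattice in a $\PSL(2,\bbR)$-factor). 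Hence every commensuration of $\Gamma_0$ extends uniquely to an element of $\Aut(S)$, and uniqueness of the extension makes $[\phi]\mapsto(\text{its extension})$ a homomorphism $r\colon\Comm(\Gamma_0)\to\Aut(S)$. Second, show commensurations of $\Lambda$ descend to $\bar\Lambda$: for any finite-index $\Lambda'\leq\Lambda$, the subgroup $N\cap\Lambda'$ is the unique maximal finite normal subgroup of $\Lambda'$. Indeed $\rho(\Lambda')$ has finite index in $\Gamma_0$, hence is a uniform lattice, hence is Zariski-dense in $S$ by Borel density (using that $S$ has no compact factors); if $F\vartriangleleft\Lambda'$ is finite then $\rho(F)$ is finite normal in $\rho(\Lambda')$, so the centraliser $C_{\rho(\Lambda')}(\rho(F))$ has finite index in $\rho(\Lambda')$ and is therefore also Zariski-dense, whence the Zariski-closed set $C_S(\rho(F))$ equals $S$ and $\rho(F)\subseteq Z(S)=\{1\}$; thus $F\leq N\cap\Lambda'$. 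Since any isomorphism carries maximal finite normal subgroups to maximal finite normal subgroups, each commensuration $\phi$ of $\Lambda$ induces one, $\bar\phi$, of $\bar\Lambda$, and $[\phi]\mapsto[\bar\phi]$ is a well-defined homomorphism $\Comm(\Lambda)\to\Comm(\bar\Lambda)$. Third, set $\Delta$ to be the composite
\[
\Gamma\xrightarrow{\ \Phi_{\Gamma,\Lambda}\ }\Comm(\Lambda)\longrightarrow\Comm(\bar\Lambda)\xrightarrow{\ \sim\ }\Comm(\Gamma_0)\xrightarrow{\ r\ }\Aut(S),
\]
the middle isomorphism being induced by $\bar\rho$.

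Finally, I would identify $\Delta|_\Lambda$. Tracing the definitions, $\Phi_{\Gamma,\Lambda}$ sends $\lambda\in\Lambda$ to the class of conjugation by $\lambda$; this descends to conjugation by $\pi(\lambda)$ on $\bar\Lambda$, is carried by $\bar\rho$ to conjugation by $\bar\rho(\pi(\lambda))$ on $\Gamma_0$, and is extended by $r$ to the inner automorphism of $S$ given by $\bar\rho(\pi(\lambda))$. Thus $\Delta|_\Lambda$ is the composite $\Lambda\xrightarrow{\pi}\bar\Lambda\xrightarrow{\bar\rho}S\cong\Inn(S)\hookrightarrow\Aut(S)$; its kernel is the finite group $N$, and its image is a uniform lattice in $\Inn(S)$, hence a uniform lattice in $\Aut(S)$ because $\Inn(S)$ is open of finite index. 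As $\Lambda$ is finitely generated (a finite extension of the uniform lattice $\Gamma_0$ in the compactly generated group $S$), Lemma \ref{lem:engulfing} applies to $\Delta\colon\Gamma\to\Aut(S)$ and yields that $\Lambda$ is uniformly commensurated. I expect the main technical point to be the bookkeeping around the finite kernel $N$: checking, via the Borel density argument, that commensurations of $\Lambda$ descend to $\bar\Lambda$ and that the induced maps of abstract commensurators are honest homomorphisms, so that uniqueness in Mostow rigidity produces a genuine homomorphism on $\Gamma$ rather than merely a map of sets.
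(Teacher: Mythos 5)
Your proof is correct and takes essentially the same route as the paper's: use Mostow rigidity (noting that it passes to finite-index subgroups) to promote the modular homomorphism to a genuine homomorphism $\Delta\colon\Gamma\to\Aut(S)$ whose restriction to $\Lambda$ is $\Ad\circ\rho$, then invoke Lemma \ref{lem:engulfing}. Your Borel-density argument showing that commensurations of $\Lambda$ descend to $\Lambda/\ker\rho$ is bookkeeping the paper's proof passes over silently, and is a welcome addition.
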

 We first recall the structure of   automorphism groups of semisimple Lie groups. Let $G$ be a connected centre-free semisimple Lie group with Lie algebra $\mathfrak{g}$. Since $G$ is a connected, there is an injective homomorphism $d:\Aut(G)\to \Aut(\mathfrak{g})$. Because $G=\widetilde{G}/Z(\widetilde{G})$ and $Z(\widetilde{G})$ is characteristic and discrete, the  map $d:\Aut(G)\to \Aut(\mathfrak{g})$ is  surjective hence an isomorphism.   Thus $\Aut(G)$ can be given the structure of a Lie group; for details see \cite[\S1.2.10]{onishchikvinberg1990lie}. 
 
 As $G$ is centre-free, the adjoint map $\Ad:G\to \Aut(G)$ is injective.  Moreover, $\Ad(G)=\Inn(G)$ maps isomorphically to $\Inn(\mathfrak{g})$. Since $\mathfrak{g}$ is semisimple, $\Aut(\mathfrak{g})$ has finitely many components and  $\Inn(\mathfrak{g})$ is the identity component of $\Aut(\mathfrak{g})$; see for instance \cite{murakami1952automorphisms}. Thus $G\cong\Aut(G)^\circ$. Note  $Z(\Aut(G))=1$ since every element in $Z(\Aut(G))$ acts trivially on $G\cong\Aut(G)^\circ$ by conjugation, hence is the identity. To summarise:
\begin{lem}\label{lem:fin_cen}
	Let $G$ be a connected centre-free semisimple Lie group. Then $\Aut(G)$ is also a centre-free semisimple Lie group with  finitely many components. Moreover,  $\Ad:G\to \Aut(G)$ is a Lie group isomorphism onto $\Inn(G)=\Aut(G)^\circ$.
\end{lem}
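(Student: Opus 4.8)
The plan is to deduce every assertion from the single structural fact that the differential map $d:\Aut(G)\to\Aut(\mathfrak{g})$ is a topological isomorphism, where $\mathfrak{g}$ is the Lie algebra of $G$, which is semisimple by hypothesis. First I would establish this isomorphism, as already outlined in the discussion preceding the statement. Injectivity is immediate: since $G$ is connected, an automorphism is determined by its differential, and since $G$ is centre-free this differential is faithful. For surjectivity, given $\psi\in\Aut(\mathfrak{g})$ I would lift it to an automorphism $\widetilde{\psi}$ of the simply connected cover $\widetilde{G}$; because $Z(\widetilde{G})=\ker(\widetilde{G}\to G)$ is characteristic, $\widetilde{\psi}$ descends to an automorphism of $G=\widetilde{G}/Z(\widetilde{G})$ with differential $\psi$. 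This transports the Lie group structure from $\Aut(\mathfrak{g})$ to $\Aut(G)$.

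Next I would read off the Lie algebra and component structure. Since $\mathfrak{g}$ is semisimple, every derivation is inner, so $\mathrm{Der}(\mathfrak{g})=\mathrm{ad}(\mathfrak{g})\cong\mathfrak{g}$; hence $\Aut(\mathfrak{g})$ has semisimple Lie algebra $\mathfrak{g}$ and is a semisimple Lie group, with identity component $\Aut(\mathfrak{g})^\circ=\Inn(\mathfrak{g})$. Transporting back through $d$, the group $\Aut(G)$ is semisimple with identity component $\Inn(G)$. The claim that $\Aut(G)$ has finitely many components then reduces to finiteness of $\Aut(\mathfrak{g})/\Inn(\mathfrak{g})=\Out(\mathfrak{g})$, which I would invoke from the structure theory of semisimple Lie algebras, as in \cite{murakami1952automorphisms}.

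For the adjoint map, $\Ad:G\to\Aut(G)$ is injective precisely because $G$ is centre-free, and by the previous paragraph its image $\Inn(G)=\Ad(G)$ is exactly $\Aut(G)^\circ$. As $\Ad$ is a continuous injective homomorphism of Lie groups onto the identity component whose differential is the isomorphism $\mathrm{ad}:\mathfrak{g}\to\mathrm{ad}(\mathfrak{g})$, it is a Lie group isomorphism onto $\Aut(G)^\circ$. To see $\Aut(G)$ is centre-free, I would take $\alpha\in Z(\Aut(G))$: for every $g\in G$ the inner automorphism $c_g=\Ad(g)$ lies in $\Aut(G)$, so centrality of $\alpha$ gives $c_{\alpha(g)}=\alpha c_g\alpha^{-1}=c_g$, whence $\alpha(g)g^{-1}\in Z(G)=\{1\}$ and thus $\alpha(g)=g$ for all $g$; therefore $\alpha=\id$.

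I do not expect a genuine obstacle here, since the lemma merely assembles standard facts from semisimple Lie theory that were already laid out in the paragraph preceding its statement. The only two ingredients that are not formal group-theoretic manipulations are that all derivations of a semisimple Lie algebra are inner (which yields both semisimplicity of $\Aut(G)$ and the identification of its identity component) and that $\Out(\mathfrak{g})$ is finite (which yields finitely many components); both are classical and cited. The remaining work — injectivity and surjectivity of $d$, injectivity of $\Ad$, and the centrality computation — is routine once these inputs are in hand, so the substance of the proof lies entirely in correctly quoting these two structural theorems.
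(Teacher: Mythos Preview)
Your proposal is correct and follows essentially the same route as the paper: the argument there is given entirely in the paragraph preceding the lemma (the lemma itself is merely a summary), and it proceeds exactly as you do---establish the isomorphism $d:\Aut(G)\to\Aut(\mathfrak{g})$ via the simply connected cover, invoke the semisimple structure theory (derivations are inner, $\Out(\mathfrak{g})$ finite) to identify $\Inn(\mathfrak{g})=\Aut(\mathfrak{g})^\circ$ and obtain finitely many components, deduce $\Ad$ is an isomorphism onto $\Aut(G)^\circ$ from centre-freeness, and kill the centre of $\Aut(G)$ by noting a central element commutes with all inner automorphisms hence fixes $G$ pointwise. Your write-up is slightly more explicit in places (e.g.\ the computation $c_{\alpha(g)}=\alpha c_g\alpha^{-1}=c_g$), but the content is identical.
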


\begin{proof}[Proof of Proposition \ref{prop:unifcomm_mostow}]
	Let $\rho:\Lambda\to S$ be a virtual uniform lattice embedding with Mostow rigid image. For every finite index $\Lambda'\leq \Lambda$, $\rho(\Lambda')$ is also Mostow rigid.  By Theorem \ref{thm:mostowrigid}, for each $g\in \Gamma$, there is an automorphism $\Delta(g):S\to S$ such that $\Delta(g)(\rho(h))=\rho(ghg^{-1})$ for all $h\in \Lambda\cap g^{-1}\Lambda g$. For all $g,k\in \Gamma$, $\Delta(gk)$ and $\Delta(g)\Delta(k)$ agree on a finite index subgroup of $\Lambda$, hence by    Theorem \ref{thm:mostowrigid} we have $\Delta(gh)=\Delta(g)\Delta(k)$. Thus $\Delta:\Gamma\to \Aut(S)$ is a homomorphism.  Lemma \ref{lem:fin_cen} ensures  the image of the map \[\Delta|_\Lambda:\Lambda\xrightarrow{\rho} S\xhookrightarrow{\Ad} \Aut(S)\] is a uniform lattice. Applying Proposition \ref{prop:engulf_commen_lattice} to $\Delta$, we deduce $\Lambda$ is uniformly commensurated in $\Gamma$.
\end{proof}

\section{The structure of lattices in locally compact groups}\label{sec:structure}
In this section we prove Theorems \ref{thm:modelgeom}, \ref{thm:lattice_prod} and one direction of Theorem \ref{thm:mainintro}. These will  follow from Proposition \ref{prop:lattice_structure}. The following major result  forms part of the solution to Hilbert's fifth problem:

\begin{thm}[\cite{gleason1951structure,yamabe1953generalization}]\label{thm:gleason-yamabe}
	Let $G$ be a connected-by-compact locally compact group. Then there is a compact subgroup $K\vartriangleleft G$ such that $G/K$ is a Lie group with finitely many components.
\end{thm}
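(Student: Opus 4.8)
The statement is, up to the easy remark about components, the Gleason--Yamabe theorem lying at the heart of the solution to Hilbert's fifth problem; the plan is to invoke the work of Gleason \cite{gleason1951structure} and Yamabe \cite{yamabe1953generalization} and then carry out a short reduction. The technical core (Gleason) is the criterion that a locally compact group with \emph{no small subgroups} --- some identity neighbourhood containing no non-trivial subgroup --- is a Lie group. One would prove this by equipping such a group $G$ with a left-invariant ``Gleason metric'' $\|\cdot\|$ satisfying an escape estimate ($\|x^n\|$ grows linearly in $n$ for $n$ up to order $\|x\|^{-1}$) and a commutator estimate ($\|[x,y]\|\lesssim\|x\|\,\|y\|$), extracting one-parameter subgroups from null sequences $x_k\to 1$ with $k\|x_k\|$ bounded, assembling them into a finite-dimensional Lie algebra, and checking that the resulting exponential map is a local homeomorphism, so that $G$ is a Lie group by the Montgomery--Zippin characterisation.

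Next one invokes Yamabe's theorem that every locally compact group has an open subgroup which is an inverse limit of Lie groups along compact normal subgroups, the open subgroup being the whole group when $G$ is almost connected. Since $G$ is connected-by-compact, $G^\circ$ is a connected normal subgroup with $G/G^\circ$ compact, so $G$ is almost connected; hence there is a compact normal $K\vartriangleleft G$ (indeed one inside any prescribed identity neighbourhood) with $G/K$ a Lie group. This step is proved by a compression argument: van Dantzig's Theorem~\ref{thm:vd} applied to the totally disconnected group $G/G^\circ$, together with the Peter--Weyl theorem on compact subgroups, yields arbitrarily small compact normal subgroups of $G$, and one shows the quotient by a suitable one has no small subgroups, so the criterion above applies to it.

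It remains to see that $G/K$ has finitely many components. Let $q\colon G\to G/K$ be the quotient map; as $G/K$ is a Lie group, $(G/K)^\circ$ is open, so $H\coloneqq q^{-1}\!\big((G/K)^\circ\big)$ is an open subgroup of $G$ with $G/H\cong(G/K)/(G/K)^\circ$. The image $q(G^\circ)$ is connected and contains the identity, hence lies in $(G/K)^\circ$, so $G^\circ\subseteq H$ and $G/H$ is a continuous image of the compact group $G/G^\circ$, hence compact; being also discrete, it is finite. So $G/K$ has finitely many components, as required. The one genuinely hard step is the no-small-subgroups criterion --- the construction of the Gleason metric and of enough one-parameter subgroups --- which is the substance of \cite{gleason1951structure}; the rest is comparatively formal, and here one needs only the existence of a single such $K$.
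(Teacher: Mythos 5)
The paper offers no proof of this statement: it is quoted directly from the cited works of Gleason and Yamabe, so there is no internal argument to compare against. Your proposal is a correct account of the standard route. The two genuinely hard ingredients you identify --- the no-small-subgroups criterion (Gleason metric, escape and commutator estimates, one-parameter subgroups, Montgomery--Zippin) and Yamabe's reduction of almost connected groups to the NSS case via van Dantzig and Peter--Weyl --- are exactly the contents of the cited papers, and sketching them rather than reproving them is the appropriate level of detail here. The one piece that does require an argument beyond citation is the ``finitely many components'' clause, since the usual formulation of Gleason--Yamabe only asserts that $G/K$ is a Lie group; your deduction is complete and correct: $(G/K)^\circ$ is open because $G/K$ is a Lie group, its preimage $H$ contains $G^\circ$ since $q(G^\circ)$ is connected, and $G/H\cong (G/K)/(G/K)^\circ$ is simultaneously discrete and a continuous image of the compact group $G/G^\circ$, hence finite. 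This matches how the theorem is standardly packaged in the form the paper uses (compare, e.g., the treatment in Montgomery--Zippin or Tao's exposition of Hilbert's fifth problem), so nothing is missing.
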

A useful consequence of Theorem \ref{thm:gleason-yamabe} is the following, which allows us to assume, after quotienting out by a compact normal subgroup of $G$, that $G^\circ$ has no non-trivial compact normal subgroup.
\begin{lem}\label{lem:conn-comp-lie}
	Let $G$ be a locally compact group. There is a compact normal subgroup $K\vartriangleleft G$ such that   $(G/K)^\circ$ is a connected Lie group  with  no non-trivial compact normal subgroups. Moreover, $G^\circ$ is compact if and only if $(G/K)^\circ$ is compact.
\end{lem}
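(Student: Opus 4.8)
The plan is to reduce everything to the existence of a maximal compact normal subgroup of the identity component $G^\circ$. I will show that the connected locally compact group $H:=G^\circ$ has a \emph{unique} maximal compact normal subgroup $K$; uniqueness makes $K$ characteristic in $H$, and since $G^\circ\vartriangleleft G$ (Proposition~\ref{prop:idcomp}), conjugation by any element of $G$ restricts to an automorphism of $G^\circ$, so $K\vartriangleleft G$. Because $K\subseteq G^\circ$, the image of $G^\circ$ in $G/K$ is closed, connected, and has totally disconnected quotient $(G/K)/(G^\circ/K)\cong G/G^\circ$; hence $(G/K)^\circ=G^\circ/K$. Maximality of $K$ then forces $(G/K)^\circ=G^\circ/K$ to have no non-trivial compact normal subgroup, since any such subgroup pulls back to a compact normal subgroup of $G^\circ$ containing $K$ properly; and the construction will exhibit $G^\circ/K$ as a connected Lie group. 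Finally, as $K$ is compact, $G^\circ$ is compact if and only if $G^\circ/K=(G/K)^\circ$ is compact (a compact-by-compact group is compact), which is the last assertion.

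So the real work is to construct the maximal compact normal subgroup of $H=G^\circ$. First apply Theorem~\ref{thm:gleason-yamabe} (legitimate since a connected group is trivially connected-by-compact): there is a compact normal $K_0\vartriangleleft H$ with $L:=H/K_0$ a connected Lie group. In $L$ I produce the maximal compact normal subgroup in two steps. Step one: the product of two connected compact normal subgroups of $L$ is again one, so by finite-dimensionality there is a maximal such subgroup $W_0$, and a dimension/openness argument shows it contains every other. Step two: in $L/W_0$ every compact normal subgroup $\bar C$ is finite --- its preimage $C\vartriangleleft L$ is a compact Lie group with $C^\circ=W_0$, so $\bar C\cong\pi_0(C)$ --- hence central; moreover $Z(L/W_0)$ has no non-trivial connected compact subgroup (such a subgroup would be a non-trivial connected compact normal subgroup of $L/W_0$), so $Z(L/W_0)^\circ\cong\bbR^a$, and by the structure of centres of connected Lie groups its torsion subgroup $F$ is finite; then $F$ is the unique maximal compact normal subgroup of $L/W_0$. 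Pulling $F$ back to $L$ and then to $H$ gives a compact normal subgroup $K\vartriangleleft H$, and a routine preimage argument shows $K$ is the unique maximal compact normal subgroup of $H$, with $H/K\cong (L/W_0)/F$ a connected Lie group with no non-trivial compact normal subgroup.

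The main obstacle is the structure-theoretic input in Step two: that once the maximal connected compact normal subgroup $W_0$ has been divided out, the torsion of $Z(L/W_0)$ is finite --- this is precisely where one must use that no central torus survives, and it is the only point requiring Lie-theoretic structure rather than formal manipulation. (Alternatively, this entire paragraph can be replaced by invoking the classical fact that every connected locally compact group has a unique maximal compact normal subgroup whose quotient is a Lie group with no non-trivial compact normal subgroup.) Once the maximal compact normal subgroup of $G^\circ$ is in hand, the remaining steps --- promoting normality from $G^\circ$ to $G$, identifying $(G/K)^\circ$ with $G^\circ/K$, and the compactness equivalence --- are immediate.
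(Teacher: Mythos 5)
Your proof is correct, and its skeleton coincides with the paper's: apply Theorem~\ref{thm:gleason-yamabe} to $G^\circ$, produce the maximal compact normal subgroup $K$ of $G^\circ$ (which is then topologically characteristic, hence normal in $G$), identify $(G/K)^\circ$ with $G^\circ/K$, and read off the remaining claims. Where you genuinely diverge is in the one substantive step, namely constructing the maximal compact normal subgroup of the connected Lie group $L=G^\circ/K_0$. The paper does this in one stroke: by Theorem~\ref{thm:maxcompact}, $L$ has a maximal compact subgroup containing every compact subgroup up to conjugacy, and its normal core is compact, normal, and absorbs every compact normal subgroup. You instead build it in two stages: the maximal connected compact normal subgroup $W_0$ via the dimension argument (fine), followed by the torsion subgroup $F$ of $Z(L/W_0)$. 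The two routes consume comparable Lie-theoretic input: the paper needs the Cartan--Iwasawa--Malcev conjugacy of maximal compact subgroups, while you need that $Z(M)/Z(M)^\circ$ is finitely generated for a connected Lie group $M$ (it is a quotient of $\pi_1(\Ad(M))$). That last point is the one place your write-up waves its hands, and it deserves a line: the torsion subgroup of the centre of a general connected Lie group is infinite (a circle already shows this), so the finiteness of $F$ really does rest on your prior reduction to $Z(L/W_0)^\circ\cong\bbR^a$, exactly as you flag. Your identification $(G/K)^\circ=G^\circ/K$ --- a connected subgroup whose quotient $G/G^\circ$ is totally disconnected --- is, if anything, slightly more direct than the paper's argument via the preimage $H$ of $(G/K)^\circ$, and your observation that $H/K\cong(L/W_0)/F$ is already a Lie group lets you avoid the paper's second invocation of Gleason--Yamabe.
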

\begin{proof}
	Since $G^\circ$ is connected, Theorem \ref{thm:gleason-yamabe} ensures it contains a compact subgroup $K_0\vartriangleleft G^\circ$ such that $G^\circ/K_0$ is a connected Lie group. 
	As  $G^\circ/K_0$ is a connected Lie group, Theorem \ref{thm:maxcompact} ensures it  has a maximal compact subgroup  $L/K_0$ such that  every compact subgroup of $G^\circ /K_0$ is contained in a conjugate of $L/K_0$. Let $K/K_0$ be the normal core of $L/K_0$. Then $K$ is a maximal  compact normal subgroup of $G^\circ$, i.e.\ it contains every other compact normal subgroup of $G^\circ$. In particular, $K$ is topologically characteristic in $G^\circ$ and therefore normal in $G$. 
	
	Let $\pi:G\rightarrow G/K$ be the quotient map and set $H\coloneqq \pi^{-1}((G/K)^\circ)$. As $\pi(G^\circ)$ is connected, $G^\circ\leq H$. The quotient $H/G^\circ$ is totally disconnected by Proposition \ref{prop:idcomp}, and is also the quotient of the connected group $H/K=(G/K)^\circ$, hence is connected. Thus $H/G^\circ$ is trivial and so $G^\circ=H$. Since $K$ is a maximal compact normal subgroup of $G^\circ$, $G^\circ/K=(G/K)^\circ$ contains no non-trivial compact normal subgroups, hence by Theorem \ref{thm:gleason-yamabe}, $(G/K)^\circ$ is a connected Lie group.
	As $K$ is compact and $G^\circ/K=(G/K)^\circ$, we see $G^\circ$ is compact if and only if  $(G/K)^\circ$ is compact.
\end{proof}

A connected Lie group $G$ has a \emph{Levi decomposition} $G=RS$, where:
	\begin{enumerate}
		\item $R$ is the unique maximal closed connected normal solvable subgroup of $G$ known as the \emph{radical} of $G$;
		\item $S$ is a closed connected semisimple subgroup of $G$, unique up to conjugation, known as the \emph{Levi subgroup} of $G$.
	\end{enumerate}
The \emph{nilradical} of a connected Lie group $G$  consists of the  unique maximal closed connected normal nilpotent subgroup of $G$. Every  Lie group  with non-trivial radical  has non-trivial nilradical. 
Engel's theorem implies a maximal compact subgroup $K$ of a connected nilpotent Lie group $N$ is necessary central in $N$ (\cite[Chapter 1, Theorem 1.6]{vinbergonischik}), hence $K$ is a compact characteristic subgroup of $N$. 
Thus if $G$ has no non-trivial compact normal subgroup, then a maximal torus of its nilradical $N$ is trivial, hence $N$ is simply connected.    We thus deduce:
\begin{prop}\label{prop:nilrad}
	Let $G$ be a connected Lie group with no non-trivial compact normal subgroup. Then exactly one of the following occurs:
	\begin{enumerate}
		\item the nilradical $N$ of $G$ is non-trivial and simply connected;
		\item $G$ is a centre-free semisimple Lie group.
	\end{enumerate}
\end{prop}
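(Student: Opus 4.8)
The plan is to run the proof as a case analysis on the nilradical $N$ of $G$, using the Levi decomposition $G=RS$ and the facts recorded just before the statement. First I would observe that the two cases ``$N\neq\{1\}$'' and ``$N=\{1\}$'' are exhaustive and correspond to the two alternatives: since $N\leq R$ and a Lie group with non-trivial radical has non-trivial nilradical, we have $N=\{1\}$ exactly when $R=\{1\}$; and a non-trivial semisimple group has trivial radical, hence trivial nilradical, so the two alternatives in the statement are mutually exclusive (and, being exhaustive, exactly one holds).

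In the case $N\neq\{1\}$ I would show alternative $(1)$ holds; this is essentially the content of the paragraph preceding the statement. The maximal compact subgroup $T$ of the connected nilpotent group $N$ is a torus which, by Engel's theorem, is central and hence topologically characteristic in $N$; as the nilradical $N$ is characteristic in $G$, the subgroup $T$ is a compact normal subgroup of $G$, so $T=\{1\}$ by hypothesis. Applying Theorem \ref{thm:maxcompact} to the connected (one-component) group $N$ then gives that $N$ is diffeomorphic to $T\times\bbR^n=\bbR^n$, i.e.\ $N$ is simply connected.

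In the case $N=\{1\}$ we get $R=\{1\}$, so the Levi decomposition collapses to $G=S$ and $G$ is semisimple; it remains to check that $G$ is centre-free. Here I would argue that the centre $Z(G)$ is a closed normal subgroup of $G$, hence contains no non-trivial compact subgroup by hypothesis — in particular no non-trivial finite subgroup, so $Z(G)$ is torsion-free — and then deduce $Z(G)=\{1\}$ from the structure of the centre of a connected semisimple Lie group with no compact factors. I expect this last deduction to be the main obstacle: case $(1)$ is essentially supplied by the discussion preceding the statement, whereas ruling out a non-trivial (a priori possibly infinite discrete, hence non-compact) centre is the step that requires genuine work and a careful use of the hypothesis that $G$ has no non-trivial compact normal subgroup.
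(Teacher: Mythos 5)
Your handling of alternative (1) and of the dichotomy is exactly the paper's: the paper gives no displayed proof, but the paragraph preceding the statement is the argument — Engel's theorem makes the maximal compact subgroup of the connected nilpotent group $N$ central, hence characteristic in $N$ and so normal in $G$, forcing it to be trivial and $N$ to be simply connected; and $N=\{1\}$ forces $R=\{1\}$ because a non-trivial radical yields a non-trivial nilradical, so $G=S$ is semisimple. Up to that point your proposal and the paper coincide.

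The genuine gap is precisely the step you flag as ``the main obstacle'' and then leave unproved: the centre-freeness of $G$ in case (2). Your reduction is correct as far as it goes — $Z(G)$ is discrete and, containing no non-trivial finite (hence compact normal) subgroup, is torsion-free — but the remaining deduction ``$Z(G)$ torsion-free $\Rightarrow$ $Z(G)=\{1\}$'' cannot be supplied, because the assertion is false at this level of generality. The group $\widetilde{\SL(2,\bbR)}$ is a connected semisimple Lie group whose proper normal subgroups are exactly the subgroups of its centre $Z\cong\bbZ$; since $\bbZ$ is torsion-free and its non-trivial subgroups are infinite, $\widetilde{\SL(2,\bbR)}$ has no non-trivial compact normal subgroup and trivial nilradical, yet it is not centre-free, so it satisfies neither alternative of the proposition. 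The paper itself offers no argument for centre-freeness (the preceding discussion only covers case (1) and the dichotomy), so you have in fact located a gap in the source rather than merely in your own write-up. A correct version would weaken alternative (2) to ``$G$ is semisimple with discrete torsion-free centre'' — which is what the argument actually proves — and then, in the application (Proposition \ref{prop:lattice_structure}), dispose of an infinite centre separately, for instance by noting that a lattice in such a group meets the finitely generated abelian group $Z(G)$ in a finite-index subgroup, producing an infinite finite rank free abelian commensurated subgroup and landing in case (1) of that proposition.
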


The following proposition partially mitigates the fact that $G^\circ $ will not typically be open.

\begin{prop}\label{prop:opensbgrp_prod}
	Let $G$ be a locally compact group such that $G^\circ$ contains no non-trivial compact normal subgroup. There is an  open subgroup $L\leq G$ and a compact totally disconnected subgroup $K\vartriangleleft L$ such that:
	\begin{enumerate}
		\item the map $G^\circ \times K\to L$  induced by the inclusions $G^\circ\to L$ and $K\to L$ is a topological isomorphism;
		\item for any topologically characteristic subgroup $M\leq G^\circ$, $M\times K\cong  MK\leq G$ is a commensurated subgroup of $G$.
	\end{enumerate}
\end{prop}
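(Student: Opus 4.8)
The plan is to use van Dantzig's theorem on the totally disconnected quotient $G/G^\circ$ to extract a compact open subgroup, pull it back to $G$, and then split off the connected component using the fact that $G^\circ$ contains no non-trivial compact normal subgroup. First I would apply Proposition \ref{prop:idcomp}: $G^\circ$ is a closed normal subgroup and $G/G^\circ$ is locally compact and totally disconnected. By Theorem \ref{thm:vd}, $G/G^\circ$ contains a compact open subgroup $U/G^\circ$; set $L \coloneqq $ the preimage in $G$, which is an open subgroup containing $G^\circ$, with $L/G^\circ$ compact, so $L$ is connected-by-compact. Then I would like to produce, inside $L$, a compact totally disconnected subgroup $K$ that complements $G^\circ$. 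The natural candidate arises from Theorem \ref{thm:gleason-yamabe} applied to $L$: there is a compact normal subgroup $K_1 \vartriangleleft L$ with $L/K_1$ a Lie group with finitely many components. Since $G^\circ$ is normal in $L$, the intersection $K_1 \cap G^\circ$ is a compact normal subgroup of $G^\circ$, hence trivial by hypothesis. Therefore $K_1$ maps injectively into $L/G^\circ$, which is totally disconnected, so $K_1$ is totally disconnected; moreover $K_1 \cap G^\circ = \{1\}$ means $G^\circ$ and $K_1$ commute? — not automatically, so I need to be careful here.

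To get commutation I would argue as follows. The image of $G^\circ$ in the Lie group $L/K_1$ is connected, and I claim it equals $(L/K_1)^\circ$: indeed $(L/K_1)^\circ$ is the image of $L^\circ = G^\circ$ (since $L$ is open in $G$, $L^\circ = G^\circ$) — more precisely, $G^\circ K_1/K_1$ is connected and of finite index is wrong; rather, $L/G^\circ$ being totally disconnected forces $G^\circ K_1 / K_1$ to contain $(L/K_1)^\circ$, and it is connected so equals it. Now $(L/K_1)^\circ$ is a connected Lie group with no non-trivial compact normal subgroup (any such would pull back, together with $K_1$, to a compact normal subgroup of $L$ whose intersection with $G^\circ$ would be compact normal in $G^\circ$ — here I need this intersection to exhaust it). Then $K_1$, being a compact normal subgroup of $L$ with $K_1 \cap G^\circ = \{1\}$, normalises $G^\circ$ and induces by conjugation a homomorphism $K_1 \to \Aut(G^\circ)$; since $K_1$ is totally disconnected and $\Aut(G^\circ)$ is a Lie group (as $G^\circ$ is a connected Lie group by Gleason–Yamabe, or at least $\Aut$ of its Lie algebra is), Proposition \ref{prop:vand+nss} shows the image is discrete, and since $K_1$ is compact the image is finite; passing to a finite-index — no, I want $K$ exactly commuting, so instead I replace $L$ by the open subgroup $L' = G^\circ C_L(G^\circ)$ or take $K$ to be a suitable finite-index-adjusted subgroup. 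The cleanest route: let $K$ be the kernel of $K_1 \to \Aut(G^\circ)$, a finite-index (hence open, hence also closed) subgroup of $K_1$, so $K$ is compact, totally disconnected, centralises $G^\circ$, and $K \cap G^\circ = \{1\}$; then set $L \coloneqq G^\circ K$, which is open since it contains the open subgroup $G^\circ \cdot(\text{finite index in } U/G^\circ \text{ preimage})$ — I should verify $G^\circ K$ is open, which follows because $K$ is open in $K_1$ and $G^\circ K_1$ was open in $G$. The map $G^\circ \times K \to L$ is then a continuous bijective homomorphism of locally compact groups with the factors commuting and intersecting trivially; it is a topological isomorphism since $K$ is compact (so the product topology is the right one — openness of $G^\circ$ in $L$ plus compactness of $K$ gives the homeomorphism). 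That establishes (1).

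For (2), given a topologically characteristic $M \leq G^\circ$, it is in particular normal in $G^\circ$; since $K$ centralises $G^\circ$, $K$ normalises $M$, so $MK \cong M \times K$ is a subgroup of $L$, and it is normal in $L = G^\circ K$ because $M \vartriangleleft G^\circ$ and $K$ centralises everything. Since $L$ is open and finite-index? — not finite index, but $L$ is open, so $L$ is commensurated in $G$ trivially? No: I need $MK$ commensurated in $G$, i.e. $g(MK)g^{-1}$ commensurable with $MK$ for all $g \in G$. Conjugation by $g \in G$ sends $L$ to another open subgroup; the key point is that $M$ being topologically characteristic in $G^\circ$ and $G^\circ \vartriangleleft G$ means $gMg^{-1} = M$, and $gKg^{-1}$ is a compact totally disconnected subgroup of $gLg^{-1}$. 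I would then show $gKg^{-1}$ and $K$ are commensurable: both are compact open subgroups of the totally disconnected group $gLg^{-1} \cap L / (G^\circ \cap \dots)$ — concretely, their images in $G/G^\circ$ are compact open subgroups of a common totally disconnected locally compact group, hence commensurable, and since both intersect $G^\circ$ trivially, $gKg^{-1} \cap K$ has finite index in each. Combined with $gMg^{-1} = M$, this yields that $g(MK)g^{-1} = M \cdot gKg^{-1}$ is commensurable with $MK$, so $MK \alnorm G$. The main obstacle I anticipate is the bookkeeping in the previous paragraph: ensuring the complement $K$ can be taken to genuinely centralise $G^\circ$ (not merely normalise it with finite-order action) while remaining compact, open in $K_1$, and with $G^\circ K$ still open in $G$ — i.e. getting all three properties simultaneously from the Gleason–Yamabe subgroup $K_1$ via Proposition \ref{prop:vand+nss}. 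Everything else is routine point-set topology of locally compact groups.
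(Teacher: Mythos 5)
Your Part (1) follows the paper's strategy (van Dantzig on $G/G^\circ$ to get $L$, Gleason--Yamabe on $L$ to get a compact normal $K_1$, and $K_1\cap G^\circ=\{1\}$ by the hypothesis on $G^\circ$), but the detour through the conjugation map $K_1\to\Aut(G^\circ)$ is unnecessary: since $G^\circ$ and $K_1$ are \emph{both} normal in $L$ with trivial intersection, the commutator $[G^\circ,K_1]$ lies in $G^\circ\cap K_1=\{1\}$, so they commute automatically and one may take $K=K_1$ (after shrinking $L$ so that $L=G^\circ K$, which the paper arranges by making $L/K$ connected). Your worry that commutation is ``not automatic'' is unfounded, and the workaround, while not wrong, adds complications (you must re-establish that $G^\circ K$ is open). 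Also, your parenthetical justification that the bijection $G^\circ\times K\to L$ is a homeomorphism cites ``openness of $G^\circ$ in $L$'', which is false in general ($G^\circ$ is open in $L$ only when $L/G^\circ$ is discrete); the correct reason, used in the paper, is that $\pi|_K:K\to L/G^\circ$ is a continuous bijection from a compact space to a Hausdorff space, hence a homeomorphism, which supplies the continuous inverse.

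The genuine gap is in Part (2). You argue that $\pi(K)$ and $\pi(gKg^{-1})$ are commensurable compact open subgroups of $G/G^\circ$ and then conclude that ``since both intersect $G^\circ$ trivially, $gKg^{-1}\cap K$ has finite index in each.'' This inference does not follow: two compact subgroups can each project isomorphically onto the same compact open subgroup $W\leq G/G^\circ$ while intersecting each other trivially --- they are graphs of two different maps $W\to G^\circ$, and their intersection is the equaliser of those maps, which a priori could be small. What saves the statement is precisely that the ``difference'' $v\mapsto k_v(k'_v)^{-1}$ is a continuous homomorphism $W\to G^\circ$ (using that both $K$ and $gKg^{-1}$ centralise $G^\circ$), whose image is a compact totally disconnected subgroup of the Lie group $G^\circ$ and hence finite by Proposition \ref{prop:vand+nss}; only then does the kernel, i.e.\ essentially $K\cap gKg^{-1}$, have finite index. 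The paper reaches the same conclusion more directly by projecting $g^{-1}Kg\cap L$ along $L\to L/K\cong G^\circ$ and applying Proposition \ref{prop:vand+nss} to the image. Without some such appeal to the no-small-subgroups property of $G^\circ$, your final step in Part (2) is a non sequitur, so this is the point you would need to repair.
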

\begin{proof}
	Since $G^\circ$ has no non-trivial compact normal subgroup, it is necessarily a Lie group  by Theorem \ref{thm:gleason-yamabe}. Let $\pi:G\rightarrow G/G^\circ$ be the quotient map. By Proposition \ref{prop:idcomp} and Theorem \ref{thm:vd}, there is a compact open subgroup $U\leq G/G^\circ$. Let $L\coloneqq \pi^{-1}(U)$. Thus $L$ is open and  connected-by-compact, hence by Theorem \ref{thm:gleason-yamabe}, $L$ contains a compact normal subgroup $K$ such that $L/K$ is a virtually connected Lie group. Replacing $L$ with a finite index open subgroup if needed, we may assume $L/K$ is connected. We claim $L$ is topologically isomorphic to $G^\circ \times K$.
	
	 Since $K$ is compact and $G^\circ$ is closed, $G^\circ K$ is closed. Since $L/G^\circ K$ is the quotient of the connected group $L/K$ and the totally disconnected group $L/G^\circ$, we deduce $L=G^\circ K$. Since $G^\circ \cap K$ is a compact normal subgroup of $G^\circ$, $G^\circ \cap K=\{1\}$. Since $L=G^\circ K$, $G^\circ \cap K=\{1\}$ and both $G^\circ$ and  $K$ are closed and  normal in $L$, they commute and so there is a continuous abstract isomorphism  $\phi:G^\circ \times K\rightarrow L$ induced by the inclusion in each factor.	 
	 Let $\pi:L\rightarrow L/G^\circ$ be the quotient map. The restriction $ \pi|_K=\psi:K\rightarrow L/G^\circ$ is  a continuous abstract isomorphism from a compact space to a Hausdorff space, so is a topological isomorphism.  This ensures $\phi$ has a topological inverse, hence $L$ is topologically isomorphic to $G^\circ \times K$ (see \cite[\S III.2.10]{bourbaki98gentop1-4}).
	 
	 We claim $K$ is a commensurated subgroup of $G$. Fix $g\in G$.  Let $\psi:L\to L/K\cong G^\circ$ be the quotient map. As $L$ is open and  $g^{-1}Kg$ is compact, $g^{-1}Kg\cap L$ is a finite index subgroup of $g^{-1}Kg$. Moreover,  $\psi(g^{-1}Kg\cap L)$ is a compact totally disconnected subgroup of the Lie group  $G^\circ$, hence is finite by Proposition \ref{prop:vand+nss}. Therefore $g^{-1}Kg\cap K$ is a finite index subgroup of $g^{-1}Kg$. Since this is true for all $g\in G$, it follows $K$ is commensurated. 	 
	 Finally, let $M$ be a topologically characteristic subgroup of $G^\circ$ and let $g\in G$. As $K$ is commensurated and  $gMg^{-1}=M$, we see $g(MK)g^{-1}=MgKg^{-1}$ is commensurable to $MK$, hence $M K\cong M\times K$ is a commensurated subgroup of $G$ as required.
\end{proof}

We use Lemma \ref{lem:fin_cen} and Proposition \ref{prop:opensbgrp_prod} to deduce the following useful proposition. 
\begin{prop}\label{prop:map_to_product}
	Let $G$ be a locally compact group such that $G^\circ$ is a centre-free semisimple Lie group with no compact factors. Then there is a continuous open monomorphism $\phi:G\to \Aut(G^\circ) \times G/G^\circ$ with finite index image. In particular, $\phi$ is copci.
\end{prop}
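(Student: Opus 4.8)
The plan is to define $\phi$ as the pair $(c,q)$, where $c\colon G\to\Aut(G^\circ)$ is the action of $G$ on its (normal) identity component $G^\circ$ by conjugation and $q\colon G\to G/G^\circ$ is the quotient map. Both components are continuous, so $\phi$ is continuous. Its kernel is $C_G(G^\circ)\cap G^\circ=Z(G^\circ)$, which is trivial because $G^\circ$ is centre-free; hence $\phi$ is a monomorphism. It remains to show that $\phi$ is open with finite-index image, and the key is to analyse $\phi$ on a suitable open subgroup of $G$.

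Since $G^\circ$ is a connected centre-free semisimple Lie group with no compact factors, it contains no non-trivial compact normal subgroup (the identity component of a compact normal subgroup is a connected compact normal subgroup, hence trivial as $G^\circ$ has no compact factors, so the subgroup is finite, hence central, hence trivial). Thus Proposition \ref{prop:opensbgrp_prod} applies and supplies an open subgroup $L\leq G$ together with a compact totally disconnected normal subgroup $K\vartriangleleft L$ such that the inclusions induce a topological isomorphism $G^\circ\times K\to L$. Under this identification $G^\circ$ and $K$ commute inside $L$, so for $h\in G^\circ$, $k\in K$ one computes $c(hk)=\Ad(h)$ and $q(hk)=q(k)$; that is, $\phi|_L$ is the map $G^\circ\times K\to\Aut(G^\circ)^\circ\times(L/G^\circ)$, $(h,k)\mapsto(\Ad(h),q(k))$. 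Here $\Ad\colon G^\circ\to\Aut(G^\circ)^\circ$ is a topological isomorphism by Lemma \ref{lem:fin_cen}, and $q|_K\colon K\to L/G^\circ$ is a continuous bijective homomorphism from a compact group to a Hausdorff group, hence a topological isomorphism. Therefore $\phi|_L$ is a topological isomorphism onto $\Aut(G^\circ)^\circ\times(L/G^\circ)$, which is open in $\Aut(G^\circ)\times(G/G^\circ)$ since $\Aut(G^\circ)^\circ$ is open in the Lie group $\Aut(G^\circ)$ (which has finitely many components, again by Lemma \ref{lem:fin_cen}) and $L/G^\circ$ is open in $G/G^\circ$ (as $L$ is open). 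A routine argument, translating the isomorphism $\phi|_L$ around cosets of $L$, then shows $\phi$ is an open map; in particular $\phi(G)$ is open.

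To finish, I would bound the index of $\phi(G)$. Composing $\phi$ with the projection to $G/G^\circ$ recovers the surjection $q$, so the projection of $\phi(G)$ to $G/G^\circ$ is onto; and $\phi(G)\cap(\Aut(G^\circ)\times\{1\})=\{\phi(h):h\in G^\circ\}=\Aut(G^\circ)^\circ\times\{1\}$. Writing $\Aut(G^\circ)\times(G/G^\circ)=(\Aut(G^\circ)\times\{1\})\cdot\phi(G)$, the standard index identity gives
\[[\Aut(G^\circ)\times(G/G^\circ):\phi(G)]=[\Aut(G^\circ):\Aut(G^\circ)^\circ]<\infty.\]
Finally, an injective continuous open homomorphism is a homeomorphism onto its image, which here is an open and hence closed finite-index subgroup of the target; such a map is automatically proper with cocompact image, so $\phi$ is copci.

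The main obstacle is the middle step: producing the open subgroup $L$ with the product decomposition $G^\circ\times K$ and verifying that $\phi$ restricts on it to a topological isomorphism onto an open subgroup of the target. This is exactly where Proposition \ref{prop:opensbgrp_prod} (and behind it van Dantzig's theorem and Gleason--Yamabe) and the description of $\Aut(G^\circ)$ from Lemma \ref{lem:fin_cen} do the real work; the remainder is index bookkeeping and unwinding the definition of copci.
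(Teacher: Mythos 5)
Your proof is correct and follows essentially the same route as the paper: both arguments rest on Proposition \ref{prop:opensbgrp_prod} (the open subgroup $L\cong G^\circ\times K$) and Lemma \ref{lem:fin_cen} (the identification $\Ad\colon G^\circ\to\Aut(G^\circ)^\circ$ and finiteness of $\pi_0(\Aut(G^\circ))$). The only differences are cosmetic — you package openness as ``$\phi|_L$ is a topological isomorphism onto an open subgroup and then translate by cosets,'' and you get the finite index via the identity $[A\phi(G):\phi(G)]=[A:A\cap\phi(G)]$, whereas the paper argues with basic product neighbourhoods and with surjectivity of $\phi_2$ directly; both are sound.
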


\begin{proof}
	There is a continuous homomorphism  $\phi_1:G\to \Aut(G^\circ)$  induced by conjugation.  Lemma \ref{lem:fin_cen} implies that $\phi_1|_{G^\circ}=\Ad$ is a Lie group isomorphism  $G^\circ\to\Inn(G^\circ)=\Aut(G^\circ)^\circ$. Therefore, if $U\subseteq G$ is open, then so is \[\phi_1(U)=\bigcup_{g\in G}\phi_1(U\cap g G^\circ)=\bigcup_{g\in G}\phi_1(g)\phi_1(g^{-1}U\cap G^\circ).\] Thus $\phi_1$ is an open map.
	Let $\phi_2:G\to G/G^\circ$ be the quotient map, which is continuous and open, and let $\phi\coloneqq (\phi_1,\phi_2):G\to \Aut(G^\circ) \times G/G^\circ$.	
	Since $\phi_1$ and $\phi_2$ are continuous, so is $\phi$. Lemma \ref{lem:fin_cen} implies  $\phi_1(G^\circ)$  is a finite index  subgroup of $\Aut(G^\circ)$, hence so is $\phi_1(G)$. Since $\phi_2$ is surjective,   $\phi(G)$ is a finite index subgroup of $\Aut(G^\circ) \times G/G^\circ$. Since $\phi_1|_{G^\circ}$ is injective and $\ker(\phi_2)=G^\circ$,  $\phi$ is injective.
	
	All that remains is to show $\phi$ is an  open map. Let $U\subseteq G$ be open and let $g\in U$. Let $K$ and  $L\cong G^\circ\times  K$ be as in Proposition \ref{prop:opensbgrp_prod}. Since $g^{-1}U\cap L$ is open and contains the identity, there exists an identity neighbourhood contained in $g^{-1}U\cap L$ of the form $U_1U_2\cong U_1\times U_2$, where $U_1\subseteq G^\circ $ and $U_2\subseteq K$. Since $U_2$ commutes with $G^\circ$, we deduce that  $\phi_1(gU_1U_2)=\phi_1(gU_1)$ and $\phi_2(gU_1U_2)=\phi_2(gU_2)$. We claim $\phi_1(gU_1)\times \phi_1(gU_2)\subseteq \phi(U)$. To see this, let $(x,y)\in \phi_1(gU_1)\times \phi_1(gU_2)$. Thus $x=\phi_1(gu_1)$ and $y=\phi_2(gu_2)$ for some $u_1\in U_1$ and $u_2\in U_2$, and so $\phi(gu_1u_2)=(x,y)\in \phi(U)$. Therefore $\phi_1(gU_1)\times \phi_1(gU_2)\subseteq \Aut(G^\circ) \times G/G^\circ$ is open neighbourhood of $\phi(g)$ contained in $\phi(U)$. Thus $\phi(U)$ is open, hence $\phi$ is an open map.	
\end{proof}

The following lemma ensures that quotienting out by a compact normal subgroup preserves lattices.
\begin{lem}[{\cite[Theorem 4.7, Corollary 4.10]{vinberggorbatsevichshvartsman2000discrete}}]\label{lem:lattice_lemma_quotient}
	Let $G$ be a locally compact group and $\Gamma\leq G$ be a (uniform) lattice. If $K\vartriangleleft G$ is a compact normal subgroup of $G$ and $\pi:G\rightarrow G/K$ is the quotient map, then $\Gamma/(\Gamma\cap K)\cong \pi(\Gamma)$ is a (uniform) lattice in $G/K$.
\end{lem}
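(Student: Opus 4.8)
The plan is to split the assertion into two parts: that $\pi(\Gamma)\cong\Gamma/(\Gamma\cap K)$ is discrete in $G/K$, and that it has finite covolume there (and is cocompact when $\Gamma$ is). A useful preliminary is that $\pi$ is a proper map: given a compact set $C\subseteq G/K$, cover it by finitely many open sets $\pi(U_1),\dots,\pi(U_m)$ with each $U_i\subseteq G$ open and relatively compact -- possible since $\pi$ is open and surjective and $G$ is locally compact -- so that $\pi^{-1}(C)$ is a closed subset of the compact set $(\overline{U_1}\cup\dots\cup\overline{U_m})K$ and hence compact.

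First I would deal with discreteness. Since $\Gamma\cap K$ is a discrete subgroup of the compact group $K$, it is finite; consequently $\pi$ induces a group isomorphism $\Gamma/(\Gamma\cap K)\to\pi(\Gamma)$, and $\Gamma K$ is a closed subgroup of $G$. Now pick a compact identity neighbourhood $\bar V$ in $G/K$. Then $\pi^{-1}(\bar V)$ is compact by properness of $\pi$, so $\Gamma\cap\pi^{-1}(\bar V)$ is finite, whence $\pi(\Gamma)\cap\bar V$ is finite; shrinking $\bar V$ using the Hausdorff property produces an identity neighbourhood meeting $\pi(\Gamma)$ only in the identity, and homogeneity gives discreteness of $\pi(\Gamma)$. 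For the uniform case this already finishes the proof: if $C\subseteq G$ is compact with $\Gamma C=G$, then $\pi(C)$ is compact and $\pi(\Gamma)\pi(C)=\pi(\Gamma C)=G/K$.

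For the general case I would argue with Haar measure. Normalise a left Haar measure $\mu_G$ on $G$, the probability Haar measure $\mu_K$ on $K$, and a Haar measure $\mu_{G/K}$ on $G/K$ so that Weil's quotient integration formula $\int_G f\,d\mu_G=\int_{G/K}\int_K f(gk)\,d\mu_K(k)\,d\mu_{G/K}(gK)$ holds for every nonnegative Borel function $f$. Choose a Borel fundamental domain $\bar F\subseteq G/K$ for the (free, countable, discrete) left action of $\pi(\Gamma)$, and set $E:=\pi^{-1}(\bar F)$, a $K$-saturated Borel set invariant under left translation by $\Gamma_0:=\Gamma\cap K$. Since $\bar F$ meets each $\pi(\Gamma)$-orbit exactly once and the action of $\pi(\Gamma)$ on $G/K$ is free, one has $\gamma E=\gamma' E$ precisely when $\gamma\Gamma_0=\gamma'\Gamma_0$ and $\gamma E\cap\gamma' E=\emptyset$ otherwise, while $\Gamma E=G$; choosing a Borel transversal $D\subseteq E$ for the free $\Gamma_0$-action on $E$ gives $E=\bigsqcup_{\kappa\in\Gamma_0}\kappa D$ and hence $G=\bigsqcup_{\gamma\in\Gamma}\gamma D$, so that $D$ is a Borel fundamental domain for $\Gamma$ in $G$. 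Consequently $\mu_G(E)=|\Gamma_0|\,\mu_G(D)=|\Gamma_0|\cdot\mathrm{covol}(\Gamma)<\infty$. Applying Weil's formula to $f=\mathbf{1}_E$, and using that $E$ is $K$-invariant while $\mu_K$ is a probability measure, yields $\mu_{G/K}(\bar F)=\mu_G(E)<\infty$, so $\pi(\Gamma)$ is a lattice in $G/K$.

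The steps requiring the most care are in the last paragraph: verifying that the decomposition $E=\bigsqcup_{\kappa\in\Gamma_0}\kappa D$ genuinely yields a fundamental domain for $\Gamma$ (so that no over- or under-counting of covolume occurs), together with the routine measure-theoretic hygiene -- existence of Borel fundamental domains and transversals for the relevant countable discrete group actions, and applicability of Weil's formula -- which is standard; see the cited reference. I would also note in passing that the covolume statement can be obtained without fundamental domains, by pushing the finite $G$-invariant measure on $G/\Gamma$ forward along the $G$-equivariant continuous surjection $G/\Gamma\to G/\Gamma K\cong(G/K)/\pi(\Gamma)$ and invoking the standard fact that a discrete subgroup whose quotient carries a finite invariant measure is a lattice.
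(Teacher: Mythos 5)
Your proof is correct. Note first that the paper offers no argument of its own for this lemma: it is quoted verbatim from Vinberg--Gorbatsevich--Shvartsman, so there is no internal proof to compare against. Your argument is essentially the standard one from that reference, and all the main steps check out: properness of $\pi$ (via covering a compact set of $G/K$ by images of relatively compact open sets and using that a closed subset of $\overline{U}K$ is compact); finiteness of $\Gamma\cap K$ and hence the isomorphism $\Gamma/(\Gamma\cap K)\cong\pi(\Gamma)$; discreteness of $\pi(\Gamma)$ from properness; the uniform case by pushing forward a compact set $C$ with $\Gamma C=G$ (which, under the paper's definition of uniform lattice, already finishes that case); and finite covolume via Weil's quotient integration formula applied to the $K$-saturated set $E=\pi^{-1}(\bar F)$, using $\mu_G(E)=|\Gamma_0|\,\mu_G(D)$. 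Your verification that $\gamma E=\gamma'E$ iff $\gamma\Gamma_0=\gamma'\Gamma_0$ and that $D$ is a genuine fundamental domain for $\Gamma$ is the one place where over/under-counting could occur, and you handle it correctly. Two small points you rightly defer to the reference but which deserve the flag you give them: the existence of Borel fundamental domains and transversals uses countability of $\Gamma$ and (in the cleanest treatments) second countability of $G$, both of which hold in every application in this paper since $\Gamma$ is finitely generated and $G$ is an isometry group of a proper cocompact space; and the identification $\mu_G(D)=\mathrm{covol}(\Gamma)<\infty$ uses the standard fact that, given the paper's definition of lattice via a finite-measure set $S$ with $\Gamma S=G$, any Borel fundamental domain $F$ satisfies $\mu(F)=\sum_{\gamma}\mu(\gamma^{-1}F\cap S)\le\mu(S)$. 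Your closing alternative, pushing the finite invariant measure on $G/\Gamma$ forward to $(G/K)/\pi(\Gamma)$, is also valid and is arguably the slicker route to the covolume statement.
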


The following two lemmas demonstrate that the  intersection of  a lattice in $G$ with certain   subgroups  $H\leq G$ are  lattices in $H$. Such subgroups are called \emph{lattice-hereditary}.
 
\begin{lem}[{see \cite[\S 2.C]{capracemonod2012lattice}}]\label{lem:lattice_open}
	Let $G$ be a locally compact group and  $\Gamma\leq G$ be a (uniform) lattice. If $H\leq G$ is open, then $H\cap \Gamma$ is a (uniform) lattice in $H$.
\end{lem}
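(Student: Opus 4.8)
The plan is to build, by hand, a set of finite Haar measure (compact, in the uniform case) whose $(\Gamma\cap H)$-translates cover $H$. Begin with some bookkeeping. Since $H$ is an open subgroup it is also closed, so $\Gamma\cap H$ is discrete in $H$ and the restriction of a Haar measure $\mu$ of $G$ to the Borel subsets of $H$ is a left Haar measure on $H$. The set $\Gamma H=\bigcup_{\gamma\in\Gamma}\gamma H$ is open, being a union of open cosets, and its complement is open as well, since $g\notin\Gamma H$ forces $gH\subseteq G\setminus\Gamma H$; hence $\Gamma H$ is clopen. Two cosets $\gamma H$ and $\gamma'H$ coincide precisely when $\gamma^{-1}\gamma'\in\Gamma\cap H$, so fixing a transversal $T\subseteq\Gamma$ for $\Gamma/(\Gamma\cap H)$ yields a partition $\Gamma H=\bigsqcup_{t\in T}tH$ into clopen subsets of $G$.

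After replacing $K$ by a Borel superset of the same measure, I may assume $K$ is Borel. Define
\[ K'\;\coloneqq\;\bigcup_{t\in T}\bigl(t^{-1}K\cap H\bigr)\;=\;\bigcup_{t\in T}t^{-1}(K\cap tH)\;\subseteq\;H. \]
To see $(\Gamma\cap H)K'=H$, take $h\in H$ and use $\Gamma K=G$ to write $h=\gamma k$ with $\gamma\in\Gamma$, $k\in K$. Then $k=\gamma^{-1}h\in K\cap\Gamma H$, so $k\in K\cap tH$ for a unique $t\in T$, and $h=\gamma k\in\gamma tH$. As $h\in H$ and $\gamma tH,H$ are both left cosets of $H$, they coincide, so $\delta\coloneqq\gamma t\in\Gamma\cap H$; then $t^{-1}k=\delta^{-1}h\in H$ lies in $t^{-1}(K\cap tH)\subseteq K'$, and $h=\delta(t^{-1}k)\in(\Gamma\cap H)K'$.

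Finally I estimate the size of $K'$. Since $\Gamma$ is countable, $K'$ is Borel; using left-invariance of $\mu$ and pairwise disjointness of the sets $K\cap tH$,
\[ \mu(K')\;\le\;\sum_{t\in T}\mu\bigl(t^{-1}(K\cap tH)\bigr)\;=\;\sum_{t\in T}\mu(K\cap tH)\;=\;\mu(K\cap\Gamma H)\;\le\;\mu(K)\;<\;\infty, \]
so $\Gamma\cap H$ is a lattice in $H$. If moreover $K$ is compact, then $K$, being contained in the partition of $G$ into the clopen sets $\{tH:t\in T\}$ and $G\setminus\Gamma H$, meets only finitely many of the $tH$; hence $K'$ is a finite union of sets $t^{-1}(K\cap tH)$, each of which is compact because $K\cap tH$ is closed in the compact set $K$. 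Thus $K'$ is compact and $\Gamma\cap H$ is a uniform lattice in $H$ whenever $\Gamma$ is.

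I expect no genuine obstacle; the points to handle carefully are that restricting $\mu$ to the open subgroup $H$ really yields a Haar measure on $H$, that the identity $t^{-1}K\cap H=t^{-1}(K\cap tH)$ is what makes left-invariance of $\mu$ compatible with the transversal $T$ in the measure estimate, and, for the uniform case, the elementary fact that a compact subset of $G$ meets only finitely many pieces of a partition of $G$ into clopen sets.
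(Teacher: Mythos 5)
Your proof is correct. Note that the paper does not prove this lemma at all---it is quoted from Caprace--Monod \cite[\S 2.C]{capracemonod2012lattice}---so there is nothing internal to compare against; the usual argument in the literature runs through the quotient, observing that $H\Gamma/\Gamma$ is an open $H$-invariant subset of $G/\Gamma$ canonically identified with $H/(H\cap\Gamma)$, so the finite invariant measure on $G/\Gamma$ restricts to one on $H/(H\cap\Gamma)$. Your argument is a hands-on unpacking of the same idea: the decomposition $\Gamma H=\bigsqcup_{t\in T}tH$ is exactly the fibration of $H\Gamma/\Gamma$ over $\Gamma/(\Gamma\cap H)$, and your set $K'$ is the result of folding the pieces $K\cap tH$ back into $H$. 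All the individual steps check out: the restriction of $\mu$ to the open (hence closed) subgroup $H$ is indeed a Haar measure, the covering identity $(\Gamma\cap H)K'=H$ is verified correctly, and the compactness argument in the uniform case (a compact set meets only finitely many members of a clopen partition) is sound. The one point you gloss over is the assertion ``since $\Gamma$ is countable'': a lattice in an arbitrary locally compact group need not be countable, and both the Borel-measurability of $K'$ and the countable-subadditivity step require $T$ to be countable. This holds whenever $G$ is $\sigma$-compact (a discrete closed subgroup then meets each compact set in a finite set), which covers every use of the lemma in this paper since all groups in play are compactly generated or second countable; it would be worth stating that hypothesis explicitly rather than leaving it implicit.
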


\begin{lem}\label{lem:mostow_lattice_series}
	Let $G$ be a connected Lie group with no non-trivial compact normal subgroup  and let $\Gamma\leq G$ be a lattice. Let $N$ be the nilradical of $G$ and let $C_k(N)$ be the $k$th term in the lower central series of $N$. Then $C_k(N)\cap \Gamma$ is a uniform lattice in $C_k(N)$.
\end{lem}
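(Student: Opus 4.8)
The statement to prove is Lemma \ref{lem:mostow_lattice_series}: for a connected Lie group $G$ with no non-trivial compact normal subgroup, a lattice $\Gamma \leq G$, nilradical $N$, and $k$th lower central series term $C_k(N)$, the intersection $C_k(N) \cap \Gamma$ is a uniform lattice in $C_k(N)$.

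The plan is to reduce to a known theorem of Mostow. By Proposition \ref{prop:nilrad}, either $G$ is centre-free semisimple—in which case $N$ is trivial, $C_k(N)$ is trivial for all $k \geq 1$, and the statement is vacuous—or $N$ is non-trivial and simply connected. So assume $N$ is simply connected; then each $C_k(N)$ is a closed connected simply connected nilpotent subgroup, normal in $G$ since $N$ is normal and the lower central series terms are characteristic in $N$. First I would recall Mostow's theorem (the standard reference is Raghunathan, \emph{Discrete subgroups of Lie groups}, Chapter 3): if $G$ is a connected Lie group, $\Gamma \leq G$ a lattice, and $H \vartriangleleft G$ a closed normal subgroup such that $G/H$ has no non-trivial compact normal subgroup—equivalently, such that $\Gamma H / H$ is closed in $G/H$—then $\Gamma \cap H$ is a lattice in $H$, and moreover if $H$ is (a closed connected subgroup contained in) the nilradical then this lattice is automatically uniform because nilpotent Lie groups containing a lattice have that lattice cocompact (Malcev).

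The key steps, in order: (1) invoke Proposition \ref{prop:nilrad} to handle the semisimple case trivially and otherwise assume $N$ simply connected; (2) observe $C_k(N)$ is closed, connected, simply connected, and normal in $G$; (3) verify the hypothesis of Mostow's theorem, namely that $\Gamma C_k(N)$ is closed in $G$ (equivalently $\Gamma C_k(N)/C_k(N)$ is closed in $G/C_k(N)$)—here one uses that $C_k(N)$ sits inside the nilradical, that $G$ has no compact normal subgroup so the relevant quotients are well-behaved, and the structure theory guaranteeing the image of $\Gamma$ remains discrete modulo the nilpotent piece; (4) conclude from Mostow's theorem that $C_k(N) \cap \Gamma$ is a lattice in $C_k(N)$; (5) since $C_k(N)$ is a simply connected nilpotent Lie group, any lattice in it is cocompact (Malcev), so $C_k(N) \cap \Gamma$ is a \emph{uniform} lattice in $C_k(N)$.

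The main obstacle is step (3): establishing that $\Gamma C_k(N)$ is closed, or equivalently checking precisely which form of Mostow's closedness criterion applies. The cleanest route is probably to apply Mostow's result not to $C_k(N)$ directly but first to $N$ itself—$\Gamma \cap N$ is a uniform lattice in $N$, a standard consequence of $N$ being the nilradical (see Raghunathan, Theorem 3.3, together with the fact that $G/N$ has no compact normal subgroup once $G$ has none)—and then to argue within $N$: $\Gamma \cap N$ is a uniform lattice in the simply connected nilpotent group $N$, and for such a lattice its intersection with any rational subgroup, in particular each $C_k(N)$ (which is defined by the Lie algebra lower central series, hence rational with respect to $\Gamma \cap N$), is a uniform lattice in that subgroup; this last fact is again classical Malcev theory of lattices in nilpotent Lie groups. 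I would expect the write-up to cite Raghunathan for the nilradical statement and Malcev/Raghunathan Chapter 2 for the rationality of the lower central series, so that no genuinely new argument is needed beyond assembling these ingredients.
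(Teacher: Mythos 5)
Your final plan is essentially identical to the paper's proof: the paper cites Mostow's lemma (\cite[Lemma 3.9]{mostow1971arithmetic}) to get that $N\cap\Gamma$ is a uniform lattice in the nilradical $N$, notes via Proposition \ref{prop:nilrad} that $N$ is simply connected when non-trivial, and then invokes Malcev's theory of lattices in simply connected nilpotent Lie groups to conclude that $C_k(N)\cap\Gamma$ is a uniform lattice in $C_k(N)$. Your initial detour through a closedness criterion for $\Gamma C_k(N)$ is unnecessary, as you yourself conclude; the two-step route (nilradical first, then Malcev within $N$) is exactly what the paper does.
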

\begin{proof}
	 A lemma of Mostow states that if $N\leq G$ is the nilradical of $G$, then  the intersection $N\cap \Gamma$ is a uniform lattice in $N$ \cite[Lemma 3.9]{mostow1971arithmetic}. If $N=\{1\}$ there is nothing to prove. If not, then we are in Case (2) of Proposition \ref{prop:nilrad}, and so $N$ is simply connected. Since $\Gamma\cap N$ is a lattice in $N$,  $C_k(N)\cap \Gamma$ is a uniform lattice in $C_k(N)$ by  a result of Malcev \cite{malcev1951class}; see  \cite[Chapter 2, Corollary 2.5]{vinbergonischik} for details.
\end{proof}
\begin{rem}
	The problem of determining  lattice-hereditary subgroups of Lie groups has ``a long and rather dramatic history''  according to Gorbatsevich's  MathSciNet review of \cite{geng2015When}.   We refer the reader to this article of Geng for a discussion of the topic and a summary of what is known, noting that several disputed claims appear in the literature  \cite{geng2015When}.
\end{rem}

We will also make use of the following lemma to show commensurated subgroups are preserved under images and preimages.
\begin{lem}[{\cite[Lemmas 3.7 and 3.8]{connermihalik2013}}]\label{lem:preimage_comm}
	Suppose $G,H$ are groups and  $\phi:G\rightarrow H$ is a homomorphism.
	\begin{enumerate}
		\item If $K\alnorm H$ is commensurated, then $\phi^{-1}(K)\alnorm G$ is commensurated.
		\item If $L\alnorm G$ is commensurated and $\phi$ is surjective, then $\phi(L)$ is commensurated.
	\end{enumerate} 
\end{lem}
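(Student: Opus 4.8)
The plan is to reduce both statements to two elementary facts about how subgroup index behaves under a homomorphism $\phi\colon G\to H$, combined with the conjugation identity $g\phi^{-1}(K)g^{-1}=\phi^{-1}(\phi(g)K\phi(g)^{-1})$ (which one checks directly on elements). The first fact is that taking preimages does not increase index: if $A\leq B\leq H$ then $[\phi^{-1}(B):\phi^{-1}(A)]\leq[B:A]$, since two left cosets of $\phi^{-1}(A)$ inside $\phi^{-1}(B)$ with the same image under $\phi$ coincide. The second is the analogue for images: if $A\leq B\leq G$ then $[\phi(B):\phi(A)]\leq[B:A]$, obtained by pushing forward a coset decomposition $B=\bigsqcup_i b_iA$ to get $\phi(B)=\bigcup_i\phi(b_i)\phi(A)$. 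I would also record the standard observation that $L\leq\Gamma$ is commensurated precisely when $[L:L\cap gLg^{-1}]<\infty$ for every $g\in\Gamma$: conjugating by $g^{-1}$ identifies $gLg^{-1}$ with $L$ and $L\cap gLg^{-1}$ with $L\cap g^{-1}Lg$, so finiteness of all these indices forces finiteness of $[gLg^{-1}:L\cap gLg^{-1}]$ as well.

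For part (1), I would set $L=\phi^{-1}(K)$, fix $g\in G$, put $h=\phi(g)$, and apply the conjugation identity to get $gLg^{-1}=\phi^{-1}(hKh^{-1})$, hence $L\cap gLg^{-1}=\phi^{-1}(K\cap hKh^{-1})$. Since $K\alnorm H$, the subgroup $K\cap hKh^{-1}$ has finite index in $K$, so the preimage index bound gives $[L:L\cap gLg^{-1}]<\infty$. As $g$ was arbitrary, $L\alnorm G$.

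For part (2), I would set $K=\phi(L)$, fix $h\in H$, use surjectivity to write $h=\phi(g)$, and note $hKh^{-1}=\phi(gLg^{-1})$. The key point is that $K\cap hKh^{-1}$ cannot be computed directly from $\phi$, because $\phi(A\cap B)\subseteq\phi(A)\cap\phi(B)$ may be strict; instead I would sandwich it as $\phi(L\cap gLg^{-1})\subseteq\phi(L)\cap\phi(gLg^{-1})=K\cap hKh^{-1}\leq K$. Since $L\alnorm G$ makes $L\cap gLg^{-1}$ finite index in $L$, the image index bound makes $\phi(L\cap gLg^{-1})$ finite index in $\phi(L)=K$, whence $K\cap hKh^{-1}$ is finite index in $K$ as well. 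As $h$ was arbitrary, $K\alnorm H$.

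There is no genuine obstacle here: the lemma is a formal consequence of the monotonicity of subgroup index under a homomorphism. The only two points needing a moment of care are the strict-containment issue in part (2), handled by the sandwiching argument above, and the fact that ``commensurated'' nominally demands two finite indices per conjugate rather than one, handled by the conjugation-symmetry remark in the first paragraph.
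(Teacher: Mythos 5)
Your proof is correct. The paper does not prove this lemma at all---it simply cites Lemmas 3.7 and 3.8 of Conner--Mihalik---so there is no in-paper argument to compare against; you have supplied a complete elementary proof of a result the paper takes as imported. All the key steps check out: the identity $g\phi^{-1}(K)g^{-1}=\phi^{-1}(\phi(g)K\phi(g)^{-1})$, the two index-monotonicity facts, the sandwich $\phi(L\cap gLg^{-1})\leq \phi(L)\cap\phi(gLg^{-1})=K\cap hKh^{-1}\leq K$ in part (2), and the reduction of the two-sided commensuration condition to the one-sided one via conjugation by $g^{-1}$ (which sends $L\cap gLg^{-1}$ to $g^{-1}Lg\cap L$, so finiteness of $[gLg^{-1}:L\cap gLg^{-1}]$ follows from the hypothesis applied to $g^{-1}$). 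The two points you flag as needing care---the possible strictness of $\phi(A\cap B)\subseteq\phi(A)\cap\phi(B)$ and the nominal two-index requirement---are exactly the right ones, and both are handled correctly.
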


The following lemma follows easily  from the fact that the centre of a finitely generated group $\Gamma$ has finite index when the  commutator subgroup $\Gamma'$ is finite.
\begin{lem}\label{lem:finite_index_abelian}
	Let $\Gamma$ be a finitely generated group containing a finite  normal subgroup $F\vartriangleleft \Gamma$ such that $\Gamma/F\cong \bbZ^n$. Then $\Gamma$ contains  a finite index subgroup isomorphic to $\bbZ^n$.
\end{lem}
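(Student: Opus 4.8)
The plan is to use the hypothesis to force the commutator subgroup $\Gamma'$ to be finite, and then to extract a free abelian finite-index subgroup from the centre $Z(\Gamma)$. Since $\Gamma/F\cong\bbZ^n$ is abelian, $\Gamma'\leq F$, so $\Gamma'$ is finite. To see that $Z(\Gamma)$ has finite index in $\Gamma$, fix a finite generating set $g_1,\dots,g_k$ of $\Gamma$: for each $i$ and each $h\in\Gamma$ we have $hg_ih^{-1}=g_i\cdot(g_i^{-1}h g_i h^{-1})\in g_i\Gamma'$, so the conjugacy class of $g_i$ has at most $|\Gamma'|$ elements; by the orbit--stabiliser theorem the centraliser $C_\Gamma(g_i)$ has finite index, and therefore so does $Z(\Gamma)=\bigcap_{i=1}^k C_\Gamma(g_i)$.

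Next I would analyse $Z(\Gamma)$. As a finite-index subgroup of the finitely generated group $\Gamma$, it is itself finitely generated, and it is abelian, so by the structure theorem for finitely generated abelian groups $Z(\Gamma)\cong\bbZ^r\oplus T$ for some $r\geq 0$ and some finite group $T$. Let $B\leq Z(\Gamma)$ be the free direct summand corresponding to $\bbZ^r\oplus 0$ under this isomorphism, so $B\cong\bbZ^r$ and $[Z(\Gamma):B]=|T|<\infty$; since also $[\Gamma:Z(\Gamma)]<\infty$, the subgroup $B$ is a free abelian subgroup of finite index in $\Gamma$.

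It remains to identify $r$ with $n$. Since $B$ is torsion-free and $F$ is finite, $B\cap F=\{1\}$, so the quotient homomorphism $\Gamma\to\Gamma/F\cong\bbZ^n$ restricts to an injection on $B$; its image $BF/F$ satisfies $[\Gamma/F:BF/F]\leq[\Gamma:B]<\infty$, so $B\cong BF/F$ is a finite-index subgroup of $\bbZ^n$, forcing $r=n$. Hence $B\cong\bbZ^n$ is the required finite-index subgroup. None of the steps presents a genuine difficulty — which is why the result ``follows easily'' — but the one point that actually uses finite generation of $\Gamma$ is the passage from ``$\Gamma'$ finite'' to ``$Z(\Gamma)$ of finite index''.
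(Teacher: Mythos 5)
Your proof is correct and follows exactly the route the paper intends: the paper gives no written proof but states that the lemma ``follows easily from the fact that the centre of a finitely generated group $\Gamma$ has finite index when the commutator subgroup $\Gamma'$ is finite,'' which is precisely the key fact you establish (via the conjugacy-class/orbit--stabiliser argument) and then exploit. Your remaining steps --- extracting the free summand of $Z(\Gamma)$ and identifying its rank with $n$ via the injection into $\Gamma/F$ --- are the standard completion of that sketch and are all sound.
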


We now come to the main technical result of this section.

\begin{prop}\label{prop:lattice_structure}
	Let $\Gamma$ be a finitely generated group, $G$ be a locally compact group,  and suppose $\rho:\Gamma\to G$ is a virtual lattice embedding. If $G^\circ$ is non-compact, then $\Gamma$ contains an infinite commensurated subgroup $\Lambda\leq \Gamma$ such that either:
	\begin{enumerate}
		\item $\Lambda$ is a finite rank  free abelian group;
		\item  There is a copci homomorphism $\tau:G\to S\times D$ with  finite index  open image such that:
		\begin{enumerate}
			\item $S$ is a centre-free semisimple Lie group with   no compact factors and finitely many components
			\item $D$ is a compactly generated totally disconnected locally compact group
			\item The composition \[\Lambda\xrightarrow{\iota} \Gamma\xrightarrow{\rho}G\xrightarrow{\tau}S\times D\xrightarrow{q}S\] is a virtual lattice embedding, where $\iota$ is the inclusion and $q$ is the projection.
		\end{enumerate} 
	Moreover,  if $\rho$ is a uniform virtual lattice embedding, then $\Lambda$ is uniformly commensurated and  $q\circ\tau\circ \rho\circ \iota$ is a virtual uniform lattice embedding.
	\end{enumerate}
\end{prop}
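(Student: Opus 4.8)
The plan is to reduce, via the structural results of Sections~\ref{sec:locally_compact_prelims} and~\ref{sec:structure}, to a statement about lattices in connected Lie groups, and then to run a case analysis on the nilradical using Proposition~\ref{prop:nilrad}. First I would invoke Lemma~\ref{lem:conn-comp-lie} to obtain a compact normal subgroup $K_0\vartriangleleft G$ with $(G/K_0)^\circ$ a connected Lie group with no non-trivial compact normal subgroup, and with $(G/K_0)^\circ$ non-compact (using the ``moreover'' in Lemma~\ref{lem:conn-comp-lie} and the hypothesis that $G^\circ$ is non-compact). By Lemma~\ref{lem:lattice_lemma_quotient}, the image $\bar\rho\colon\Gamma\to G/K_0$ is again a virtual lattice embedding (uniform if $\rho$ is), and $\Gamma\cap K_0$ is finite since $\Gamma$ is discrete and $K_0$ compact. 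So it suffices to prove the proposition for $G/K_0$; I would then rename and assume $G^\circ$ itself is a connected Lie group with no non-trivial compact normal subgroup.

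**The two cases.** Now apply Proposition~\ref{prop:nilrad} to the connected Lie group $G^\circ$.

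\emph{Case 1: $G^\circ$ is a centre-free semisimple Lie group.} If it additionally had a compact factor, that factor would be a non-trivial compact normal subgroup of $G^\circ$, contradicting our reduction; so $G^\circ$ has no compact factors. Proposition~\ref{prop:map_to_product} gives a copci (indeed continuous open monomorphism with finite index image) homomorphism $\phi\colon G\to \Aut(G^\circ)\times G/G^\circ$. Set $S:=\Aut(G^\circ)$, which is a centre-free semisimple Lie group with no compact factors and finitely many components by Lemma~\ref{lem:fin_cen}, and $D:=G/G^\circ$, which is totally disconnected (Proposition~\ref{prop:idcomp}) and locally compact and compactly generated since $G$ is (the quotient of a compactly generated group is compactly generated; $G$ is compactly generated as it contains a finitely generated lattice --- or one cites Lemma~\ref{lem:quasi_geod} / the standard fact). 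Take $\tau:=\phi$. It remains to produce $\Lambda$: let $\Lambda:=\Gamma\cap \rho^{-1}(G^\circ\cdot(\text{finite-index normalising subgroup}))$ --- more precisely, I would use Proposition~\ref{prop:opensbgrp_prod} to find an open subgroup $L\leq G$ isomorphic to $G^\circ\times K$ with $K$ compact totally disconnected, note $G^\circ$ is topologically characteristic in $L$ hence $G^\circ K = G^\circ\times K$ is commensurated in $G$ by part~(2) of that proposition, and set $\Lambda:=\rho^{-1}(G^\circ K)$, which is commensurated in $\Gamma$ by Lemma~\ref{lem:preimage_comm}(1). By Lemma~\ref{lem:lattice_open}, $\rho(\Gamma)\cap L$ is a lattice in $L=G^\circ\times K$; projecting to the first factor (a continuous proper map since $K$ is compact) shows the composition $\Lambda\xrightarrow{\iota}\Gamma\xrightarrow{\rho}G\xrightarrow{\tau}S\times D\xrightarrow{q}S$, which up to finite kernel agrees with $\Lambda\to G^\circ\to\Aut(G^\circ)^\circ=\Inn(G^\circ)$ via $\Ad$, is a virtual lattice embedding; uniformity is preserved throughout. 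This puts us in alternative~(2), and $\Lambda$ is infinite because $G^\circ$ is non-compact (a lattice in a non-compact semisimple Lie group is infinite). When $\rho$ is uniform, $\Lambda$ is a uniformly commensurated subgroup virtually isomorphic to a uniform lattice in a semisimple Lie group; uniform commensuration follows from Lemma~\ref{lem:engulfing} applied to the composite homomorphism $\Gamma\xrightarrow{\rho}G\xrightarrow{\tau}S\times D\xrightarrow{q}S$, whose restriction to $\Lambda$ is a virtual uniform lattice embedding.

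\emph{Case 2: the nilradical $N$ of $G^\circ$ is non-trivial and simply connected.} Here I produce alternative~(1). The group $G^\circ$ is open? No --- it need not be; but $N$ is topologically characteristic in $G^\circ$, which is normal in $G$, so $N$ is normal in $G$, and by Proposition~\ref{prop:opensbgrp_prod} with $M=C_k(N)$ (a topologically characteristic subgroup of $G^\circ$, being a term of the lower central series of the characteristic subgroup $N$) we get that $C_k(N)K\cong C_k(N)\times K$ is commensurated in $G$ for suitable compact totally disconnected $K\vartriangleleft L$. Choose $k$ maximal with $C_k(N)\neq\{1\}$, so $C_k(N)$ is a non-trivial connected abelian Lie group; since $G^\circ$ has no compact normal subgroup and $N$ is simply connected, $C_k(N)\cong\bbR^m$ for some $m\geq 1$. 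Now $\Lambda_0:=\rho^{-1}(C_k(N)K)$ is commensurated in $\Gamma$ by Lemma~\ref{lem:preimage_comm}(1); by Lemma~\ref{lem:lattice_open} and Lemma~\ref{lem:mostow_lattice_series}, $\rho(\Gamma)\cap (C_k(N)\times K)$ is a lattice, and projecting away the compact factor $K$ shows $\Lambda_0$ maps, with finite kernel, onto a lattice in $\bbR^m$, i.e.\ $\Lambda_0$ has a finite normal subgroup with quotient $\bbZ^m$. By Lemma~\ref{lem:finite_index_abelian}, $\Lambda_0$ contains a finite index subgroup $\Lambda\cong\bbZ^m$; and $\Lambda$ is commensurated in $\Gamma$ because it is commensurable to the commensurated subgroup $\Lambda_0$ and $m\geq 1$ so $\Lambda$ is infinite and of finite rank. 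This is exactly alternative~(1).

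**Main obstacle.** The routine parts are the diagram-chasing with Lemmas~\ref{lem:lattice_lemma_quotient}, \ref{lem:lattice_open}, \ref{lem:preimage_comm} and the bookkeeping of ``virtual'' (finite kernel, finite index) and ``uniform''. The genuinely delicate point is the interplay between $G^\circ$ \emph{not being open} and the need for \emph{commensurated} (rather than normal) subgroups of $\Gamma$: this is precisely what Proposition~\ref{prop:opensbgrp_prod} is designed to handle, by replacing $G^\circ$ with the open subgroup $L\cong G^\circ\times K$ and observing that topologically characteristic subgroups of $G^\circ$, when multiplied by the compact commensurated $K$, become commensurated in all of $G$. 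Getting the lattice-hereditary statements right --- $N\cap\Gamma$ a lattice in $N$ via Mostow, then $C_k(N)\cap\Gamma$ a lattice in $C_k(N)$ via Malcev (Lemma~\ref{lem:mostow_lattice_series}), and $\Gamma\cap L$ a lattice in $L$ via openness (Lemma~\ref{lem:lattice_open}) --- and checking these intersect correctly with the compact piece $K$, is where I would spend the most care. The ``moreover'' about uniformity should then fall out of Lemma~\ref{lem:engulfing} in Case~1, and is automatic in Case~2 since every finite-rank free abelian commensurated subgroup is uniformly commensurated (its image in $\QI(\bbZ^m)$ is contained in the uniform subgroup $\GL_m(\bbZ)\cdot(\text{bounded})$ --- or one simply notes the conclusion as stated only claims uniform commensuration and uniform lattice embedding in alternative~(2)).
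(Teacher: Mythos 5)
Your proposal is correct and follows essentially the same route as the paper's own proof: the same reduction via Lemma~\ref{lem:conn-comp-lie} and Lemma~\ref{lem:lattice_lemma_quotient}, the same dichotomy from Proposition~\ref{prop:nilrad}, the same use of the open subgroup $L\cong G^\circ\times K$ from Proposition~\ref{prop:opensbgrp_prod} to manufacture commensurated subgroups, the same lattice-hereditary chain through Lemmas~\ref{lem:lattice_open} and~\ref{lem:mostow_lattice_series} (passing to the lattice in $G^\circ$ before intersecting with $C_k(N)$), and the same appeal to Proposition~\ref{prop:map_to_product} and Lemma~\ref{lem:engulfing} in the semisimple case. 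No substantive differences to report.
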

\begin{proof}
Lemma \ref{lem:conn-comp-lie}  implies there is some compact normal subgroup $K_1\vartriangleleft G$ such that $(G/K_1)^\circ$ has no non-trivial compact normal subgroups. By Lemma  \ref{lem:lattice_lemma_quotient}, postcomposing $\rho$ with the quotient map $G\to G/K_1$ and replacing $G$ with $G/K_1$, we can thus assume $G^\circ$ has no non-trivial compact normal subgroup. In particular, Theorem \ref{thm:gleason-yamabe} implies that $G^\circ$ is a connected Lie group.

We choose an open subgroup $L\leq G$ and a compact subgroup $K\vartriangleleft L$ as in Proposition \ref{prop:opensbgrp_prod} and let $\pi_L:L\cong G^\circ \times K\to G^\circ$ be the quotient map. Let $\Gamma_L\coloneqq \rho^{-1}(L)$. Since $L$ is commensurated in $G$, Lemma \ref{lem:preimage_comm} ensures  $\Gamma_L\alnorm \Gamma$. As $\rho(\Gamma_L)=\rho(\Gamma)\cap L$ and $L$ is open, Lemma \ref{lem:lattice_open} implies $\rho(\Gamma_L)$ is a lattice in $L$. Let $\omega\coloneqq \pi_L\circ \rho|_{\Gamma_L}:\Gamma_L\to G^\circ$.  Lemma \ref{lem:lattice_lemma_quotient} implies $\im(\omega)=\pi_L(\rho(\Gamma_L))$ is a lattice in $G^\circ$. Since $\ker(\rho)$ is finite, $\rho(\Gamma_L)$ is discrete and $\ker(\pi_L)$ is compact, $\ker(\omega)$ is finite and so $\omega$ is a virtual lattice embedding.

By Proposition \ref{prop:nilrad}, $G^\circ$ either has non-trivial simply connected nilradical, or is a centre-free  semisimple Lie group. Assume first $G^\circ$ is a  centre-free  semisimple Lie group.  
Proposition \ref{prop:map_to_product} ensures the map $\tau:G\to \Aut(G^\circ)\times G/G^\circ$ is continuous and open with finite index image. By Lemma \ref{lem:fin_cen},  $\Aut(G^\circ)$ is a centre-free semisimple Lie group with  finitely many components, whilst Proposition \ref{prop:idcomp} ensures $G/G^\circ$ is totally disconnected.  
Since $\omega$ is a virtual lattice embedding,  Lemma \ref{lem:fin_cen} ensures the composition \[\Gamma_L\xrightarrow{\omega}G^\circ\xrightarrow{\Ad}\Aut(G^\circ)\] is also a virtual lattice embedding. 

We claim $\Ad\circ \omega$ coincides with \[ \Gamma\xrightarrow{\rho}G\xrightarrow{\tau}\Aut(G^\circ)\times G/G^\circ\xrightarrow{q} \Aut(G^\circ)\] on $\Gamma_L$, where $q$ is the projection. Indeed, for each $g\in \Gamma_L$, we have $\rho(g)\in L\cong G^\circ\times K$ so that $\rho(g)=s_gk_g$ for unique $s_g\in G^\circ$ and $k_g\in K$.  Thus $\pi_L(s_gk_g)=s_g=\omega(g)$. The definition of $\tau$ in Proposition \ref{prop:map_to_product} says that the map $G\xrightarrow{q\circ \tau} \Aut(G^\circ)$ is induced by the action of $G$ on $G^\circ$ by conjugation. Therefore, for all $h\in G^\circ$, we have \[q(\tau(\rho(g)))(h)=s_gk_ghk_g^{-1}s_g^{-1}=s_ghs_g^{-1}=\Ad(\omega(g))(h),\] since $k_g\in K$ commutes with $G^\circ$. Thus $\Ad\circ \omega$ and $q\circ \tau\circ \rho$ agree on $\Gamma_L$, and so $q\circ \tau\circ \rho|_{\Gamma_L}$ is a virtual lattice embedding.

Suppose in addition that $\rho$ is a uniform virtual lattice embedding. Then Lemmas \ref{lem:lattice_lemma_quotient} and \ref{lem:lattice_open} imply 
 $\omega$ is a uniform lattice embedding, hence so is $\Ad\circ \omega$. The preceding paragraph ensures $q\circ \tau\circ \rho|_{\Gamma_L}$ is a virtual uniform lattice embedding, so  Lemma \ref{lem:engulfing} may be applied to the map $q\circ \tau\circ \rho$  to deduce $\Gamma_L$ is uniformly commensurated in $\Gamma$.

We now assume  $G^\circ$  has non-trivial simply-connected nilradical. Choose the maximal $k$ such that the term  $C_k(N)$  of the lower central series is non-zero.  Then $C_k(N)$ is a non-trivial simply-connected  abelian Lie group, hence is isomorphic to $\bbR^n$ for some $n>0$. Since $N$ is a characteristic subgroup of $G^\circ$ and $C_k(N)$ is a characteristic subgroup of $N$, $C_k(N)$ is a characteristic subgroup of $G^\circ$. By Proposition \ref{prop:opensbgrp_prod},  $C_k(N)\times K$ is a commensurated subgroup of $G$. Thus Lemma \ref{lem:preimage_comm} says $\Lambda\coloneqq \rho^{-1}(C_k(N)\times K)$ is a commensurated subgroup of $\Gamma$. 
As $\omega(\Gamma_L)$ is a lattice in $G^\circ$, Lemma \ref{lem:mostow_lattice_series} implies \[\omega(\Gamma_L)\cap C_k(N)=\omega(\Lambda)\] is a uniform lattice in $C_k(N)\cong \bbR^n$, hence is isomorphic to $\bbZ^n$. Since $\ker(\omega)$ is finite, Lemma \ref{lem:finite_index_abelian} ensures $\Lambda$ has a finite index subgroup isomorphic to $\bbZ^n$; such a subgroup is also commensurated.
\end{proof}
\begin{proof}[Proof of Theorem \ref{thm:modelgeom}]
	Let $\Gamma$ be a finitely generated group that does not contain a non-trivial finite rank free abelian commensurated subgroup. Let $Z$ be a model geometry of $\Gamma$ and let  $\rho:\Gamma\to \Isom(Z)$ be the geometric action of $\Gamma$ on $Z$.  Let $G=\Isom(Z)$. 
	If $G^\circ$ is compact, then $G$ is compact-by-(totally disconnected), hence Lemma \ref{lem:dom} and Proposition \ref{prop:td_graph} ensure $Z$ is dominated by a locally finite vertex-transitive graph. 
	
	If $G^\circ$ is non-compact, Proposition \ref{prop:lattice_structure} ensures there is a copci homomorphism $\lambda:G\to S\times Q$ with $S$ and $Q$ as in the statement of Proposition \ref{prop:lattice_structure}. By Lemma \ref{lem:dom} and Propositions \ref{prop:manifold_gp_action} and \ref{prop:td_graph}, there exists a symmetric  space $X$, a locally finite vertex-transitive graph $Y$, and copci homomorphisms $\phi:S\to \Isom(X)$ and  $\psi:Q\to \Isom(Y)$. The composition \[\Isom(Z)\xrightarrow{\lambda}S\times Q\xrightarrow{(\phi,\psi)}\Isom(X)\times \Isom(Y)\to \Isom(X\times Y)\] is also copci, where the last map  is the product action. Thus $Z$ is dominated by $X\times Y$.	
\end{proof}

\begin{proof}[Proof of Theorem \ref{thm:lattice_prod}]
Let $\Gamma$ be a finitely generated group that does not contain a non-trivial finite rank free abelian commensurated subgroup and suppose $\rho:\Gamma\to G$ is a lattice embedding. If $G^\circ$ is compact, then the quotient map $\phi:G\to G/G^\circ$ is the required proper continuous map to a totally disconnected locally compact group. If $G^\circ$ is non-compact, then we can apply Proposition \ref{prop:lattice_structure} to get a copci map $\phi:G\to S\times D$ with finite index open image, where $S$ and $D$ satisfy the required properties.
\end{proof}

\section{Building lattice embeddings}\label{sec:building_group}
In Section \ref{sec:structure}, we proved one direction of Theorem \ref{thm:mainintro}, showing that if a finitely generated group $\Gamma$ has a model geometry not dominated by a locally finite graph, this implies the existence of a certain commensurated subgroup. In this section we  complete the proof of Theorem \ref{thm:mainintro}, showing how to embed a group $\Gamma$ containing a certain commensurated subgroup $\Lambda$ into a locally compact group. The technical result of this section,  Theorem \ref{thm:build_lattice_embedding} shows that given a Hecke pair with $(\Gamma,\Lambda)$ with engulfing group $E$, $\Gamma$ is a uniform lattice in a locally compact group containing $E$ as a closed normal subgroup.

Given a Hecke pair $(\Gamma,\Lambda)$, we consider the set $\Gamma/\Lambda$ of left $\Lambda$-cosets. There is an action $\phi:\Gamma\to \Sym(\Gamma/\Lambda)$  given by left multiplication.  The group $\Sym(\Gamma/\Lambda)$ can be equipped with the topology of pointwise convergence. The \emph{Schlichting completion of $(\Gamma,\Lambda)$} consists of the pair $(G,\lambda)$, where $G=\overline{\im(\phi)}$ and $\lambda:\Gamma\to G$ is the corestriction of $\phi$ to $G$. For ease of notation, we sometimes refer to $G$ as the Schlichting completion.  The following properties characterise Schlichting completions:
\begin{prop}[{\cite[Lemma 3.6]{shalomwillis13}}]
If $(G,\lambda)$ is the Schlichting completion of the   Hecke pair $(\Gamma,\Lambda)$, then:
	\begin{enumerate}
		\item $G$ is a totally disconnected locally compact group;
		\item $\lambda(\Gamma)$ is dense in $G$;
		\item There is a compact open subgroup $K\leq G$ such that $\rho^{-1}(K)=\Lambda$.
	\end{enumerate}
\end{prop}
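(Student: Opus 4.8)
The plan is to carry out everything inside the group $\Sym(\Gamma/\Lambda)$, equipped with the topology of pointwise convergence, and to use the defining properties of a commensurated subgroup at exactly one point. First I would recall the standard facts about this topology: $\Sym(\Gamma/\Lambda)$ is a Hausdorff topological group, the pointwise stabilisers of finite subsets of $\Gamma/\Lambda$ form a neighbourhood basis of the identity, and each such stabiliser is clopen, so $\Sym(\Gamma/\Lambda)$ is totally disconnected. Since $G=\overline{\im(\phi)}$ is by construction a closed subgroup of $\Sym(\Gamma/\Lambda)$, it is automatically totally disconnected, and $\lambda(\Gamma)=\phi(\Gamma)$ is dense in $G$ by definition; this already gives (2) and the totally-disconnected part of (1), so the only real content is local compactness of $G$ together with (3).

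The heart of the argument is to produce the compact open subgroup. I would let $K_0\le\Sym(\Gamma/\Lambda)$ be the stabiliser of the base coset $e\Lambda$, which is open, and set $K\coloneqq G\cap K_0$, an open subgroup of $G$. The identity $\lambda^{-1}(K)=\Lambda$ is then a one-line check: $\phi(\gamma)$ fixes $e\Lambda$ if and only if $\gamma\Lambda=\Lambda$, i.e.\ $\gamma\in\Lambda$. The structural input needed is that $\Lambda$ is commensurated: for a coset $c=\gamma\Lambda$, the stabiliser of $c$ under the left $\Lambda$-action on $\Gamma/\Lambda$ is $\Lambda\cap\gamma\Lambda\gamma^{-1}$, which has finite index in $\Lambda$ by commensurability, so every $\Lambda$-orbit $O\subseteq\Gamma/\Lambda$ is finite.

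With finiteness of orbits in hand, compactness of $K$ follows. Any $g\in K$ is a limit of a net $\phi(\gamma_i)$ with $\phi(\gamma_i)(e\Lambda)\to e\Lambda$, so eventually $\gamma_i\in\Lambda$; as $\Lambda$ preserves each finite orbit $O$ and acts on it by permutations, $g$ preserves every orbit and restricts to a permutation of each. Hence $K$ is contained in the product $\prod_O\Sym(O)$ of finite symmetric groups, on which the pointwise-convergence topology restricts to the product topology, so this product is compact by Tychonoff; since $K$ is closed in $\Sym(\Gamma/\Lambda)$ it is a closed subset of a compact Hausdorff space, hence compact. This proves $G$ has a compact open subgroup, so $G$ is locally compact, completing (1), and it proves (3).

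The one place requiring care — the main obstacle, though a mild one — is the compactness step: a pointwise limit of permutations a priori need only be a self-map of $\Gamma/\Lambda$, so one cannot simply invoke Tychonoff on $\prod_{c}\Lambda\cdot c$. Tracking the orbit-preservation property and working inside $\prod_O\Sym(O)$, where self-maps that preserve each finite orbit are automatically bijective, is what makes the limit argument go through. Everything else is routine unwinding of the pointwise-convergence topology and the definition of the Schlichting completion.
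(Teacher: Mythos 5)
Your proof is correct and complete. The paper itself gives no proof of this proposition — it simply cites \cite[Lemma 3.6]{shalomwillis13} — and your argument (clopen point stabilisers for total disconnectedness, finiteness of $\Lambda$-orbits from commensurability, and compactness of $K=G\cap\stab(e\Lambda)$ via the product of finite symmetric groups on the orbits) is exactly the standard one underlying that citation, including the correct handling of the one genuine subtlety, namely that a pointwise limit of permutations must be checked to permute each finite orbit rather than merely map into it.
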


We complete the proof of Theorem \ref{thm:mainintro}  by proving the following theorem.
\begin{thm}\label{thm:build_lattice_embedding}
	Let $\Gamma$ be a finitely generated group and let  $(\Gamma,\Lambda)$ be a Hecke pair with engulfing triple $(E,\rho,\Delta)$. Then there exists a locally compact group $G$ and a virtual uniform lattice embedding $\gamma:\Gamma\to G$ such that  $G$ fits into the short exact sequence \[1\rightarrow E\xrightarrow{p} G\xrightarrow{q} Q\rightarrow 1\] and the following hold:
	\begin{enumerate}
		\item $(Q,\lambda)$ is the Schlichting completion of $(\Gamma,\Lambda_0)$ for some finite index subgroup $\Lambda_0\leq \Lambda$
		\item  $p$ is a topological embedding and $q$ is a topological quotient map;
		\item   $\lambda=q\circ \gamma$;
		\item For all $g\in \Gamma$ and $v\in E$, $p(\Delta(g)(v))=\gamma(g)p(v)\gamma(g)^{-1}$.
	\end{enumerate}   
\end{thm}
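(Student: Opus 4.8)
The strategy is to construct $G$ as a quotient of a semidirect product that captures both the engulfing data and the action of $\Gamma$ on the coset space. First I would fix a finite index subgroup $\Lambda_0 \leq \Lambda$ with the property that the virtual lattice embedding $\rho$ restricted to $\Lambda_0$ is honestly injective onto a uniform lattice and that $\Delta(h)(\rho(k)) = \rho(hkh^{-1})$ holds on the nose for all $h,k \in \Lambda_0$ (shrinking $\Lambda_0$ further to absorb the finite-index discrepancies in Definition \ref{defn:engulf}(2), using that $\Lambda$ is finitely generated so only finitely many generators need be accounted for, and that $\ker\rho$ is finite). Let $(Q,\lambda)$ be the Schlichting completion of $(\Gamma,\Lambda_0)$, with compact open subgroup $K \leq Q$ satisfying $\lambda^{-1}(K) = \Lambda_0$. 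The idea is that $G$ should be an extension of $Q$ by $E$ in which $\Gamma$ embeds by combining $\gamma = (\rho\text{-data}, \lambda)$, and in which the conjugation action of $\gamma(\Gamma)$ on $p(E)$ recovers $\Delta$.

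**Construction via a cocycle.** I would realise $G$ as follows. Choose a set-theoretic section $\sigma:Q \to \Gamma$ of $\lambda$ (well-defined up to $\ker\lambda$, which is finite, plus $\Lambda_0$); more precisely, since $\lambda(\Gamma)$ is dense in $Q$ and $K$ is open, one can choose $\sigma$ so that $\sigma$ is the identity on $\lambda(\Gamma) \cap K$-cosets and is "locally constant" in a suitable sense. The action $\Delta$ pulls back along $\sigma$ to a map $Q \to \Aut(E)$ which is a homomorphism up to an $E$-valued correction; together with the failure of $\sigma$ to be a homomorphism one obtains a continuous generalised cocycle $c: Q\times Q \to E$ (continuity is where the care with $\sigma$ and the compact-open subgroup $K$ matters). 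Define $G$ as the set $E \times Q$ with twisted multiplication $(v,x)(w,y) = (v \cdot {}^{\sigma(x)}w \cdot c(x,y),\, xy)$, topologised so that $E \times K'$ is open for a suitable compact open $K' \leq Q$ over which $c$ and the $\Delta$-pullback are trivial. One checks this is a locally compact group, that $p(v) = (v,1)$ is a closed topological embedding with normal image, and that $q(v,x) = x$ is the quotient map onto $Q$; this gives the short exact sequence and items (2). Then $\gamma:\Gamma \to G$ sends $g \mapsto (\rho\text{-valued term}, \lambda(g))$ — more precisely one uses that for $g\in\Lambda_0$ the engulfing condition lets us take the first coordinate to be $\rho(g)$, and for general $g$ one writes $g$ relative to the section — arranged so that $\gamma$ is a homomorphism (this is forced by the cocycle identities), $q\circ\gamma = \lambda$ giving (3), and the conjugation formula (4) holds by design since conjugation by $(v,x)$ on $(w,1)$ is $({}^{\sigma(x)}w, 1)$ modulo the $E$-part, which is exactly $\Delta$.

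**Verifying the lattice property.** Finally I would check $\gamma$ is a virtual uniform lattice embedding. The kernel of $\gamma$ is finite: it maps into $\ker\lambda$, which is finite (Schlichting completions of Hecke pairs have this when $\Gamma$ is, say, finitely generated — actually $\ker\lambda = \bigcap_{g}g\Lambda_0 g^{-1}$, the normal core, which need not be finite in general, so I would instead note that $\ker\gamma$ also injects into $\ker\rho$-related data via the first coordinate and argue finiteness there, or observe that we may pass to the image and absorb a finite kernel). For discreteness and cocompactness: $\gamma(\Gamma) \cap (E \times K')$ maps to $\Lambda_0$-related cosets, and the first coordinate lies in the discrete uniform lattice $\rho(\Lambda_0) \leq E$, so $\gamma(\Gamma)$ is discrete; cocompactness follows because $E/\rho(\Lambda_0)$ is compact and $Q/\lambda(\Gamma) \cdot K'$ is compact (indeed $\lambda(\Gamma)$ is dense), and these assemble along the extension. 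Uniformity of the lattice is exactly the point where the \emph{uniform} in "virtual uniform lattice embedding $\rho$" is used, via Lemma \ref{lem:quasi-isom_copci} and Proposition \ref{prop:discrete_copci}.

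**Main obstacle.** The hard part will be the construction and continuity of the cocycle $c$ and the section $\sigma$: the action $\Delta$ is only a homomorphism $\Gamma \to \Aut(E)$, not $Q \to \Aut(E)$, and it only agrees with conjugation on finite-index subgroups that \emph{depend on the element}, so pushing it to a continuous (generalised) cocycle on the totally disconnected group $Q$ requires carefully choosing $\Lambda_0$ small enough and choosing $\sigma$ compatibly with a compact open subgroup $K' \leq K$ on which everything degenerates, then verifying the cocycle identity and continuity simultaneously. Once the cocycle is in hand, the group-theoretic verifications of (1)–(4) and the lattice property are essentially bookkeeping with the tools already assembled in Sections \ref{sec:locally_compact_prelims} and \ref{sec:engulf}.
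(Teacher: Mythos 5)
Your overall architecture coincides with the paper's: realise $G$ as a generalised-cocycle (factor-set) extension of the Schlichting completion $Q$ by the engulfing group $E$, map $\Gamma$ in diagonally via $(\rho\text{-data},\lambda)$, and verify discreteness, cocompactness and properness using that $\rho(\Lambda_0)$ is a uniform lattice in $E$ and $\lambda(\Gamma)$ is dense in $Q$. However, the step you yourself flag as the hard part --- producing a \emph{continuous} cocycle on $Q$ --- is where your outline has two concrete problems, and the specific fixes you propose would not work.

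First, a set-theoretic section $\sigma:Q\to\Gamma$ of $\lambda$ does not exist: $\lambda(\Gamma)$ is only dense in $Q$, not all of $Q$, so there is nothing to section over a generic point of $Q$. The construction has to go the other way: choose a section of the coset map $\Gamma\to\Gamma/\Lambda_0$, define $\omega:\Gamma\to\Aut(E)$ and the factor set $f:\Gamma\times\Gamma\to E$ on the discrete group $\Gamma$, prove that both are locally constant for the Schlichting topology (i.e.\ eventually constant along Cauchy sequences with respect to the conjugates of $\Lambda_0$), and only then extend to $\hat\omega:Q\to\Aut(E)$ and $\hat f:Q\times Q\to E$ by density. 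Second, your proposed normalisation --- shrink $\Lambda_0$ so that $\Delta(h)(\rho(k))=\rho(hkh^{-1})$ for all $h,k\in\Lambda_0$ --- misses the actual difficulty. For $h\in\Lambda$ the identity $\Delta(h)=\Inn(\rho(h))$ is already built into the definition of an engulfing triple; the problem is for arbitrary $g\in\Gamma$, where the finite-index subgroup $\Lambda_g$ on which $\Delta(g)$ agrees with conjugation genuinely depends on $g$, and no single finite-index $\Lambda_0$ can be made to work uniformly over all of $\Gamma$ (nor does finite generation of $\Lambda$ help here). What is actually needed, and what makes the cocycle locally constant, is the following statement: after one initial replacement of $\Lambda$ by a finite-index subgroup $\Lambda_0$, for \emph{each} $g\in\Gamma$ the identity holds on a finite intersection of conjugates $\bigcap_{f\in F_g}f\Lambda_0f^{-1}$ --- a subgroup that is the preimage of an open subset of $Q$. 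This is proved using finite generation of $\Gamma$ (not of $\Lambda$): write $g$ as a word in a fixed finite generating set $S$, take $\Lambda_0=\bigcap_{s\in S}\Lambda_s$, and propagate the identity letter by letter along the word. This is precisely the paper's Lemma \ref{lem:intersection_engulf}, and without it (or an equivalent substitute) the continuity of $\hat\omega$ and $\hat f$, and hence the existence of the locally compact group $G$, is not established.
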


Before proving Theorem \ref{thm:build_lattice_embedding}, we first explain how to deduce Theorem \ref{thm:mainintro} from it.
\begin{proof}[Proof of Theorem \ref{thm:mainintro}]
	Suppose $\Gamma$ has a model geometry that is not dominated by a locally finite graph. By Proposition \ref{prop:modgeom_vs_lattice}, $\Gamma$ is virtually a uniform lattice in a locally compact group $G$ with $G^\circ$ non-compact. Then Proposition \ref{prop:lattice_structure} says $\Gamma$ contains a commensurated subgroup $\Lambda$ as in the statement of Theorem \ref{thm:mainintro}.
	
	Conversely, suppose $\Gamma$ contains a commensurated subgroup $\Lambda$ as in the statement of Theorem \ref{thm:mainintro}, i.e.\ $\Lambda$ is either finite rank free abelian  or is uniformly commensurated and virtually isomorphic to a uniform lattice in a semisimple Lie group.  Propositions \ref{prop:engulf_abelian} and \ref{prop:engulf_commen_lattice} imply that in both cases, $(\Gamma,\Lambda)$ has a non-compact virtually connected  Lie engulfing group $E$. By Theorem \ref{thm:build_lattice_embedding}, $\Gamma$ is a virtual uniform lattice in a locally compact group $G$ containing a closed normal subgroup topologically isomorphic to  $E$. Since $E^\circ$ is non-compact, neither is  $G^\circ$. Therefore,  Proposition \ref{prop:modgeom_vs_lattice} implies there exists a  model geometry of $\Gamma$ that is not dominated by a locally finite vertex-transitive graph.
\end{proof}

We now begin our proof of Theorem \ref{thm:build_lattice_embedding}. Let $(\Gamma,\Lambda)$ and $(E,\rho,\Delta)$ be as in Theorem \ref{thm:build_lattice_embedding}. We automatically know from the Definition \ref{defn:engulf} that for all $g\in \Gamma$, the equation $\Delta(g)(\rho(h))=\rho(ghg^{-1})$ holds for all $h$ in \emph{some} finite index subgroup of $\Lambda$. However, to facilitate continuity arguments needed in the proof of Theorem \ref{thm:build_lattice_embedding}, we would like to show  that this subgroup can be taken to be  the intersection of finitely many conjugates of $\Lambda$.  The following lemma says this can  be arranged after replacing $\Lambda$ with a suitable finite index subgroup.
\begin{lem}\label{lem:intersection_engulf}
	Let $(\Gamma,\Lambda)$ be a Hecke pair with $\Gamma$ finitely generated and with engulfing triple $(E,\rho,\Delta)$.  Then there exists a finite index subgroup $\Lambda_0\leq \Lambda$ such that for every $g\in \Gamma$, there is a finite set $F_g\subseteq \Gamma$ such that $e,g^{-1}\in F_g$ and   $\Delta(g)(\rho(h))=\rho(ghg^{-1})$ for all $h\in \cap_{f\in F_g} f\Lambda_0f^{-1}$.
\end{lem}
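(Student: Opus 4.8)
The plan is to start from a finite generating set $T=\{t_1,\dots,t_m\}$ of $\Gamma$ and the corresponding finite-index subgroups supplied by the engulfing hypothesis, intersect their conjugates to produce a single finite-index subgroup $\Lambda_0\leq \Lambda$, and then verify the required property by induction on word length. Concretely, for each generator $t_i$ (and its inverse) the definition of an engulfing triple gives a finite-index subgroup $\Lambda_{t_i}\leq\Lambda$ on which $\Delta(t_i)\circ\rho$ agrees with $\rho\circ(\text{conj by }t_i)$. Since $\Lambda\alnorm\Gamma$, each conjugate $t_i^{\pm 1}\Lambda_{t_j} t_i^{\mp 1}\cap\Lambda$ is again finite index in $\Lambda$; taking $\Lambda_0$ to be the intersection over all $i,j$ of these, together with the $\Lambda_{t_j}$ themselves, yields a finite-index subgroup $\Lambda_0\leq\Lambda$ that is, moreover, invariant under conjugation by any element of $T\cup T^{-1}$ modulo passing into $\Lambda$ — more precisely $t_i^{\pm1}\Lambda_0 t_i^{\mp1}\cap\Lambda\supseteq\Lambda_0$ fails in general, so instead I would only demand that $\Lambda_0$ be a finite-index subgroup of $\Lambda$ contained in every $\Lambda_{t_i^{\pm1}}$; the conjugates will be handled by enlarging $F_g$.

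Next, for $g\in\Gamma$ fixed, write $g=s_1s_2\cdots s_\ell$ with each $s_j\in T\cup T^{-1}$, and set
\[
F_g=\{\,e,\ s_\ell,\ s_{\ell-1}s_\ell,\ \dots,\ s_2\cdots s_\ell,\ s_1\cdots s_\ell=g\,\}^{-1}\cup\{e,g^{-1}\},
\]
i.e.\ the inverses of all suffixes of this word, which is a finite set containing $e$ and $g^{-1}$. I claim that $\Delta(g)(\rho(h))=\rho(ghg^{-1})$ for all $h\in\bigcap_{f\in F_g}f\Lambda_0 f^{-1}$. This is proved by induction on $\ell$. The base case $\ell=0$ is trivial since $\Delta(e)=\mathrm{id}$. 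For the inductive step, write $g=s_1g'$ with $g'=s_2\cdots s_\ell$. Given $h$ in the stated intersection, one first observes $h\in g'\Lambda_0 g'^{-1}$ (this conjugate appears among the $f\Lambda_0f^{-1}$, $f\in F_g$, up to the indexing convention above), so $g'^{-1}hg'\in\Lambda_0$, and the inductive hypothesis applied to $g'$ — whose suffix set is contained in $F_g$ — gives $\Delta(g')(\rho(h))=\rho(g'hg'^{-1})$. Then since $g'hg'^{-1}\in\Lambda_0\subseteq\Lambda_{s_1^{\pm1}}$, the single-generator case gives $\Delta(s_1)(\rho(g'hg'^{-1}))=\rho(s_1g'hg'^{-1}s_1^{-1})=\rho(ghg^{-1})$; composing and using that $\Delta$ is a homomorphism yields $\Delta(g)(\rho(h))=\Delta(s_1)\Delta(g')(\rho(h))=\rho(ghg^{-1})$, as desired.

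The one point requiring care — and the main obstacle — is bookkeeping the conjugates: I must ensure that along the induction the element $g'hg'^{-1}$ genuinely lands in $\Lambda_0$ (not merely in $\Lambda$), which is why the intersection defining the hypothesis on $h$ must range over conjugates of $\Lambda_0$ by \emph{all} suffixes, not just by $g$ itself, and why $\Lambda_0$ must be chosen small enough that all these conjugation maps behave. The cleanest way to organise this is to prove a slightly stronger statement by induction: ``for every word $w=s_1\cdots s_\ell$ representing $g$ and every $h\in\bigcap_{k=0}^{\ell} (s_{k+1}\cdots s_\ell)\Lambda_0(s_{k+1}\cdots s_\ell)^{-1}$, we have $\Delta(g)(\rho(h))=\rho(ghg^{-1})$ and moreover $g h g^{-1}\in\Lambda$.'' Finiteness of $F_g$ is automatic since a chosen word for $g$ has finitely many suffixes, and $e,g^{-1}\in F_g$ by construction; replacing $\Lambda$ by $\Lambda_0$ throughout (it is still an engulfing triple for $(\Gamma,\Lambda_0)$ after restricting $\rho$, since a finite-index subgroup of $\Lambda$ is a virtual uniform lattice in $E$ whenever $\Lambda$ is) gives the statement exactly as in the lemma.
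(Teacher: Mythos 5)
Your proposal is correct and follows essentially the same route as the paper: intersect the subgroups $\Lambda_s$ over a finite symmetric generating set to form $\Lambda_0$, take $F_g$ to consist of the inverses of the suffixes of a word for $g$, and verify the identity generator by generator (the paper telescopes directly where you induct on word length, which is the same computation). The only blemish is a transient sign slip --- from $h\in f\Lambda_0 f^{-1}$ with $f=g'^{-1}$ one gets $g'hg'^{-1}\in\Lambda_0$, not $g'^{-1}hg'\in\Lambda_0$ --- but your subsequent step uses the correct containment, so the argument goes through.
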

\begin{proof}
	Let $S$ be a finite symmetric  generating set of $\Gamma$ containing the identity. Definition \ref{defn:engulf} ensures that for each $s\in S$, there is some finite index subgroup $\Lambda_s\leq \Lambda\cap s^{-1}\Lambda s$ such that $\Delta(s)(\rho(h))=\rho(shs^{-1})$ for all $h\in \Lambda_s$. Set $\Lambda_0\coloneqq \cap _{s\in S}\Lambda_s$. We claim that $\Lambda_0$ satisfies the required property. Indeed, if $g\in \Gamma$, then   $g=s_1s_2\dots s_n$  for some $s_1,\dots,s_n\in S$. Set $g_i=s_{i}\dots s_n$ for all $1\leq i\leq n$ and let $g_{n+1}=1$. Thus $g_1=g$ and $s_ig_{i+1}=g_i$ for all $1\leq i\leq n$.   Let  $F_g\coloneqq \{1,g_1^{-1}, \dots, g_n^{-1}\}$. If $h\in \cap_{f\in F_g} f\Lambda_0f^{-1}$, then for each $1\leq i\leq n$, $g_{i+1}hg_{i+1}^{-1}\in \Lambda_0\leq \Lambda_{s_i}$ so that \[\Delta(s_i)(\rho(g_{i+1}hg_{i+1}^{-1}))=\rho(s_ig_{i+1}hg_{i+1}^{-1}s_i^{-1})=\rho(g_{i}hg_{i}^{-1}).\] It follows that $\Delta(g)(\rho(h))=\rho(ghg^{-1})$ as required.
\end{proof}
We prove   Theorem \ref{thm:build_lattice_embedding} by defining a group extension $G$ using generalised cocycles, also called factor sets, as described in  \cite[\S IV.6]{brown1982cohomology}.
Suppose we are given groups $E$ and $Q$ and a homomorphism $\phi:Q\rightarrow \Out(E)$. We want to study group extensions \[1\to E\to G\to Q\to 1\] such that the action of $G$ on $E$ by  conjugation  descends to the given map $\phi$.  To do so, we require the following data:
\begin{itemize}
	\item A map $\omega:Q\rightarrow \Aut(E)$ such that $\omega(q)$ represents the outer automorphism $\phi(q)$ and $\omega(1)=1$.
	\item A map $f:Q\times Q\rightarrow E$ satisfying the following conditions.
	\begin{enumerate}
		\item  The \emph{generalised cocycle condition}
		\[f(g,h)f(gh,k)=\omega(g)(f(h,k))f(g,hk)\] for all $g,h,k\in Q$.
		\item The 
		\emph{compatibility condition}
		\[ \omega(g)\omega(h)=\Inn(f(g,h))\omega(gh)\]
		for all $g,h\in Q$, where $\Inn(a)\in \Aut(E)$ is the inner automorphism  $b\mapsto aba^{-1}$.
		\item The \emph{normalisation condition} \[f(1,g)=f(g,1)=1\] for all $g\in Q$.
		
	\end{enumerate}
\end{itemize}
We then define a group $G_{\omega,f}$ that is equal as a set to $E\times Q$ with multiplication defined by \begin{align}
	(a,g)\cdot (b,h)=(a\omega(g)(b)f(g,h),gh).\label{eqn:mult_extn}
\end{align}

The group $G_{\omega,f}$ is a well-defined extension of $E$ by $Q$ and the  conjugation action $G_{\omega,f}\rightarrow \Aut(E)$ descends to the given map $\phi:Q\rightarrow \Out(E)$. Moreover, if $E$ and $Q$ are locally compact topological groups and $\omega$ and $f$ are continuous, then $G_{\omega,f}$ is a locally compact topological group when endowed with the product topology.

\begin{rem}
	As noted by Hochschild \cite{hochschild1951group}, for a continuous short exact sequence \[1\rightarrow E\rightarrow G\rightarrow Q\rightarrow 1\] of locally compact topological groups, we cannot necessarily choose  a continuous generalised cocycle $f$ as above.
\end{rem}

Although the proof of Theorem \ref{thm:build_lattice_embedding} given below works for general $E$, we encourage the reader the keep in mind the special case where $\Lambda\cong \bbZ^n$ and $\Gamma\cong \bbR^n$, which encapsulates the difficulties and intricacies of the general case. In  the situation  where  $E$ has trivial centre, the proof can likely  be substantially simplified.
\begin{proof}[Proof of Theorem \ref{thm:build_lattice_embedding}]
Since the proof is rather long, we split it into several parts.

\textbf{Part 1: Defining the generalised cocycle.}	
	Let $(\Gamma,\Lambda)$ be a Hecke pair with an engulfing triple $(E,\rho,\Delta)$. Pick a finite index subgroup of $\Lambda$ such that the conclusion of Lemma \ref{lem:intersection_engulf} holds. Without loss of generality, we replace $\Lambda$ by such a subgroup. Thus for every $g\in \Gamma$, there is a finite set $F_g\subseteq \Gamma$ such that   $\Delta(g)(\rho(h))=\rho(ghg^{-1})$ for all $h\in \Lambda_g\coloneqq \cap_{f\in F_g} f\Lambda f^{-1}$.

	 Let $\sigma:\Gamma/\Lambda\rightarrow \Gamma$ be a section of the quotient map $g\mapsto g\Lambda$. We  assume $\sigma(\Lambda)=1$.  For each $g\in \Gamma$, let $\sigma_g=\sigma(g\Lambda)$  and $h_g=\sigma_g^{-1}g\in \Lambda$ so that $g=\sigma_gh_g$. We set $v_g\coloneqq \rho(h_g)\in E$ and define $\omega:\Gamma\rightarrow \Aut(E)$ by $\omega(g)=\Delta(\sigma_g)$. 
	 Although $\omega$ is not typically a homomorphism,  since $\Delta$ is a homomorphism and \[\omega(g)=\Delta(\sigma_g)=\Delta(gh_g^{-1})=\Delta(g)\Inn(v_g^{-1}),\] $\omega$ descends to a homomorphism $\Gamma\rightarrow \Out(E)$.  Since $\sigma(\Lambda)=1$, we see $v_1=1$ and  $\omega(1)=1$.
	 
	 Observe that for all $\beta\in \Aut(E)$ and $k\in E$, we have the identity $\beta \Inn(k)\beta^{-1}=\Inn(\beta(k))$.
	 Thus for all $g,k\in \Gamma$, we have \begin{align*}
		\omega(g)\omega(k)\omega(gk)^{-1}
		&=\Delta(g)\Inn(v_g^{-1})\Delta(k)\Inn(v_k^{-1})\Inn(v_{gk})\Delta(k)^{-1}\Delta(g)^{-1}\\
		& =\Delta(g)\Inn(v_g^{-1}\Delta(k)(v_k^{-1}v_{gk}))\Delta(g)^{-1}\\
		& =\Inn(\Delta(g)(v_g^{-1})\Delta(gk)(v_k^{-1}v_{gk})).
	\end{align*}
We thus set $f(g,k)=\Delta(g)(v_g^{-1})\Delta(gk)(v_k^{-1}v_{gk})$. By construction, the compatibility condition   \[\omega(g)\omega(k)=\Inn(f(g,k))\omega(gk)\] is satisfied for all $g,k\in \Gamma$. We now see that for all $g,h,k\in \Gamma$, we have 
\begin{align*}
	\omega(g)&(f(h,k))f(g,hk)=\Delta(g)\Inn(v_g^{-1})(\Delta(h)(v_h^{-1})\Delta(hk)(v_k^{-1}v_{hk}))f(g,hk)\\
	&=\Delta(g)(v_g^{-1}\Delta(h)(v_h^{-1})\Delta(hk)(v_k^{-1}v_{hk})v_g)\Delta(g)(v_g^{-1})\Delta(ghk)(v_{hk}^{-1}v_{ghk})\\
	&=\Delta(g)(v_g^{-1})\Delta(gh)(v_h^{-1})\Delta(ghk)(v_{k}^{-1}v_{ghk})\\
	&=\Delta(g)(v_g^{-1})\Delta(gh)(v_h^{-1}v_{gh})\Delta(gh)(v_{gh}^{-1})\Delta(ghk)(v_{k}^{-1}v_{ghk})\\
	&=f(g,h)f(gh,k)
\end{align*}
and so the  generalised cocycle condition holds. Moreover, since $v_1=1$, we see that $f(g,1)=f(1,g)=1$ for all $g\in \Gamma$; thus $f$ is normalised.

\textbf{Part 2: Showing $\omega$ and $f$ are continuous}	
We say a sequence $(g_i)$ in $\Gamma$ is a \emph{Cauchy sequence} if  for every $g\in \Gamma$, there exists a number $N$ such that $g_j^{-1}g_i\in g\Lambda g^{-1}$ for all $i,j\geq N$.
If $(g_i)$ is a Cauchy sequence in $\Gamma$, then  $g_j\Lambda=g_i\Lambda$ for sufficiently large $i,j$, and so \begin{align}
	\omega(g_i)=\Delta(\sigma(g_i\Lambda))=\Delta(\sigma(g_j\Lambda))=\omega(g_j)
\end{align}
 for  sufficiently large $i,j$.
 
Suppose $(g_i)$ and $(k_i)$ are Cauchy sequences. We pick $N_0$ large enough such that $k_i^{-1}k_j\in \Lambda$ for all $i,j\geq N_0$. Set $k\coloneqq k_{N_0}$. We pick $N\geq N_0$ large enough such that $g_i^{-1}g_j\in \Lambda\cap k\Lambda k^{-1}\cap \Lambda_{k^{-1}}$  for all $i,j\geq N$. Set $g=g_N$.
By the choice of $N$, $g$ and $k$, for each $i,j\geq N$ we have \[\text{$g_i=gh_{g,i}$, $k_i=kh_{k,i}$ and $g_ik_j=gk h_{gk,i,j}$}\]  for some  $h_{g,i},h_{k,i},h_{gk,i,j} \in \Lambda $.  Since $g_ik_j=gh_{g,i}kh_{k,j}=gkh_{gk,i,j}$, we deduce \[h_{k,j}h_{gk,i,j}^{-1}=k^{-1}h_{g,i}^{-1}k\]
for all $i,j\geq N$. 
For all $i\geq N$, we see that $\sigma_g=\sigma(g\Lambda)=\sigma(g_i\Lambda)=\sigma_{g_i}$. Similarly, $\sigma_k=\sigma_{k_i}$ and $\sigma_{gk}=\sigma_{g_ik_j}$ for all $i,j\geq N$. Therefore, \[\text{$h_{g_i}=h_gh_{g,i}$, $h_{k_i}=h_kh_{k,i}$ and $h_{g_ik_j}=h_{gk}h_{gk,i,j}$}\] for all $i,j\geq N$.

Now for $i\geq N$, we have $h_{g,i}^{-1}\in \Lambda_{k^{-1}}$ and so $\Delta(k^{-1})(\rho(h_{g,i}^{-1}))=\rho (k^{-1}h_{g,i}^{-1}k)$. Putting everything together,  we see that for all $i,j\geq N$ we have
\begin{align*}
	 f(g_i,k_j)&=\Delta(g_i)(v_{g_i}^{-1})\Delta(g_ik_j)(v_{k_j}^{-1}v_{g_ik_j})\\
	 & =\Delta(g_i)(\rho(h_{g_i}^{-1}))\Delta(g_ik_j)(\rho(h_{k_j}^{-1}h_{g_ik_j}))\\
	 &=\Delta(gh_{g,i})(\rho(h_{g,i}^{-1}h_g^{-1}))\Delta(gkh_{gk,i,j})(\rho(h_{k,j}^{-1}h_k^{-1}h_{gk}h_{gk,i,j}))\\
	&=\Delta(g)(\rho(h_g^{-1}h_{g,i}^{-1}))\Delta(gk)(\rho(h_{gk,i,j}h_{k,j}^{-1}h_k^{-1}h_{gk}))\\
	&=\Delta(g)(v_g^{-1}\rho(h_{g,i}^{-1}))\Delta(gk)(\rho(h_{gk,i,j}h_{k,j}^{-1})v_k^{-1}v_{gk})\\
	&=\Delta(g)(v_g^{-1})\Delta(gkk^{-1})(\rho(h_{g,i}^{-1}))\Delta(gk)(\rho(h_{gk,i,j}h_{k,j}^{-1})v_k^{-1}v_{gk})\\
	&=\Delta(g)(\rho(h_g^{-1}))\Delta(gk)(\rho(k^{-1}h_{g,i}^{-1}kh_{gk,i,j}h_{k,j}^{-1}))\Delta(gk)(v_k^{-1}v_{gk}))\\
	&=\Delta(g)(\rho(h_g^{-1}))\Delta(gk)(\rho(h_{k,j}h_{gk,i,j}^{-1}h_{gk,i,j}h_{k,j}^{-1}))\Delta(gk)(v_k^{-1}v_{gk}))\\
	 &=\Delta(g)(v_g^{-1})\Delta(gk)(v_k^{-1}v_{gk})\\
	&=f(g,k). 
\end{align*}

\textbf{Part 3: Defining the locally compact group.}	
Let $(Q,\lambda)$ be the Schlichting completion of $(\Gamma,\Lambda)$. It follows from the definitions that $(g_i)$ is a Cauchy sequence in $\Gamma$ if and only if $(\lambda(g_i))$ converges in $ Q$. Since $\lambda(\Gamma)$ is dense in $Q$ and $\omega$ and $f$ are eventually constant on any pair of Cauchy sequences,  it follows that $\omega$ and $f$ uniquely induce  continuous maps $\hat\omega:Q\rightarrow \Aut(E)$ and $\hat f: Q\times  Q\rightarrow E$ such that $\hat\omega(\lambda(g))=\omega(g)$ and $\hat f(\lambda(g),\lambda(k))=f(g,k)$ for all $g,k\in \Gamma$. Since $\omega$ and $f$ satisfy the compatibility condition, generalised cocycle condition, and are normalised,  $\hat \omega $ and $\hat f$ also satisfy these properties. Thus $\hat\omega$ and $\hat f$ define a locally compact topological group $G=G_{\hat\omega,\hat f}$ as above, i.e.\ $G$ is equal to $E\times Q$ as a set, equipped with the product topology and with multiplication defined by (\ref{eqn:mult_extn}).

We consider the homomorphisms $p:E\to G$ given by $v\mapsto (v,1)$ and $q:G\to Q$ given by $(v,g)\to g$. Since $p$ is injective, $q$ is surjective, and  $q\circ p$ is zero, we have a short exact sequence $1\to E\xrightarrow{p} G\xrightarrow{q} Q\to 1$. Moreover, since  $G$ is homeomorphic to $E\times Q$, $p$ is a topological embedding  and $q$ is a topological quotient map.

\textbf{Part 4: Defining the homomorphism to $G$.}	
We define $\gamma:\Gamma\rightarrow G$ by $\gamma(g)=(\Delta(g)(v_g),\lambda(g))$. We note that $q\circ \gamma=\lambda$ as required. We claim $\gamma$ is a virtual uniform lattice embedding, first showing $\gamma$ is a homomorphism. For all $g,k\in \Gamma$, we have \begin{align*}
	\gamma(g)\gamma(k)&= (\Delta(g)(v_g),\lambda(g))(\Delta(k)(v_k),\lambda(k))\\
	&= (\Delta(g)(v_g)\hat\omega(\lambda(g))(\Delta(k)(v_k))\hat f(\lambda(g),\lambda(k)),\lambda(g)\lambda(k))\\
	&= (\Delta(g)(v_g)\omega(g)(\Delta(k)(v_k))f(g,k),\lambda(gk))\\
	&= (\Delta(g)(v_g)\Delta(g)(v_g^{-1}\Delta(k)(v_k)v_g)\Delta(g)(v_g^{-1})\Delta(gk)(v_k^{-1}v_{gk}),\lambda(gk))\\
	&= (\Delta(g)(\Delta(k)(v_k))\Delta(gk)(v_k^{-1}v_{gk}),\lambda(gk))\\
	&= (\Delta(gk)(v_{gk}),\lambda(gk))=\gamma(gk)
\end{align*}
verifying that $\gamma$ is a homomorphism. 

\textbf{Part 5:  $\im(\gamma)$ is cocompact.}	
We now show $\gamma(\Gamma)$  is cocompact. 
Since $\rho:\Lambda\to E$ is a uniform lattice embedding, there is compact set $K\subseteq E$ such that $\rho(\Lambda)K=E$. Since $(Q,\lambda)$ is a Schlichting completion of $(\Gamma,\Lambda)$, there is a compact open subgroup $U\leq Q$ such that $\lambda^{-1}(U)=\Lambda$.  We claim $G=\gamma(\Gamma)(K, U)$, which will show $\gamma(\Gamma)$ is cocompact. 

Let $(g_1,g_2)\in G$. Since $\lambda(\Gamma)$ is dense in $Q$, there is some $g\in \Gamma$ such that $g_2\in \lambda(g)U$. As $\rho(\Lambda)K=E$, there is some $x\in K$ and $h\in \Lambda$ such that $\rho(h)x=\Delta(g^{-1})(g_1)\rho(h_g)^{-1}$. As $g_2\in \lambda(g)U=\lambda(gh)U$, we can choose $y\in U$ such that $g_2=\lambda(gh)y$.  We will show $\gamma(gh)(x,y)=(g_1,g_2)$. 

Since $\lambda(\Lambda)$ is dense in $U$, we can pick a sequence $(z_i)$ in $\Lambda$ such that $(\lambda(z_i))$ converges to $y$. Since $(z_i)$ is a Cauchy sequence,  the sequence  $\hat f(\lambda(gh),\lambda(z_i))$ is eventually constant and thus equal to $\hat f(\lambda(gh),y)$ for  $i$ sufficiently large. We can thus pick some $z\in \Lambda$ such that $ f(gh,z)=\hat f(\lambda(gh),y)$. Since $h,z\in \Lambda$, we have $h_z=z$ and $h_{ghz}=h_ghz$.   We thus deduce  \begin{align*}
	\gamma(gh)(x,y)&=(\Delta(gh)(v_{gh}),\lambda(gh))\cdot(x,y)\\
	&=(\Delta(gh)(v_{gh})\omega(gh)(x)f(gh,z),g_2)\\
	&=(\Delta(gh)(v_{gh})\Delta(gh)(v_{gh}^{-1}xv_{gh})\Delta(gh)(v_{gh}^{-1})\Delta(ghz)(v_z^{-1}v_{ghz}),g_2)\\
	&=(\Delta(gh)(x)\Delta(ghz)(v_{z}^{-1}v_{ghz}),g_2)\\
	&=(\Delta(g)(\rho(h)x\rho(h)^{-1})\Delta(g)(\rho(hzz^{-1}h_ghzz^{-1}h^{-1})),g_2)\\
	&=(\Delta(g)(\rho(h)x\rho(h_g)),g_2)\\
	&=(\Delta(g)(\Delta(g^{-1})(g_1)),g_2)\\
	&=(g_1,g_2).
\end{align*}
This shows $\gamma(\Gamma)(K,U)=G$ as required.

\textbf{Part 6:  $\gamma$ is proper.}	
We show $\gamma$ is proper by demonstrating that for every compact $V\subseteq G$, $\gamma^{-1}(V)$ is finite. Since $\gamma(\Gamma)$ is cocompact,  this will show $\gamma$ is a virtual uniform lattice embedding. As $G$ is identified as a set with $E\times Q$ and is equipped with the product topology, there is a continuous projection $\pi:G\to E$, which is typically not a homomorphism. Observe that for all $h\in \Lambda$, we have $\pi(\gamma(h))=\Delta(h)(v_h)=\rho(h)\rho(h)\rho(h)^{-1}=\rho(h)$, hence $\pi\circ \gamma|_{\Lambda}=\rho$.

Let $V\subseteq G$ be compact and let $U\leq Q$ be a compact open subgroup as above such that $\lambda^{-1}(U)=\Lambda$. Let $L=q^{-1}(U)$, where $q:G\to Q$ is the quotient map.  Then $L$ is an open subgroup of $G$, so $V$ is contained in finitely many left $L$ cosets. Since $\lambda(\Gamma)$ is dense, it intersects every coset of $U$, hence $\gamma(\Gamma)$ intersects every left coset of $L$. Thus there are finitely many $g_1,\dots,g_n\in \Gamma$ such that $V\subseteq \bigcup_{i=1}^n\gamma(g_i)L$. We thus deduce \[\gamma^{-1}(V)\subseteq \bigcup_{i=1}^n g_i \gamma^{-1}(L\cap \gamma(g_i)^{-1} V).\] Since $\gamma^{-1}(L)=\Lambda$ and $\pi\circ \gamma|_\Lambda=\rho$, 
we see that  
\[\rho(\gamma^{-1}(L\cap \gamma(g_i)^{-1} V))=\pi(L\cap \gamma(g_i)^{-1} V)\] for each $1\leq i\leq n$.  As $\pi(L\cap \gamma(g_i)^{-1} V)$ is compact and $\rho$ is a virtual lattice embedding, each \[\gamma^{-1}(L\cap \gamma(g_i)^{-1} V)\subseteq \rho^{-1}(\pi(L\cap \gamma(g_i)^{-1} V))\] is finite. Thus $\gamma^{-1}(V)$ is finite.

\textbf{Part 7: $\Delta$ acts by conjugation}
 All that remains is to show if  $v\in E$ and $g\in \Gamma$, we have $p(\Delta(g)(v))=\gamma(g)p(v)\gamma(g)^{-1}$.
Indeed, \begin{align*}
	\gamma(g)p(v)&=(\Delta(g)(v_g),\lambda(g))(v,1)\\
	&=(\Delta(g)(v_g)\omega(g)(v) f(g,1),\lambda(g))\\
	&=(\Delta(g)(v_g)\Delta(g)(v_g^{-1}vv_g),\lambda(g))\\
	&=(\Delta(g)(vv_g),\lambda(g)).
\end{align*}Since $\gamma(g)^{-1}=(\Delta(g)^{-1}(f(g,g^{-1})^{-1})v_g^{-1},\lambda(g)^{-1}),$ we see \begin{align*}
	\gamma(g)p(v)\gamma(g)^{-1}&=(\Delta(g)(vv_g),\lambda(g))(\Delta(g)^{-1}(f(g,g^{-1})^{-1})v_g^{-1},\lambda(g)^{-1})\\
	&=(\Delta(g)(vv_g)\omega(g)(\Delta(g)^{-1}(f(g,g^{-1})^{-1})v_g^{-1})f(g,g^{-1}),1)\\
	&=(\Delta(g)(vv_g)\Delta(g)(v_g^{-1}\Delta(g)^{-1}(f(g,g^{-1})^{-1}))f(g,g^{-1}),1)\\
	&=(\Delta(g)(v),1)=p(\Delta(g)(v))
\end{align*} as required.
\end{proof}

\section{Applications to low dimensional groups}\label{sec:lowdim}
In this section we prove Theorems \ref{thm:cd2} and \ref{thm:cd3} using tools from group cohomology. We refer the reader to Brown's book, especially Chapter VIII, for the necessary background  \cite{brown1982cohomology}.  

The following lemma is our starting point, which says uniform lattices in connected Lie groups possess desirable cohomological properties.
\begin{lem}[{\cite[VIII.9 Example 4 \& VIII.10 Example 1]{brown1982cohomology}}]\label{lem:poincare}
	If $\Gamma$ is a torsion-free uniform lattice in virtually connected Lie group $G$, then $\cd(\Gamma)=\dim(G/K)=\dim(G)-\dim(K)$, where $K$ is a maximal compact subgroup of $G$. Moreover, $\Gamma$ is a Poincar\'e duality group and so in particular, is of type $FP$. 
\end{lem}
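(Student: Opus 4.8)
The plan is to realise $\Gamma$ as the fundamental group of a closed aspherical manifold of the right dimension, and then appeal to the standard dictionary between such manifolds and Poincar\'e duality groups (as in \cite[Ch.\ VIII]{brown1982cohomology}).

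Since $G$ is virtually connected, Theorem~\ref{thm:maxcompact} supplies a maximal compact subgroup $K\leq G$, unique up to conjugacy, together with a diffeomorphism $G/K\cong\bbR^n$ where $n=\dim(G)-\dim(K)=\dim(G/K)$. Thus $G/K$ is a contractible $n$-manifold on which $G$ acts smoothly and properly by left translations, the point stabilisers being the compact conjugates of $K$. The first step is to check that the restricted action $\Gamma\curvearrowright G/K$ is free, properly discontinuous and cocompact: properness is inherited from that of $G\curvearrowright G/K$ using discreteness of $\Gamma$; cocompactness holds because $\Gamma$ is a uniform lattice, so $\Gamma\backslash G$, and hence its continuous image $\Gamma\backslash G/K$, is compact; and freeness holds because the stabiliser of $gK$ is $\Gamma\cap gKg^{-1}$, a finite subgroup of $\Gamma$ and therefore trivial by torsion-freeness. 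Consequently $M\coloneqq\Gamma\backslash(G/K)$ is a closed smooth $n$-manifold with contractible universal cover $G/K$; that is, $M$ is a closed aspherical $n$-manifold and a model for $B\Gamma$ (and $\Gamma$ is finitely generated by the Milnor--Schwarz lemma).

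The remaining conclusions are then formal. As $M$ is homotopy equivalent to a finite CW complex, $\Gamma$ is of type $FP$ (in fact $FL$) and $\cd(\Gamma)\leq\dim M=n$. Since $\Gamma$ acts freely and cocompactly on the contractible space $G/K$ (endowed with a $\Gamma$-equivariant CW structure lifted from a triangulation of $M$), one has $H^i(\Gamma;\bbZ\Gamma)\cong H^i_c(G/K;\bbZ)\cong H^i_c(\bbR^n;\bbZ)$, which is infinite cyclic for $i=n$ and zero otherwise; in particular $\cd(\Gamma)\geq n$, so $\cd(\Gamma)=n=\dim(G/K)$. Being $FP$ with $H^{\ast}(\Gamma;\bbZ\Gamma)$ concentrated in degree $n$ and equal to $\bbZ$ there, $\Gamma$ is by definition a Poincar\'e duality group of dimension $n$, and in particular of type $FP$. (Alternatively, once the free cocompact action on $G/K\cong\bbR^n$ is in place, one may simply quote \cite[VIII.9 Example 4 \& VIII.10 Example 1]{brown1982cohomology}.) I expect the only point genuinely requiring attention to be the verification that the $\Gamma$-action on $G/K$ is proper; the rest is bookkeeping with standard facts about homogeneous spaces and group cohomology.
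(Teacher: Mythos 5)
Your proof is correct and is essentially the argument the paper is invoking: the paper gives no proof of its own but cites \cite[VIII.9 Example 4 \& VIII.10 Example 1]{brown1982cohomology}, and those examples are precisely your construction of the free, properly discontinuous, cocompact $\Gamma$-action on the contractible manifold $G/K\cong\bbR^n$ followed by the computation $H^i(\Gamma;\bbZ\Gamma)\cong H^i_c(\bbR^n;\bbZ)$. The verifications you flag (properness, freeness via torsion-freeness, cocompactness) are all handled correctly, so nothing further is needed.
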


We will prove Theorems \ref{thm:cd2} and \ref{thm:cd3} using the following theorem of the author, generalising work of Kropholler \cite{kropholler1990CD2,kropholler2006spectral}:
\begin{thm}[{\cite[Theorem 1.2 and Proposition 1.3]{margolis2019codim1}}]\label{thm:codim1}
	Let $\Gamma$ be a group containing a commensurated subgroup $\Lambda\alnorm \Gamma$ such that both $\Gamma$ and $\Lambda$ are of type $FP$.
	\begin{enumerate}
		\item If $\cd(\Gamma)=\cd(\Lambda)$, then $\Lambda$ is a finite index subgroup of $\Gamma$.
		\item If $\cd(\Gamma)=\cd(\Lambda)+1$, then $\Gamma$ splits as a finite graph of groups in which every vertex and edge group is commensurable to $\Lambda$.
	\end{enumerate}
\end{thm}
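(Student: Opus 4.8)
The plan is to deduce both statements from the theory of almost invariant subsets of coset spaces, following the circle of ideas of Stallings' ends theorem and its refinements by Dunwoody, Kropholler--Roller and Dunwoody--Swenson. I first record two reductions. Finiteness of $\cd$ forces both $\Gamma$ and $\Lambda$ to be torsion free, and replacing $\Lambda$ by a finite index subgroup changes neither the hypotheses (type $FP$, the values of $\cd$) nor the conclusions (``finite index in $\Gamma$'' and ``commensurable to $\Lambda$'' are insensitive to this). So I may pass to a convenient finite index subgroup of $\Lambda$ when useful --- for instance, one realised as $\rho^{-1}(U)$ for a compact open subgroup $U$ of the Schlichting completion of the Hecke pair $(\Gamma,\Lambda)$, which streamlines the later commensurability arguments.

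\textbf{The cohomological input.} Write $n=\cd\Gamma$ and $m=\cd\Lambda\in\{n-1,n\}$, and put $\Omega=\Gamma/\Lambda$, a transitive $\Gamma$-set all of whose point stabilisers are commensurable to $\Lambda$. Since $\Lambda$ is of type $FP$ with $\cd\Lambda=m$, inducing a finite length projective $\bbZ\Lambda$-resolution of $\bbZ$ up to $\bbZ\Gamma$ produces a length-$m$ projective resolution of the permutation module $\bbZ\Omega\cong\bbZ\Gamma\otimes_{\bbZ\Lambda}\bbZ$; in particular $\bbZ\Omega$ has projective dimension at most $m$ over $\bbZ\Gamma$. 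Feeding this into the long exact cohomology sequences associated to $0\to\bbZ\Omega\to\mathcal P\Omega\to\mathcal P\Omega/\bbZ\Omega\to0$, where $\mathcal P\Omega=\prod_\Omega\bbZ$, and comparing with $H^{n}(\Gamma;\bbZ\Gamma)\neq0$ --- exploiting that $\bbZ\Gamma$ is free over $\bbZ\Lambda$ on coset representatives, so that $H^{*}(\Gamma;\bbZ\Gamma)$ is assembled from $H^{*}(\Gamma;\bbZ\Omega)$ along the conjugates of $\Lambda$ --- one computes the number of relative ends $\tilde e(\Gamma,\Lambda)$, the dimension of the space of $\Gamma$-almost invariant subsets of $\Omega$ modulo finite sets. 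When $m=n$ this comparison shows $\bbZ\Omega$ is cohomologically as large as $\bbZ\Gamma$, which forces $\Omega$ to be finite: this is part (1). When $m=n-1$ the same bookkeeping yields a non-trivial class, i.e. $\tilde e(\Gamma,\Lambda)\ge2$, equivalently a non-trivial $\Gamma$-almost invariant subset of $\Gamma/\Lambda$.

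\textbf{From codimension one to a splitting.} For part (2) I feed this almost invariant set into the standard splitting machine. In the Kropholler--Roller framework the hypothesis $\cd\Gamma-\cd\Lambda=1$ says precisely that the pair $(\Gamma,\Lambda)$ has \emph{codimension one}, and their algebraic-torus-theorem technology --- built on Dunwoody's resolution and nested-track constructions and on an accessibility argument, both using the $FP$ hypotheses on $\Gamma$ and $\Lambda$ --- produces a $\Gamma$-tree $T$ with no global fixed point, with finitely many orbits of cells, and with edge stabilisers commensurable to $\Lambda$ (it is the codimension-one framework, in which $\cd\Gamma-\cd\Lambda$ is the codimension of the pair, that delivers commensurability with $\Lambda$ itself rather than with a mere subgroup). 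Bass--Serre theory then presents $\Gamma$ as the fundamental group of a finite graph of groups with edge groups commensurable to $\Lambda$, and a Mayer--Vietoris computation along $T$, together with part (1) applied to the pairs $(V,V\cap\Lambda)$ for vertex groups $V$ (noting $V\cap\Lambda\alnorm V$), identifies the vertex groups up to commensurability with $\Lambda$ after possibly refining $T$ --- a refinement that terminates by accessibility.

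\textbf{Main obstacle.} The technical heart is the passage from a single almost invariant set to a genuinely finite graph of groups with controlled edge and vertex groups: assembling a nested $\Gamma$-invariant family of almost invariant sets to which Dunwoody's tree construction applies, and running the accessibility argument so that the decomposition is finite and stabilises. This is exactly where the type $FP$ hypotheses on \emph{both} $\Gamma$ and $\Lambda$ are indispensable and where the proof is most delicate; by contrast the two cohomological computations underlying part (1) and the implication $m=n-1\Rightarrow\tilde e(\Gamma,\Lambda)\ge2$, though fiddly, are routine once the induced projective resolution of $\bbZ\Omega$ and the relevant long exact sequences are in place.
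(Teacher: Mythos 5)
First, a point of comparison that matters here: the paper does not prove this statement. Theorem \ref{thm:codim1} is imported verbatim from \cite{margolis2019codim1} (Theorem 1.2 and Proposition 1.3 there), so there is no in-paper proof to measure your attempt against. Your sketch does identify the correct circle of ideas: the cited result generalises Kropholler's work exactly along the lines you describe, via relative ends, almost invariant subsets of $\Gamma/\Lambda$, and a Dunwoody--Kropholler--Roller style splitting theorem, so as a roadmap it points in the right direction.

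As a proof, however, it has genuine gaps, and you have in effect flagged them yourself. (i) In part (1) the decisive step is missing: from ``$\bbZ\Omega$ has projective dimension at most $m$ over $\bbZ\Gamma$'' and ``$H^{n}(\Gamma;\bbZ\Gamma)\neq 0$'' you assert that $m=n$ forces $\Omega$ to be finite, but the bridge you offer --- that $H^{*}(\Gamma;\bbZ\Gamma)$ is ``assembled from'' $H^{*}(\Gamma;\bbZ\Omega)$ because $\bbZ\Gamma$ is free over $\bbZ\Lambda$ --- is not an argument. Shapiro's lemma computes $H^{*}(\Gamma;-)$ of the \emph{coinduced} module as $H^{*}(\Lambda;-)$, and induced and coinduced modules differ precisely when $[\Gamma:\Lambda]=\infty$, which is the case you are trying to rule out. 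Moreover the statement is false without almost normality ($\bbZ\leq F_2$ has $\cd=1=\cd(F_2)$ and infinite index), so whatever replaces this step must use the commensurated hypothesis in an essential way (for instance Kropholler's Lyndon--Hochschild--Serre-type spectral sequence for commensurated subgroups from \cite{kropholler2006spectral}, whose very existence is where almost normality enters); your sketch never locates that point. (ii) In part (2), the passage from $\tilde e(\Gamma,\Lambda)\geq 2$ to a \emph{finite} graph of groups in which the \emph{vertex} groups are commensurable to $\Lambda$ is the entire content of the theorem: the splitting machinery controls edge groups only; applying part (1) to a vertex group $V$ requires first knowing that $V$ is of type $FP$ and that $\cd V=\cd\Lambda$ (a priori $\cd V=\cd\Gamma$, in which case one must split $V$ again); and the termination of the resulting refinement is an accessibility theorem that must be proved for this class of groups, not invoked. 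Your ``main obstacle'' paragraph concedes that exactly these points are not carried out, so the proposal should be read as a plausible plan for reproving the cited result rather than as a proof of it.
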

 In order to show the group in question is of type $FP$,  we appeal to the following result of Kropholler.
\begin{prop}[{\cite[Proposition 2.6]{kropholler2006spectral}}]\label{prop:typefp}
	Let $n\geq 1$ be a natural number. Let $\Gamma$ be a finitely generated group of cohomological dimension at most $n+1$. Let $\Lambda\alnorm \Gamma$ be a commensurated subgroup that is a Poincar\'e duality group of cohomological  dimension $n$. Then $\Gamma$ is of type $FP$.
\end{prop}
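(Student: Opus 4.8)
The plan is to show that $\Gamma$ is of type $FP_{n+1}$; since $\cd\Gamma\le n+1<\infty$ this suffices, because in any partial resolution $Q_n\to\cdots\to Q_0\to\bbZ\to 0$ of the trivial module by finitely generated projective $\bbZ\Gamma$-modules the kernel $\ker(Q_n\to Q_{n-1})$ is then both finitely generated (as $\Gamma$ is $FP_{n+1}$) and projective (as $\cd\Gamma\le n+1$, by dimension shifting), yielding a finite projective resolution and hence type $FP$ (cf.\ \cite[Chapter~VIII]{brown1982cohomology}). Since $\Gamma$ is finitely generated it is already $FP_1$, so the real task is to propagate finite generation of the syzygies of the trivial module up through degree $n+1$. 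The instrument for this is the permutation module $B:=\bbZ[\Gamma/\Lambda]=\bbZ\Gamma\otimes_{\bbZ\Lambda}\bbZ$ associated with the commensurated subgroup, together with its augmentation $\varepsilon\colon B\to\bbZ$.

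Two properties of $B$ are crucial. First, since $\Lambda$ is a Poincar\'e duality group of dimension $n$ it is of type $FP$ over $\bbZ$ with $\cd\Lambda=n$; applying the exact functor $\bbZ\Gamma\otimes_{\bbZ\Lambda}(-)$ to a finite resolution of $\bbZ$ by finitely generated projective $\bbZ\Lambda$-modules shows that $B$ is of type $FP$ over $\bbZ\Gamma$ of projective dimension at most $n$ (the same argument applies to $\bbZ[\Gamma/\Lambda']$ for any conjugate $\Lambda'$ of $\Lambda$, again a $\mathrm{PD}_n$ group, or a finite-index subgroup of one). Second, a change-of-rings computation identifies $\mathrm{Ext}^k_{\bbZ\Gamma}(B,\bbZ\Gamma)$ with $H^k(\Lambda,\bbZ\Gamma|_\Lambda)$; since $\bbZ\Gamma$ restricts to a free $\bbZ\Lambda$-module and $\Lambda$ is $\mathrm{PD}_n$ of type $FP$, this group vanishes for $k\ne n$ and is a twisted permutation module $\bbZ\Gamma\otimes_{\bbZ\Lambda}\widetilde{\bbZ}$ for $k=n$. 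Thus $B$ is, over $\bbZ\Gamma$, ``self-dual up to a shift by $n$ and an orientation twist'' --- the homological shadow of $\Lambda$ sitting inside $\Gamma$ with codimension one.

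Finally I would feed in finite generation of $\Gamma$. Fixing a finite symmetric generating set $S$ and forming the coset graph on the vertex set $\Gamma/\Lambda$ gives an exact sequence of $\bbZ\Gamma$-modules
\[0\to Z\to B^{|S|}\xrightarrow{\ \partial\ }B\xrightarrow{\ \varepsilon\ }\bbZ\to 0,\qquad \partial(e_s)=s\Lambda-\Lambda,\]
where exactness at $B$ holds because the elements $s\Lambda-\Lambda$ $(s\in S)$ generate $\ker\varepsilon$ over $\bbZ\Gamma$ (telescoping along words in $S$), and $Z$ is the cycle module $H_1$ of the graph. Splicing finite finitely generated projective resolutions of $B$ and of $B^{|S|}$ onto the left, $\Gamma$ is $FP_{n+1}$ if and only if $Z$ is $FP_{n-1}$. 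To control $Z$ one dualises the displayed complex by $R\mathrm{Hom}_{\bbZ\Gamma}(-,\bbZ\Gamma)$: the second property above forces both middle terms to be concentrated in cohomological degree $n$, so the resulting hyper-$\mathrm{Ext}$ spectral sequence occupies only two adjacent total degrees, and combining this with $\cd\Gamma\le n+1$ (which annihilates $H^{>n+1}(\Gamma,-)$) pins down the syzygies of $Z$ and gives that $Z$ is of type $FP$, in particular $FP_{n-1}$. I expect this duality step to be the main obstacle: it is precisely where both hypotheses enter --- commensurability of $\Lambda$ ensures the coset graph has only finitely many cell orbits, all with $\mathrm{PD}_n$ stabilisers, while the $\mathrm{PD}_n$ property makes the dualisation possible --- and it is non-trivial exactly because the coset graph is not a tree. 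Equivalently, one may run the generalised Lyndon--Hochschild--Serre spectral sequence of the Hecke pair $(\Gamma,\Lambda)$ from \cite{kropholler2006spectral}, which is tailored to commensurated subgroups: Poincar\'e duality of $\Lambda$ converts the fibre cohomology $H^{\ast}(\Lambda,-)$ into homology and concentrates the spectral sequence near its $n$th row, whereupon type $FP$ for $\Gamma$ reduces to a one-dimensional finiteness statement about the ``base'' $\Gamma/\!\!/\Lambda$ --- a Hecke-pair analogue of the theorem that a finitely generated group of cohomological dimension at most one is finitely generated free --- which is supplied by finite generation of $\Gamma$ together with $\cd\Gamma\le n+1$.
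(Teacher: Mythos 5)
The paper offers no proof of this proposition --- it is quoted directly from Kropholler --- so the only question is whether your argument stands on its own. Your reduction framework is sound: type $FP_{n+1}$ together with $\cd\Gamma\le n+1$ does give type $FP$; the induced module $B=\bbZ[\Gamma/\Lambda]$ is of type $FP$ over $\bbZ\Gamma$ of projective dimension at most $n$ because $\Lambda$ is a $\mathrm{PD}_n$ group of type $FP$; the Eckmann--Shapiro computation of $\mathrm{Ext}^*_{\bbZ\Gamma}(B,\bbZ\Gamma)$ is correct; and a four-term exact sequence with middle terms of type $FP$ does reduce $FP_{n+1}$ for $\bbZ$ to $FP_{n-1}$ for the cycle module $Z$. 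One repair is needed even here: the map $\partial\colon B^{|S|}\to B$, $e_s\mapsto s\Lambda-\Lambda$, is not a well-defined $\bbZ\Gamma$-homomorphism, since $e_s$ is annihilated by $\lambda-1$ for $\lambda\in\Lambda$ while $\lambda(s\Lambda-\Lambda)=s\Lambda-\Lambda$ forces $s^{-1}\lambda s\in\Lambda$, which fails when $\Lambda$ is merely commensurated rather than normal. The edge module must be $\bigoplus_{s\in S}\bbZ[\Gamma/(\Lambda\cap s\Lambda s^{-1})]$; your parenthetical about finite-index subgroups shows you are aware of this, and the relevant properties persist because $\Lambda\cap s\Lambda s^{-1}$ is again $\mathrm{PD}_n$ of type $FP$, but the displayed sequence as written is incorrect.

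The genuine gap is the step you yourself flag as ``the main obstacle'': proving that $Z$ is of type $FP_{n-1}$. Everything before this point is bookkeeping; this is the entire content of the proposition. The mechanism you propose --- dualise the four-term sequence into $\bbZ\Gamma$, note that the resulting hyper-$\mathrm{Ext}$ spectral sequence lives in two adjacent total degrees, and invoke the vanishing of $H^{>n+1}(\Gamma,-)$ --- does not visibly yield finite generation of the syzygies of $Z$. Knowing that $\mathrm{Ext}^*_{\bbZ\Gamma}(M,\bbZ\Gamma)$ is concentrated in a single degree, or even finitely generated there, does not imply $M$ is of type $FP$: the $FP$ conditions are detected by colimit-exactness of $\mathrm{Ext}^*(M,-)$ against arbitrary coefficient systems (Bieri's criterion), not by duality against the group ring, and passing from the latter to the former via biduality would require knowing $M$ is of type $FP$ in the first place. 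Nor does finite projective dimension plus finite generation suffice: a finitely generated group of cohomological dimension two need not be of type $FP_2$, so $\bbZ$ itself is a counterexample over such a group ring, even though every module here acquires projective dimension at most $n$ from $\cd\Gamma\le n+1$. Already in the base case $n=1$, the assertion that the cycle module of the coset graph is finitely generated is essentially Kropholler's 1990 theorem on groups of cohomological dimension two containing commensurated infinite cyclic subgroups, whose proof is a delicate argument and not a formal consequence of the setup you describe. As it stands, your proposal correctly assembles the objects that Kropholler's proof manipulates but leaves its core step unproved.
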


We also recall Thurston's definition of a model geometry:
\begin{defn}[{\cite[\S3]{thurston1997Threedimensional}}]\label{def:modgeom_thurs}
	A \emph{model geometry in the sense of Thurston}  is a pair $(G,X)$, where $X$ is a manifold and $G$ is a Lie group of diffeomorphisms of $X$ such that:
	\begin{enumerate}
		\item $X$  is simply-connected;
		\item $G$ acts transitively on $X$ with compact point stabilisers;
		\item $G$ is not contained in a larger group of diffeomorphisms of $X$ with compact point stabilisers;
		\item there is at least one compact manifold modelled on $(G,X)$.
	\end{enumerate}
\end{defn}
If $(G,X)$ is a model geometry in the sense of Thurston, we  equip $X$ with a $G$-invariant Riemannian metric. Although this metric is not unique, the isometry group of this Riemannian manifold is $G$ and hence is independent of the  metric. By a slight abuse of notation, we refer to $X$ equipped with such a  Riemannian metric  as a model geometry,  noting that two different metrics on $X$ may give rise to the same model geometry.

The following proposition is necessary to obtain the sharp conclusions of Theorem \ref{thm:cd2} and \ref{thm:cd3} without passing to finite index subgroups.
\begin{prop}\label{prop:extension_action}
	Let $\Gamma$ be a finitely generated group containing a finite index torsion-free subgroup $\Gamma'$ acting geometrically on a model geometry $X$ in the sense of Thurston. Then the $\Gamma'$ action on $X$ extends to a geometric action of $\Gamma$ on $X$.
\end{prop}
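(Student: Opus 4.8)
The plan is to reduce the statement to realising the conjugation action of $\Gamma$ on a suitable finite-index normal subgroup by isometries of $X$, and then to treat the (finitely many) Thurston geometries separately according to whether their isometry groups are semisimple, solvable-by-compact, or mixed. First I would record what suffices. Write $G=\Isom(X)$; by Definition \ref{def:modgeom_thurs} and Theorem \ref{thm:maxcompact}, $G$ is a Lie group with finitely many components acting transitively on $X$ with compact point stabiliser $K$, so $X\cong G/K$. As $\Gamma'$ is torsion-free, its geometric action is faithful and realises $\Gamma'$ as a torsion-free uniform lattice in $G$, with which we identify it. Any homomorphism $\rho\colon\Gamma\to G$ whose restriction to $\Gamma'$ is a geometric action is automatically copci: the kernel is finite, since $\ker\rho\cap\Gamma'=\ker(\rho|_{\Gamma'})$ is finite and $[\Gamma:\Gamma']<\infty$; the image is cocompact, since it contains the uniform lattice $\rho(\Gamma')$; and $\rho$ is proper, since the preimage of a compact set meets $\Gamma'$ in a finite set and is covered by finitely many $\Gamma'$-cosets. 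So by Lemma \ref{lem:copci_action} and Proposition \ref{prop:discrete_copci} such a $\rho$ is a geometric action of $\Gamma$ on $X$ extending that of $\Gamma'$ (up to the reparametrisation, by an automorphism of $\Gamma'$ and a conjugation in $G$, that leaves the resulting model geometry unchanged). It thus suffices to build one such $\rho$.

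For the main step let $N=\bigcap_{g\in\Gamma}g\Gamma' g^{-1}$ be the normal core of $\Gamma'$ in $\Gamma$: a finite-index torsion-free normal subgroup that is a uniform lattice in $G$, with conjugation map $c\colon\Gamma\to\Aut(N)$ satisfying $c|_N=\Inn(N)$. I would then realise $c$ by isometries, geometry by geometry. If $X=\bbH^3$, Mostow rigidity (Theorem \ref{thm:mostowrigid}) realises each $c(g)$ as conjugation by a unique element $\rho(g)\in\Isom(\bbH^3)$, and uniqueness makes $\rho\colon\Gamma\to\Isom(\bbH^3)$ a homomorphism extending the inclusion of $N$ — and of $\Gamma'$, since $N$ has trivial centraliser in $G$. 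If $X=\bbH^2$, then $\Gamma$, being a finite extension of a torsion-free cocompact Fuchsian group, is the fundamental group of a closed hyperbolic $2$-orbifold and hence embeds cocompactly and discretely in $\Isom(\bbH^2)$ (Nielsen realisation / the classification of NEC groups). If $X$ is $\bbE^3$, $\Nil$ or $\Sol$, then $N$ is torsion-free polycyclic — a uniform lattice in the corresponding $1$-connected solvable Lie group $R$ — and $\Gamma$ is virtually polycyclic; here I would invoke the work of Grunewald--Platonov \cite{grunewaldplatonov2004new} on finite extensions and automorphisms of polycyclic groups to realise $\Gamma$ as a uniform lattice in $R\rtimes F$ for a finite group $F$ of automorphisms of $R$ preserving the lattice structure, which in turn embeds in $\Isom(X)=R\rtimes C$ (with $C$ a maximal compact group of symmetries of $R$) after choosing an $F$-invariant metric. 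The mixed geometries $\bbH^2\times\bbR$ and $\widetilde{\SL(2,\bbR)}$ are handled by combining these two inputs along the semisimple and solvable directions, using that $N$ is virtually a direct-product (respectively central) extension of a cocompact surface group.

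In every case this produces the required homomorphism $\rho\colon\Gamma\to\Isom(X)$, and the first paragraph then finishes the proof. The hard part is the solvable and mixed geometries: unlike $\bbH^2$ and $\bbH^3$, whose isometry groups have finite outer automorphism group, the isometry groups of $\bbE^3$, $\Nil$, $\Sol$ and of the mixed geometries have positive-dimensional outer automorphism groups (arising from rescaling), so no single rigidity theorem applies and one must check that the Lie group in which $\Gamma$ is realised can genuinely be pinned down to be $\Isom(X)$; keeping track of how a finite extension interacts with the polycyclic or Seifert-fibred structure of $N$ is exactly where the automorphism-theoretic results of Grunewald--Platonov are needed. (The spherical geometries $S^3$ and $S^2\times\bbE^1$ do not arise in the applications, where $\cd(\Gamma)\in\{2,3\}$, and may be excluded at the outset.)
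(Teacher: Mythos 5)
Your plan diverges from the paper's proof and, as written, has genuine gaps. First, Proposition \ref{prop:extension_action} is stated for an arbitrary model geometry in the sense of Definition \ref{def:modgeom_thurs}, with no restriction on dimension and no exclusion of the spherical geometries; an argument that enumerates the two- and three-dimensional geometries of Theorem \ref{thm:thurs} therefore cannot prove the statement as given. Second, even within that list the mixed geometries are not actually handled. For $\widetilde{\SL(2,\bbR)}$ the lattice $N$ is a non-split central extension of a surface group, the centraliser of $N$ in $\Isom(X)$ contains the positive-dimensional group of fibre translations (so the trivial-centraliser device you use for $\bbH^3$ to promote the map on $N$ to one on $\Gamma'$ and $\Gamma$ fails, as it also does for $\bbE^3$, $\Nil$, $\Sol$ and $\bbH^2\times\bbE^1$), and realising the finite outer action by isometries is the Seifert-fibred analogue of Nielsen realisation --- a substantial theorem, not something obtained by ``combining the semisimple and solvable inputs''. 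Your closing admission that ``one must check that the Lie group in which $\Gamma$ is realised can genuinely be pinned down to be $\Isom(X)$'' concedes exactly the point at issue: that check is the content of the proposition, and your proposal does not carry it out.

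The paper's proof is short, uniform and dimension-free, and it is worth seeing why. By \cite[Corollary 1.3]{grunewaldplatonov2004new} --- a general result on finite extensions of uniform lattices in Lie groups with finitely many components, not a polycyclic-specific one --- $\Gamma$ is a uniform lattice in a Lie group $G_\Gamma$ that is a finite extension of $G=\Isom(X)$. Taking a maximal compact subgroup $K_\Gamma\leq G_\Gamma$ meeting every component (Theorem \ref{thm:maxcompact}), the inclusion $G\hookrightarrow G_\Gamma$ identifies $G/K$ with $G_\Gamma/K_\Gamma$, so $G_\Gamma$ acts on $X$ by diffeomorphisms with compact point stabilisers extending the $G$-action. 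The maximality condition in Definition \ref{def:modgeom_thurs} then forces this action to factor through $G=\Isom(X)$, i.e.\ every $G$-invariant metric is $G_\Gamma$-invariant, and $\Gamma\leq G_\Gamma$ acts geometrically on $X$, genuinely extending the $\Gamma'$-action (your version only recovers the action up to precomposition by an automorphism of $\Gamma'$). No Mostow rigidity, Nielsen realisation or case analysis is needed; the maximality clause of Thurston's definition, which your proposal never invokes, is precisely what replaces all of the geometry-by-geometry rigidity inputs. If you wish to salvage your outline, the missing ingredients are this maximality clause together with the general form of the Grunewald--Platonov theorem.
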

\begin{proof}
	Let $G=\Isom(X)$. Then $(G,X)$ is a model geometry in the sense of Definition \ref{def:modgeom_thurs}, and $X$ can be identified with the homogeneous space $G/K$ where $K$ is a maximal compact subgroup. As $\Gamma'$ is torsion-free and acts geometrically on $X$, $\Gamma'$  is a uniform lattice in $G$.  A result of Grunewald--Platonov says that $\Gamma$ is a uniform lattice in a Lie group $G_\Gamma$ with finitely many components \cite[Corollary 1.3]{grunewaldplatonov2004new}. Moreover, the group $G_\Gamma$ is a finite extension of $G$; see \cite[\S 3]{grunewaldplatonov2004new}. By Theorem \ref{thm:maxcompact}, there exists a maximal compact subgroup $K_\Gamma\leq G_\Gamma$ that meets every component of $G_\Gamma$. Thus  the inclusion $G\hookrightarrow G_\Gamma$ induces a diffeomorphism $G/K\to G_\Gamma/K_\Gamma$. This gives an  action  of  $G_\Gamma$ on $X$ by diffeomorphisms with compact point stabilisers that extends the action of $G$ on $X$. By Definition \ref{def:modgeom_thurs}, $G$ is a maximal such  group of diffeomorphisms of $X$, and so any $G$-invariant metric on $X$ is also $G_\Gamma$-invariant. Since $\Gamma$ is a uniform lattice in $G_\Gamma$, it follows the $\Gamma'$ action on $X$ extends to a geometric action of $\Gamma$  on $X$ as required.
\end{proof}
 Model geometries of dimensions two and three were listed by Thurston:
\begin{thm}[{\cite[Theorems 3.8.2 and 3.8.4]{thurston1997Threedimensional}}]\label{thm:thurs}
	Suppose $(G,X)$ is a model geometry in the sense of Thurston.
	\begin{itemize}
		\item If $\dim(X)=2$, then $X$ is either  $\bbE^2$, $\bbH^2$ or $S^2$.
		\item If $\dim(X)=3$, then $X$ is either $\bbE^3$, $\bbH^3$, $\bbH^2\times \bbE^1$, $\Nil$, $\widetilde{\SL(2,\bbR)}$, $\Sol$, $S^3$ or $S^2\times \bbE^1$.
	\end{itemize}
Moreover, $\bbH^2$ and $\bbH^3$ are the only symmetric spaces of non-compact type of dimension two or three.
\end{thm}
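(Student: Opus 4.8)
This is a classical result; the plan is to reproduce Thurston's argument \cite[\S3.8]{thurston1997Threedimensional}, organised around the isotropy representation. First I would equip $X$ with a $G$-invariant Riemannian metric, so that $G$ acts transitively by isometries with compact point stabiliser $K$, and identify $X$ with $G/K$; by maximality $G$ is in fact the full isometry group of this metric. Since an isometry of a connected Riemannian manifold fixing a point $x$ and having trivial derivative there is the identity, the isotropy representation $K\to\mathrm{O}(T_xX)\cong\mathrm{O}(n)$ is faithful, so $K$ may be regarded as a closed subgroup of $\mathrm{O}(n)$; by Theorem \ref{thm:maxcompact} the geometry is then determined by the pair $(G,K)$ together with the $K$-module $\mathfrak{m}\cong\bbR^n$ complementary to $\mathfrak{k}$ in $\mathfrak{g}$, and the maximality clause of Definition \ref{def:modgeom_thurs} means precisely that we must enumerate the \emph{maximal} such pairs.

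Next I would split according to whether $K$ acts irreducibly on $\bbR^n$. In the irreducible case the $G$-invariant metric is unique up to scale (Schur's lemma on $\mathfrak{m}$), $X$ is Einstein, and since Einstein manifolds of dimension at most three have constant sectional curvature, $X$ is a space form: either $K\supseteq\mathrm{SO}(n)$ and $X\in\{S^n,\bbE^n,\bbH^n\}$, or $n=3$ and $K$ is one of the finite polyhedral subgroups of $\mathrm{O}(3)$ --- but then $X\cong G^\circ$ is a three-dimensional Lie group carrying an essentially round invariant metric, and one checks $G$ is never maximal, the only Lie algebras admitting such a $K$ being $\bbR^3$ and $\mathfrak{su}(2)$, which recover $\bbE^3$ and $S^3$. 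For $n=2$, reducibility of $K$ gives $X=\bbE^1\times\bbE^1=\bbE^2$; this completes dimension two.

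For $n=3$ with $K$ reducible, $K$ preserves a splitting $\bbR^3=\bbR^2\oplus\bbR$. If the Riemannian metric itself splits, de Rham's theorem yields $X=Y^2\times\bbE^1$ with $Y^2\in\{\bbE^2,\bbH^2,S^2\}$, and discarding the non-maximal $\bbE^2\times\bbE^1=\bbE^3$ leaves $\bbH^2\times\bbE^1$ and $S^2\times\bbE^1$. If it does not split, then $X$ carries a $G$-equivariant one-dimensional foliation (dually, it fibres $G$-equivariantly over a two-dimensional geometry, or over $\bbE^1$ with two-dimensional fibres), $G^\circ$ is a unimodular three-dimensional Lie group, and classifying these Lie groups together with their left-invariant metrics having two-dimensional isotropy produces exactly $\Nil$ (a twisted line bundle over $\bbE^2$), $\widetilde{\SL(2,\bbR)}$ (a twisted line bundle over $\bbH^2$; the corresponding twisted bundle over $S^2$ is $S^3$, already listed) and $\Sol$ (the remaining unimodular solvable case). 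Verifying maximality of the listed $G$ in each case closes the enumeration at eight geometries. I expect this reducible non-product case to be the main obstacle: it requires a hands-on analysis of three-dimensional Lie groups and their isometry groups, both to rule out further geometries and to confirm maximality.

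Finally, for the ``moreover'' clause, I would observe that the Riemannian symmetric spaces among the geometries listed are $\bbE^2,S^2,\bbH^2$ in dimension two and $\bbE^3,S^3,\bbH^3,S^2\times\bbE^1,\bbH^2\times\bbE^1$ in dimension three ($\Nil$, $\Sol$ and $\widetilde{\SL(2,\bbR)}$ are not symmetric, as their isometry Lie algebras fail the Cartan relation $[\mathfrak{m},\mathfrak{m}]\subseteq\mathfrak{k}$). Of these, $\bbE^n$ is flat and $\bbH^2\times\bbE^1$ splits off a Euclidean factor, while $S^n$ and $S^2\times\bbE^1$ are not of non-positive curvature; by the definition of a symmetric space of non-compact type recalled in Section \ref{sec:locally_compact_prelims}, each of these is excluded, leaving precisely $\bbH^2$ in dimension two and $\bbH^3$ in dimension three.
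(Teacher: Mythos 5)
The paper does not prove this statement; it is imported verbatim from Thurston (Theorems 3.8.2 and 3.8.4), so there is no internal proof to compare against. Your outline reproduces the standard argument --- pass to a $G$-invariant metric, use maximality to identify $G$ with its full isometry group, and enumerate via the isotropy representation, with the irreducible case yielding the constant-curvature spaces and the reducible three-dimensional case handled through unimodular three-dimensional Lie groups and their left-invariant metrics --- and as a roadmap it is sound. Two steps, however, are stated in a form that does not literally hold. In dimension two you deduce from reducibility of $K$ that $X=\bbE^1\times\bbE^1$; what reducibility actually gives is that $K$ is finite and $X\cong G^\circ$ is a simply connected two-dimensional Lie group, namely $\bbR^2$ or the non-abelian group $\{x\mapsto ax+b\}$. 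Every left-invariant metric on the latter has constant negative curvature, so that case is a non-maximal copy of $\bbH^2$ (alternatively it is excluded by condition (4) of Definition \ref{def:modgeom_thurs}, being non-unimodular); it is not excluded because the metric splits. The same caution applies in dimension three: a $K$-invariant splitting of $\bbR^3$ does not force a Riemannian product splitting, and your ``non-split'' branch has to absorb all of this. (Also, Theorem \ref{thm:maxcompact} is not quite the right tool here: the point stabiliser of a Thurston geometry need not be maximal compact in $G$, e.g.\ $O(2)\leq O(3)$ for $S^2$.)

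For the ``moreover'' clause, your argument only shows that \emph{among the eight listed geometries} exactly $\bbH^2$ and $\bbH^3$ are symmetric of non-compact type. To get the stated conclusion --- that these are the only symmetric spaces of non-compact type in dimensions two and three --- you must also know that every such symmetric space occurs in the list, which requires maximality of its isometry group together with the existence of compact forms (Borel's theorem, as in Example \ref{exmp:symm}). It is cleaner to argue directly: the de Rham factors of a symmetric space of non-compact type are irreducible of dimension at least two, and the only irreducible symmetric spaces of non-compact type of dimension two or three are $\bbH^2=\SL(2,\bbR)/\mathrm{SO}(2)$ and $\bbH^3=\SL(2,\bbC)/\mathrm{SU}(2)$, since every other irreducible quotient $G/K$ with $G$ simple and non-compact has dimension at least four.
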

Combining Lemma \ref{lem:poincare} with Theorem \ref{thm:thurs},  we can explicitly describe the commensurated subgroups of cohomological dimension at most three that may occur in the statement of Theorem \ref{thm:mainintro}. This may also be deduced from the classification of symmetric spaces \cite{helgason1978differential}.
\begin{cor}\label{cor:lowdim_gps}
	Let $\Lambda$ be a torsion-free group that is either a finite rank free abelian group, or is a uniform lattice in a connected centre-free semisimple Lie group with no compact factors. Then $\Lambda$ is a Poincar\'e duality group. Moreover, the following hold.
	\begin{enumerate}
		\item If $\cd(\Lambda)=1$, then $\Lambda\cong \bbZ$.
		\item If $\cd(\Lambda)=2$, then either $\Lambda\cong \bbZ^2$ or $\Lambda$ acts geometrically on $\bbH^2$.
		\item If $\cd(\Lambda)=3$, then either $\Lambda\cong \bbZ^3$ or $\Lambda$ acts geometrically on $\bbH^3$.
	\end{enumerate}
\end{cor}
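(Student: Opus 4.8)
The plan is to handle the two kinds of group $\Lambda$ allowed in the hypothesis separately, and in each case to read $\cd(\Lambda)$ off the associated geometry.

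First I would record that $\Lambda$ is a Poincar\'e duality group. If $\Lambda$ is free abelian of rank $n$, this is classical and $\cd(\Lambda)=n$. If instead $\Lambda$ is a uniform lattice in a connected centre-free semisimple Lie group $G$ with no compact factors, then $G$ is in particular a virtually connected Lie group, so Lemma \ref{lem:poincare} applies: $\Lambda$ is a Poincar\'e duality group and $\cd(\Lambda)=\dim(G/K)$, where $K\leq G$ is a maximal compact subgroup. By Proposition \ref{prop:manifold_gp_action}, the space $X=G/K$ equipped with a $G$-invariant Riemannian metric is a symmetric space of non-compact type, and since $\Lambda$ is a uniform lattice in $G$ it acts geometrically on $X$.

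The next step is a dimension count. In cases (1)--(3) we have $\cd(\Lambda)\geq 1$, so $\Lambda$ is infinite; hence $X$ is neither a point nor, being of non-compact type, one-dimensional (the only $1$-dimensional symmetric space is $\bbE^1$, which has a Euclidean factor), so $\dim X\geq 2$. By the last assertion of Theorem \ref{thm:thurs}, the only symmetric spaces of non-compact type of dimension two or three are $\bbH^2$ and $\bbH^3$. Therefore, if $\Lambda$ is a uniform lattice of the second kind, then $\cd(\Lambda)=\dim X$, and this value equalling $2$ forces $X\cong\bbH^2$, equalling $3$ forces $X\cong\bbH^3$, while the value $1$ cannot occur.

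Assembling these facts gives the three conclusions. If $\cd(\Lambda)=1$, the second alternative is excluded by the dimension bound, so $\Lambda$ is free abelian of rank one, i.e.\ $\Lambda\cong\bbZ$. If $\cd(\Lambda)=2$, then either $\Lambda\cong\bbZ^2$, or $\Lambda$ acts geometrically on $X\cong\bbH^2$. If $\cd(\Lambda)=3$, then either $\Lambda\cong\bbZ^3$, or $\Lambda$ acts geometrically on $X\cong\bbH^3$. I do not expect a genuine obstacle here: the statement is essentially a bookkeeping exercise combining Lemma \ref{lem:poincare}, Proposition \ref{prop:manifold_gp_action} and Thurston's low-dimensional classification, the only point needing a little care being the exclusion of a symmetric space of non-compact type of dimension at most one.
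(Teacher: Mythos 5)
Your proposal is correct and follows essentially the same route as the paper, which likewise obtains the statement by combining Lemma \ref{lem:poincare} (Poincar\'e duality and $\cd(\Lambda)=\dim(G/K)$) with Proposition \ref{prop:manifold_gp_action} and the final assertion of Theorem \ref{thm:thurs}. Your explicit exclusion of a one-dimensional symmetric space of non-compact type is the only step the paper leaves implicit, and you handle it correctly.
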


We recall  groups of finite cohomological dimension are  torsion-free, and that if $\Lambda\leq \Gamma$, then $\cd(\Lambda)\leq \cd(\Gamma)$. Moreover, any non-trivial group $\Lambda$ satisfies $\cd(\Lambda)>0$. These  facts will be used implicitly throughout the proofs of Theorems \ref{thm:cd2} and \ref{thm:cd3}. 

\begin{proof}[Proof of Theorem \ref{thm:cd2}]
	$\implies$: Suppose $\Gamma$ has a model geometry that is not dominated by a locally finite vertex-transitive graph. Then Theorem \ref{thm:mainintro} and Corollary \ref{cor:lowdim_gps} imply $\Gamma$ contains an infinite commensurated Poincar\'e duality subgroup $\Lambda$ such that  is either isomorphic to $\bbZ$ or $\bbZ^2$, or acts geometrically on $\bbH^2$.
	 Since $1\leq \cd(\Lambda)\leq \cd(\Gamma)=2$, Proposition \ref{prop:typefp} ensures $\Gamma$ is of type FP. 
	
	If $\Lambda\cong \bbZ^2$ or $\Lambda$ acts geometrically on $\bbH^2$, then $\cd(\Lambda)=\cd(\Gamma)$, so Theorem \ref{thm:codim1} implies $\Lambda$ is a finite index subgroup of $\Gamma$. As $\Lambda$ acts freely and cocompactly on either $\bbE^2$ or $\bbH^2$,  Proposition \ref{prop:extension_action} implies the torsion-free group $\Gamma$ acts freely and cocompactly on either $\bbE^2$ or $\bbH^2$, hence is a surface group and we are done.
	
	Now suppose $\Lambda\cong \bbZ$.  Theorem \ref{thm:codim1} ensures $\Gamma$ splits as a non-trivial graph of groups in which all vertex and edge groups are commensurable to $\Lambda$. Let $T$ be the associated Bass--Serre tree. 	If the tree $T$ is infinite-ended, then $\Gamma$ is a generalised Baumslag--Solitar group  of rank one and we are done. If $T$ is two-ended, then $\Gamma$ surjects onto $\bbZ$ or $D_\infty$ with kernel $K$ commensurable to $\Lambda$. As $\Gamma$ is torsion-free, $K\cong \bbZ$.  Thus  $\Gamma$ is virtually $\bbZ^2$ and so Proposition \ref{prop:extension_action} implies $\Gamma$ acts geometrically on $\bbE^2$ and hence is a surface group.
	
	$\impliedby$: If $\Gamma$  acts geometrically on $\bbE^2$ or $\bbH^2$, it has a model geometry not dominated by a locally finite graph.  If $\Gamma$  is a generalised Baumslag--Solitary group, it contains either a commensurated infinite cyclic subgroup, so Theorem \ref{thm:mainintro} implies $\Gamma$ has  a model geometry not dominated by a locally finite graph.
\end{proof}

The following lemma describes the structure of certain Schlichting completions, and  will arise in the proof of Theorem \ref{thm:cd3}.
\begin{lem}\label{lem:schlicht_tree}
	Let $\Gamma$ be a finitely generated group that splits as a finite graph of groups with all vertex and edge groups commensurable to $\Lambda$. Then $\Lambda$ is commensurated. Moreover, if $Q$ is the Schlichting completion of $\Gamma$ with respect to $\Lambda$, then $Q$ acts geometrically on a locally finite tree quasi-isometric to the Bass--Serre tree of $\Gamma$.
\end{lem}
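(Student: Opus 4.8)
The first assertion is immediate: each vertex group $\Gamma_v$ of the splitting is commensurable with $\Lambda$, and for $g\in\Gamma$ the conjugate $g\Gamma_vg^{-1}=\Gamma_{gv}$ is again a vertex group, hence commensurable with $\Lambda$; so $g\Lambda g^{-1}$ is commensurable with $\Lambda$ for every $g$, i.e.\ $\Lambda\alnorm\Gamma$. Let $T$ be the Bass--Serre tree, with cocompact action $\alpha\colon\Gamma\to\Aut(T)$. Each edge group, being contained in and commensurable with its adjacent vertex groups, has finite index in them; hence every vertex of $T$ has finite valence, $T$ is locally finite, and $\Aut(T)$ is totally disconnected locally compact with compact open vertex stabilisers. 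Passing to the minimal $\Gamma$-invariant subtree — which, the action being cocompact, is a quasi-isometric change — I may assume either that $\Gamma\curvearrowright T$ is minimal, or that $T$ is a point; in the latter case $\Gamma$ is itself a vertex group, hence commensurable with $\Lambda$, so $Q=\overline{\lambda(\Gamma)}$ is compact and acts geometrically on the point $T$. So assume henceforth that $\Gamma\curvearrowright T$ is minimal.

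The strategy is to realise $Q$ as acting geometrically on $T$ itself, sandwiched between two auxiliary Schlichting completions. Since $\Lambda$ is commensurable with the vertex groups it has bounded orbits on $T$, so by the Bruhat--Tits fixed-point theorem $\Lambda$ fixes a vertex $v_0$, and then $\Lambda\leq\Gamma_{v_0}$ with finite index. Set $H:=\overline{\alpha(\Gamma)}\leq\Aut(T)$, which acts on $T$ cocompactly (since $\alpha(\Gamma)$ does) with compact open vertex stabilisers, hence \emph{geometrically}; and let $Q_0$ be the Schlichting completion of the Hecke pair $(\Gamma,\Gamma_{v_0})$. I claim $H\cong Q_0$. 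As $H=\overline{\alpha(\Gamma)}$ it preserves the orbit $\Gamma v_0\subseteq VT$, and restriction gives a continuous homomorphism $r\colon H\to\Sym(\Gamma v_0)$ with $r\circ\alpha$ the translation action on $\Gamma/\Gamma_{v_0}$, so $r(H)=\overline{r(\alpha(\Gamma))}=Q_0$. Its kernel fixes $\Gamma v_0$ pointwise; the fixed set being a subtree, it contains the convex hull of $\Gamma v_0$, which by minimality is all of $T$, so the kernel fixes $T$ pointwise and is trivial since $H\leq\Aut(T)$ is faithful. Thus $r$ is a continuous bijection between second-countable locally compact groups, hence an isomorphism.

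Next I claim there is a copci homomorphism $\mu\colon Q\to Q_0$. Because $[\Gamma_{v_0}:\Lambda]<\infty$, the $\Gamma$-equivariant map $\Gamma/\Lambda\to\Gamma/\Gamma_{v_0}$ has \emph{finite} fibres; the permutations of $\Gamma/\Lambda$ preserving this finite partition form a closed subgroup of $\Sym(\Gamma/\Lambda)$ containing $\lambda(\Gamma)$, hence containing $Q$, and passing to the quotient set defines a continuous $\mu\colon Q\to Q_0$. Its kernel sits inside a product of finite symmetric groups, so is profinite; $\mu$ is surjective because $\mu(Q)$ contains the dense subgroup $\lambda_{\Gamma_{v_0}}(\Gamma)$ together with the image of the compact open subgroup $K\leq Q$, which is an open subgroup of $Q_0$, so $\mu(Q)$ is open, hence closed; and $\mu$ is proper because the preimage of a compact set lies in finitely many cosets of the (compact) preimage of a compact open subgroup. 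So $\mu$ is copci, and the composite $Q\xrightarrow{\mu}Q_0\xrightarrow{r^{-1}}H\hookrightarrow\Aut(T)$ is copci; since $H$ acts geometrically on $T$ and compositions of copci maps are copci, $Q$ acts geometrically on the locally finite tree $T$, which, being obtained from the Bass--Serre tree of $\Gamma$ by a quasi-isometry, is quasi-isometric to it. This completes the proof.

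The step I expect to be the main obstacle is the identification $H\cong Q_0$, and in particular the triviality of $\ker r$: this is precisely where minimality of the action is used, and it is what guarantees that $Q$ acts on $T$ itself rather than on some refinement. A secondary difficulty is assembling $\mu$ and verifying that it is proper and surjective, which relies on the standard but fiddly order-preserving correspondence between intermediate subgroups $\Lambda\leq\Delta\leq\Gamma$ and open subgroups of $Q$ containing $K$; one should also take care with the reduction to a minimal action and with the degenerate case in which $T$ collapses to a point.
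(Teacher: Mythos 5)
Your proof is correct, but it takes a genuinely different route from the paper. The paper's argument is metric and indirect: it identifies the quotient space $\Gamma/\Lambda$ (with the relative word metric) with the Cayley--Abels graph of $Q$, cites \cite{margolis2022discretisable} to see that this space is quasi-isometric to the Bass--Serre tree, and then invokes a theorem of Cornulier (synthesising Stallings, Abels and Mosher--Sageev--Whyte) to convert ``$Q$ is a compactly generated totally disconnected group quasi-isometric to a tree'' into a geometric action on some locally finite tree, ruling out the real-line alternative by total disconnectedness. You instead construct the action directly on (the minimal subtree of) the Bass--Serre tree itself: you identify $\overline{\alpha(\Gamma)}\leq\Aut(T)$ with the Schlichting completion of $(\Gamma,\Gamma_{v_0})$ and then produce a copci map $Q\to Q_0$ from the finite-to-one $\Gamma$-map $\Gamma/\Lambda\to\Gamma/\Gamma_{v_0}$. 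Your approach avoids the accessibility-type input entirely and yields the slightly stronger conclusion that $Q$ acts on the Bass--Serre tree itself rather than on some tree merely quasi-isometric to it; the cost is that you must verify by hand the points you correctly flag as delicate, namely that $r(H)$ is all of $Q_0$ (this follows since $r(H_{v_0})$ is compact and contains the dense image of $\Gamma_{v_0}$, hence contains the standard compact open subgroup $U_0$, so $r(H)\supseteq\lambda_0(\Gamma)U_0=Q_0$), that $\mu(K)$ is open (the closure of a finite-index subgroup of a dense subgroup of $U_0$ is closed of finite index, hence open), and that $\mu^{-1}(U_0)$ is compact (it is the setwise stabiliser in $Q$ of a finite subset of $\Gamma/\Lambda$, hence contains a finite intersection of compact open point stabilisers with finite index). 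All of these are standard and your sketches of them are sound, so the argument goes through; it is simply longer and more self-contained than the paper's, which leans on the cited machinery to finish in a few lines.
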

\begin{proof}
	Let $T$ be the Bass-Serre tree associated to the given graph of groups $\cG$ of $\Gamma$, noting that the action of $\Gamma$ on $T$ is cocompact.  Since all vertex and edge groups of $\cG$ are commensurable to $\Lambda$, the tree $T$ is locally finite, hence every vertex and edge stabiliser of $T$ is commensurable to $\Lambda$. Thus $\Lambda$ is commensurated. Fix a finite symmetric generating set $S$ of $\Gamma$. Recall as in \cite[\S 3]{margolis2022discretisable}, the \emph{quotient space} $\Gamma/\Lambda$ is the set  $\Gamma/\Lambda$ of left $\Lambda$-cosets  equipped with the relative word metric. By \cite[Example 3.4 and Proposition 3.7]{margolis2022discretisable}, we see that as $\Lambda$ is commensurable to a vertex stabiliser of  $T$, the quotient space $\Gamma/\Lambda$ is quasi-isometric to $T$. 
	
	As there is a bijective correspondence between $\Lambda$-cosets and cosets of a compact open subgroup of $Q$ containing $\Lambda$ as a dense subgroup, the quotient space $\Gamma/\Lambda$ coincides precisely with the Cayley--Abels graph  of the Schlichting completion $Q$. Therefore, $Q$ is quasi-isometric to  $T$. A result of Cornulier, synthesising earlier work of Stallings, Abels and Mosher--Sageev--Whyte, shows $Q$ either acts geometrically on a locally finite tree, or admits a transitive geometric action on the real line \cite[Theorem 19.31]{cornulier2018quasi}. Since $Q$ is a Schlichting completion, hence totally disconnected, the latter cannot occur.
\end{proof}

\begin{proof}[Proof of Theorem \ref{thm:cd3}]
 $\implies$: Suppose $\Gamma$ has a model geometry that is not dominated by a locally finite vertex-transitive graph. Then Theorem \ref{thm:mainintro} implies $\Gamma$ contains an infinite commensurated subgroup $\Lambda$ that is one of the groups in the statement of Corollary \ref{cor:lowdim_gps}.  In the case where $\cd(\Lambda)=1$,  $\Lambda\cong \bbZ$ and we are done. We thus suppose $\cd(\Lambda)>1$. Then $2\leq\cd(\Lambda)\leq \cd(\Gamma)=3$.   Since $\Lambda$ is a Poincar\'e duality group, Proposition \ref{prop:typefp} ensures $\Lambda$ is of type FP. 
 
 Suppose $\cd(\Lambda)=3$.  Theorem \ref{thm:codim1} ensures  $\Lambda$ is a finite index subgroup of $\Gamma$. Moreover, Corollary \ref{cor:lowdim_gps} says either $\Lambda\cong \bbZ^3$ hence acts geometrically on $\bbE^3$, or  $\Lambda$ acts geometrically on $\bbH^3$.  Proposition \ref{prop:extension_action} implies $\Gamma$ acts geometrically on $\bbE^3$ or $\bbH^3$ as required.
 
 Now suppose  $\cd(\Lambda)=2$. By Corollary \ref{cor:lowdim_gps},  $\Lambda$ is either isomorphic to $\bbZ^2$ or acts geometrically on $\bbH^2$. Moreover, in the case $\Lambda$ acts geometrically on $\bbH^2$, it is uniformly commensurated.  Theorem \ref{thm:codim1} says $\Gamma$ splits as a non-trivial finite graph of groups in which all vertex and edge groups are commensurable to $\Lambda$.  Let $T$ be the associated Bass--Serre tree. The action of $\Gamma$ on $T$ is cocompact and without loss of generality, may assumed to be minimal.   There are four cases to consider, depending on whether  $\Lambda$ is isomorphic to $\bbZ^2$ or  acts geometrically on $\bbH^2$, and on whether $T$ is two-ended or infinite-ended.
 
In the case $\Lambda\cong \bbZ^2$ and $T$ is  infinite-ended,  $\Gamma$ is a generalised Baumslag--Solitar group of rank two and we are done. In the case $T$ is two-ended,  $\Gamma$ surjects onto $\bbZ$ or $D_\infty$ with kernel commensurable to $\Lambda$ hence virtually $\bbZ^2$.  It follows that  $\Gamma$ has a subnormal series with all factors infinite cyclic or finite. Thus $\Gamma$ is virtually polycyclic, hence some finite index subgroup  $\Gamma'\leq \Gamma$ is a lattice in a simply-connected solvable Lie group $G$ \cite[Theorem 4.28]{raghunathan1972discrete}. If $K\leq G$ is a maximal compact subgroup, the associated homogeneous space $G/K$ can be endowed with a left-invariant metric making it a three-dimensional model geometry. This model geometry must be one of $\bbE^3$, $\Nil$ or $\Sol$; see Theorem \ref{thm:thurs} and \cite[Figure 4.22]{thurston1997Threedimensional}. Since $\Gamma'$ acts geometrically on one of $\bbE^3$, $\Nil$ or $\Sol$,  Proposition \ref{prop:extension_action}  ensures $\Gamma$ does also. If $\Gamma$ acts geometrically on $\bbE^3$ or $\Sol$ we are done. If  $\Gamma$ acts geometrically on $\Nil$, then $\Gamma$ contains a normal infinite cyclic subgroup and we are also done.
 
 We now consider the case $\Lambda$ acts geometrically on $\bbH^2$. Since $\Lambda$ is uniformly commensurated,  Proposition \ref{prop:engulf_commen_lattice} ensures it has a semisimple Lie engulfing group, which may be assumed to be  $\Isom(\bbH^2)$. Theorem \ref{thm:build_lattice_embedding} implies there is a uniform lattice embedding  $\gamma:\Gamma\to G$, where  $G$ is a  locally compact group that fits into a short exact sequence \[1\to \Isom(\bbH^2)\to G\to Q\to 1,\] with $Q$  the Schlichting completion of $(\Gamma,\Lambda_0)$ for some finite index subgroup $\Lambda_0\leq \Lambda$. By observing that  $\Isom(\bbH^2)=\Aut(\Isom(\bbH^2)^\circ)$, we use Proposition \ref{prop:map_to_product} to deduce there is a copci homomorphism $G\to\Isom(\bbH^2)\times Q$. By Lemma \ref{lem:schlicht_tree}, there is a copci map $\psi:Q\to \Aut(T')$ for some locally finite tree $T'$ quasi-isometric to $T$. Thus there is a uniform lattice embedding $\Gamma\to \Isom(\bbH^2)\times \Aut(T')$, hence $\Gamma$ acts geometrically on $\bbH^2\times T'$. In the case $T'$ is infinite-ended, we are done. In the case $T'$ is two-ended, $\Gamma$ surjects onto $\bbZ$ or $D_\infty$ with kernel commensurable to $\Lambda$. Corollary \ref{cor:unif_comm} implies $\Gamma$ contains a two-ended normal subgroup, hence an infinite cyclic normal subgroup as required.

$\impliedby$: If $\Gamma$  acts geometrically on $\bbE^3$,  $\bbH^2$, $\Sol$  or $\bbH^2\times T$, it clearly has a model geometry not dominated by a locally finite graph. In the remaining cases,  $\Gamma$  contains a commensurated  subgroup isomorphic to $\bbZ$ or $\bbZ^2$, so Theorem \ref{thm:mainintro} implies $\Gamma$ has  a model geometry not dominated by a locally finite graph.
\end{proof}

\bibliography{bibliography/bibtex} 
\bibliographystyle{amsalpha}
\end{document}